\documentclass[11pt]{amsart}

\usepackage[divide={2.5cm,*,2.5cm}]{geometry}

\input{Macros.sty}

\allowdisplaybreaks

\begin{document}
\title{Serre weight conjectures for $p$-adic unitary groups of rank 2}

\author{Karol Kozio\l}
\address{Department of Mathematics, University of Michigan, 2074 East Hall, 530 Church Street, Ann Arbor, MI 48109-1043 USA} 
\email{kkoziol@umich.edu}

\author{Stefano Morra}
\address{Universit\'e Paris 8, Laboratoire d'Analyse, G\'eom\'etrie et Applications, Universit\'e Sorbonne Paris Nord, CNRS, UMR 7539,  F-93430, Villetaneuse, France}
\email{morra@math.univ-paris13.fr}

\subjclass[2010]{11F80 (primary), 11F33, 11F55, 20C33 (secondary)}

\begin{abstract}
We prove a version of the weight part of Serre's conjecture for mod $p$ Galois representations attached to automorphic forms on rank 2 unitary groups which are non-split at $p$.  More precisely, let $F/F^+$ denote a CM extension of a totally real field such that every place of $F^+$ above $p$ is unramified and inert in $F$, and let $\overline{r}: \textnormal{Gal}(\overline{F^+}/F^+) \longrightarrow {}^C\mathbf{U}_2(\overline{\mathbb{F}}_p)$ be a Galois parameter valued in the $C$-group of a rank 2 unitary group attached to $F/F^+$.  We assume that $\overline{r}$ is semisimple and sufficiently generic at all places above $p$.  Using base change techniques and (a strengthened version of) the Taylor--Wiles--Kisin conditions, we prove that the set of Serre weights in which $\overline{r}$ is modular agrees with the set of Serre weights predicted by Gee--Herzig--Savitt.  
\end{abstract}

\maketitle
\tableofcontents

\section{Introduction}

Let $p$ be a prime number.  The mod $p$ local Langlands program (cf. \cite{breuil-ICM}, \cite{berger-bourbaki}, \cite{breuilmezard}) predicts a correspondence between continuous Galois representations $\rhobar: \Gal(\overline{\bbQ}_p/\bbQ_p)\longrightarrow \bG\bL_n(\Fpbar)$ and smooth admissible $\bG\bL_n(\qp)$-representations on $\Fpbar$-vector spaces.  It is expected to be compatible with the classical local Langlands correspondence over $\bbC$, its geometric realization in the torsion cohomology of Shimura varieties, and classical local/global compatibility.

The case when $n=2$ has been most extensively studied, and such a correspondence has now been established (see \cite{colmez}, \cite{CDP}, \cite{emerton-lgc}, and the above references).   However, the picture for $n > 2$ (or more general $p$-adic fields) still remains highly conjectural, and evidence suggests that such a correspondence will be much more intricate (see, for example, \cite{breuilherzig}).  Despite this deficiency, there has been substantial progress on several expected consequences of this conjecture:  the weight part of Serre's conjecture, the Breuil--M\'ezard conjecture, and Breuil's lattice conjecture (\cite{BDJ}, \cite{GLS}, \cite{breuilmezard}, \cite{gee-kisin}, \cite{breuil-buzzati}, \cite{EGH}, \cite{EGS}).

In a different direction, one is also interested in the possibility of enlarging the conjectural correspondence to include more general groups.  The works \cite{ramla-sl2} and \cite{Karol-U11} give some preliminary indication that a Langlands-type correspondence might be expected to hold for the groups $\bS\bL_2(\bbQ_p)$ and $\bU_2(\bbQ_p)$, and reveal some new phenomena (e.g., the existence of $L$-packets in the mod $p$ setting).  In general, the work of Buzzard--Gee \cite{buzzardgee} lays out precise statements of Langlands-type conjectures for general reductive groups by making use of an enhancement of the Langlands dual group (this will figure prominently in our considerations below).  This framework reconciles the classical local Langlands correspondence with its geometric realization.  These developments are also related to recent work of Gee--Herzig--Savitt: the article \cite{ghs} gives a formulation of the weight part of Serre's modularity conjectures for a large class of non-classical reductive groups.

Classical Langlands correspondences (i.e., with $\bbC$-coefficients) for various reductive groups, and the relations among them, are at the core of the Langlands functoriality principle.  In the specific example of unitary groups, this principle predicts that a correspondence between (packets of) automorphic representations of unitary groups on the one side and $L$-group valued Galois parameters on the other side is obtained from a correspondence on general linear groups.  When the unitary group has low rank, this is studied in \cite[\S 15.1]{rogawski}.

The goal of the present work is to give evidence for a mod $p$ Langlands correspondence for rank 2 unitary groups.  Specifically, given a Galois parameter $\rbar$ with values in the $C$-dual of our unitary group, we prove that the Serre weights for $\rbar$ predicted by \cite{ghs} (which are representations of finite unitary groups) are exactly equal to the Serre weights in which $\rbar$ is modular (we give a precise statement below).  In order to do this, we use known instances of functoriality (in the form of classical base change results) and local/global compatibility.  Thus, our methods hint at a mod $p$ principle of unitary base change.

We now introduce some notation and setup in order to state our main result.  Let $K_2/K/\qp$ be unramified extensions, with $K_2/K$ quadratic.  We let $\bU_2$ denote the unramified unitary group in two variables defined over the ring of integers $\cO_K$ of $K$.  Note that $\bU_2$ splits over $K_2$.  We let ${}^C \bU_2$ denote the $C$-group of $\bU_2$, in the terminology of \cite{buzzardgee} (${}^C\bU_2$ is the usual Langlands $L$-group of a canonical central extension of $\bU_2$).  An $L$-parameter is a continuous homomorphism $\overline{\rho}: \Gal(\qpb/K) \longrightarrow {}^C{\bU_2}(\Fpbar)$, compatible with the projection ${}^C{\bU_2}(\Fpbar) \longtwoheadrightarrow \Gal(K_2/K)$.  The $C$-group also comes equipped with a canonical map ${}^C\bU_2 \longrightarrow \bG_m$, and we assume that the composite character $\Gal(\qpb/K) \stackrel{\rhobar}{\longrightarrow} {}^C{\bU_2}(\Fpbar) \longrightarrow \Fpbar^\times$ (called the multiplier of $\rhobar$) is equal to the mod $p$ cyclotomic character.

Inspired by the conjectures of \cite{buzzardgee} and the prospect of a mod $p$ Langlands program for unitary groups, we would like to infer that the $L$-parameter $\rhobar$ is associated to an $L$-packet of smooth representations of $\bU_2(K)$ over $\Fpbar$.  Unfortunately, such representations are poorly understood beyond the case $K = \qp$ (cf. \cite{Karol-U11}).  A possible first step in understanding such a correspondence would be to study this question in a global context, that is, to study local/global compatibility for an $L$-parameter $\overline{r}: \Gal(\overline{\bbQ}/F^+) \longrightarrow {}^C{\bU_2}(\Fpbar)$, where $F^+/\bbQ$ is a totally real field.  We assume furthermore that $\rbar$ is associated to a non-zero Hecke eigenclass in the mod $p$ cohomology with infinite level at $p$ of a definite unitary group $\bG_{/\cO_{F^+}}$ which is \textbf{non-split} at places of $F^+$ above $p$.  We would like to stress that our setting differs quite markedly from the body of work related to Serre weights for unitary groups (e.g., \cite{GLS}, \cite{BLGG13}), wherein the group $\bG$ is split at places above $p$.  In particular, our Serre weights are representations of finite unitary groups, not general linear groups.  We define $\textnormal{W}_{\textnormal{mod}}(\overline{r})$ to be the set consisting of the $\prod_{v|p}\bU_2(\cO_{F^+_v})$-representations appearing in the socle of the Hecke isotypic component attached to $\rbar$ of the mod $p$ cohomology of $\bG$.

According to the conjectures of \cite{ghs}, the set $\textnormal{W}_{\textnormal{mod}}(\overline{r})$ should be described in an explicit way by $(\overline{r}|_{\Gal(\qpb/F^+_v)})_{v|p}$ using purely representation-theoretic constructions.  Let us denote $\textnormal{W}^?(\overline{r}) \defeq \bigotimes_{v|p} \textnormal{W}^?(\overline{r}|_{\Gal(\qpb/F^+_v)})$, where $\textnormal{W}^?(\overline{r}|_{\Gal(\qpb/F^+_v)})$ is the set described combinatorially in \cite{ghs} (thus $\textnormal{W}^?(\overline{r})$ is again a set of representations of the group $\prod_{v|p}\bU_2(\cO_{F^+_v})$).

The main theorem of this paper is the following (we refer the reader to the bulk of the paper for any unfamiliar terminology).

\begin{theo}[Corollary \ref{cor:SWC}]
\label{thm:1:intro}
Let $F/F^+$ be a CM field extension of $F^+$ which is unramified at all finite places, suppose that $p$ is unramified in $F^+$ and that every place of $F^+$ above $p$ is inert in $F$.  Let $\rbar:\Gal(\ovl{\bbQ}/{F^+})\longrightarrow {}^C\bU_2(\overline{\bbF}_p)$ be an $L$-parameter with cyclotomic multiplier. Assume that: 
\begin{itemize}
	\item $\rbar^{-1}({}^C\bU_2^\circ(\overline{\bbF}_p)) = \Gal(\ovl{\bbQ}/{F})$;
	\item $\rbar$ is modular;
	\item $\rbar$ is unramified outside $p$;
	\item $\rbar$ is semisimple and $4$-generic at places above $p$;
	\item $\overline{\bbQ}^{\ker(\textnormal{ad}^0(\rbar))}$ does not contain $F(\zeta_p)$; and
	\item $\BC(\rbar)(\Gal(\overline{\bbQ}/F)) \supseteq \bG\bL_2(\bbF')$ for some subfield $\bbF' \subseteq \Fpbar$ with $|\bbF'| > 6$.
\end{itemize}
Then
$$\textnormal{W}^?(\overline{r}) = \textnormal{W}_{\textnormal{mod}}(\rbar).$$
\end{theo}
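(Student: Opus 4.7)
The strategy is to reduce the statement to the weight part of Serre's conjecture for $\GL_2$-valued Galois representations over the CM field $F$, which is known by the work of Gee--Liu--Savitt, Barnet-Lamb--Gee--Geraghty, and Emerton--Gee--Savitt. The $C$-group has a natural projection to $\GL_2 \times \bG_m$ on its index-two identity component, producing the base-changed parameter $\BC(\rbar) : \Gal(\ovl{\bbQ}/F) \to \GL_2(\Fpbar)$, whose big image is guaranteed by the sixth hypothesis. The argument breaks into three steps.

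First, I would match predicted weights across base change locally. For each place $v \mid p$ of $F^+$, inert in $F/F^+$ with unique place $w \mid v$ in $F$, the finite unitary group $\bU_2(k_v)$ sits inside $\GL_2(k_w)$ as the fixed points of the Galois involution $\theta$, and there is a canonical bijection $V_v \longmapsto V_w^{\BC}$ between irreducible $\Fpbar$-representations of $\bU_2(k_v)$ and $\theta$-invariant irreducible $\Fpbar$-representations of $\GL_2(k_w)$, coming from Shintani-type descent on Deligne--Lusztig characters. Under the $4$-genericity hypothesis, the combinatorial prescriptions of Gee--Herzig--Savitt and Gee--Liu--Savitt both land in the regular part of the Jantzen alcove, and I would establish the identity
\[ V \in \textnormal{W}^?(\rbar) \quad\Longleftrightarrow\quad V^{\BC} \in \textnormal{W}^?(\BC(\rbar)). \]

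Second, I would transfer modularity on the automorphic side. Solvable cyclic base change for definite unitary groups (Labesse, with refinements by Mok and others) converts automorphic forms on $\bG_{/F^+}$ into conjugate self-dual forms on $\GL_{2,F}$, and classical descent reverses this transfer for appropriately self-dual forms. Combined with local/global compatibility for the mod $p$ cohomology and the strengthened Taylor--Wiles--Kisin patching functor on the unitary side, this yields
\[ V \in \textnormal{W}_{\textnormal{mod}}(\rbar) \quad\Longleftrightarrow\quad V^{\BC} \in \textnormal{W}_{\textnormal{mod}}(\BC(\rbar)). \]
The forward direction is essentially automorphic base change; the reverse uses descent together with crystalline lifts of $\BC(\rbar)$ with Hodge--Tate weights corresponding to $V^{\BC}$, and self-duality considerations to ensure these lifts descend to the unitary side. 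Then I would invoke the known Serre weight equality $\textnormal{W}_{\textnormal{mod}}(\BC(\rbar)) = \textnormal{W}^?(\BC(\rbar))$ and combine it with the two bijections above to conclude. The TWK hypotheses (genericity, non-inclusion of $F(\zeta_p)$, big image of $\BC(\rbar)$) are precisely those needed to run the standard $\GL_2/F$ patching argument.

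The main obstacle is the local combinatorial matching in the first step: the Gee--Herzig--Savitt description for the non-split unitary group involves a twist by the nontrivial element of $\Gal(K_2/K)$ on the space of weights, recorded through the $C$-group, and verifying that this twist precisely corresponds to the Galois action on $\GL_2(k_w)$-weights under Shintani descent is a delicate compatibility between the $C$-group formalism and the Lusztig-style parametrization of unitary Serre weights. A secondary difficulty lies in controlling the descent on the automorphic side at the level of Hecke eigenspaces of the mod $p$ cohomology, which is exactly where the strengthened TWK hypotheses are essential: one must check that the socle of the Hecke-isotypic component on the $\bU_2$ side matches the $\theta$-fixed socle on the $\GL_2$ side after patching, without losing or gaining Serre weights during the transfer.
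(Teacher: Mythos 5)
Your first step (the local matching $V \in \textnormal{W}^?(\rbar) \Leftrightarrow \BC(V) \in \textnormal{W}^?(\BC(\rbar))$ via Shintani-type base change of Deligne--Lusztig representations) is correct and is exactly Theorem \ref{main:thm:local} in the paper. The second step, however, contains a genuine gap, in two places. First, the ``known Serre weight equality $\textnormal{W}_{\textnormal{mod}}(\BC(\rbar)) = \textnormal{W}^?(\BC(\rbar))$'' that you invoke is not available in this setting. The results of Gee--Liu--Savitt, Barnet-Lamb--Gee--Geraghty, etc.\ for conjugate self-dual $\bG\bL_2(\Fpbar)$-valued representations of $\Gamma_F$ with $F$ CM are proved using a definite unitary group that is \emph{split} at all places above $p$; here every place of $F^+$ above $p$ is inert in $F$, so no such auxiliary group exists, and any reasonable definition of $\textnormal{W}_{\textnormal{mod}}(\BC(\rbar))$ would have to go through the same non-split group $\bG$ --- which makes the appeal circular. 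Second, the implication $\BC(V) \in \textnormal{W}_{\textnormal{mod}}(\BC(\rbar)) \Rightarrow V \in \textnormal{W}_{\textnormal{mod}}(\rbar)$ requires a mod $p$ descent of Hecke eigensystems that matches $\bU_2(\cO_{F^+_v})$-socles with $\theta$-fixed $\bG\bL_2(\cO_{F_v})$-socles; no such mod $p$ descent technology exists, and characteristic-zero cyclic base change (Rogawski, Labesse) does not control the socle of the Hecke-isotypic part of mod $p$ cohomology.

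The paper circumvents both problems. For the inclusion $\textnormal{W}_{\textnormal{mod}}(\rbar) \subseteq \textnormal{W}^?(\rbar)$ it only uses characteristic-zero base change in the forward direction: modularity in weight $V$ produces, via the associated automorphic form and its base change, a potentially Barsotti--Tate lift of $\BC(\rbar)|_{\Gamma_{F_v}}$ of a prescribed tame type, and then a purely \emph{local} result of Gee forces $\JH\big(\overline{\BC(\sigma(\tau'_v))}\big) \cap \textnormal{W}^?(\BC(\rbar)|_{\Gamma_{F_v}}) \neq \emptyset$, which transports back through the local base change compatibility (Propositions \ref{prop:double:weights}, \ref{prop:weights:type}, \ref{prop:compatibilityBC}) to give a contradiction. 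For the reverse inclusion $\textnormal{W}^?(\rbar) \subseteq \textnormal{W}_{\textnormal{mod}}(\rbar)$ the paper never leaves the unitary side: it runs a Taylor--Wiles--Kisin patching functor $M_\infty(-)$ directly on $\bG$, and replaces the global $\bG\bL_2$ input you propose by an explicit computation of the local deformation rings $R^{(1,0,1),\tau'}_{\rhobar_v}$ via Frobenius-twist self-dual (polarized) Kisin modules, yielding the Breuil--M\'ezard-type identity $\big|\textnormal{W}^?(\rhobar_v) \cap \JH\big(\overline{\sigma(\tau'_v)}\big)\big| = e\big(R^{(1,0,1),\tau'_v}_{\rhobar_v}\otimes_{\cO}\bbF\big)$ (Corollary \ref{cor:irr:cmpts}), together with the integrality of these rings; an induction on the graph distance between weights then shows every predicted weight is modular. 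If you want to salvage your outline, the essential missing ingredient is precisely this local deformation-theoretic computation on the unitary side; without it, there is no way to get from the $\bG\bL_2$ world back to $\textnormal{W}_{\textnormal{mod}}(\rbar)$.
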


In the $\bG\bL_2$ setting, the results of \cite{BP} and \cite{EGS} imply that, for a $\bG\bL_2(\overline{\bbF}_p)$-valued Galois representation $\rhobar'$, the set $\textnormal{W}^?(\rhobar')$ of modular Serre weights should be equal to the set of representations appearing in the $\bG\bL_2(\cO_K)$-socle of the $\bG\bL_2(K)$-representation associated to $\rhobar'$ via some sort of mod $p$ local Langlands correspondence.  For $\bU_2$, the supersingular representations of $\bU_2(\bbQ_p)$ constructed in \cite{Karol-U11} all have simple $\bU_2(\bbZ_p)$-socle, while the set $\textnormal{W}^?(\rhobar)$ (for generic semisimple $\rhobar$) has size $2^{[K:\bbQ_p]}$.  {Thus, in the $K = \bbQ_p$ case, the global evidence provided by Theorem \ref{thm:1:intro} suggests that $\textnormal{W}^?(\rhobar)$ (for appropriate $\rhobar$) takes into account the $\bU_2(\bbZ_p)$-socles of all $\bU_2(\bbQ_p)$-representations in a supersingular $L$-packet. }

We obtain Theorem \ref{thm:1:intro} by following the strategy of \cite{gee-kisin}.  We first prove the containment $\textnormal{W}^?(\rbar) \supseteq \textnormal{W}_{\textnormal{mod}}(\rbar)$ by using a global base change argument and applying results of \cite{gee-hmf}.  The opposite containment follows by using a modified version of the patching functor constructed in \cite{CEGGPS} and the explicit description of ${}^C\bU_2$-valued local deformation rings.  We explain these arguments with more details presently.

The main novelty in the unitary group setting is that for both inclusions we make use of the analogous results for ${\bG\bL_2}_{/K_2}$.  Firstly, we establish a compatibility between classical local base change of automorphic types (as may be deduced from work of Rogawski \cite{rogawski}) and the set of predicted Serre weights $\textnormal{W}^?(\rhobar)$ (for which we introduce a notion of base change of weights).  In this direction our results give the following proposition, which may be thought of as evidence towards a notion of mod $p$ base change.  Recall that a tame $\bU_2(\cO_K)$-type is the inflation of an irreducible $\bU_2(\bbF_q)$-representation over $\overline{\bbQ}_p$, where $\bbF_q$ denotes the residue field of $K$.

\begin{prop}[Lemma \ref{JHunderBC}, Theorem \ref{main:thm:local}]
\label{prop:2:intro}
Let $\sigma$ denote a $1$-generic tame type for $\bU_2(\cO_K)$, and let $V$ denote a Serre weight for $\bU_2(\cO_K)$.  Let $\BC(\sigma)$ denote the base change of $\sigma$ \emph{(}as defined in Subsection \ref{autBC}\emph{)}.  Then
$$V \in \JH(\overline{\sigma}) \quad \Longleftrightarrow \quad \BC(V) \in \JH\big(\overline{\BC(\sigma)}\big),$$
where $\BC(V)$ is the base change of the Serre weight $V$ (as defined in Subsection \ref{subsec:BC:WT}) and $\JH(\overline{W})$ denotes the set of Jordan-H\"older factors of the mod $p$ reduction of a $\zpb$-lattice in $W$.

In particular, if $\rhobar:\textnormal{Gal}(\overline{\bbQ}_p/K) \longrightarrow {}^C\bU_2(\Fpbar)$ is a $1$-generic tame $L$-parameter with cyclotomic multiplier, then the set of predicted local Serre weights $\textnormal{W}^?(\rhobar)$ is of the form $\JH(\overline{\sigma})$, and we obtain
$$V \in \textnormal{W}^?(\rhobar) \quad \Longleftrightarrow \quad \BC(V) \in \textnormal{W}^?\big({\BC(\rhobar)}\big).$$
Here $\BC(\rhobar):\textnormal{Gal}(\overline{\bbQ}_p/K_2) \longrightarrow \bG\bL_2(\Fpbar)$ denotes the Galois representation obtained by restricting $\rhobar$ to the absolute Galois group of $K_2$ and projecting onto the $\bG\bL_2$ factor.
\end{prop}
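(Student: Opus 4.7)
The plan is to proceed by explicit computation, exploiting the Deligne--Lusztig classification of tame types together with the known combinatorics of Jordan--H\"older factors of their mod $p$ reductions. A tame type $\sigma$ for $\bU_2(\cO_K)$ is the inflation of an irreducible Deligne--Lusztig representation of $\bU_2(\bbF_q)$, hence is either a principal series (induced from a regular character of the diagonal torus of order $q^2-1$) or a cuspidal representation (attached to a character of the anisotropic torus of order $q+1$). By the definition of base change of types in \S\ref{autBC}, in both cases $\BC(\sigma)$ is a tame principal series of $\GL_2(\cO_{K_2})$ whose inducing character is obtained from the parameter of $\sigma$ via the natural extension of tori over $\bbF_{q^2}$.

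First I would recall the explicit description of $\JH(\overline{\BC(\sigma)})$ going back to Diamond and Breuil--Pa\v{s}kunas: writing $\cJ_2 := \Hom_{\bbF_p}(\bbF_{q^2},\Fpbar)$, the Jordan--H\"older factors of the mod $p$ reduction of a $1$-generic tame type for $\GL_2(\cO_{K_2})$ are parameterized by subsets $J \subseteq \cJ_2$, each factor being written explicitly in terms of the inducing character and $J$. I would then establish the analogous description for $\JH(\overline{\sigma})$: its factors are Serre weights of $\bU_2(\bbF_q)$ indexed by subsets of $\cJ := \Hom_{\bbF_p}(\bbF_q,\Fpbar)$, giving $|\JH(\overline{\sigma})| = 2^f$. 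This is most efficiently obtained by restriction along the inclusion $\bU_2(\bbF_q) \into \GL_2(\bbF_{q^2})$, together with the combinatorial definition of $\BC$ on weights in \S\ref{subsec:BC:WT}. Direct comparison of parametrizations then shows that $\BC$ sends the factor of $\overline{\sigma}$ indexed by $J_0 \subseteq \cJ$ to the factor of $\overline{\BC(\sigma)}$ indexed by the preimage of $J_0$ under the natural surjection $\cJ_2 \onto \cJ$, yielding the biconditional.

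For the second half, I would invoke the combinatorial description of $\textnormal{W}^?(\rhobar)$ from \cite{ghs}: for a $1$-generic semisimple tame parameter $\rhobar$ there is a canonical tame $\bU_2(\cO_K)$-type $\sigma(\rhobar)$ whose inertial parameter matches that of $\rhobar$ under the tame inertial local Langlands correspondence, and $\textnormal{W}^?(\rhobar) = \JH(\overline{\sigma(\rhobar)})$. The analogous statement identifies $\textnormal{W}^?(\BC(\rhobar)) = \JH(\overline{\sigma'})$ for a tame $\GL_2(\cO_{K_2})$-type $\sigma'$. The crucial compatibility $\sigma' = \BC(\sigma(\rhobar))$ reduces to compatibility of the tame inertial local Langlands correspondence on both sides with base change of tame characters of the torus, which is a direct calculation; combined with the first half, this yields the equivalence for $\textnormal{W}^?$.

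The main obstacle will be the combinatorial bookkeeping: aligning the parametrizations of subsets of $\cJ_2$ and $\cJ$, the action of the nontrivial element of $\Gal(K_2/K)$ on $\cJ_2$ (which acts by precomposition with the $q$-power Frobenius, equivalently a cyclic shift by $f$ in the natural indexing), and the cuspidal-versus-principal-series dichotomy on the $\bU_2$ side (both of which base change to principal series for $\GL_2/K_2$). One must also carefully propagate the twist by the multiplier character through each identification, and verify that the $1$-genericity hypothesis on $\sigma$ is preserved in passing between the two sides.
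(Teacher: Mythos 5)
Your overall strategy coincides with the paper's: both halves of the statement are proved by writing everything in Deligne--Lusztig terms, computing the $2^{f}$ (resp.\ $2^{2f}$) Jordan--H\"older factors of $\overline{\sigma}$ and $\overline{\BC(\sigma)}$ explicitly, matching the parametrizations (your subsets $J_0\subseteq\cJ$ with preimage $J\subseteq\cJ_2$ are exactly the paper's Weyl-group labels $w'\mapsto(w',w')$), and then reducing the $\textnormal{W}^?$ statement to the compatibility of the GHS recipe with base change of types (the paper's Proposition \ref{prop:compatibilityBC}, which is indeed the direct torus-character calculation you describe). For the converse implication the paper packages the "direct comparison of parametrizations" via the involution $\epsilon$ of $\bG\bL_2(\bbF_{q^2})$ fixing $\bU_2(\bbF_q)$: a factor $F'_{(v,v')}$ of $\overline{\BC(\sigma)}$ is $\epsilon$-conjugate to $F'_{(v',v)}$, and $\epsilon$-invariance characterizes base-changed weights, forcing $v=v'$; your comparison of highest weights would accomplish the same thing, so this is a cosmetic difference.

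The one step in your plan that does not work as described is obtaining $\JH(\overline{\sigma})$ "by restriction along $\bU_2(\bbF_q)\hookrightarrow\bG\bL_2(\bbF_{q^2})$." The type $\sigma$ is not the restriction of $\BC(\sigma)$ (already a dimension count rules this out for cuspidal $\sigma$), and the irreducible mod $p$ representations of $\bU_2(\bbF_q)$ are not restrictions of Serre weights of $\bG\bL_2(\bbF_{q^2})$; so no restriction argument produces the decomposition of $\overline{\sigma}$. What is actually needed --- and what the paper uses --- is the decomposition formula for mod $p$ reductions of Deligne--Lusztig representations applied to the (restriction of scalars of the) unitary group itself, i.e.\ equation \eqref{jantzen} coming from the appendix of \cite{herzig-duke}, which is valid for general reductive groups and directly gives the $2^{f}$ factors $F_{w'}(R_w(\mu))$ indexed by $w'\in W$. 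With that replacement the rest of your argument goes through as in the paper.
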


The tame $\bG\bL_2(\cO_{K_2})$-type $\BC(\sigma)$ of the proposition is characterized by the property that $\BC(\sigma)\otimes \bbC \longhookrightarrow \BC(\pi)$, where $\pi$ is any smooth irreducible complex representation of $\bU_2(K)$ containing $\sigma \otimes \bbC$, and where $\BC(\pi)$ denotes the stable base change of the $L$-packet containing $\pi$ (\cite[\S 11]{rogawski}).  Using the above proposition, we prove in Theorem \ref{thm:WE} the inclusion $\textnormal{W}^?(\rbar) \supseteq \textnormal{W}_{\textnormal{mod}}(\rbar)$ by base changing to $\bG\bL_2$, and using results of Gee (\cite{gee-hmf}) on the set $\textnormal{W}^?(\BC(\rhobar))$.

In order to prove the inclusion $\textnormal{W}^?(\rbar) \subseteq \textnormal{W}_{\textnormal{mod}}(\rbar)$, we would like to employ a patching argument, which requires information regarding certain deformation rings.  More precisely, let us suppose that $\rhobar: \Gal(\overline{\bbQ}_p/K) \longrightarrow {}^C\bU_2(\bbF)$ is an $L$-parameter with $\bbF$ a finite extension of $\bbF_p$, and let $\cO$ denote the ring of integers in some {sufficiently large} finite extension of $\bbQ_p$ with residue field $\bbF$.  We let $R_{\rhobar}^{(1,0,1),\tau'}$ denote the deformation ring parametrizing potentially crystalline framed deformations of $\rhobar$ to $\cO$-algebras with (parallel) $p$-adic Hodge type $(1,0,1)$, inertial type $\tau'$, and cyclotomic multiplier.  In order to study the ring $R^{(1,0,1),\tau'}_{\rhobar}$, we introduce the notion of {Frobenius twist self-dual} Kisin modules.  Given this, we are able to describe the structure of $R_{\rhobar}^{(1,0,1),\tau'}$ in terms of the ``base changed'' deformation ring $R_{\BC(\rhobar)}^{(1,0), \tau'}$.  Combining these calculations with Proposition \ref{prop:2:intro}, along with the analogous results of \cite{gee-kisin} for $\bG\bL_2$, we obtain the following result, which may be viewed as a ``Breuil--M\'ezard-type'' result for unitary groups.

\begin{prop}
\label{intro:BM}
Let $\rhobar: \Gal(\qpb/K) \longrightarrow {}^C\bU_2(\bbF)$ be a $3$-generic tame $L$-parameter with cyclotomic multiplier.  Let $\tau'$ be a ${}^C\bU_2$-valued, $2$-generic inertial type for $I_K$ and $\sigma(\tau')$ the tame $\bU_2(\cO_K)$-type associated to $\tau'$ via the inertial local Langlands correspondence of Theorem \ref{ILLC}.  Then
$$\big|\textnormal{W}^?(\rhobar) \cap \JH\big(\overline{\sigma(\tau')}\big)\big| = e\big(R_{\rhobar}^{(1,0,1),\tau'}\otimes_{\cO}\bbF\big),$$
where $e(-)$ denotes the Hilbert--Samuel multiplicity.  
\end{prop}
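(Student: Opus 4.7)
The plan is to reduce both sides of the equality to the analogous Breuil--M\'ezard-type identity for $\bG\bL_2$ over $K_2$ proved in \cite{gee-kisin}, by applying base change on each side. On the representation-theoretic side this reduction is supplied by Proposition \ref{prop:2:intro}; on the Galois side it goes through the formalism of Frobenius twist self-dual Kisin modules developed earlier in the paper.

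First I would handle the left-hand side. Proposition \ref{prop:2:intro} provides the equivalences $V \in \JH(\overline{\sigma(\tau')}) \iff \BC(V) \in \JH(\overline{\BC(\sigma(\tau'))})$ and $V \in \textnormal{W}^?(\rhobar) \iff \BC(V) \in \textnormal{W}^?(\BC(\rhobar))$. Under the genericity assumptions on $\rhobar$ and $\tau'$ the map $V \mapsto \BC(V)$ is injective on $\JH(\overline{\sigma(\tau')})$ (identifying its elements with their image in the larger set of Jordan--H\"older factors on the $\bG\bL_2$ side), so
$$\bigl|\textnormal{W}^?(\rhobar) \cap \JH(\overline{\sigma(\tau')})\bigr| = \bigl|\textnormal{W}^?(\BC(\rhobar)) \cap \JH(\overline{\BC(\sigma(\tau'))})\bigr|.$$
Since $\BC(\rhobar)$ is a tame $\bG\bL_2$-valued representation of $G_{K_2}$ and $\BC(\sigma(\tau'))$ is the associated tame $\bG\bL_2(\cO_{K_2})$-type of inertial type $\BC(\tau')$, the Breuil--M\'ezard identity of \cite{gee-kisin} identifies the right-hand side with $e(R_{\BC(\rhobar)}^{(1,0),\BC(\tau')} \otimes_{\cO} \bbF)$. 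Thus the left-hand side of the proposition equals $e(R_{\BC(\rhobar)}^{(1,0),\BC(\tau')} \otimes_{\cO} \bbF)$.

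Next I would prove the multiplicity identity $e(R_{\rhobar}^{(1,0,1),\tau'} \otimes_{\cO} \bbF) = e(R_{\BC(\rhobar)}^{(1,0),\BC(\tau')} \otimes_{\cO} \bbF)$. This is where the Frobenius twist self-dual Kisin modules enter: restriction of Galois representations to $G_{K_2}$ followed by projection onto the $\bG\bL_2$-factor realizes base change on the Galois side, and I would describe the moduli functor of ${}^C\bU_2$-valued potentially crystalline lifts of $\rhobar$ of parallel Hodge type $(1,0,1)$, inertial type $\tau'$, and cyclotomic multiplier via $\bG\bL_2$-Kisin modules over $K_2$ equipped with a Frobenius twist self-dual pairing compatible with the cyclotomic multiplier condition. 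Appealing to the explicit local model description of $R_{\BC(\rhobar)}^{(1,0),\BC(\tau')}$ from \cite{gee-kisin}, I would locate the self-dual locus inside the local model in explicit matrix coordinates and verify that the resulting closed subscheme has special fiber whose components carry the same Hilbert--Samuel multiplicity as $R_{\rhobar}^{(1,0,1),\tau'} \otimes_{\cO} \bbF$. Combining the three displayed identities then yields the proposition.

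The hard part is this last step: the explicit analysis of the involution induced by Frobenius twist self-duality on the $\bG\bL_2$-local model, and the verification that it respects (and cuts out with the correct multiplicities) the Serre weight stratification of the special fiber. Everything else in the argument is formal once Proposition \ref{prop:2:intro} and the theory of Frobenius twist self-dual Kisin modules are in place.
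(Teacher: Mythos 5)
Your overall strategy---reduce both sides to the Breuil--M\'ezard identity of \cite{gee-kisin} for $\BC(\rhobar)$ via base change---is exactly the paper's strategy, but both of your intermediate identities are false as stated: base change \emph{squares} the relevant quantities rather than preserving them, because the number of embeddings doubles from $f$ to $2f$.

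Concretely: injectivity of $V \mapsto \BC(V)$ only gives an injection of $\textnormal{W}^?(\rhobar) \cap \JH(\overline{\sigma(\tau')})$ into $\textnormal{W}^?(\BC(\rhobar)) \cap \JH(\overline{\BC(\sigma(\tau'))})$, not a bijection; the $\bG\bL_2$-side intersection contains many constituents that are not base changes of $\bU_2$-weights. The correct relation, which is Proposition \ref{prop:double:weights} (applied to $\beta(V_\phi(\rhobar))$ and $\sigma(\tau')$, together with Propositions \ref{prop:weights:type} and \ref{prop:compatibilityBC}), is
$$\big|\textnormal{W}^?(\BC(\rhobar)) \cap \JH\big(\overline{\BC(\sigma(\tau'))}\big)\big| = \big|\textnormal{W}^?(\rhobar) \cap \JH\big(\overline{\sigma(\tau')}\big)\big|^2,$$
reflecting that the intersection count $2^{|\{i : \tld{w}_i = 1\}|}$ has its exponent doubled when one passes from $f$ to $2f$ embeddings. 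Likewise, on the Galois side the Frobenius twist self-duality forces the shape to satisfy $\tw_{i-f} = \tw_i$ (Lemma \ref{lem:BC:shape}), so the polarized deformation ring is a completed tensor product over only $f$ of the $2f$ shape factors appearing in $R^{\tau'}_{\BC(\rhobar)}$ (Theorem \ref{thm:dringBC} and \eqref{defringpres}); hence $e(R^{\tau'}_{\BC(\rhobar)}\otimes_{\cO}\bbF) = e(R^{\tau'}_{\rhobar}\otimes_{\cO}\bbF)^2$, not an equality of multiplicities. Your two errors happen to be consistent (both off by a squaring), so the final statement can still be salvaged by combining the corrected identities with \cite[Thm.~A]{gee-kisin} and extracting positive square roots at the end---which is precisely what the paper does---but as written your proof asserts, and gives incorrect justifications for, two false equalities.
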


To conclude, we employ a variant of the construction of \cite{CEGGPS} in order to produce a patching functor $M_\infty(-)$ on the category of $\cO$-modules with an action of $\prod_{v|p} \bU_2(\cO_{F^+_v})$.  Using the explicit structure of the rings $R_{\rhobar}^{(1,0,1),\tau'}$ (namely their integrality), the properties of the patching functor $M_\infty(-)$, and Proposition \ref{intro:BM}, we obtain the inclusion $\textnormal{W}^?(\rbar) \subseteq \textnormal{W}_{\textnormal{mod}}(\rbar)$ in Theorem \ref{thm:WExt}. This is enough to prove the main Theorem \ref{thm:1:intro}.

Our results on the geometry of $R_{\rhobar}^{(1,0,1),\tau'}$ in \S \ref{subsec:Def:thy} can also be used to deduce new cases of automorphy lifting phenomena for unitary groups which are non-split at $p$.
Indeed, the integrality of $R_{\rhobar}^{(1,0,1),\tau'}$ (cf. \S \ref{subsubsec:integrality} and Table \ref{Table3}) together standard Taylor--Wiles--Kisin arguments give the following Theorem (again, we refer the reader to the bulk of the paper for unfamiliar terminology):

\begin{theo}
\label{thm:mod:lift}
Let $F/F^+$ be a CM field extension of $F^+$ which is unramified at all finite places, suppose that $p$ is unramified in $F^+$ and that every place of $F^+$ above $p$ is inert in $F$.  

Let $r':\Gal(\ovl{\bbQ}/{F})\longrightarrow \bG\bL_2(\overline{\mathbb{Z}}_p)$ be a continuous Galois representation, and let $\rbar': \Gal(\overline{\mathbb{Q}}/F) \longrightarrow \bG\bL_2(\Fpbar)$ denote the associated residual representation.
Assume that
\begin{itemize}
	\item $r'$ is unramified at all but finitely many places;
	\item we have $r'^c \cong r'^\vee \otimes \varepsilon^{-1}$, where $c\in \textnormal{Gal}(F/F^+)$ is the complex conjugation;
	\item for all places $v$ of $F$ above $p$, the local representation $r'|_{\Gal(\ovl{\bbQ}_p/{F_v})}$ is potentially crystalline, with parallel Hodge type $(-1,0)$ and $4$-generic tame inertial type $\tau'_v$;
	\item for all places $v$ of $F$ above $p$, the local representation $\rbar'|_{\Gal(\ovl{\bbQ}_p/{F_v})}$ is semisimple and $4$-generic;
	\item $\rbar'$ is unramified outside places above $p$;
	\item $\rbar'\cong \rbar_{\imath}(\pi)$ where $\pi$ is a cuspidal automorphic representation of $\bG(\bbA_{F^+})$, such that $\pi_\infty$ is trivial and for all places $v$ of $F^+$ above $p$, the local component $\pi_v$ contains the tame $\bU_2(\cO_{F^+_v})$-representation associated to $\tau'_v$ by the inertial local Langlands correspondence \emph{(}cf. Theorem \ref{ILLC}\emph{)};
	\item $\overline{\bbQ}^{\ker(\textnormal{ad}(\rbar'))}$ does not contain $F(\zeta_p)$; and
	\item $\rbar'(\Gal(\overline{\bbQ}/F)) \supseteq \bG\bL_2(\bbF')$ for some subfield $\bbF' \subseteq \Fpbar$ with $|\bbF'| > 6$.	
\end{itemize}
Then
$r'$ is automorphic.
\end{theo}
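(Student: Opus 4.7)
The plan is to implement a Taylor--Wiles--Kisin patching argument in the style of \cite{gee-kisin}, using the patching functor $M_\infty(-)$ constructed earlier in the paper together with the structural results on the potentially crystalline deformation rings $R_{\rhobar_v}^{(1,0,1),\tau'_v}$ established in~\S\ref{subsec:Def:thy}. First, I would use the essentially conjugate self-dual condition $r'^c \cong r'^\vee \otimes \varepsilon^{-1}$ to extend $r'$ to a continuous $C$-parameter $r:\Gal(\ovl{\bbQ}/F^+) \longrightarrow {}^C\bU_2(\ovl{\bbZ}_p)$ whose residual representation $\rbar$ satisfies $\BC(\rbar) \cong \rbar'$, and whose local components at each $v\mid p$ are potentially crystalline with parallel Hodge type $(1,0,1)$, inertial type $\tau'_v$, and cyclotomic multiplier. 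I then set up a global deformation problem for $\rbar$ with cyclotomic multiplier, unramified outside~$p$, whose local condition at each place $v\mid p$ is given by $R_{\rhobar_v}^{(1,0,1),\tau'_v}$ and whose conditions at auxiliary Taylor--Wiles primes are the standard ones. The hypotheses that $\rbar'(\Gal(\ovl{\bbQ}/F)) \supseteq \bG\bL_2(\bbF')$ with $|\bbF'|>6$ and that $\ovl{\bbQ}^{\ker(\ad(\rbar'))}$ does not contain $F(\zeta_p)$ guarantee adequacy of $\rbar$ and the existence of the requisite infinite families of Taylor--Wiles primes.

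Second, the modularity assumption $\rbar' \cong \rbar_\imath(\pi)$, together with the fact that $\pi_v$ contains the tame $\bU_2(\cO_{F^+_v})$-type $\sigma(\tau'_v)$ associated to $\tau'_v$ by Theorem~\ref{ILLC}, ensures that the patched module $M_\infty\bigl(\bigotimes_{v\mid p}\sigma(\tau'_v)\bigr)$ is nonzero. By construction it is a finitely generated maximal Cohen--Macaulay module over the patched ring $R_\infty(\sigma(\tau'))$, which is a quotient of $R_\infty \,\widehat{\otimes}_{R_\infty^{\textnormal{loc}}}\, \widehat{\bigotimes}_{v\mid p} R_{\rhobar_v}^{(1,0,1),\tau'_v}$. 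The crucial local input is the integrality of the local deformation rings above $p$ under the $4$-genericity assumption, which is precisely the content of~\S\ref{subsubsec:integrality} and Table~\ref{Table3}: each $R_{\rhobar_v}^{(1,0,1),\tau'_v}[1/p]$ is a (nonzero) regular integral domain of the expected dimension, obtained by the analysis of Frobenius twist self-dual Kisin modules.

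Combined with the standard Taylor--Wiles--Kisin dimension count, this integrality propagates to show that $R_\infty(\sigma(\tau'))$ is an integral domain of the correct Krull dimension, and therefore that the nonzero maximal Cohen--Macaulay module $M_\infty\bigl(\bigotimes_{v\mid p}\sigma(\tau'_v)\bigr)$ is faithful over it. Consequently, the $\ovl{\bbZ}_p$-point of $R_\infty(\sigma(\tau'))$ corresponding to our lift $r$ lies in the support of $M_\infty\bigl(\bigotimes_{v\mid p}\sigma(\tau'_v)\bigr)$, and unwinding the patching construction produces a classical Hecke eigenclass on $\bG(\bbA_{F^+})$ giving rise to $r'$, thereby proving automorphy. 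The main obstacle I anticipate is checking that the $\bU_2$-valued patching formalism and the inertial local Langlands correspondence match the ${}^C\bU_2$-valued Galois-theoretic side on the nose (so that the patched deformation ring really is cut out by the local factors $R_{\rhobar_v}^{(1,0,1),\tau'_v}$), but this compatibility is built into the patching functor constructed in the paper, and the genuinely new input---the integrality of these local rings in the non-split, ramified-residue setting of $\bU_2$---is already supplied by~\S\ref{subsec:Def:thy}.
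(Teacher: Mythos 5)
Your overall strategy is the same as the paper's: patch the type $\sigma(\tau')=\bigotimes_{v\mid p}\sigma(\tau'_v)$, use the integrality of the local rings $R^{(1,0,1),\tau'_v}_{\rhobar_v}$ from \S\ref{subsubsec:integrality} and Table \ref{Table3} to control the irreducible components of the patched ring, observe that the support of the nonzero maximal Cohen--Macaulay module $M_\infty(\sigma^\circ)$ is a union of such components, and conclude that the point coming from $r'$ lies in that support. The two genuinely new local inputs you identify (integrality above $p$; matching of the $\bU_2$-automorphic and ${}^C\bU_2$-Galois sides via Theorem \ref{ILLC}) are exactly the ones the paper uses.

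There is, however, a concrete gap in your setup. You take the global deformation problem to be \emph{unramified outside $p$}, but the hypotheses only force the \emph{residual} representation $\rbar'$ to be unramified outside $p$; the lift $r'$ itself is merely unramified at all but finitely many places, so it may be ramified at a finite set $\Sigma_{\mathrm{ram}}$ of prime-to-$p$ places. With your deformation problem, $\widetilde{r}'$ is simply not a point of the universal deformation ring, and the argument never gets off the ground. The paper's proof enlarges the deformation problem to include unrestricted (framed, fixed-multiplier) local conditions $\widetilde{R}^{\Box}_v$ at each $v\in\Sigma^+_{\mathrm{ram}}$; these local rings are \emph{not} integral domains in general, so one cannot conclude that the patched ring is a domain. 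Instead one needs the weaker statement that the two relevant $\cO$-points $\zeta'$ (from $r'$) and $\zeta_\pi$ (from $\pi$) lie on a common irreducible component, which at the places in $\Sigma^+_{\mathrm{ram}}$ is supplied by Property $(*)$ of \cite[\S 3.5]{BLGG11} (Lemma 3.5.1 of \emph{op.~cit.}). Relatedly, you also need to arrange that the automorphic point $\zeta_\pi$ itself lies in this deformation space, i.e.\ that $r_\imath(\pi)$ can be taken unramified outside $p$ and of the prescribed type at $p$; this is the level-lowering input of Proposition \ref{prop:level:lowering}, which your proposal does not address. Once these two points are incorporated, your argument matches the paper's.
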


(Recall that $r'$ is automorphic if $r'\otimes_{\overline{\bbZ}_p}\overline{\bbQ}_p$ is isomorphic to $r_\imath(\pi')$ for some cuspidal automorphic representation $\pi'$ of $\bG(\bbA_{F^+})$, where $r_\imath(\pi')$ is the continuous Galois representation associated to $\pi'$ as in Theorem \ref{auttogal}.)

We conclude this introduction with a few remarks on natural questions which arise from the results in this paper.

In Theorems \ref{thm:1:intro} and \ref{thm:mod:lift}, the assumption that $\rbar$ is unramified outside $p$ is used to simplify our arguments, and it should be possible to remove it.  
On the other hand removing the condition that the $L$-parameter is residually tame at places above $p$ requires further analysis of the possible set of modular weights $\textnormal{W}_{\textnormal{mod}}(\rbar|_{\Gal(\ovl{\bbQ}_p/F^+_v)})\subseteq \textnormal{W}^?(\rbar|_{\Gal(\ovl{\bbQ}_p/F^+_v)}^{\textnormal{ss}})$, and will depend in a subtle way on the geometry of $R_{\rbar|_{\Gal(\ovl{\bbQ}_p/F^+_v)}}^{(1,0,1),\tau'}$.

In the case where $\rbar|_{\Gal(\ovl{\bbQ}_p/F^+_v)}$ is semisimple, the combinatorics of the set $\textnormal{W}^?(\rbar|_{\Gal(\ovl{\bbQ}_p/F^+_v)})$ and the set of Jordan--H\"older consitutents of tame types for $\bU_2(\cO_{F^+_v})$ suggest that tame $\bU_2(\cO_{F^+_v})$-representations will play the role of Breuil--Pa\v{s}k\={u}nas diagrams for non-split unitary groups.  We expect these representations to be useful in constructing, by a purely local procedure, some mod-$p$ representations of $\bU_2(K)$ which naturally appear in the cohomology of Shimura curves with tame level at $p$.  We hope to come back to these questions in future work.

\vspace{20pt}

The paper is organized as follows.  In Section \ref{sec:UnitaryGPS}, we discuss the unitary groups over $\cO_K$ which are relevant for this paper, namely the unramified unitary group in two variables $\bU_{2/\cO_K}$.  In fact, in order to speak about Serre weight conjectures, we must work with a certain central extension $\widetilde{\bU}_2$ of $\bU_2$ constructed by Buzzard--Gee in \cite{buzzardgee}.  We also define the $C$-group ${}^C\bU_2$, which is the ``classical'' Langlands $L$-group of $\widetilde{\bU}_2$.  We give explicit descriptions of the Galois actions on these groups, their character groups, and their $\Fp$-structures.  Since the groups appearing are slightly non-standard, we have attempted to give a detailed account.

Section \ref{sec:RepThy} is devoted to the theory of types, that is, absolutely irreducible $\bU_2(\bbF_q)$-representations over $\textnormal{Frac}(\cO)$, and their reductions over $\bbF$. In Subsection \ref{autBC}, we recall the notion of base change for types and compare it with local automorphic base change of smooth $\bU_2(K)$-representations over $\bbC$.  Then, in Subsections \ref{subsec:cmb:types} and \ref{subsec:BC:WT}, we analyze the Jordan--H\"older constituents of the mod $p$ reductions of types vis-\`a-vis the constituents of the mod $p$ reductions of their base changes.  This allows us to establish several useful properties of base change of Serre weights.

In Section \ref{subsec:L:pmts} we study $L$-parameters of the form $\rhobar:\Gal(\overline{\bbQ}_p/K) \longrightarrow {}^C\bU_2(\bbF)$.  We relate these parameters to $\bU_2(\cO_K)$-representations to produce the set of (local) predicted weights $\textnormal{W}^?(\rhobar)$, as defined in \cite{ghs}.  The core of this section is Subsection \ref{subsec:BCandW}, which examines the compatibility between Serre weights of $L$-parameters and their base changes.  To conclude, we establish Theorem \ref{main:thm:local}, which figures in subsequent base change results.

Section \ref{sec:Loc:Def} deals with local deformation theory of $C$-group valued $L$-parameters.  We introduce the notion of {Frobenius twist self-dual} Kisin modules over $\fS_R = (\cO_{K_2}\otimes_{\bbZ_p}R)[\![u]\!]$ in Subsection \ref{sec:dual:KM}, which are Kisin modules equipped with an isomorphism between their Frobenius pullback and their dual.  Using this definition, we deduce the deformation theory of {Frobenius twist self-dual} Kisin modules from that of Kisin modules over $\fS_R$ by means of base change (as in \cite{LLLM1}).  The precise relation between deformation theory of {Frobenius twist self-dual} Kisin modules and $C$-group valued $L$-parameters is achieved in Subsection \ref{subsec:Def:thy}.  In particular, we obtain an explicit presentation for the deformation rings $R^{(1,0,1), \tau'}_{\rhobar}$.

Sections \ref{sec:global} and \ref{glob2} contain the main global applications, and the proof of the main theorem.  In Subsections \ref{subsec:UGGlob} -- \ref{subsec:Galois-Automorphic}, we provide the background on algebraic automorphic forms on unitary groups which are quasi-split (but \textbf{not} split) at $p$, and the Galois representations associated to them, by generalizing the usual results in the literature for groups which are split at $p$ (see Theorem \ref{galrepheckealg}).  We remark that the compatibility of base change of types as recalled in \ref{autBC} and classical base change are integral to these generalizations.  The main result of Section \ref{sec:global} is Theorem \ref{thm:WE}, which is the ``weight elimination'' statement.

In Subsections \ref{sub:setup} -- \ref{sub:patching} we generalize the patching construction of \cite{CEGGPS} to our unitary groups (cf. Proposition \ref{patchingprops}).  The modifications are largely formal, using as input the results from Subsection \ref{subsec:Galois-Automorphic}.  The main result on ``weight existence'' is then obtained in Subsection \ref{subsec:WExt}, following the patching techniques of \cite{gee-kisin}.  The main result on automorphy lifting follows in Subsection \ref{autlift}.

\vspace{10pt}

\textbf{Acknowledgements:}
The authors would like to thank Patrick Allen, {Rapha\"el Beuzart--Plessis, John Enns,} Florian Herzig, {Tasho Kaletha,} {Bao Le Hung,} and Sug-Woo Shin for numerous enlightening discussions about this project.  Parts of this work were carried out during stays at the IH\'ES, the Max Planck Institute for Mathematics, Universit\'e de Montpellier, and the University of Toronto; we would like to thank each of these institutions for their support (financial, logistical, and otherwise).  During the preparation of this article the first author was supported by NSF grant DMS-1400779 and an EPDI fellowship, while the second author was supported by the A.N.R.\ project CLap-CLap ANR-18-CE40-0026.  
{Finally, we would like to heartily thank the referee(s) for his/her meticulous reading and comments on several versions of the present article, which greatly improved its clarity and precision.}

\subsection{Notation}

Let $p$ denote an odd prime number, and fix an algebraic closure $\qpb$ of $\qp$.  We denote its ring of integers by $\overline{\bbZ}_p$ and its residue field by $\overline{\bbF}_p$, and we assume that all field extensions of $\qp$ are contained in $\qpb$.  Given a $p$-adic field $F$ and an element $x$ in its residue field, we define $\tilde{x}$ to be its Teichm\"uller lift.  Throughout we will work with a finite extension $E$ of $\qp$ which will serve as our field of coefficients. We let $\cO$ denote the ring of integers of $E$, $\varpi$ its uniformizer, and $\bbF$ its residue field.  We will assume $E$ and $\bbF$ are sufficiently large as necessary.

For any field $F$, we let $\Gamma_F \defeq \textnormal{Gal}(\overline{F}/F)$ denote the absolute Galois group of $F$, where $\overline{F}$ is a fixed separable closure of $F$.  If $F$ is a number field and $v$ is a place of $F$, we let $F_v$ denote the completion of $F$ at $v$, and use the notation $\textnormal{Frob}_v$ to denote a geometric Frobenius element of $\Gamma_{F_v}$.  If $F$ is a $p$-adic field, we let $I_F$ denote the inertia subgroup of $\Gamma_F$.

For $F$ either a number field or a $p$-adic field, we let $\varepsilon:\Gamma_F\longrightarrow \zp^\times$ denote the $p$-adic cyclotomic character, and let $\overline{\varepsilon}$ or $\omega$ denote its reduction mod $p$.

If $F$ is a $p$-adic field, $V$ a de Rham representation of $\Gamma_F$ over $E$, and $\kappa:F \longhookrightarrow E$ an embedding, then we define $\textnormal{HT}_\kappa(V)$ to be the multiset of Hodge--Tate weights with respect to $\kappa$.  Thus, $\textnormal{HT}_\kappa(V)$ contains $i$ with multiplicity $\dim_E(V\otimes_{F,\kappa}\widehat{\overline{F}}(i))^{\Gamma_F}$.  In particular, $\textnormal{HT}_\kappa(\varepsilon) = \{-1\}$.  Further, we let $\textnormal{WD}(V)$ denote the Weil--Deligne representation associated to $V$, normalized so that $V \longmapsto \textnormal{WD}(V)$ is a covariant functor.

Let $F$ be a $p$-adic field.  We let $\textnormal{Art}_F:F^\times \longrightarrow \Gamma_F^{\textnormal{ab}}$ denote the Artin map, which sends uniformizers to geometric Frobenius elements.  Let $\textnormal{rec}_{\bbC}$ denote the Local Langlands correspondence of \cite{HT}, from isomorphism classes of smooth irreducible representations of $\bG\bL_n(F)$ over $\bbC$ to isomorphism classes of $n$-dimensional, Frobenius-semisimple Weil--Deligne representations of the Weil group of $F$ (normalized to agree $\textnormal{Art}_F$ in dimension 1).  For a choice of isomorphism $\imath: \overline{E} \stackrel{\sim}{\longrightarrow} \bbC$, we define $\textnormal{rec}_{\overline{E}} \defeq \imath^{-1}\circ \textnormal{rec}_{\bbC} \circ \imath$ to be the Local Langlands correspondence over $\overline{E}$.

All representations will live on vector spaces over $E$ or $\bbF$, or on $\cO$-modules, unless otherwise indicated.  
By abuse of notation, we will generally not distinguish between a representation and its isomorphism class.  
If $G$ is a group, $H \trianglelefteq G$ a normal subgroup, $\rho$ an $H$-representation and $g\in G$, we write $\rho^g$ to denote the $H$-representation given by $h \longmapsto \rho(ghg^{-1})$.

Given a finite length representation $V$ of some group, we let $\JH(V)$ denote its set of Jordan--H\"older factors.  If $V$ denotes a representation of a (pro)finite group $G$ on a finite-dimensional $E$-vector space, then we may choose a $G$-stable $\cO$-lattice $V^\circ$ inside $V$, and we write $\overline{V^\circ}$ for its reduction mod $\varpi$.  By \cite[Thm. 32]{serre-reptheory}, the set of Jordan--H\"older factors of $\overline{V^\circ}$ is independent of the choice of lattice $V^\circ$.  We write $\JH(\overline{V})$ for $\JH(\overline{V^\circ})$.  
{We denote by $V\longmapsto V^\vee$ the duality functor defined on the category of finite dimensional $E$-vector spaces (resp.~finite dimensional $\bbF$-vector spaces).}

We write matrix transposes \emph{on the right}, so that $A^\top$ denotes the transpose of a matrix $A$.  Given an (anti)automorphism $\theta$ of $\bG\bL_n(R)$ which commutes with the transpose, we write $A^{\theta\top}$ for $(A^\theta)^{\top}$; in particular, we write $A^{-\top}$ for $(A^{-1})^\top$.

\section{Group-theoretic constructions}
\label{sec:UnitaryGPS}

Our first task will be to introduce the groups which will be relevant to arithmetic applications.  After defining unitary groups and certain central extensions in Subsections \ref{unitarygps:local} and \ref{unitarygps:qp}, we construct the dual groups with which we will be working in Subsection \ref{dualgps}.  For the sake of thoroughness, we also give explicit descriptions of the Galois actions and $\bbF_p$-structures.  We mostly follow \cite{buzzardgee} and \cite[\S 9]{ghs}.

\subsection{Unitary groups over $p$-adic fields}\label{unitarygps:local}

\subsubsection{}

Let $f\geq 1$, and let $K$ denote the unramified extension of $\qp$ of degree $f$.  We let $\cO_K$ denote its ring of integers, with canonical uniformizer $p$, and identify its residue field with $\bbF_q =  \bbF_{p^f}$.  We let $\varphi \in \Gamma_{\bbQ_p}$ denote a fixed lift of $\textnormal{Art}_{\bbQ_p}(p)\in \Gamma_{\bbQ_p}^{\textnormal{ab}}$; in particular, $\varphi$ is a geometric Frobenius element and we have $\varepsilon(\varphi) = 1$.  The group $\Gamma_K$ is topologically generated by $\varphi^f$ and $I_K$ ( $= I_{\qp}$).

We let $K_2$ denote the unique unramified quadratic extension of $K$, and $\cO_{K_2}$ its ring of integers.  The group $\bU_1(K)\subseteq \cO_{K_2}^\times$ is defined as the kernel of the norm map $K_2^\times \longrightarrow K^\times$.

Fix a choice of root $\pi \defeq (-p)^{1/(p^{2f} - 1)} \in \overline{\bbQ}_p$.  We define a character $\widetilde{\omega}_{\pi}:\Gamma_{K_2} \longrightarrow \cO_{K_2}^\times$ by
$$\gamma \longmapsto \frac{\pi^\gamma}{\pi}.$$
We fix once and for all an embedding $\varsigma_0: K_2 \longhookrightarrow E$, and define 
$$\widetilde{\omega}_{2f} \defeq \varsigma_0 \circ \widetilde{\omega}_\pi : \Gamma_{K_2} \longrightarrow \cO^\times.$$
We denote by $\omega_{2f}$ the mod $p$ reduction of $\widetilde{\omega}_{2f}$.  Note that $\omega_{2f}^{(p^{2f} - 1)/(p - 1)} = \omega$.

\subsubsection{}

Let $\bU_2$ denote the algebraic group over $\cO_K$ given by
$$\bU_2(R) = \left\{g\in \bG\bL_2(\cO_{K_2}\otimes_{\cO_K}R): g^{(\varphi^f\otimes 1)\top}\Phi_2g = \Phi_2\right\}\footnote{{This group is quasi-split, and is customarily denoted $\bU_{1,1}$ in the literature.}},$$
where $R$ is an $\cO_K$-algebra, and $\Phi_2 \defeq \sm{0}{1}{-1}{0}$.

{Recall that the field $K_2$ is considered as a subfield of $\qpb$.  The projection $\cO_{K_2} \otimes_{\cO_K} \qpb \longrightarrow \qpb$ defined by $x \otimes y \longmapsto xy$ induces an isomorphism $\bU_2(\qpb) \stackrel{\sim}{\longrightarrow} \bG\bL_2(\qpb)$, and via this isomorphism $\bG\bL_2(\qpb)$ obtains a $\Gamma_K$-action given by}
$$\gamma\cdot g = \begin{cases}g^\gamma & \textnormal{if}~\gamma\in \Gamma_{K_2}, \\ \left(\Phi_2g^{-\top}\Phi_2^{-1}\right)^\gamma & \textnormal{if}~\gamma\in \Gamma_K\smallsetminus\Gamma_{K_2}.\end{cases}$$

\subsubsection{}

Following \cite[\S 5.3]{buzzardgee}, we set $\bH \defeq \widetilde{\bU}_2$, so that $\bH$ is a canonical central extension
$$1\longrightarrow\bG_m\longrightarrow \bH \longrightarrow \bU_2\longrightarrow 1$$
of algebraic groups over $\cO_{K}$.  {(To be precise, the construction of \cite{buzzardgee} which we outline below is done over $K$.  The integral model for $\bU_2$ above gives rise to a hyperspecial point in the semisimple Bruhat--Tits building of $\bU_2(K)$, which is identified with the semisimple Bruhat--Tits building of $\bH(K)$, since the extension defining $\bH$ is central.  We therefore obtain a hyperspecial point and the desired integral model for $\bH$.)}  {We will often abuse notation and conflate algebraic groups over $\cO_K$ with their generic fibers.} The group $\bH$ possesses a twisting element, in the terminology of \emph{op. cit.}.  We now recall the explicit construction of $\bH$.

We proceed as follows.  The group $\bH$ is defined as a pushout followed by a pullback:

\begin{center}
\begin{tikzcd}
1 \ar[r] & \ar[dr, phantom, "\square"] \boldsymbol{\mu}_2 \ar[r] \ar[d, hook] & \bS\bL_2 \ar[r]\ar[d] & \bP\bG\bL_2 \ar[r] \ar[d, equal] & 1\\
1 \ar[r] & \bG_m \ar[r] \ar[d, equal] & \ar[dr, phantom, "\square"] \bG\bL_2 \ar[r] & \bP\bG\bL_2 \ar[r] & 1 \\
1 \ar[r] & \bG_m \ar[r,"\imath"] & \bH \ar[u] \ar[r] & \bU_2 \ar[u, two heads] \ar[r] & 1
\end{tikzcd}
\end{center}

\noindent Concretely, $\bH$ is the set of all pairs $(h,h')$, with $h\in \bU_2, h'\in \bG\bL_2$, subject to the condition that $h$ and $h'$ have the same image in $\bP\bG\bL_2$.  The maps $\bH \longrightarrow \bU_2$ and $\bH\longrightarrow \bG\bL_2$ are the projections onto the corresponding factors, and the map $\imath:\bG_m\longrightarrow \bH$ is $\lambda\longmapsto (1, \sm{\lambda}{0}{0}{\lambda})$.

Note that {the $\qpb$-points of} the top two rows of the diagram above carry the standard (i.e., split) action {of $\Gamma_K$}.  In particular, the action {of $\Gamma_K$} on the first factor of ${\bH(\qpb)}$ is the one induced from ${\bU_2(\qpb)}$, while the action on the second factor is the standard one.

\subsubsection{}
\label{tori}

Let $\bT_{\bU}$ denote the diagonal maximal torus of $\bU_2$, and $\bT_\bH$ its preimage in $\bH$.  Furthermore, let $\bT_\bG, \bT_\bS$, and $\bT_\bP$ denote the diagonal maximal tori of $\bG\bL_2$, $\bS\bL_2$ and $\bP\bG\bL_2$, respectively.  The character groups of these tori fit into a diagram:

\begin{center}
\begin{tikzcd}[row sep=large, column sep=10ex]
0 \ar[r] & X^*(\bT_\bP)\cong \bbZ \ar[r, "a\mapsto 2a"] & X^*(\bT_\bS) \cong \bbZ \ar[r] \ar[dr, phantom, "\square"] & X^*(\boldsymbol{\mu}_2)\cong \bbZ/2\bbZ \ar[r] & 0 \\
0 \ar[r] &  X^*(\bT_\bP) \cong \bbZ  \ar[r, "a \mapsto (a{,}-a)"] \ar[u, equal] \ar[d, hook, "a\mapsto (a{,}-a)"] \ar[dr, phantom, "\square"] & X^*(\bT_\bG)\cong \bbZ^2  \ar[u, "(a{,}b)\mapsto a - b"'] \ar[r, "(a{,}b)\mapsto a + b"] \ar[d] & X^*(\bG_m) \cong \bbZ \ar[u, two heads] \ar[r] \ar[d, equal] & 0 \\
0 \ar[r] & X^*(\bT_\bU) \cong \bbZ^2 \ar[r] & X^*(\bT_{\bH}) \ar[r] & X^*(\bG_m)\cong \bbZ \ar[r] & 0
\end{tikzcd}
\end{center}

\noindent The isomorphisms appearing are the canonical ones.  (The notation $X^*(\bT_{\bullet})$, for $\bullet\in\{\bP,\, \bS,\, \bU,\, \bG,\, \bH\}$, stands for the character group of the torus $\bT_{\bullet}$ over $\overline{\mathbb{Q}}_p$.)

We describe the remaining character group.  The group $X^*(\bT_{\bH})$ is a pushout, so we may identify it as
$$X^*(\bT_{\bH}) = \left\{\vecf{a}{b}{c}{d}\in X^*(\bT_\bU)\oplus X^*(\bT_\bG)\cong \bbZ^4 \right\}\big/\sim$$
where
$$\vecf{a}{b}{c}{d}\sim \vecf{a + z}{b - z}{c - z}{d + z}$$
for $z\in \bbZ$.  
The maps $X^*(\bT_\bU)\longrightarrow X^*(\bT_{\bH}), X^*(\bT_\bG)\longrightarrow X^*(\bT_{\bH})$ are the inclusions into the corresponding factors, and the projection $X^*(\bT_{\bH})\longrightarrow X^*(\bG_m)\cong \bbZ$ is 
$$\vecf{a}{b}{c}{d}\longmapsto c + d.$$

\subsubsection{}

We now consider cocharacter groups.  The bottom two rows of the diagram above give the following commutative diagram:

\begin{center}
\begin{tikzcd}[row sep=large, column sep=10ex]
0 \ar[r] & X_*(\bG_m) \cong  \bbZ \ar[r, "a'\mapsto (a'{,}a')"] & X_*(\bT_\bG) \cong \bbZ^2 \ar[r, "(a'{,}b')\mapsto a' - b'" ] \ar[dr, phantom, "\square"] & X_*(\bT_\bP) \cong \bbZ \ar[r] & 0\\
0 \ar[r] & X_*(\bG_m) \cong  \bbZ \ar[r]\ar[u, equal] & X_*(\bT_{\bH}) \ar[r] \ar[u, two heads] & X_*(\bT_{\bU}) \cong \bbZ^2 \ar[r] \ar[u, two heads, "(a'{,}b')\mapsto a' - b'"'] & 0
\end{tikzcd}
\end{center}

\noindent The isomorphisms are again the canonical ones, {and we again consider the cocharacter groups of the tori over $\qpb$.}

We describe the remaining cocharacter group.  The group $X_*(\bT_{\bH})$ is a pullback, so we may identify it as
$$X_*(\bT_{\bH}) = \left\{\vecf{a'}{b'}{c'}{d'}\in X_*(\bT_\bU)\oplus X_*(\bT_\bG)\cong \bbZ^4 :a' - b' = c'- d'\right\}.$$
The maps $X_*(\bT_{\bH})\longrightarrow X_*(\bT_\bU), X_*(\bT_{\bH})\longrightarrow X_*(\bT_\bG)$ are the projections onto the corresponding factors, and the map $X_*(\bG_m)\cong \bbZ\longrightarrow X_*(\bT_{\bH})$ is 
$$a'\longmapsto \vecf{0}{0}{a'}{a'}.$$

\subsubsection{}

The actions of $\Gamma_K$ on $X^*(\bT_{\bH})$ and $X_*(\bT_{\bH})$ are the ones induced from $X^*(\bT_\bU)$ and $X_*(\bT_\bU)$: they are both unramified, and we have
$$\varphi^f\cdot\vecf{a}{b}{c}{d} =  \vecf{-b}{-a}{c}{d}$$
for $\vecf{a}{b}{c}{d}\in X^*(\bT_{\bH})$ and 
$$\varphi^f\cdot\vecf{a'}{b'}{c'}{d'} = \vecf{-b'}{-a'}{c'}{d'}$$
for $\vecf{a'}{b'}{c'}{d'}\in X_*(\bT_{\bH})$.

The pairing $\langle-,-\rangle:X^*(\bT_\bH)\times X_*(\bT_\bH)\longrightarrow \bbZ$ between characters and cocharacters is given by
$$\Big\langle \vecf{a}{b}{c}{d}, \vecf{a'}{b'}{c'}{d'}\Big\rangle = aa' + bb' + cc' + dd';$$
this is well-defined and Galois-invariant.  The roots $\Phi_\bH \subseteq X^*(\bT_{\bH})$ are given by $\{\pm\alpha_\bH\}$, where 
$$\alpha_\bH \defeq \vecf{1}{-1}{0}{0}.$$ 
Likewise, the coroots $\Phi_\bH^\vee \subseteq X_*(\bT_{\bH})$ are given by $\{\pm\alpha_{\bH}^\vee\}$ where 
$$\alpha_\bH^\vee \defeq \vecf{1}{-1}{1}{-1}.$$ 
We define the set of simple roots as $\Delta_\bH \defeq \{\alpha_\bH\}$, and let $\bB_\bH$ denote the corresponding Borel subgroup of $\bH$.  We therefore have $\Delta_\bH^\vee = \{\alpha_\bH^\vee\}$.

The group $\bH$ has a twisting element, in the sense of \cite{buzzardgee}: tracing through the construction in \emph{op. cit.}, we obtain
$$\eta_\bH \defeq \vecf{0}{0}{1}{0}\in X^*(\bT_\bH).$$
This element is Galois-invariant, and $\langle\eta_\bH,\alpha_\bH^\vee\rangle = 1$.

The Weyl group of $\bH$ with respect to $\bT_{\bH}$ is denoted $W_{\bH}$; it is a cyclic group of order 2.  We denote by $s$ the unique simple reflection, which generates $W_{\bH}$.

\subsection{Unitary groups over $\qp$}
\label{unitarygps:qp}

\subsubsection{}
\label{sub:sub:unitary:loc}
We now consider unitary groups over $\qp$.  We set 
$$(\bG,~\bB,~\bT) \defeq  \textnormal{Res}_{\cO_K/\zp}\left(\bH,~ \bB_{\bH},~\bT_{\bH} \right),$$
all group schemes over $\zp$.  We have
$$\bG(\qpb)\cong \Ind_{\Gamma_K}^{\Gamma_{\qp}}\left(\bH(\qpb)\right)$$ 
as $\Gamma_{\qp}$-groups, and via the evaluation maps, we have
{\begin{eqnarray*}
(\textnormal{ev}_1,\textnormal{ev}_{\varphi},\ldots, \textnormal{ev}_{\varphi^{f - 1}}):\bG(\qpb) & \stackrel{\sim}{\longrightarrow} & \prod_{i = 0}^{f - 1}\bH(\qpb)\\
f & \longmapsto & \left(f(\varphi^i)\right)_{0\leq i \leq f - 1}.
\end{eqnarray*}}

Recall that $\varphi^f$ acts on ${\bH(\qpb)}$ by
$$\varphi^f\cdot (h_1,h_2) = \left((\Phi_2h_1^{-\top}\Phi_2^{-1})^{\varphi^f}, h_2^{\varphi^f}\right).$$
Tracing through the isomorphisms above, the action of $\Gamma_{\qp}$ on the right-hand-side product is given as follows:
$${\varphi\cdot\big((h_{0,1}, h_{0,2}),h_1,\ldots, h_{f - 1}\big) = \left(h_1,\ldots, h_{f - 1},(\Phi_2h_{0,1}^{-\top}\Phi_2^{-1}, h_{0,2})^{\varphi^f}\right)}$$
with inertia acting in the standard, diagonal way.  In particular,
$$\textnormal{ev}_1:\bG(\qp) = \bG(\qpb)^{\Gamma_{\qp}}\stackrel{\sim}{\longrightarrow}\bH(\qpb)^{\Gamma_K} = \bH(K) \cong \widetilde{\bU}_2(K).$$

\subsubsection{}
\label{tori-direct-sum}

The character and cocharacter groups of the torus $\bT$ are given by 
$${X^*(\bT) \cong \textnormal{Ind}_{\Gamma_K}^{\Gamma_{\qp}}\big(X^*(\bT_{\bH})\big),\qquad X_*(\bT) \cong \textnormal{Ind}_{\Gamma_K}^{\Gamma_{\qp}}\big(X_*(\bT_{\bH})\big)}$$
{(cf.~ \cite[\S 9.4]{ghs}).  Using the evaluation maps as above (with the same ordering), we identify}
$${X^*(\bT) \cong \bigoplus_{i = 0}^{f - 1}X^*(\bT_{\bH}),\qquad X_*(\bT) \cong \bigoplus_{i = 0}^{f - 1}X_*(\bT_{\bH}).}$$
We will write elements of $X^*(\bT)$ as
$$\mu = \vecf{\un{a}}{\un{b}}{\un{c}}{\un{d}} = \vecf{a_0}{b_0}{c_0}{d_0}\vecf{a_1}{b_1}{c_1}{d_1}\cdots\vecf{a_{f - 1}}{b_{f - 1}}{c_{f - 1}}{d_{f - 1}}$$
(and similarly for $X_*(\bT)$).

The perfect pairing $\langle -,-\rangle: X^*(\bT)\times X_*(\bT)\longrightarrow \bbZ$ is given by
$$\Big\langle\vecf{\un{a}}{\un{b}}{\un{c}}{\un{d}},~\vecf{\un{a}'}{\un{b}'}{\un{c}'}{\un{d}'}\Big\rangle = \sum_{i = 0}^{f - 1} a_ia_i' + b_ib_i' + c_ic_i' + d_id_i',$$
and the action of $\Gamma_{\qp}$ on $X^*(\bT)$ is given by 
$$\varphi\cdot \vecf{a_0}{b_0}{c_0}{d_0}\vecf{a_1}{b_1}{c_1}{d_1}\cdots\vecf{a_{f - 1}}{b_{f - 1}}{c_{f - 1}}{d_{f - 1}} = \vecf{a_1}{b_1}{c_1}{d_1}\cdots\vecf{a_{f - 1}}{b_{f - 1}}{c_{f - 1}}{d_{f - 1}}\vecf{-b_0}{-a_0}{c_0}{d_0}.$$
An analogous action (i.e., with a ``shift left'') holds for $X_*(\bT)$.

\subsubsection{}

We define the simple roots $\Delta$ as those functions $f$ in $\Ind_{\Gamma_K}^{\Gamma_{\qp}}(X^*(\bT_{\bH}))$ with image in $\{0\}\cup\Delta_{\bH}$, and such that $f(\gamma) = 0$ for all but a single coset.  Explicitly, we have $\Delta = \{\alpha_i\}_{0\leq i \leq f- 1}$, where
$$\alpha_i \defeq \vecf{0}{0}{0}{0}\cdots \underbrace{\vecf{1}{-1}{0}{0}}_{i^{\textnormal{th}}~\textnormal{entry}}\cdots\vecf{0}{0}{0}{0}\in X^*(\bT).$$  
We define $\Delta^{\vee}$ analogously, and obtain $\Delta^\vee = \{\alpha_i^\vee\}_{0\leq i \leq f- 1}$, where
$$\alpha_i^\vee \defeq \vecf{0}{0}{0}{0}\cdots \underbrace{\vecf{1}{-1}{1}{-1}}_{i^{\textnormal{th}}~\textnormal{entry}}\cdots\vecf{0}{0}{0}{0}\in X_*(\bT).$$

The Weyl group $W$ of $\bG$ with respect to $\bT$ is equal to $W_{\bH}^{f}$.  We shall write elements of $W$ as $w = (w_0,w_1,\ldots, w_{f - 1})$.  The group $W$ has a nontrivial Galois action given by
$$\varphi\cdot (w_0,w_1,\ldots w_{f - 1}) = (w_1,\ldots w_{f - 1}, w_0).$$
Finally, we define $\un{1} \defeq (1,1,\ldots, 1)$ and $\un{s} \defeq (s,s,\ldots, s)$.

The map $\textnormal{ev}_1$ induces a bijection $X^*(\bT)^{\Gamma_{\qp}} \stackrel{\sim}{\longrightarrow} X^*(\bT_{\bH})^{\Gamma_K}$.  In particular, the twisting element $\eta_{\bH}\in X^*(\bT_{\bH})^{\Gamma_K}$ corresponds to the twisting element 
$$\eta \defeq \vecf{\underline{0}}{\underline{0}}{\underline{1}}{\underline{0}} = \vecf{0}{0}{1}{0}\vecf{0}{0}{1}{0}\cdots\vecf{0}{0}{1}{0}\in X^*(\bT)^{\Gamma_{\qp}}.$$

\subsection{Dual groups}
\label{dualgps}

We now define the relevant Langlands dual groups.

\subsubsection{}

The based root datum of $\bU_2$ (with respect to the upper-triangular Borel subgroup) is given by 
$$\left(X^*(\bT_\bU)\cong \bbZ^2 ,~\{(1,-1)\},~ X_*(\bT_\bU)\cong \bbZ^2 ,~ \{(1,-1)\}\right).$$
Therefore, we may take $\widehat{\bU}_2 \defeq \bG\bL_2$ as the dual group, which we consider as a split group scheme over $\zp$, along with its diagonal maximal torus, upper-triangular Borel subgroup, and the fixed isomorphism between $\bG_a$ and the unipotent radical of the Borel given by $x\longmapsto \sm{1}{x}{0}{1}$.  We equip this data with the canonical isomorphism between the based root datum of $\widehat{\bU}_2$ and the dual based root datum of $\bU_2$.  In choosing this isomorphism, we obtain an induced action of $\Gamma_K$ on $\widehat{\bU}_2$ given by
$$\gamma\cdot \widehat{g} = \begin{cases} \widehat{g} & \textnormal{if}~ \gamma\in \Gamma_{K_2},\\ \Phi_2\widehat{g}^{-\top}\Phi_2^{-1} = \begin{pmatrix}\det(\widehat{g})^{-1}& 0 \\ 0 & \det(\widehat{g})^{-1}\end{pmatrix}\widehat{g} & \textnormal{if}~ \gamma\in \Gamma_K\smallsetminus\Gamma_{K_2}.\end{cases}$$

\subsubsection{}
\label{subsub:dual:root}
Consider now the group $\bH = \widetilde{\bU}_2$.  The based root datum of $\bH$ is given by 
$$\Psi_\bH \defeq \left(X^*(\bT_\bH),~ \Delta_\bH,~ X_*(\bT_\bH),~ \Delta_\bH^\vee\right),$$
and therefore the dual based root datum is 
$$\Psi_\bH^\vee = \left(X_*(\bT_\bH),~ \Delta_\bH^\vee,~ X^*(\bT_\bH),~ \Delta_\bH\right).$$
We let $\widehat{\bH}$ denote the dual group of $\bH$, with maximal torus $\widehat{\bT}_{\bH}$ and Borel $\widehat{\bB}_{\bH}$ which contains $\widehat{\bT}_{\bH}$.  By \cite[Prop. 5.39]{buzzardgee}, we have
$$\widehat{\bH} \cong (\widehat{\bU}_2\times\bG_m)\bigg/\left\langle\left(\begin{pmatrix}-1 & 0 \\ 0 & -1\end{pmatrix},-1\right)\right\rangle = \bG\bL_2\times^{{\boldsymbol{\mu}_2}}\bG_m,$$
where the Galois action on $\widehat{\bH}$ is the one induced from $\widehat{\bU}_2$.  We have an isomorphism 
\begin{eqnarray*}
\widehat{\bH} = \bG\bL_2\times^{{\boldsymbol{\mu}_2}}\bG_m & \stackrel{\sim}{\longrightarrow} & \bG\bL_2\times\bG_m\\
 {[}\widehat{h},a] & \longmapsto & \left(\begin{pmatrix} a & 0 \\ 0 & a \end{pmatrix}\widehat{h}, a^2\right)
\end{eqnarray*}
and we will identify $\widehat{\bH}$ with $\bG\bL_2\times \bG_m$ via this isomorphism.  The Galois action is then given by
$$\gamma\cdot (\widehat{h},a) = \begin{cases}(\widehat{h},a) & \textnormal{if}~\gamma\in \Gamma_{K_2},\\ \left(\begin{pmatrix}a & 0 \\ 0 & a\end{pmatrix}\Phi_2\widehat{h}^{-\top}\Phi_2^{-1}, a\right) = \left(\begin{pmatrix}a\det(\widehat{h})^{-1}& 0 \\ 0 & a\det(\widehat{h})^{-1}\end{pmatrix}\widehat{h},a\right) & \textnormal{if}~\gamma\in \Gamma_K\smallsetminus\Gamma_{K_2},\end{cases}$$
for $(\widehat{h},a)\in\bG\bL_2\times\bG_m$.

Thus, we obtain the based root datum for $\widehat{\bH}$
\begin{eqnarray*}
\Psi_{\widehat{\bH}} & \defeq & \left(X^*(\widehat{\bT}_{\bH}),~ \widehat{\Delta},~ X_*(\widehat{\bT}_{\bH}),~ \widehat{\Delta}^{\vee}\right)\\
 & = & \left(\bbZ^3,~ \{(1,-1,0)\},~ \bbZ^3,~ \{(1,-1,0)\}\right),
 \end{eqnarray*}
equipped with an action of $\Gamma_K$.  Moreover, we obtain an isomorphism of based root data $\phi:\Psi_\bH^\vee\stackrel{\sim}{\longrightarrow}\Psi_{\widehat{\bH}}$:
\begin{eqnarray*}
 \phi:X_*(\bT_\bH) & \stackrel{\sim}{\longrightarrow} & X^*(\widehat{\bT}_{\bH})\\
 (a',b',c',d') & \longmapsto & (a',b',c'-a')\\
 (\phi^\vee)^{-1}:X^*(\bT_\bH) & \stackrel{\sim}{\longrightarrow} & X_*(\widehat{\bT}_{\bH})\\
 (a,b,c,d) & \longmapsto & (a + c,b + d,c + d)
\end{eqnarray*}
{where the last coordinate in the character (resp.~cocharacter) group of $\widehat{\bT}_{\bH}$ corresponds to the $\bG_m$ factor of $\widehat{\bH}$.}  Note that this exchanges the roots and coroots.  We use this isomorphism {to} identify the Weyl group of $\widehat{\bT}{_{\bH}}$ with $W_{\bH}$.

\subsubsection{}

Finally, we define
$${}^C\bU_2 \defeq {}^L\bH = \widehat{\bH}\rtimes \textnormal{Gal}(K_2/K) = (\bG\bL_2\times\bG_m)\rtimes\textnormal{Gal}(K_2/K),$$
with the Galois group acting on $\widehat{\bH}$ as above.  The injection $\imath:\bG_m\longrightarrow \bH$ induces a dual map $\widehat{\imath}:{}^L\bH\longrightarrow\bG_m$, which is given by $(\widehat{h},a)\rtimes\gamma\longmapsto a$.

\begin{rmk}
\label{cgroupglobal}

We will need to make use of the above construction in a global setting as follows.  Suppose $F/F^+$ is a quadratic extension of global fields, and let $v$ denote a place of $F^+$ which is unramified and inert in $F$, and such that $F^+_v \cong K$ and $F_v \cong K_2$.  We then identify ${}^C\bU_2$ with 
$$\widehat{\bH}\rtimes \textnormal{Gal}(F/F^+)$$ 
via the isomorphism $\textnormal{Gal}(F/F^+) \cong \textnormal{Gal}(F_v/F^+_v) \cong \textnormal{Gal}(K_2/K)$.   
\end{rmk}

\subsubsection{}

We set
$$\left(\widehat{\bG},~\widehat{\bB},~\widehat{\bT}\right) \defeq \Ind_{\Gamma_K}^{\Gamma_{\qp}}\left(\widehat{\bH},~\widehat{\bB}_{\bH},~\widehat{\bT}_{\bH}\right),$$
all group schemes over $\zp$, equipped with the induced $\Gamma_{\qp}$-action.  Using the (induced versions of the) isomorphisms above, we consider $\widehat{\bG}$ as the dual group of $\bG$, and set 
$${}^L\bG \defeq \widehat{\bG}\rtimes \textnormal{Gal}(K_2/\qp).$$

\subsection{An isomorphism}\label{isomsect}

We briefly digress to recall a construction of ${}^C\bU_2$ from \cite{CHT} (see also \cite[\S 8.3]{buzzardgee}).

Let $\cG_2$ denote the group scheme over $\zp$ which is a semidirect product of $\bG\bL_2\times\bG_m$ by $\textnormal{Gal}(K_2/K)$, with $\varphi^{f}\in \textnormal{Gal}(K_2/K)$ acting by 
$$\varphi^{f}\cdot (\widehat{h},a) = \left(\begin{pmatrix}a & 0 \\ 0 & a \end{pmatrix}\widehat{h}^{-\top}, a\right).$$
There is an isomorphism between our model ${}^C\bU_2$ and $\cG_2$ given as follows:
\begin{eqnarray*}
{}^C\bU_2 & \stackrel{\sim}{\longrightarrow} & \cG_2\\
(\widehat{h},a)\rtimes 1 & \longmapsto & \left(\begin{pmatrix}a^{-1} & 0 \\ 0 &  a^{-1}\end{pmatrix}\widehat{h}, a^{-1}\right)\rtimes 1\\
(\widehat{h},a)\rtimes \varphi^{f} & \longmapsto & \left(\begin{pmatrix}a^{-1} & 0 \\ 0 &  a^{-1}\end{pmatrix}\widehat{h}\Phi_2, -a^{-1}\right)\rtimes \varphi^{f}\\
\left(\begin{pmatrix}a^{-1} & 0 \\ 0 & a^{-1}\end{pmatrix}\widehat{h},a^{-1}\right) \rtimes 1 & \longmapsfrom & (\widehat{h},a)\rtimes 1\\
\left(\begin{pmatrix}a^{-1} & 0 \\ 0 & a^{-1}\end{pmatrix}\widehat{h}\Phi_2, -a^{-1}\right)\rtimes\varphi^{f} & \longmapsfrom & (\widehat{h},a)\rtimes\varphi^{f}.
\end{eqnarray*}

The group $\cG_2$ also possesses a map $\nu:\cG_2\longrightarrow \bG_m$, given by $(\widehat{h},a)\rtimes (\varphi^{f})^i\longmapsto (-1)^ia$.  Under the isomorphism above, this corresponds to the map ${(-)^{-1}\circ\widehat{\imath}}:{}^C\bU_2\longrightarrow \bG_m$.

As in Remark \ref{cgroupglobal}, we will often identify $\cG_2$ with $(\bG\bL_2\times\bG_m)\rtimes\textnormal{Gal}(F/F^+)$.

\subsection{$\mathbb{F}_p$-structures}

\subsubsection{}

Viewing $\bG$ and $\widehat{\bG}$ as group schemes over $\zp$, we can form the $\Fpbar$-group schemes:
$$(\underline{\bG}, \underline{\bB}, \underline{\bT}) \defeq (\bG, \bB, \bT)\times_{\zp}\Fpbar,\qquad (\underline{\bG}^*, \underline{\bB}^*, \underline{\bT}^*) \defeq (\widehat{\bG}, \widehat{\bB}, \widehat{\bT})\times_{\zp}\Fpbar.$$
{We denote by $\biF$ the relative Frobenius on $\underline{\bG}$, and denote by $\biF^*$ the composite $\textnormal{Fr} \circ \varphi$, where $\textnormal{Fr}$ is the relative Frobenius on the split group $\underline{\bG}^*$.  In particular, we have $\underline{\bG}^{\biF} = \bG(\Fp) = \bH(\Fq)$. }

{The action of $\biF$ on $X^*(\underline{\bT})$ is defined by $\biF(\chi) = \chi\circ\biF$, so that $\biF = p\varphi$ on $X^*(\underline{\bT})$.  Identifying $X^*(\underline{\bT}) \cong X^*(\bT)$ with $\bigoplus_{i = 0}^{f - 1} X^*(\bT_\bH)$ as in Subsubsection \ref{tori-direct-sum}, this action is explicitly given by
$$\biF\vecf{a_0}{b_0}{c_0}{d_0}\vecf{a_1}{b_1}{c_1}{d_1}\cdots \vecf{a_{f - 1}}{b_{f - 1}}{c_{f - 1}}{d_{f - 1}} =  \vecf{pa_1}{pb_1}{pc_1}{pd_1}\cdots \vecf{pa_{f - 1}}{pb_{f - 1}}{pc_{f - 1}}{pd_{f - 1}}\vecf{-pb_0}{-pa_0}{pc_0}{pd_0}$$}

{Similarly, the action of $\biF^*$ on $X_*(\underline{\bT}^*)$ is given by $\biF^*(\lambda) = \biF^*\circ\lambda$, so that $\biF^* = p\varphi$ on $X_*(\underline{\bT}^*)$.  Therefore, after chasing through the isomorphisms of root data of Subsubsection \ref{subsub:dual:root} and using the identification $X_*(\underline{\bT}^*) \cong X_*(\widehat{\bT}) \cong  \bigoplus_{i = 0}^{f - 1} X_*(\widehat{\bT}_\bH)$ similar to above, this map is explicitly given by
$$\biF^*\Big(\begin{smallmatrix}a_0 \\ b_0\\ c_0\end{smallmatrix}\Big)\Big(\begin{smallmatrix}a_1 \\ b_1\\ c_1\end{smallmatrix}\Big)\cdots \Big(\begin{smallmatrix}a_{f - 1} \\ b_{f - 1}\\ c_{f - 1}\end{smallmatrix}\Big) = \Big(\begin{smallmatrix}pa_1 \\ pb_1\\ pc_1\end{smallmatrix}\Big)\cdots \Big(\begin{smallmatrix}pa_{f - 1} \\ pb_{f - 1}\\ pc_{f - 1}\end{smallmatrix}\Big)\Big(\begin{smallmatrix}p(c_0 - b_0) \\ p(c_0 - a_0)\\ pc_0\end{smallmatrix}\Big).$$}

\section{Representation theory}
\label{sec:RepThy}

We now collect various results we will use regarding types and weights for the groups $\widetilde{\bU}_2(\bbF_q)$ and $\bG\bL_2(\bbF_{q^2})$.  We give definitions of base change for both types and weights in Subsections \ref{autBC} and \ref{subsec:BC:WT}, respectively, and relate the former to automorphic base change.  Subsection \ref{subsec:cmb:types} discusses various compatibilities between types and weights, and contains useful combinatorial properties which will be employed extensively in the applications which follow.

\subsection{The group $\bG$}
\label{subsec:thegroupG}

\subsubsection{}
\label{rep-theory-G}

Let $X_+(\bT), X_1(\bT)$ and $X^0(\bT)$ denote respectively the subsets of $X^*(\bT)$ consisting of dominant, $p$-restricted, and inner-product-zero elements:
\begin{eqnarray*}
X_+(\bT) & \defeq & \{\mu \in X^*(\bT):0 \leq \langle \mu, \alpha_i^\vee\rangle ~\textnormal{for all}~0\leq i \leq f - 1\}\\
X_1(\bT) & \defeq & \{\mu \in X^*(\bT):0 \leq \langle \mu, \alpha_i^\vee\rangle \leq p - 1 ~\textnormal{for all}~0\leq i \leq f - 1\}\\
X^0(\bT) & \defeq & \{\mu \in X^*(\bT):\langle \mu, \alpha_i^\vee\rangle = 0~\textnormal{for all}~0\leq i \leq f - 1\}.
\end{eqnarray*}

\subsubsection{}

Recall that a \emph{Serre weight of $\bG(\bbF_p)$} is an irreducible representation of $\bG(\bbF_p)$ on an $\overline{\bbF}_p$-vector space.  Given $\mu \in X_+(\bT)$, we let $F(\mu)$ denote the restriction to $\bG(\bbF_p)$ of the algebraic $\bG$-representation of highest weight $\mu$.  We then have the following result.

\begin{prop}[\cite{ghs}, Lemma 9.2.4] \label{serrewtparam}
The map
\begin{eqnarray*}
\frac{X_1(\bT)}{(\biF - 1)X^0(\bT)} & \longrightarrow & \{\textnormal{Serre weights of}~\bG(\bbF_p)\}_{/\cong}\\
\mu & \longmapsto & F(\mu)
\end{eqnarray*}
is a well-defined bijection.  
\end{prop}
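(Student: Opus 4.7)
The plan is to reduce the statement to the standard Steinberg-type classification of irreducible $\overline{\mathbb{F}}_p$-representations of a finite reductive group, applied to $\underline{\bG}^{\biF}$, and to verify the equivalence relation by a direct character computation on the torus. By Subsection \ref{sub:sub:unitary:loc} the evaluation map $\textnormal{ev}_1$ identifies $\bG(\bbF_p) = \underline{\bG}^{\biF}$ with $\bH(\bbF_q) = \widetilde{\bU}_2(\bbF_q)$, and the explicit action of $\biF$ on $X^*(\bT)$ was recorded in Subsection \ref{unitarygps:qp} (a cyclic shift of the $f$ entries combined with the negation-swap coming from the unitary involution on the final entry).

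For well-definedness, I would observe that for any $\chi \in X^0(\bT)$ the algebraic representation $F(\chi)$ is one-dimensional and factors through $\bT$. The Frobenius pullback $F(\chi)^{\biF}$ is isomorphic as an algebraic representation to $F(\biF\chi)$, and since $\biF$ acts trivially on $\underline{\bG}^{\biF}$, one gets $F(\biF\chi)|_{\underline{\bG}^{\biF}} \cong F(\chi)|_{\underline{\bG}^{\biF}}$. Consequently, if $\mu' - \mu = (\biF - 1)\chi$ with $\chi \in X^0(\bT)$, then
$$F(\mu')|_{\underline{\bG}^{\biF}} \;\cong\; F(\mu)|_{\underline{\bG}^{\biF}} \otimes \bigl(F(\biF\chi) \otimes F(\chi)^{-1}\bigr)|_{\underline{\bG}^{\biF}} \;\cong\; F(\mu)|_{\underline{\bG}^{\biF}}.$$

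The heart of the proof is Steinberg's tensor product theorem, in the form classifying irreducible modular representations of a finite reductive group. Applied to $\underline{\bG}$ (whose derived subgroup is the restriction of scalars $\textnormal{Res}_{\cO_K/\bbZ_p}\bS\bL_{2}$, hence semisimple simply-connected), this yields:
\begin{enumerate}
\item for every $\mu \in X_1(\bT)$, the restriction $F(\mu)|_{\underline{\bG}^{\biF}}$ is irreducible;
\item every irreducible $\overline{\bbF}_p$-representation of $\underline{\bG}^{\biF}$ arises this way, since any algebraic highest weight can be decomposed using Steinberg's $p$-adic expansion and the twists collapse on $\underline{\bG}^{\biF}$.
\end{enumerate}
Injectivity modulo $(\biF - 1)X^0(\bT)$ then follows by examining characters on $\bT^{\biF}$: if $F(\mu)|_{\underline{\bG}^{\biF}} \cong F(\mu')|_{\underline{\bG}^{\biF}}$ with $\mu,\mu'\in X_1(\bT)$, then the comparison of highest weights forces $\mu - \mu' \in X^0(\bT)$ (using $p$-restrictedness), and triviality of the one-dimensional $F(\mu - \mu')$ on $\bT^{\biF}$ then pins down $\mu - \mu' \in (\biF - 1)X^0(\bT)$.

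The main obstacle is to correctly invoke the Steinberg-type parametrization with our specific $\biF$: unlike the split setting, the Frobenius $\biF$ on $X^*(\bT)$ is not simply multiplication by $p$ but also incorporates the shift of embeddings and the Galois involution at the ``unitary position''. Rather than re-deriving the classification, I would appeal to the general form established for quasi-split (even disconnected) reductive groups over $\bbF_p$ (as in Herzig's thesis and \cite[\S 9.2]{ghs}), which is precisely set up to handle this $\biF$-structure; the preceding steps then apply verbatim once the bookkeeping for $\biF$ from Subsection \ref{unitarygps:qp} is in place.
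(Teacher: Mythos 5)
Your proposal is correct, but note that the paper gives no proof of this statement at all: it simply cites \cite[Lem.~9.2.4]{ghs}, and your argument is in essence an outline of the proof given there (the quasi-split Steinberg-type classification for $\underline{\bG}^{\biF}$, plus the standard checks that twisting by $F((\biF-1)\chi)$ is invisible on $\biF$-fixed points and that a $\chi\in X^0(\bT)$ killed on $\bT^{\biF}$ lies in $(\biF-1)X^0(\bT)$ rather than merely $(\biF-1)X^*(\bT)\cap X^0(\bT)$). So you are taking essentially the same route as the source the paper relies on, just making the bookkeeping for the non-split Frobenius explicit.
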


\noindent We will always assume that the coefficient field $\bbF$ is large enough so that the representations $F(\mu)$ may be realized over $\bbF$.

\begin{df}
Given a character $\mu\in X^*(\bT)$, we say $\mu$ lies \emph{$n$-deep} in the fundamental alcove if we have
$$n < \langle\mu + \eta,\alpha_i^\vee \rangle < p - n$$
for all $0 \leq i \leq f - 1$.  We say a Serre weight $F$ is \emph{$n$-deep} if we can write $F \cong F(\mu)$ for some $n$-deep character $\mu$.  (Note that this notion is independent of the choice of $\mu$.)
\end{df}

\subsubsection{}

We likewise consider Deligne--Lusztig representations for the group $\bG(\bbF_p)$, as in \cite[\S 9.2]{ghs}.  In particular, for $w \in W$ and $\mu\in X^*(\bT)$ such that $(\bT_{w}, \theta_{w, \mu})$ is maximally split, we let $R_{w}(\mu)$ denote the associated Deligne--Lusztig representation, a representation of $\bG(\bbF_p)$ over $\overline{\bbQ}_p$.  
{Note that if $\mu-\eta$ is 0-deep, then $(\bT_{w}, \theta_{w, \mu})$ is maximally split for any choice of $w \in W$, cf.~\cite[Lem. 2.2.3]{LLL}.}  We again assume the coefficient field $E$ is large enough so that $R_w(\mu)$ may be realized over $E$.  Using the surjection $\bG(\bbZ_p) \longtwoheadrightarrow \bG(\bbF_p)$, we will occasionally view Serre weights and Deligne--Lusztig representations as representations of the compact group $\bG(\bbZ_p) \cong \widetilde{\bU}_2(\cO_K)$.

{By \cite[\S 4.1]{herzig-duke}, if $(w, \mu)\in W\times X^*(\bT)$ with $\mu - \eta$ being 0-deep, and if $(\nu,v) \in X^*(\bT) \rtimes W$, then we have an isomorphism
\begin{equation}
\label{DLequiv}
R_w(\mu) \cong R_{v w\biF(v)^{-1}}\left(v(\mu) + \biF(\nu) - v w\biF(v)^{-1}(\nu)\right).
\end{equation}}
{Moreover, by \cite[Lem. 4.2]{herzig-duke}, if $(w,\mu), (w', \mu') \in W\times X^*(\bT)$ are two pairs with $\mu - \eta$ and $\mu' - \eta$ being 0-deep, and we have an isomorphism $R_w(\mu)\cong R_{w'}(\mu')$, then $(w,\mu)$ and $(w',\mu')$ lie in the same $X^*(\bT)\rtimes W$-orbit.}

\begin{df}
Let $\sigma$ denote a Deligne--Lusztig representation.  We say $\sigma$ is \emph{$n$-generic} if there is an isomorphism $\sigma \cong R_w(\mu + \eta)$, where $\mu$ lies $n$-deep in the fundamental alcove.  
\end{df}

\subsubsection{}

We shall also need to know how the representations $R_{w}(\mu)$ decompose upon reduction mod $p$.  To this end, we define the following elements of $X^*(\bT)$.  Fix $w = (w_0,w_1,\ldots, w_{f - 1})\in W$, and set
\begin{align*}
\rho_{w}  \defeq  \cdots\underbrace{\vecf{0}{0}{0}{0}}_{w_i = 1}\cdots \underbrace{\vecf{1}{0}{0}{0}}_{w_i = s}\cdots,& \qquad \varepsilon_w  \defeq  \cdots\underbrace{\vecf{0}{0}{0}{0}}_{w_i = 1}\cdots \underbrace{\vecf{0}{1}{0}{0}}_{w_i = s}\cdots,\\
\gamma_w  \defeq   \cdots\underbrace{\vecf{1}{1}{0}{0}}_{w_i = 1}\cdots \underbrace{\vecf{0}{0}{0}{0}}_{w_i = s}\cdots, & \qquad \rho  \defeq  \vecf{\un{1}}{\un{0}}{\un{0}}{\un{0}}.
\end{align*}

Suppose that $\mu\in X^*(\bT)$ is such that $\mu - \eta$ is $1$-deep.  By the main theorem in the appendix of \cite{herzig-duke}, we have
\begin{equation}
\label{jantzen}
\JH\big(\overline{R_w(\mu)}\big) = \left\{F_{w'}(R_w(\mu))\right\}_{w'\in W},
\end{equation}
where
\begin{equation}
\label{def:JHcomps}
F_{w'}(R_w(\mu))  \defeq  F\left(p\gamma_{w'} + w'(\mu - w\pi(\varepsilon_{\un{s}w'})) + p\rho_{w'} - \pi(\rho)\right),
\end{equation}
and where $\pi$ denotes the action of $\varphi^{-1}$ on $X^*(\bT)$.

\begin{df}
{Let $\sigma$ be a $1$-generic Deligne--Lusztig representation and fix a presentation $\sigma\cong R_w(\mu)$ with $w \in W$ and $\mu - \eta$ being $1$-deep.  We define the Deligne--Lusztig representation $\beta(\sigma)$ by 
$$\beta(\sigma) \defeq R_{\un{s}w}\left(\un{s}(\mu - \eta)+ (p - 1)\eta\right).$$
By \cite[Prop. 2.2.15]{LLL} and equation (\ref{DLequiv}) one easily checks that $\beta(\sigma)$ does not depend on the choice of presentation $\sigma \cong R_w(\mu)$.  Moreover, note that if $n < \langle \mu, \alpha_i^\vee\rangle < p - n$, then $\un{s}(\mu - \eta)+ (p - 1)\eta$ will satisfy the same set of inequalities.  Therefore if $R_w(\mu)$ is $n$-generic, then $\beta(R_w(\mu))$ will also be $n$-generic.}
\end{df}

\subsubsection{}\label{trivactiondescent}

Finally, suppose $\mu\in X^*(\bT)$ is a character of the form
$$\mu = \vecf{a_0}{b_0}{0}{0}\vecf{a_1}{b_1}{0}{0}\cdots\vecf{a_{f - 1}}{b_{f - 1}}{0}{0}$$
(that is, suppose $\mu$ is in the image of $X^*(\textnormal{Res}_{\cO_K/\bbZ_p}(\bT_\bU)) \longhookrightarrow X^*(\bT)$).  Then $R_w(\mu)$ is a representation of $\bG(\zp) = \widetilde{\bU}_2(\cO_K)$ on which $\imath(\cO_K^\times)$ acts trivially, and therefore we view it as a representation of $\widetilde{\bU}_2(\cO_K)/\imath(\cO_K^\times) = \bU_2(\cO_K)$.  Conversely, if $R_w(\mu)$ is a representation of $\bG(\zp)$ on which $\imath(\cO_K^\times)$ acts trivially, then $\mu$ is a character of the form
$$\mu = \vecf{a_0}{b_0}{c_0}{0}\vecf{a_1}{b_1}{c_1}{0}\cdots\vecf{a_{f - 1}}{b_{f - 1}}{c_{f - 1}}{0}$$
with $\sum_{i = 0}^{f - 1} c_ip^i \equiv 0~ (\textnormal{mod}~p^f - 1)$.  By applying the equivalence $R_w(\mu) \cong R_w(\mu + (\biF - w)\mu')$ for an appropriately defined element $\mu' \in X^*(\bT)$ and using the equivalence relation on $X^*(\bT_{\bH})$, we may assume $\mu$ is of the form
$$\mu = \vecf{a_0'}{b_0'}{0}{0}\vecf{a_1'}{b_1'}{0}{0}\cdots\vecf{a_{f - 1}'}{b_{f - 1}'}{0}{0}$$
(one can even take $\mu' \in X^0(\bT)$).

\subsection{The group $\bG\bL_2$}

\subsubsection{}\label{subsubsec:groupG'}

Set 
$$(\bG', \bT') \defeq \Res_{\cO_{K_2}/\zp}(\bG\bL_{2/\cO_{K_2}}, \bT_{\bG/\cO_{K_2}}),$$ 
so that $\bG'(\bbF_p) = \bG\bL_2(\bbF_{q^2})$.  We identify the maximal torus $\bT_{\bG/\cO_{K_2}}$ of $\bG\bL_{2/\cO_{K_2}}$ with $\bT_{\bU}\times_{\cO_K}\cO_{K_2}$.  {This gives isomorphisms
\begin{eqnarray*}
\bT'\times_{\bbZ_p}\qpb & \cong & \prod_{i = 0}^{2f - 1}(\bT_{\bU} \times_{\cO_K}\cO_{K_2}) \times_{\cO_{K_2},\varphi^i} \qpb\\
& \cong & \Bigg(\prod_{i = 0}^{f - 1}\bT_{\bU}  \times_{\cO_{K},\varphi^i} \qpb \Bigg) \times \Bigg(\prod_{i = f}^{2f - 1}\bT_{\bU}  \times_{\cO_{K},\varphi^i} \qpb\Bigg)\\
& \cong & \left(\textnormal{Res}_{\cO_K/\bbZ_p}(\bT_{\bU}) \times_{\bbZ_p} \qpb\right) \times \left(\textnormal{Res}_{\cO_K/\bbZ_p}(\bT_{\bU}) \times_{\bbZ_p} \qpb\right),
\end{eqnarray*}
where we conflate $\textnormal{Gal}(K_2/\bbQ_p)$ with the set of embeddings of $K_2$ into $\qpb$.  In this way}, we identify $X^*(\bT')$ with two copies of $X^*(\Res_{\cO_K/\zp}(\bT_\bU))$.  We write elements of $X^*(\bT')$ as $(\mu,\mu')$ with $\mu,\mu'\in X^*(\Res_{\cO_K/\zp}(\bT_\bU))$.  In particular, if $\mu$ is an element of $X^*(\bT)$ of the form 
$$\mu = \vecf{a_0}{b_0}{0}{0}\vecf{a_1}{b_1}{0}{0}\cdots\vecf{a_{f - 1}}{b_{f - 1}}{0}{0},$$
we will identify it with 
$$\vect{a_0}{b_0}\vect{a_1}{b_1}\cdots\vect{a_{f - 1}}{b_{f - 1}} \in X^*(\Res_{\cO_K/\zp}(\bT_\bU)),$$
and consider expressions such as $(\mu,\mu)$ or $(\mu,-\un{s}(\mu))$ in $X^*(\bT')$.

Given some object (homomorphism, character, etc.) associated to $\bG$, we will denote by a prime the analogous object associated to $\bG'$.  For example, the notation $\biF'$ will be used to denote the Frobenius map on $\bG'\times_{\zp}\overline{\bbF}_p$, and on the character lattice $X^*(\bT')$:
$$\biF'\vect{a_0}{b_0}\vect{a_1}{b_1}\cdots \vect{a_{2f - 1}}{b_{2f - 1}} = \vect{pa_1}{pb_1}\cdots \vect{pa_{2f - 1}}{pb_{2f - 1}}\vect{pa_0}{pb_0}$$

We identify the Weyl group $W'$ of $\bT'$ with two copies of $W$, and we will sometimes write elements of $W'$ as $(w, w')$ where $w,w'\in W$.

\subsubsection{}

We define the subsets $X_+(\bT'), X_1(\bT')$ and $X^0(\bT')$ as above, and denote by $F'(\mu)$ the restriction to $\bG'(\bbF_p) \cong \bG\bL_2(\bbF_{q^2})$ of the algebraic $\bG'$-representation of highest weight $\mu\in X_+(\bT')$.  In particular, we have the following classification result.

\begin{prop}[\cite{ghs}, Lemma 9.2.4]
\label{Serre-wt-classn-GL2}
The map
\begin{eqnarray*}
\frac{X_1(\bT')}{(\biF' - 1)X^0(\bT')} & \longrightarrow & \{\textnormal{Serre weights of}~\bG\bL_2(\bbF_{q^2})\}_{/\cong}\\
\mu & \longmapsto & F'(\mu)
\end{eqnarray*}
is a well-defined bijection.  
\end{prop}

Once again, we will assume that $\bbF$ is large enough so that $F'(\mu)$ may be realized over $\bbF$.

\subsubsection{}

We define Deligne--Lusztig representations $R'_{w}(\mu)$ for $w\in W', \mu\in X^*(\bT')$ analogously to the above.  We again assume that $R'_w(\mu)$ may be realized over $E$.  Furthermore, an analog of equation \eqref{DLequiv} holds.  We will often view $F'(\mu)$ and $R'_w(\mu)$ as representations of $\bG'(\bbZ_p) \cong \bG\bL_2(\cO_{K_2})$ by inflation.

\subsubsection{}
\label{def-of-pi-prime}

Given $w = (w_0,w_1,\ldots, w_{2f - 1})\in W'$, we define the following elements of $X^*(\bT')$:
\begin{align*}
\rho_{w}'  \defeq \cdots\underbrace{\vect{0}{0}}_{w_i = 1}\cdots \underbrace{\vect{1}{0}}_{w_i = s}\cdots,& \qquad \varepsilon_{w}'  \defeq  \cdots\underbrace{\vect{0}{0}}_{w_i = 1}\cdots \underbrace{\vect{0}{1}}_{w_i = s}\cdots,\\
\gamma_{w}'  \defeq   \cdots\underbrace{\vect{1}{1}}_{w_i = 1}\cdots \underbrace{\vect{0}{0}}_{w_i = s}\cdots, & \qquad \rho'  \defeq  \vect{\un{1}}{\un{0}}.\\
\end{align*}

Suppose $\mu \in X^*(\bT')$ is such that $\mu - \rho'$ is $1$-deep.  The analog of \eqref{jantzen} takes the following form: 
\begin{equation}
\label{jantzenGL2}
\JH\big(\overline{R'_w(\mu)}\big) = \left\{F'_{w'}(R'_w(\mu))\right\}_{w'\in W'},
\end{equation}
where 
$$F'_{w'}(R'_w(\mu)) \defeq F'\left(p\gamma_{w'}' + w'\big(\mu  - w\pi'(\varepsilon_{(\un{s},\un{s})w'}')\big)+ p\rho_{w'}' - \rho'\right),$$
and where $\pi'$ is the automorphism of $X^*(\bT')$ such that $\biF' = p\pi'^{-1}$.

\begin{df} 
{Let $\sigma'$ be a 1-generic Deligne–Lusztig representation and fix a presentation $\sigma \cong R'_{w}(\mu)$ with $w \in W'$ and $\mu - \rho'$ being 1-deep. We define the Deligne–Lusztig representation $\beta'(\sigma')$ by
$$\beta'(\sigma') \defeq R'_{(\un{s},\un{s})w}\left((\un{s},\un{s})(\mu - \rho')+ (p - 1)\rho'\right).$$}
{As above for the map $\beta$, the above expression is well-defined, and if $R'_{w}(\mu)$ is $n$-generic, then $\beta'(R'_w(\mu))$ will also be $n$-generic.}
\end{df}

\subsection{Base change of types}\label{autBC}
Our next task will be to define a notion of base change for tame types of $\bU_2(\cO_K)$. We note that this is essentially the Shintani lifting considered in \cite{kawanaka}.

\subsubsection{}

We first recall the classification of irreducible representations of $\bU_2(\bbF_q)$ in characteristic zero (cf. \cite{ennola}).

Fix a character 
$$\psi: \bbF_{q^2}^\times  \longrightarrow \cO^\times,$$
which we also view as a character of $\bB_\bU(\bbF_q)$ via
$$\psi\left(\begin{pmatrix} x & y \\ 0 & x^{-q}\end{pmatrix}\right) = \psi(x),$$
where $x\in \bbF_{q^2}^\times, y\in \bbF_{q^2}$, and $xy^q = yx^q$.  (Here $\bB_{\bU}$ denotes the upper triangular Borel subgroup of $\bU_2$.)  We let $\Ind_{\bB_\bU(\bbF_q)}^{\bU_2(\bbF_q)}(\psi)$ denote the induced representation.  If $\psi^{-q}\neq \psi$, then $\Ind_{\bB_\bU(\bbF_q)}^{\bU_2(\bbF_q)}(\psi)$ is irreducible.  On the other hand, if $\psi^{-q} = \psi$, then $\psi$ extends to a character of $\bU_2(\bbF_q)$, and we have
$$\Ind_{\bB_\bU(\bbF_q)}^{\bU_2(\bbF_q)}(\psi) \cong \psi~ \oplus~ \big(\psi\otimes_E\textnormal{St}\big),$$
where $\textnormal{St}$ denotes the irreducible representation $\Ind_{\bB_\bU(\bbF_q)}^{\bU_2(\bbF_q)}(\mathbf{1})/\mathbf{1}$.

Consider now the group $\bJ_{\mathrm{end}} \defeq \bU_1\times \bU_1$ over $\cO_K$, which is the unique elliptic endoscopic group of $\bU_2$.  Fix a character 
\begin{eqnarray*}
\theta = \theta_1\otimes\theta_2: \bJ_{\mathrm{end}}(\bbF_q) = \bU_1(\bbF_q)\times\bU_1(\bbF_q) & \longrightarrow & \cO^{\times}\\
(x,y) & \longmapsto & \theta_1(x)\theta_2(y).
\end{eqnarray*}
We suppose that $\theta_1\neq \theta_2$, and let $\sigma(\theta)$ denote the associated irreducible cuspidal representation of $\bU_2(\bbF_q)$, as in \cite[\S 3.1(b)]{blasco}.

We have the following classification theorem.

\begin{theo}[\cite{ennola}]
\label{thm:ennola}
Any irreducible representation of $\bU_2(\bbF_q)$ over $E$ is isomorphic to one of the following:
\begin{itemize}
\item $\psi$, where $\psi$ is a character of $\bU_2(\bbF_q)$;
\item $\psi\otimes_E\textnormal{St}$, where $\psi$ is a character of $\bU_2(\bbF_q)$;
\item $\textnormal{Ind}_{\bB_{\bU}(\bbF_q)}^{\bU_2(\bbF_q)}(\psi)$, where $\psi$ is a character of $\bbF_{q^2}^\times$ which satisfies $\psi^{-q} \neq \psi$;
\item $\sigma(\theta)$, where $\theta = \theta_1 \otimes \theta_2$ is a character of $\bU_1(\bbF_q)\times\bU_1(\bbF_q)$ with $\theta_1 \neq \theta_2$.
\end{itemize}
The only isomorphisms among these representations are $\textnormal{Ind}_{\bB_{\bU}(\bbF_q)}^{\bU_2(\bbF_q)}(\psi) \cong \textnormal{Ind}_{\bB_{\bU}(\bbF_q)}^{\bU_2(\bbF_q)}(\psi^{-q})$ and $\sigma(\theta_1\otimes\theta_2) \cong \sigma(\theta_2\otimes\theta_1)$.  
\end{theo}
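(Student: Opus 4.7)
My plan is to establish the classification by constructing the four families explicitly, verifying their irreducibility and pairwise non-isomorphism (up to the stated coincidences), and then confirming exhaustion by a dimension-squared count matching $|\bU_2(\bbF_q)|= q(q-1)(q+1)^2$. The whole argument is classical and fits into the Deligne--Lusztig/Harish--Chandra framework for a finite group of Lie type of small rank. First I would record the $\bbF_q$-rational conjugacy classes of maximal tori of $\bU_2$: since the Weyl group has order $2$ with trivial Frobenius action, there are exactly two, namely the diagonal torus $\bT_{\bU}$ with $\bT_{\bU}(\bbF_q)\cong \bbF_{q^2}^\times$ of order $q^2-1$ (contained in the Borel $\bB_{\bU}$), and the elliptic endoscopic torus $\bJ_{\mathrm{end}}(\bbF_q) \cong \bU_1(\bbF_q)\times \bU_1(\bbF_q)$ of order $(q+1)^2$. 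The rest of the argument is organized around these two tori.

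For the two families attached to $\bT_{\bU}$, I would analyze the parabolic induction $\Ind_{\bB_\bU(\bbF_q)}^{\bU_2(\bbF_q)}(\psi)$ through Mackey's criterion applied to the Bruhat decomposition $\bU_2(\bbF_q) = \bB_\bU(\bbF_q)\sqcup\bB_\bU(\bbF_q)\dot{s}\bB_\bU(\bbF_q)$. A direct computation of the intertwiner $\Hom_{\bU_2(\bbF_q)}(\Ind\psi,\Ind\psi')$ identifies it with the space of $\bT_\bU(\bbF_q)$-invariants in a product of characters, showing that $\Ind(\psi)$ is irreducible whenever $\psi\neq \psi^s = \psi^{-q}$ with the isomorphism $\Ind(\psi)\cong \Ind(\psi^{-q})$ being the only nontrivial coincidence; and when $\psi^{-q}=\psi$, i.e.\ $\psi^{q+1}=1$, the character $\psi$ factors through the determinant-type character of $\bU_2(\bbF_q)$, so $\psi$ extends to $\bU_2(\bbF_q)$ and the induced representation splits as $\psi\oplus (\psi\otimes\textnormal{St})$ by standard arguments on $\textnormal{St} = \Ind\mathbf{1}/\mathbf{1}$.

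For the cuspidal family attached to $\bJ_{\mathrm{end}}$, I would use Deligne--Lusztig induction $R_{\bJ_{\mathrm{end}}}^{\bU_2}(\theta)$ for a character $\theta=\theta_1\otimes\theta_2$ of $\bU_1(\bbF_q)\times \bU_1(\bbF_q)$. The relative Weyl group of the elliptic torus is generated by the swap of the two factors, so $\theta$ is in ``general position'' precisely when $\theta_1\neq \theta_2$. In that case the DL formalism (Deligne--Lusztig \emph{Ann.\ of Math.} 103) yields that $\pm R_{\bJ_{\mathrm{end}}}^{\bU_2}(\theta)$ is an irreducible cuspidal representation $\sigma(\theta)$, cuspidality being forced by the absence of $\theta$-isotypic vectors in the Harish--Chandra restriction to any proper parabolic, and the identification $\sigma(\theta_1\otimes\theta_2)\cong\sigma(\theta_2\otimes\theta_1)$ coming directly from Weyl-equivalence of the DL parameters, with no further coincidences by orthogonality of DL characters on regular elliptic elements.

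The final step is a dimension-squared count, which at once rules out any other irreducible representations and confirms pairwise non-isomorphism between the four families. There are $q+1$ characters and $q+1$ twists by $\textnormal{St}$ (dimensions $1$ and $q$); the parameters for principal series are characters of $\bbF_{q^2}^\times$ satisfying $\psi^{q+1}\neq 1$, modulo the involution $\psi\mapsto \psi^{-q}$, giving $(q+1)(q-2)/2$ representations of dimension $q+1$; the cuspidal parameters give $q(q+1)/2$ representations of dimension $q-1$. Summing $(q+1) + q^2(q+1) + (q+1)^3(q-2)/2 + (q-1)^2 q(q+1)/2$ collapses to $q(q-1)(q+1)^2 = |\bU_2(\bbF_q)|$, concluding the argument. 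I expect the main obstacle to lie in the third step: namely, the correct formulation of ``general position'' for the elliptic torus and the identification of the resulting DL virtual character with the concrete cuspidal $\sigma(\theta)$ constructed in \cite[\S 3.1(b)]{blasco}, which requires carefully tracking signs and the genericity hypothesis $\theta_1\neq\theta_2$.
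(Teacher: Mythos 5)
Your proposal is correct. The paper offers no proof of this statement --- it is quoted from Ennola, whose original derivation proceeds by direct computation of the character table of $\bU_2(\bbF_q)$ (by transfer from $\bG\bL_2(\bbF_{q^2})$, the so-called Ennola duality) --- so there is no argument in the text to compare against; your Deligne--Lusztig route is the standard modern replacement and all the steps check out. In particular: there are exactly two rational classes of maximal tori ($\bT_\bU(\bbF_q)\cong\bbF_{q^2}^\times$ of order $q^2-1$ and the elliptic $\bU_1(\bbF_q)\times\bU_1(\bbF_q)$ of order $(q+1)^2$); the Mackey/intertwining computation correctly yields irreducibility of $\Ind_{\bB_\bU(\bbF_q)}^{\bU_2(\bbF_q)}(\psi)$ exactly when $\psi\neq\psi^s=\psi^{-q}$, with $\Ind(\psi)\cong\Ind(\psi^{-q})$ the only coincidence, and the split case $\psi^{q+1}=1$ gives $\psi\oplus(\psi\otimes\mathrm{St})$ since such $\psi$ factor through $\det$; general position for the elliptic torus is $\theta_1\neq\theta_2$, and $-R^{\bU_2}_{\bJ_{\mathrm{end}}}(\theta)$ is then irreducible cuspidal of dimension $q-1$. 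Your final identity
$$(q+1)+q^2(q+1)+\tfrac{(q+1)^3(q-2)}{2}+\tfrac{q(q+1)(q-1)^2}{2}=(q+1)\bigl(q^3-q\bigr)=q(q-1)(q+1)^2=|\bU_2(\bbF_q)|$$
is correct, and since $p$ is odd (so $q\geq 3$) the four families have pairwise distinct dimensions $1$, $q$, $q+1$, $q-1$, which together with the within-family analysis settles both exhaustion and the list of coincidences. The one point you rightly flag --- matching $-R^{\bU_2}_{\bJ_{\mathrm{end}}}(\theta)$ with the explicit $\sigma(\theta)$ of Blasco --- is exactly the identification the paper itself performs later (via a character computation using Ennola and Deligne--Lusztig), and is not needed for the classification statement itself provided one takes $\dim\sigma(\theta)=q-1$ and $\sigma(\theta_1\otimes\theta_2)\cong\sigma(\theta_2\otimes\theta_1)$ from Blasco's construction.
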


\begin{df}
We define a \emph{tame type} $\sigma$ to be an irreducible $\bU_2(\cO_K)$-representation over $E$ which arises by inflation from an irreducible $\bU_2(\bbF_q)$-representation over $E$.  Likewise, we define a \emph{tame type over $\cO$} to be a representation $\sigma$ of $\bU_2(\cO_K)$ on a finite-free $\cO$-module, such that $\sigma\otimes_{\cO}E$ is a tame type over $E$.  {We make similar definitions for the group $\bG\bL_2(\cO_{K_2})$.  }
\end{df}

\subsubsection{The principal series case}

Consider again a character
$$\psi: \bbF_{q^2}^\times  \longrightarrow \cO^\times$$
which satisfies $\psi^{-q} \neq \psi$, and let $\Ind_{\bB_\bU(\bbF_q)}^{\bU_2(\bbF_q)}(\psi)$ denote the (irreducible) principal series representation.  We may extend the character $\psi$ to a character $\psi\otimes\psi^{-q}$ of $\bB_\bU(\bbF_{q^2})$ as follows:
\begin{eqnarray*}
\psi\otimes\psi^{-q}:\bB_\bU(\bbF_{q^2}) & \longrightarrow & \cO^\times\\
\begin{pmatrix} x & y \\ 0 & z\end{pmatrix} & \longmapsto & \psi(x)\psi(z)^{-q},
\end{eqnarray*}
where $x,z\in \bbF_{q^2}^\times, y\in \bbF_{q^2}$.  We consider the (irreducible) induced representation $\Ind_{\bB_\bU(\bbF_{q^2})}^{\bG\bL_2(\bbF_{q^2})}(\psi\otimes\psi^{-q})$ of $\bU_2(\bbF_{q^2}) = \bG\bL_2(\bbF_{q^2})$, and view it as a tame type of $\bG\bL_2(\cO_{K_2})$ by inflation.

\begin{df}
\label{df:PS:BC}
Let $\psi:\bbF_{q^2}^\times\longrightarrow \cO^\times$ be a character such that $\psi^{-q}\neq \psi$.  We define the \emph{base change of $\Ind_{\bB_\bU(\bbF_q)}^{\bU_2(\bbF_q)}(\psi)$} to be the $\bG\bL_2(\cO_{K_2})$-type given by
$$\BC\left(\Ind_{\bB_\bU(\bbF_q)}^{\bU_2(\bbF_q)}(\psi)\right) \defeq \Ind_{\bB_\bU(\bbF_{q^2})}^{\bG\bL_2(\bbF_{q^2})}(\psi\otimes\psi^{-q}).$$
\end{df}

There is a compatibility of this definition with automorphic base change, as follows.  Let $\sigma = \Ind_{\bB_\bU(\bbF_q)}^{\bU_2(\bbF_q)}(\psi)$, and suppose $\pi$ is a smooth irreducible representation of $\bU_2(K)$ over $\bbC$ such that $\sigma\otimes_E\bbC\subseteq \pi|_{\bU_2(\cO_K)}$ (for some choice of morphism $E\longhookrightarrow \bbC$).  {This implies that $\pi^{\textnormal{Iw}_1} \neq 0$, where $\textnormal{Iw}_1$ denotes the upper-triangular pro-$p$-Iwahori subgroup of $\bU_2(\cO_K)$.  Consequently, $\pi$ cannot be supercuspidal, and is therefore a subquotient of a principal series representation.  Since the character $\psi$ is regular, this subquotient must in fact be an irreducible principal series (see \cite[\S 11.1]{rogawski} for a classification of nonsupercuspidal representations of $\bU_2(K)$).}  We let $\BC(\pi)$ denote the stable base change of $\pi$ to a representation of $\bG\bL_2(K_2)$ (cf. \cite[\S 11.4]{rogawski}).  Then $\BC(\pi)$ contains a unique tame type, which is isomorphic to $\BC(\sigma)\otimes_{E}\bbC$.

\subsubsection{}
\label{subsub:PSequiv}

We now wish to compute the base change map on Deligne--Lusztig representations.  Let $\mu \in X^*(\bT)$ be such that
$$\mu = \vecf{a_0}{b_0}{0}{0}\cdots \vecf{a_{f - 1}}{b_{f - 1}}{0}{0},$$
and suppose 
$$\sum_{i = 0}^{f - 1} a_ip^i - p^f\sum_{i = 0}^{f - 1} b_ip^i \not\equiv \sum_{i = 0}^{f - 1} b_ip^i - p^f\sum_{i = 0}^{f - 1} a_ip^i~(\textnormal{mod}~p^{2f} - 1).$$  
By \cite[Prop. 8.2]{DL} we have an isomorphism of $\bU_2(\cO_K)$-representations
$$R_{\un{1}}(\mu) \cong \Ind_{\bB_{\bU}(\bbF_q)}^{\bU_2(\bbF_q)}(\theta_{\mu}),$$
where 
$$\theta_{\mu}\left(\begin{pmatrix}x & y \\ 0 & x^{-q}\end{pmatrix}\right) = \varsigma_0\left(\tilde{x}^{\sum_{i = 0}^{f - 1} a_ip^i - p^f\sum_{i = 0}^{f - 1} b_ip^i}\right)$$
(recall that we identify representations of $\widetilde{\bU}_2(\cO_K)$ trivial on $\imath(\cO_K^\times)$ and representations of $\bU_2(\cO_K)$).  Further, the assumption on $\mu$ and Proposition 7.4 of \emph{op. cit.} imply that $R_{\un{1}}(\mu)$ is irreducible.  Consequently, the base change map becomes
$$\BC\left(R_{\un{1}}(\mu)\right) = R'_{(\un{1},\un{1})}\left(\mu, -\un{s}(\mu)\right).$$

Now let $w\in W$ be an element in the $\biF$-conjugacy class of $\un{1}$, and choose $w'\in W$ such that $w'w\biF(w')^{-1} = \un{1}$.  Applying first the equivalence \eqref{DLequiv} for the element $v = w'$ {(and $\nu = 0$)}, then the above equation, then the equivalence induced by $(w'^{-1}, w'^{-1})$, we obtain
\begin{equation}\label{BCDLPS}
\BC\left(R_{w}(\mu)\right)   \cong R'_{(w, w)}\left(\mu, -\un{s}(\mu)\right).
\end{equation}

\subsubsection{The cuspidal case}

Consider again the character $\theta = \theta_1\otimes\theta_2$ of $\bJ_{\textnormal{end}}(\bbF_q)$ which satisfies $\theta_1 \neq \theta_2$.  By base change we obtain the character 
\begin{eqnarray*}
\tld{\theta}:\bJ_{\textnormal{end}}(\bbF_{q^2}) = \bbF_{q^2}^\times\times\bbF_{q^2}^\times & \longrightarrow & \cO^{\times}\\
(x,y) & \longmapsto & \theta_1(x^{1-q})\theta_2\left(y^{1-q}\right).
\end{eqnarray*}
By inflation, we view this as a character of upper triangular Borel subgroup $\bB_{\bU}(\bbF_{q^2})$ of $\bU_2(\bbF_{q^2}) \cong \bG\bL_2(\bbF_{q^2})$, and view the (irreducible) induced representation $\Ind_{\bB_{\bU}(\bbF_{q^2})}^{\bG\bL_2(\bbF_{q^2})}(\widetilde{\theta})$ as a tame type of $\bG\bL_2(\cO_{K_2})$.

\begin{df}
\label{df:END:BC}
Let $\theta = \theta_1\otimes\theta_2:\bJ_{\textnormal{end}}(\bbF_q)\longrightarrow \cO^\times$ be a character such that $\theta_1 \neq \theta_2$.  We define the \emph{base change of $\sigma(\theta)$} to be the $\bG\bL_2(\cO_{K_2})$-type given by
$$\BC\left(\sigma(\theta)\right) \defeq \Ind_{\bB_{\bU}(\bbF_{q^2})}^{\bG\bL_2(\bbF_{q^2})}(\widetilde{\theta}).$$
\end{df}

We again have a compatibility of this definition with automorphic base change.  Let $\sigma = \sigma(\theta)$, and suppose $\pi$ is a smooth irreducible representation of $\bU_2(K)$ over $\bbC$ such that $\sigma\otimes_{E}\bbC \subseteq \pi|_{\bU_2(\cO_K)}$ (for some choice of morphism $E \longhookrightarrow \bbC$).  This implies that $\pi$ is a level 0 supercuspidal representation, and we let $\BC(\pi)$ denote the stable base change of (the $L$-packet containing) $\pi$.  Then $\BC(\pi)$ contains a unique tame type, which is isomorphic to $\BC(\sigma)\otimes_E \bbC$ (see \cite[Cor. 3.6]{blasco}).

\subsubsection{}
\label{subsub:cuspequiv}

We now wish to compute the base change map on cuspidal Deligne--Lusztig representations.  Let $\mu\in X^*(\bT)$ be such that
$$\mu = \vecf{a_0}{b_0}{0}{0}\cdots\vecf{a_{f - 1}}{b_{f -1}}{0}{0},$$
and suppose that $\sum_{i = 0}^{f - 1}a_ip^i \not\equiv \sum_{i = 0}^{f - 1} b_ip^i~(\textnormal{mod}~p^f + 1)$.  By \cite[Thm. 8.3]{DL}, this assumption guarantees that $R_{(s,1,\ldots, 1)}(\mu)$ is an irreducible, cuspidal $\bU_2(\cO_K)$-representation.  
Then a straighforward character computation using \cite[\S 6]{ennola} and \cite[Cor. 7.2]{DL} gives 
$$R_{(s,1,\ldots, 1)}(\mu) \cong \sigma(\theta_{\mu}),$$
where 
$$\theta_{\mu}(x,y) = \varsigma_0\left(\tilde{x}^{\sum_{i = 0}^{f - 1}a_ip^i}\tilde{y}^{\sum_{i = 0}^{f - 1}b_ip^i}\right).$$
Consequently, the base change map becomes
$$\BC\left(R_{(s,1,\ldots, 1)}(\mu)\right) = R'_{(\un{1},\un{1})}\left(\mu, -\mu\right).$$

Now let $w\in W$ be an element in the $\biF$-conjugacy class of $(s,1,\ldots,1)$, and choose $w'\in W$ such that $w'w\biF(w')^{-1} = (s,1,\ldots, 1)$.  Applying first the equivalence \eqref{DLequiv} for the element {$v = w'$ (and $\nu = 0$)}, then the above equation, then the equivalence induced by $(w'^{-1}, \un{s}w'^{-1})$, we obtain
\begin{equation}\label{BCDLcusp}
\BC\left(R_{w}(\mu)\right)  \cong R'_{(w, w)}\left(\mu, -\un{s}(\mu)\right).
\end{equation}

\subsubsection{}

We define a base change map on the remaining irreducible representations of $\bU_2(\bbF_q)$.  Given a character $\psi_0:\bU_1(\bbF_q) \longrightarrow \cO^\times$, we let $\widetilde{\psi}_0$ denote the character
\begin{eqnarray*}
\tld{\psi}_0:\bbF_{q^2}^\times & \longrightarrow & \cO^\times\\
x  & \longmapsto & \psi_0(x^{1 - q}).
\end{eqnarray*}

\begin{df}
Let $\psi_0: \bU_1(\bbF_q) \longrightarrow \cO^\times$ denote a character of $\bU_1(\bbF_q)$.  We define
\begin{eqnarray*}
\BC\left(\psi_0\circ\det\right) & \defeq & \widetilde{\psi}_0\circ\det,\\
\BC\left(\psi_0\circ\det\otimes_E\textnormal{St}\right) & \defeq & \widetilde{\psi}_0\circ\det\otimes_E\textnormal{St}',
\end{eqnarray*}
where $\textnormal{St}'$ denotes the Steinberg representation of $\bG\bL_2(\bbF_{q^2})$, inflated to $\bG\bL_2(\cO_{K_2})$.  
\end{df}

Taken together, these definitions give a base change map on isomorphism classes of tame types.  One further checks that the association $\sigma \longmapsto \BC(\sigma)$ is injective on isomorphism classes.

\subsubsection{}\label{epsilondef}

We define an involution $\epsilon$ on (isomorphism classes of) representations of $\bG\bL_2(\bbF_{q^2})$ by twisting a representation by the automorphism
$$g\longmapsto \left(\Phi_2 g^{-\top}\Phi_2^{-1}\right)^{(q)}$$
(note that the fixed points in $\bG\bL_2(\bbF_{q^2})$ of this automorphism are exactly $\bU_2(\bbF_q)$).  On Deligne--Lusztig representations, this becomes
$$\epsilon\left(R_{(w,w')}'(\mu, \mu')\right) = R_{(w',w)}'\left(-\un{s}(\mu'), -\un{s}(\mu)\right).$$
{The above can be checked using the equivalences of Subsubsections \ref{subsub:PSequiv} and \ref{subsub:cuspequiv}, and character tables for $\bG\bL_2(\bbF_{q^2})$ (see, e.g., \cite[\S 1]{diamond}).  Note that by dimension reasons $\epsilon(R_{(w,w')}'(\mu, \mu'))$ is an irreducible principal series, resp.~an irreducible cuspidal representation, if and only if $R_{(w,w')}'(\mu, \mu')$ is such a representation.}

The following lemma {is one of the main results of \cite{kawanaka}}.

\begin{lem}\label{invforBC}
Let $\sigma'$ denote a tame $\bG\bL_2(\cO_{K_2})$-type over $E$.  Then we have $\epsilon(\sigma') \cong \sigma'$ if and only if $\sigma'$ is of the form $\BC(\sigma)$ for a tame $\bU_2(\cO_K)$-type $\sigma$.  
\end{lem}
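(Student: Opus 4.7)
The plan is to verify the two implications separately, using the classification of tame types recalled above. The forward implication is a direct case-by-case check; the converse is most efficiently handled by a combination of explicit analysis and a counting argument.

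For the forward direction, denote by $\vartheta$ the automorphism $g \mapsto (\Phi_2 g^{-\top} \Phi_2^{-1})^{(q)}$ defining $\epsilon$, and observe that it sends $\diag(a,b) \in \bT_\bG(\bbF_{q^2})$ to $\diag(b^{-q}, a^{-q})$. For character base changes $\widetilde{\psi}_0 \circ \det$ (and their twisted Steinberg variants), I would verify that $\widetilde{\psi}_0(x^{-q}) = \widetilde{\psi}_0(x)$ for $x \in \bbF_{q^2}^\times$, using that $\psi_0$ factors through $\bU_1(\bbF_q) = \ker(\textnormal{Nm}_{\bbF_{q^2}/\bbF_q})$; this gives $\epsilon$-invariance at once. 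For principal series base changes $\Ind(\psi \otimes \psi^{-q})$, pullback by $\vartheta$ preserves the inducing character (using $\psi^{q^2} = \psi$); for cuspidal base changes $\Ind(\widetilde{\theta}_1 \otimes \widetilde{\theta}_2)$, it swaps the two factors, after which the standard intertwiner $\Ind(\chi_1 \otimes \chi_2) \cong \Ind(\chi_2 \otimes \chi_1)$ gives invariance. These last two cases can also be treated uniformly by noting that equations \eqref{BCDLPS} and \eqref{BCDLcusp} write $\BC$ of a Deligne--Lusztig representation in the common form $R'_{(w,w)}(\mu, -\un{s}(\mu))$, which is manifestly fixed by the explicit $\epsilon$-action on Deligne--Lusztig representations recalled just above the lemma.

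For the converse, I would enumerate the $\epsilon$-fixed tame $\bG\bL_2(\cO_{K_2})$-types. A character $\chi \circ \det$ is fixed iff $\chi^{q+1} = 1$; such $\chi$ are precisely the $\widetilde{\psi}_0$, yielding base changes (similarly for twisted Steinbergs). A principal series $\Ind(\chi_1 \otimes \chi_2)$ with $\chi_1 \neq \chi_2$ is $\epsilon$-fixed iff pulling back by $\vartheta$ gives $\chi_2^{-q} \otimes \chi_1^{-q}$, which must agree with $\chi_1 \otimes \chi_2$ or its swap; this forces either (a) $\chi_2 = \chi_1^{-q}$, recovering the base change of a principal series, or (b) both $\chi_i$ are $(q+1)$-torsion, in which case $\chi_i = \widetilde{\theta}_i$ for distinct characters $\theta_i$ of $\bU_1(\bbF_q)$ and the representation is a cuspidal base change.

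The main obstacle is then to rule out $\epsilon$-fixed cuspidal $\bG\bL_2(\bbF_{q^2})$-representations, which are \emph{a priori} available and do not obviously arise from any $\sigma$. The cleanest approach is a counting argument: Ennola's classification gives $(q+1)^2$ isomorphism classes of tame $\bU_2(\cO_K)$-types, while the three families identified above account for exactly $2(q+1) + (q+1)(q-2)/2 + q(q+1)/2 = (q+1)^2$ distinct $\epsilon$-fixed tame $\bG\bL_2(\cO_{K_2})$-types (all non-cuspidal), and the assignment $\sigma \mapsto \BC(\sigma)$ is injective (by inspection of the inducing data) and lands in the $\epsilon$-fixed locus by the forward direction. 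Equality of the two cardinalities forces $\BC$ to be a bijection onto the $\epsilon$-fixed types, leaving no room for cuspidal $\epsilon$-fixed representations of $\bG\bL_2(\bbF_{q^2})$. Alternatively, one could argue directly using the non-split maximal torus and Deligne--Lusztig theory, or invoke the Shintani descent results of \cite{kawanaka}, which match our $\BC$ map.
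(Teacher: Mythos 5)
Your forward direction and your analysis of the non-cuspidal $\epsilon$-fixed types are both correct (and the paper, which dismisses the lemma as "easily checked," gives no proof to compare against). But the step you yourself identify as "the main obstacle" --- ruling out $\epsilon$-fixed cuspidal representations of $\bG\bL_2(\bbF_{q^2})$ --- is not established by the counting argument you offer. You compare the number of tame $\bU_2(\cO_K)$-types, namely $(q+1)^2$, with the number of \emph{non-cuspidal} $\epsilon$-fixed types, also $(q+1)^2$. Together with injectivity of $\BC$ this only shows that $\BC$ is a bijection onto the non-cuspidal part of the $\epsilon$-fixed locus; it gives a lower bound on the total number of $\epsilon$-fixed types, not an upper bound, and hence leaves entirely open whether additional $\epsilon$-fixed \emph{cuspidal} representations exist. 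A counting argument could be salvaged only by producing an independent upper bound on the number of $\epsilon$-fixed irreducibles, e.g.\ by counting the conjugacy classes of $\bG\bL_2(\bbF_{q^2})$ stable under $g\longmapsto (\Phi_2 g^{-\top}\Phi_2^{-1})^{(q)}$ and invoking Brauer's permutation lemma; you do not do this.

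Fortunately the missing step is a short direct computation (essentially the "alternative" you mention in passing). Writing $\pi_\Theta$ for the cuspidal representation of $\bG\bL_2(\bbF_{q^2})$ attached to a regular character $\Theta$ of $\bbF_{q^4}^\times$ (so $\Theta \neq \Theta^{q^2}$, and $\pi_\Theta \cong \pi_{\Theta^{q^2}}$), one has $\pi_\Theta^\vee \cong \pi_{\Theta^{-1}}$ and $\pi_\Theta^{(q)} \cong \pi_{\Theta^{q}}$, so $\epsilon(\pi_\Theta) \cong \pi_{\Theta^{-q}}$. This is isomorphic to $\pi_\Theta$ only if $\Theta^{-q} \in \{\Theta, \Theta^{q^2}\}$, i.e.\ $\Theta^{q+1} = 1$ or $\Theta^{q(q+1)} = 1$; since $\gcd(q, q^4-1) = 1$ both conditions reduce to $\Theta^{q+1} = 1$, which forces $\Theta^{q^2-1} = 1$ and hence $\Theta = \Theta^{q^2}$, contradicting regularity. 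So no cuspidal representation is $\epsilon$-fixed, and with that the remainder of your argument closes the converse.
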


\subsection{Combinatorics of types and weights}
\label{subsec:cmb:types}

For future applications to weight elimination and weight existence results, we now analyze the combinatorial properties of the set $\JH(\overline{\sigma})$ for a tame type $\sigma$.

\subsubsection{}

Before proceeding, we make some definitions to simplify the discussion below.  

\begin{df}
\begin{enumerate}
\item We define $\tW \defeq  X^*(\bT) \rtimes W$ to be the extended affine Weyl group.  It acts on $X^*(\bT)$ in the natural way, and we write elements of $\tW$ as $t_\mu w$, with $\mu\in X^*(\bT)$, $w\in W$, to underscore this action.  
\item An \emph{alcove} is a connected component of 
$$X^*(\bT) \otimes_{\bbZ}\bbR - \Big(\bigcup_{\alpha,n}\big\{\langle\mu + \eta, \alpha^\vee\rangle = np\big\}\Big).$$
We let $C_0$ denote the dominant base alcove
$$\{\mu\in X^*(\bT)\otimes_{\bbZ}\bbR: 0 < \langle\mu + \eta, \alpha_i^\vee\rangle < p~\textnormal{for all}~0 \leq i \leq f - 1\}.$$
\item The group $pX^*(\bT) \rtimes W \subseteq \tW$ acts on the set of alcoves via the dot action $\sbullet$ centered at $-\eta$.  We define 
$${\Omega} \defeq \{\tw \in pX^*(\bT) \rtimes W :\tw \sbullet C_0 = C_0\}.$$
\end{enumerate}
\end{df}

\begin{rmk}
One easily checks that if $\tw = wt_{-p\nu} = (w_i t_{-p\nu_i})_i\in {\Omega}$, then we must have 
$$(w_i, \nu_i) \in \{1\}\times X^0(\bT_\bH)\quad \textnormal{or}\quad (w_i, \nu_i) \in \{s\}\times (\eta_{\bH} + X^0(\bT_{\bH}))$$
for all $0\leq i \leq f - 1$.  
\end{rmk}

\begin{lem}
\label{sub:char:intsct}
Let $R_w(\mu + \eta)$ denote a Deligne--Lusztig representation of $\bG(\bbZ_p)$, and suppose $\mu$ is a $1$-deep character.  Let $\lambda\in X_1(\bT)$.  Then $F(\lambda) \in \JH(\overline{R_w(\mu + \eta)})$ if and only if there exists $z t_{-p\nu}\in {\Omega}$ such that 
$$z t_{-p\nu}\sbullet (\mu + w\pi(\nu)) = \un{s}\sbullet(\lambda - p\eta).$$
{Moreover, the element $z \in W$ is unique, and every choice of $z$ arises.  Consequently we have $|\!\JH(\overline{R_w(\mu + \eta)})|=2^f$. \(Compare with \eqref{jantzen}.\)}
\end{lem}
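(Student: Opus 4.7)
The plan is to translate the Jantzen-style description of $\JH(\overline{R_w(\mu+\eta)})$ recalled in \eqref{jantzen}--\eqref{def:JHcomps} into the extended-affine-Weyl-group language used in the statement. Since $\mu$ is $1$-deep, \eqref{jantzen} gives $\JH(\overline{R_w(\mu+\eta)}) = \bigl\{F(\lambda_{w'}) : w' \in W\bigr\}$ with
$$\lambda_{w'} \defeq p\gamma_{w'} + w'\bigl(\mu + \eta - w\pi(\varepsilon_{\un{s}w'})\bigr) + p\rho_{w'} - \pi(\rho),$$
and by Proposition~\ref{serrewtparam} we have $F(\lambda) \cong F(\lambda_{w'})$ if and only if $\lambda - \lambda_{w'} \in (\biF-1)X^0(\bT)$. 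So the task reduces to showing that the identity $\un{s}\sbullet(\lambda - p\eta) = zt_{-p\nu}\sbullet(\mu + w\pi(\nu))$ for some $zt_{-p\nu} \in \tW_+$ is equivalent, modulo $(\biF-1)X^0(\bT)$, to $\lambda \equiv \lambda_{w'}$ for some $w' \in W$.

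First I would unfold the dot action: writing $zt_{-p\nu}\sbullet\kappa = z(\kappa+\eta) - pz(\nu) - \eta$ and $\un{s}\sbullet(\lambda - p\eta) = \un{s}(\lambda+\eta) - p\un{s}(\eta) - \eta$, the claimed identity becomes a concrete linear equation in $X^*(\bT)$ relating $\lambda$ to $(\mu, w, z, \nu)$. I would then set up the correspondence $w' \leftrightarrow (z,\nu)$ by taking $z \defeq \un{s}w'$ and decomposing $\nu$ componentwise to encode $\gamma_{w'}$ and $\rho_{w'}$. By the remark following the definition of $\tW_+$, the condition $\tw \in \tW_+$ forces each pair $(z_i,\nu_i)$ to lie either in $\{1\}\times X^0(\bT_\bH)$ or in $\{s\}\times(\eta_\bH + X^0(\bT_\bH))$, which is precisely the combinatorial pattern governing $\gamma_{w'}$ and $\rho_{w'}$; the remaining freedom in adjusting each $\nu_i$ within $X^0(\bT_\bH)$ is absorbed by the Serre-weight equivalence $(\biF-1)X^0(\bT)$.

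Finally, using $\biF = p\pi^{-1}$ together with the explicit formulas for $\gamma_{w'}, \rho_{w'}, \varepsilon_{\un{s}w'}, \rho$ as functions of the components of $w'$, I would verify by direct substitution that the resulting identity matches \eqref{def:JHcomps} modulo $(\biF-1)X^0(\bT)$, giving the forward direction. Conversely, from any $\tw \in \tW_+$ satisfying the identity, one reads off $w' \defeq \un{s}z \in W$ and recovers $\lambda \equiv \lambda_{w'}$, giving the reverse direction.

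The main obstacle will be the bookkeeping: carefully matching the three parametrizations---by $w' \in W$, by $\tw \in \tW_+$, and by the auxiliary combinatorial vectors $\gamma_{w'}, \rho_{w'}, \varepsilon_{\un{s}w'}$---while respecting the conventions for the dot action centered at $-\eta$ and the $\pi$-twist coming from $\biF = p\pi^{-1}$. The $1$-deepness hypothesis on $\mu$ enters in two places: it is required in order to apply \eqref{jantzen}, and it guarantees that the $2^f$ Jordan--H\"older factors are pairwise non-isomorphic and each admit a $p$-restricted representative, which makes the induced map $w' \leftrightarrow \tw$ an honest bijection.
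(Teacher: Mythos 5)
Your proposal is correct in outline, but it takes a different route from the paper's. The paper does not unwind \eqref{def:JHcomps} coordinate by coordinate: it instead invokes the reformulation of the Jordan--H\"older description in terms of the $\uparrow$-ordering (\cite[Prop.\ 10.1.8]{ghs}), picks the unique $z$ making $z(\mu + (w\pi - p)\nu + \eta)$ dominant, and upgrades $\uparrow$ to equality using dominance of one side and $p$-restrictedness of the other; membership of $zt_{-p\nu}$ in $\tW_+$ is then quoted from \cite[Prop.\ 4.1.3]{LLL}, and the converse follows from \cite[II.6.4(5)]{RAGS}. Your route — matching $\lambda$ against the explicit $\lambda_{w'}$ of \eqref{def:JHcomps} modulo $(\biF-1)X^0(\bT)$ via the dictionary $z=\un{s}w'$ — is viable and more self-contained (it avoids the two external citations), at the cost of the bookkeeping you flag. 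Two points deserve emphasis when you carry it out. First, your observation that the $X^0$-ambiguity in $\nu$ matches the Serre-weight equivalence is exactly right: for $\xi\in X^0(\bT)$ the change in $\lambda$ is $w'(w\pi-p)(\xi)=(\pi-p)(\xi)$ (as $W$ acts trivially on $X^0$), and $(\pi-p)X^0(\bT)=(\biF-1)X^0(\bT)$. Second, in the forward direction, once $z=\un{s}w'$ is fixed the operator $w\pi - p$ is invertible over $\bbQ$, so $\nu$ is \emph{forced} by the exact equation; the substantive check is that this forced $\nu$ is integral and has components in $X^0(\bT_\bH)$ or $\eta_\bH + X^0(\bT_\bH)$ as dictated by $z_i$ — this is precisely what the shape of $\gamma_{w'},\rho_{w'},\varepsilon_{\un{s}w'}$ in \eqref{def:JHcomps} guarantees, and it is the step that in the paper's argument is outsourced to \cite[Prop.\ 4.1.3]{LLL}. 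What the paper's approach buys is the avoidance of this computation; what yours buys is an explicit, checkable identification of the label $z$ with $\un{s}w'$, which is in fact the content of Definition \ref{def:label} used later.
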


\begin{proof}
{First, note that by our depth assumption on $\mu$, we may apply \cite[Prop. 10.1.2]{ghs} (which is based on \cite[Satz 4.3]{Jantzen-red}) and \cite[Prop. 10.1.8]{ghs}.}

Suppose $F(\lambda)\in \textnormal{JH}(\overline{R_{w}(\mu + \eta)})$ for some $\lambda\in X_1(\bT)$.  By \cite[Prop. 10.1.8]{ghs}, this holds if and only if there exists $\nu\in X^*(\bT)$ such that 
$$z'\sbullet(\mu + (w\pi - p)\nu) \uparrow \un{s}\sbullet(\lambda - p\eta)$$
for all $z'\in W$.  (We refer to \cite[II.6.4]{RAGS} for the definition of $\uparrow$; since the root system is of type $A_1 \times \cdots \times A_1$, the condition $\mu' \uparrow \lambda'$ is equivalent to $\mu' \leq \lambda'$ and $\mu' \in (p{\Lambda_R}\rtimes W)\sbullet \lambda'$, {where $\Lambda_R$ denotes the root lattice of $\bf{G}$}.)  Select $z\in W$ such that 
$$z(\mu + (w\pi - p)\nu + \eta)\in X_+(\bT).$$  
Since $z\sbullet (\mu + (w\pi - p)\nu)$ lies below $\un{s}\sbullet (\lambda - p\eta)$ in the $\uparrow$ ordering, since $z(\mu + (w\pi - p)\nu + \eta)$ is dominant, and since $\un{s}(\lambda - (p - 1)\eta)$ is $p$-restricted, we must have 
$$z\sbullet (\mu + (w\pi - p)\nu) = \un{s}\sbullet (\lambda - p\eta).$$
The proof of \cite[Prop. 4.1.3]{LLL} shows that {$|\langle\nu, \alpha_i^\vee\rangle| \leq 1$ for every $i$, from which we deduce} $z t_{-p\nu}\in {\Omega}$.  Furthermore, we deduce \emph{a posteriori} that the choice of $z$ is unique.


Conversely, if $z\sbullet (\mu + (w\pi - p)\nu) = \un{s}\sbullet (\lambda - p\eta)$ for some ${zt_{-p\nu} \in \Omega}$, then $z(\mu + (w\pi - p)\nu + \eta)\in X_+(\bT)$, and \cite[II.6.4(5)]{RAGS} implies 
$$(z'z)\sbullet (\mu + (w\pi - p)\nu) \uparrow z\sbullet (\mu + (w\pi - p)\nu) = \un{s}\sbullet (\lambda - p\eta)$$
for all $z'\in W$, so that $F(\lambda)\in \JH(\overline{R_w(\mu + \eta)})$ by \cite[Prop. 10.1.8]{ghs}.

{To show that every choice of $z$ arises, choose any $\nu\in X^*(\bT)$ such that $zt_{-p\nu} \in \Omega$, and define
$$\lambda_z \defeq \un{s}z(\mu + (w\pi - p)\nu + \eta) + (p - 1)\eta.$$
The depth assumption on $\mu$ implies that $\lambda_z \in X_1(\bT)$, and by definition we have
$$z t_{-p\nu}\sbullet (\mu + w\pi(\nu)) = \un{s}\sbullet(\lambda_z - p\eta),$$
so $F(\lambda_z) \in \JH(\overline{R_w(\mu + \eta)})$.  Finally, we note that different choices of $\nu$ will alter $\lambda_z$ by an element of $(p - \pi)X^0(\bT)$, which will give an isomorphic Serre weight.}
\end{proof}

\begin{prop}
\label{char:intsct}
Let $\sigma_1 = R_{w_1}(\mu_1 + \eta),~ \sigma_2 = R_{w_2}(\mu_2 + \eta)$ be two Deligne--Lusztig representation of $\bG(\zp)$.  Suppose that {$\mu_1$ is $3$-deep and $\mu_2$ is $1$-deep.  }
\begin{enumerate}
\item\label{it1:char:intsct} 
We have $\JH(\overline{\sigma_1}) \cap \JH(\overline{\sigma_2}) \neq \emptyset$ if and only if there exists a pair $(w_2', \mu_2')\in W\times X^*(\bT)$ such that $R_{w_2}(\mu_2 + \eta) \cong R_{w_2'}(\mu_2' + \eta)$ and 
$$t_{\mu_2'}w_2' = t_{\mu_1}w_1  \tld{w}$$ 
in $\tW$, where $\tld{w}_i\in\{1,~ s,~ t_{\alpha_\bH}s\}$ for all $0\leq i \leq f - 1$.
\item\label{it2:char:intsct} 
Suppose $\JH(\overline{\sigma_1})\cap\JH(\overline{\sigma_2})\neq \emptyset$, and let $(w_2',\mu_2')$ and $\tld{w} = (\tld{w}_i)_i$ be as in item \ref{it1:char:intsct}.
Let $\lambda\in X_1(\bT)$.  
Then $F(\lambda)\in \JH(\overline{\sigma_1}) \cap \JH(\overline{\sigma_2})$ if and only if
$$\un{s}\sbullet(\lambda - p\eta) = \tld{w}_\lambda\sbullet (\mu_1 + w_1\pi (\nu))= \tld{w}_\lambda\sbullet (\mu_2' + w_2'\pi (\nu))$$
for an element $\tld{w}_\lambda = wt_{-p\nu}\in  \Omega$ satisfying the following conditions:
\begin{enumerate}[label={\upshape(\alph*)}]
\item\label{it2:1:char:intsct} if $\tld{w}_i = s$ then {$(\tld{w}_\lambda)_{i - 1} \equiv 1 ~\textnormal{mod}~ X^0(\bT_\bH)$}; and
\item\label{it2:2:char:intsct} if $\tld{w}_i = t_{\alpha_\bH}s$ then {$(\tld{w}_\lambda)_{i - 1} \equiv st_{-p\eta_{\bH}} ~\textnormal{mod}~ X^0(\bT_\bH)$}.
\end{enumerate}
{In particular, since adding $(p - \pi)X^0(\bT)$ to $\lambda$ does not affect the isomorphism class of a Serre weight, we obtain}
$$\left|\JH(\overline{\sigma_1})\cap\JH(\overline{\sigma_2})\right| = 2^{\left|\{i:\tld{w}_i = 1\}\right|}.$$
\end{enumerate}
\end{prop}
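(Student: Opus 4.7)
The plan is to derive both parts from Lemma~\ref{sub:char:intsct}, which translates the condition $F(\lambda) \in \JH(\overline{R_w(\mu+\eta)})$ into the existence of an element $\tld{w}_\lambda \in \tW_+$ satisfying a specific dot-action equation, together with the uniqueness of such an element observed at the end of its proof.

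For part~\ref{it1:char:intsct}, given $F(\lambda) \in \JH(\overline{\sigma_1}) \cap \JH(\overline{\sigma_2})$, I would let $\tld{w}^{(j)} = z_j t_{-p\nu_j} \in \tW_+$ denote the unique element produced by Lemma~\ref{sub:char:intsct} for $\sigma_j$, $j \in \{1,2\}$. Rewriting the shared identity $\tld{w}^{(1)}\sbullet(\mu_1 + w_1\pi(\nu_1)) = \tld{w}^{(2)} \sbullet (\mu_2 + w_2\pi(\nu_2))$ as an equality in $\tW$ between $t_{\mu_1+\eta}w_1$ and $t_{\mu_2+\eta}w_2$, and then using the Deligne--Lusztig equivalence~\eqref{DLequiv} to replace $(\mu_2, w_2)$ by a suitable representative $(\mu_2', w_2')$, I would produce the relation $t_{\mu_2'}w_2' = t_{\mu_1}w_1 \tld{w}$. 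A coordinate-wise analysis of $\tld{w} = (\tld{w}^{(1)})^{-1}\tld{w}^{(2)}$, together with the explicit structure of $\tW_+$ (in which each coordinate lies in $\{1\} \times X^0(\bT_\bH)$ or $\{s\} \times (\eta_\bH + X^0(\bT_\bH))$), shows that the only possible values of $\tld{w}_i$ are $1$, $s$, and $t_{\alpha_\bH}s$. Conversely, the manipulations are reversible: given $\tld{w}$ of the stated form, one builds pairs of $\tW_+$-elements compatible with the equations of Lemma~\ref{sub:char:intsct}, producing a common Jordan--H\"older factor.

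For part~\ref{it2:char:intsct}, the uniqueness clause in Lemma~\ref{sub:char:intsct} forces $\tld{w}^{(1)} = \tld{w}^{(2)} = \tld{w}_\lambda$. Substituting the relation $t_{\mu_2'}w_2' = t_{\mu_1}w_1\tld{w}$ into the equation for $\sigma_2$ and comparing with the equation for $\sigma_1$ produces a coordinate-by-coordinate compatibility between the components of $\tld{w}$ and of $\tld{w}_\lambda$. The index shift from $i$ to $i-1$ arises because the factor $\pi$ (equal to the action of $\varphi^{-1}$) cyclically shifts embedding coordinates by one. A direct case analysis then shows that $\tld{w}_i = s$ forces $(\tld{w}_\lambda)_{i-1} = 1$ and $\tld{w}_i = t_{\alpha_\bH}s$ forces $(\tld{w}_\lambda)_{i-1} = st_{-p\eta_\bH}$, while $\tld{w}_i = 1$ leaves $(\tld{w}_\lambda)_{i-1}$ unconstrained among the two possibilities $\{1, st_{-p\eta_\bH}\}$ (modulo $X^0(\bT_\bH)$). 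For the count, each free position contributes a factor of $2$ and each constrained position is determined, yielding $2^{|\{i : \tld{w}_i = 1\}|}$ admissible tuples, and hence the same number of distinct Serre weights in the intersection.

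I expect the main technical obstacle to be the careful bookkeeping of the index shift in part~\ref{it2:char:intsct}: since $\pi$ cyclically permutes embedding coordinates, verifying that $\tld{w}_i$ constrains $(\tld{w}_\lambda)_{i-1}$ rather than $(\tld{w}_\lambda)_i$ requires tracking the Frobenius action meticulously through the dot-action calculations. A secondary subtlety is that in part~\ref{it1:char:intsct}, one must apply the Deligne--Lusztig equivalence~\eqref{DLequiv} judiciously so that the element $\tld{w}$ extracted from $(\tld{w}^{(1)})^{-1}\tld{w}^{(2)}$ indeed has all coordinates in the restricted set $\{1, s, t_{\alpha_\bH} s\}$, which relies on the $3$-deepness hypothesis to exclude degenerate possibilities.
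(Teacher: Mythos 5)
Your overall strategy matches the paper's: both parts are deduced from Lemma \ref{sub:char:intsct}, with part (i) obtained by comparing the two dot-action equations, transporting $(w_2,\mu_2)$ to a new presentation $(w_2',\mu_2')$ via the equivalence \eqref{DLequiv}, and reading off $\tld{w}$ coordinatewise from the structure of $\tW_+$; the converse direction and the lower bound on the count come from explicitly constructing the admissible elements $\tld{w}_\lambda$. That part of the proposal is sound.

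The genuine gap is in part (ii), at the step where you assert that "the uniqueness clause in Lemma \ref{sub:char:intsct} forces $\tld{w}^{(1)} = \tld{w}^{(2)} = \tld{w}_\lambda$." The uniqueness proved there is uniqueness of the element $z t_{-p\nu}\in\tW_+$ \emph{for a fixed presentation} $(w,\mu)$ of a single Deligne--Lusztig representation. Here $\tld{w}^{(1)}$ arises from the presentation $(w_1,\mu_1)$ of $\sigma_1$ and $\tld{w}^{(2)}$ from the presentation $(w_2',\mu_2')$ of $\sigma_2$, which are different data, so the lemma gives no a priori relation between them. Establishing their equality is exactly where the paper has to work: writing $\tld{w}^{(j)} = z^{(j)}t_{-p\nu^{(j)}}$, one pairs the equality $\tld{w}^{(1)}\sbullet(\mu_1+w_1\pi(\nu^{(1)})) = \tld{w}^{(2)}\sbullet(\mu_2'+w_2'\pi(\nu^{(2)}))$ against each $\alpha_i^\vee$ and reduces modulo $2$ to obtain $\delta_{z^{(1)}_i,s}+\delta_{z^{(2)}_i,s}\equiv\delta_{z^{(1)}_{i-1},s}+\delta_{z^{(2)}_{i-1},s} \ (\mathrm{mod}\ 2)$; hence either $z^{(1)}=z^{(2)}$, or else $z^{(1)}=\un{s}z^{(2)}$ with $\nu^{(1)}+\nu^{(2)}=\eta$, and the second alternative is ruled out by substituting back into the equation (this is where the depth hypothesis intervenes). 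Without this parity-and-contradiction argument, a single $F(\lambda)$ could a priori be produced by a pair $(\tld{w}^{(1)},\tld{w}^{(2)})$ with $\tld{w}^{(1)}\neq\tld{w}^{(2)}$ not of the constrained form, and your characterization of the intersection -- and hence the count $2^{|\{i:\tld{w}_i=1\}|}$ -- would not follow. You need to supply this step to close part (ii).
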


\begin{proof}
We begin with item \ref{it1:char:intsct}. 

Assume $\JH(\overline{\sigma_1})\cap \JH(\overline{\sigma_2})\neq \emptyset$; by Lemma \ref{sub:char:intsct} we have
\begin{equation}
\label{eq:equality:characters}
\mu_2 + (w_2\pi - p)\nu^{(2)} + \eta = {(z^{(2)})^{-1}}z^{(1)} (\mu_1 + (w_1\pi - p)\nu^{(1)} + \eta)
\end{equation}
where $z^{(j)} t_{-p\nu^{(j)}}\in {\Omega}$.  This gives
\begin{eqnarray*}
R_{w_2}\left(\mu_2 + \eta\right) & \cong & R_{w_2}\left(\mu_2 + \eta + (w_2\pi - p)\nu^{(2)}\right)\\
 & \cong & R_{w_2}\left({(z^{(2)})^{-1}}z^{(1)} (\mu_1 + (w_1\pi - p)\nu^{(1)} + \eta)\right)\\
 & \cong & R_{{(z^{(1)})^{-1}}z^{(2)}w_2\biF({(z^{(1)})^{-1}}z^{(2)})^{-1}}\left(\mu_1 + (w_1\pi - p)\nu^{(1)} + \eta\right)\\
 & \cong & R_{{(z^{(1)})^{-1}}z^{(2)}w_2\biF({(z^{(1)})^{-1}}z^{(2)})^{-1}}\left(\mu_1 + w_1\pi(\nu^{(1)})  - w_2'\pi(\nu^{(1)}) + \eta  \right)\\
 & \cong & R_{w_2'}\left(\mu_2' + \eta  \right)
\end{eqnarray*}
where the first isomorphism comes from {equation (\ref{DLequiv}) by} adding $(w_2\pi - p)\nu^{(2)}$, the second from (\ref{eq:equality:characters}), the third from {equation (\ref{DLequiv}) by} conjugation by ${(z^{(1)})^{-1}}z^{(2)}$, and the fourth {again from equation (\ref{DLequiv})} by adding $(p - w_2'\pi)\nu^{(1)}$.  Here, we define $w_2' \defeq {(z^{(1)})^{-1}}z^{(2)}w_2\biF({(z^{(1)})^{-1}}z^{(2)})^{-1}$ and $\mu_2' \defeq \mu_1 + w_1\pi(\nu^{(1)})  - w_2'\pi(\nu^{(1)})$.

We now proceed entrywise:
\begin{itemize}
\item if $w_{2,i}' = w_{1,i}$, then by definition we have $\mu_{2,i}' = \mu_{1,i}$;
\item if $w_{2,i}' = w_{1,i}s$, then $\mu_{2,i}' = \mu_{1,i} + w_{1,i}(\pi(\nu^{(1)})_i - s\pi(\nu^{(1)})_i)$.  Since $\pi(\nu^{(1)})_i\in\{0,~\eta_{\bH}\} + X^0(\bT_{\bH})$, we have $\pi(\nu^{(1)})_i - s\pi(\nu^{(1)})_i \in\{0,~ \alpha_{\bH}\}.$
\end{itemize}
This means exactly that $t_{\mu_2'}w_2' = t_{\mu_1}w_1\tld{w}$ in $\tW$, with $\tld{w}_i\in \{1,~ s,~ t_{\alpha_{\bH}} s\}$.

For the converse, suppose that $(w_2',\mu_2')$ satisfies $\sigma_2 = R_{w_2}(\mu_2 + \eta) \cong R_{w_2'}(\mu_2' + \eta)$ and 
$$t_{\mu_2'}w_2' = t_{\mu_1}w_1\tld{w}$$ 
with $\tld{w}_i\in \{1,~ s,~ t_{\alpha_{\bH}} s\}$.   In particular, this implies $\mu_2'$ is 1-deep.  Let $\tld{w}_\lambda = wt_{-p\nu}\in {\Omega}$ be any element which satisfies conditions \ref{it2:1:char:intsct}, \ref{it2:2:char:intsct} in the statement of the lemma.  
\begin{itemize}
\item If $\tld{w}_i = s$, then $w_{2,i}' = w_{1,i}s$ and 
$$\mu_{2,i}' = \mu_{1,i} = \mu_{1,i} + w_{1,i}(\nu_{i - 1} - s(\nu_{i - 1})) = \mu_{1,i} + w_{1,i}\pi(\nu)_i - w_{2,i}^{{\prime}}\pi(\nu)_i.$$
\item If $\tld{w}_i = t_{\alpha_{\bH}}s$, then $w_{2,i}' = w_{1,i}s$ and 
$$\mu_{2,i}' = \mu_{1,i} + w_{1,i}(\alpha_{\bH}) = \mu_{1,i} + w_{1,i}(\nu_{i - 1} - s(\nu_{i - 1})) = \mu_{1,i} + w_{1,i}\pi(\nu)_i - w_{2,i}^{{\prime}}\pi(\nu)_i.$$
\item Finally, if $\tld{w}_i = 1$, then $w_{2,i}' = w_{1,i}$ and
$$\mu_{2,i}' = \mu_{1,i} = \mu_{1,i} + w_{1,i}(\nu_{i - 1}) - w_{2,i}^{{\prime}}(\nu_{i - 1}) = \mu_{1,i} + w_{1,i}\pi(\nu)_i - w_{2,i}^{{\prime}}\pi(\nu)_i.$$
\end{itemize}
Collecting these, we obtain $\mu_1 + w_1\pi(\nu) = \mu_2' + w_2'\pi(\nu)$, i.e.
\begin{equation*}
\tld{w}_\lambda\sbullet\big(\mu_1 + w_1\pi(\nu)\big) = \tld{w}_\lambda\sbullet\big(\mu_2' + w_2'\pi(\nu)\big).
\end{equation*}
By Lemma \ref{sub:char:intsct}, we conclude that $F(\lambda)\in \JH(\overline{\sigma_1})\cap \JH(\overline{\sigma_2})$, where $\lambda$ is defined by
$$\un{s}\sbullet(\lambda - p\eta) = \tld{w}_\lambda\sbullet\big(\mu_1 + w_1\pi(\nu)\big) = \tld{w}_\lambda\sbullet\big(\mu_2' + w_2'\pi(\nu)\big).$$
This completes the proof of item \ref{it1:char:intsct} and of the ``if'' direction in item \ref{it2:char:intsct}, and shows that 
$$\left|\JH(\overline{\sigma_1})\cap\JH(\overline{\sigma_2})\right| \geq 2^{\left|\{i:\tld{w}_i = 1\}\right|}.$$

We now conclude the proof of item \ref{it2:char:intsct}.  

Suppose there exists some $F(\lambda)\in \JH(\overline{\sigma_1})\cap \JH(\overline{\sigma_2})$, and let $(w_2',\mu_2')$ be as in item \ref{it1:char:intsct}.  By Lemma \ref{sub:char:intsct} there exist $z^{(j)}t_{-p\nu^{(j)}}\in {\Omega}$ such that
\begin{equation}\label{JHint}
\un{s}\sbullet(\lambda - p\eta) = {z^{(1)}t_{-p\nu^{(1)}}}\sbullet\left(\mu_1 + w_1\pi(\nu^{(1)})\right) = {z^{(2)}t_{-p\nu^{(2)}}}\sbullet\left(\mu_2' + w_2'\pi(\nu^{(2)})\right).
\end{equation}
{Pairing the middle expression with $\alpha_i^\vee$ and reducing modulo 2 gives
\begin{eqnarray*}
\left\langle z^{(1)}t_{-p\nu^{(1)}}\sbullet(\mu_1 + w_1\pi(\nu^{(1)})), \alpha_i^\vee\right\rangle & = & \left\langle z^{(1)}(\mu_1 + (w_1 \pi - p)\nu^{(1)} + \eta), \alpha_i^\vee\right\rangle - 1 \\
 & = & (-1)^{\delta_{z_i^{(1)}, s}}\left\langle \mu_1 + (w_1 \pi - p)\nu^{(1)} + \eta, \alpha_i^\vee\right\rangle - 1 \\
 & \equiv & \left\langle \mu_1 + (w_1 \pi - p)\nu^{(1)} + \eta, \alpha_i^\vee\right\rangle - 1 \\
 & \equiv & \left\langle \mu_1 , \alpha_i^\vee\right\rangle + (-1)^{\delta_{w_{1,i},s}} \left\langle \pi(\nu^{(1)}), \alpha_i^\vee\right\rangle - p\left\langle\nu^{(1)}, \alpha_i^\vee\right\rangle \\
 & \equiv & \left\langle \mu_1 , \alpha_i^\vee\right\rangle +  \left\langle \pi(\nu^{(1)}), \alpha_i^\vee\right\rangle + \left\langle\nu^{(1)}, \alpha_i^\vee\right\rangle \\
 & \equiv & \left\langle \mu_1 , \alpha_i^\vee\right\rangle +  \delta_{z^{(1)}_{i - 1}, s} + \delta_{z^{(1)}_i, s} ~(\textnormal{mod}~2).
\end{eqnarray*}
We have a similar calculation for the rightmost expression.  Recalling how $\mu_2'$ and $\mu_1$ are related (cf. item \ref{it1:char:intsct}), we see that $\langle \mu_2', \alpha_i^\vee\rangle \equiv \langle \mu_1, \alpha_i^\vee\rangle ~(\textnormal{mod}~2)$.  Consequently, the last equality in \eqref{JHint} gives
}
$$\delta_{z^{(1)}_i, s} + \delta_{z^{(2)}_i,s} \equiv \delta_{z^{(1)}_{i - 1},s} + \delta_{z^{(2)}_{i - 1},s}~(\textnormal{mod}~2).$$

{Suppose by contradiction that $z^{(1)} \neq z^{(2)}$, so that $z^{(1)}_i\neq z^{(2)}_i$ for some $i$.  The above equation implies that} this inequality holds for all $i$, i.e., $z^{(2)} = \un{s}z^{(1)}$ and $\nu^{(1)} + \nu^{(2)} = \eta {+ \beta}$ {for some $\beta \in X^0(\bT)$}.  Substituting this into equation \eqref{JHint} and {cancelling $z^{(1)}$ yields
\begin{equation}
\label{intersection-contradiction}
\mu_1 + (w_1\pi - p)\nu^{(1)} + \eta = \un{s}\left(\mu_2' + (w_2'\pi - p)(\eta + \beta - \nu^{(1)}) + \eta\right).
\end{equation}
Recalling how $(\mu_1, w_1)$ is related to $(\mu_2', w_2')$ via $\widetilde{w}$ (cf. item \ref{it1:char:intsct}), by pairing the above equation with $\alpha_i^\vee$ we see that
\begin{equation}
\label{intersection-contradiction-2}
\langle \mu_1, \alpha_i^\vee\rangle = \begin{cases} \frac{p - 1}{2} - \delta_{w_{1,i},1} & \textnormal{if}~ \widetilde{w}_i = 1,\\  \frac{p - 1}{2} - \delta_{w_{1,i},sz_{i - 1}^{(1)}} & \textnormal{if}~ \widetilde{w}_i = s,\\ \frac{p - 1}{2} - \delta_{w_{1,i},sz_{i - 1}^{(1)}}  + (-1)^{\delta_{w_{1,i}, 1}} & \textnormal{if}~ \widetilde{w}_i = t_{\alpha_{\bH}}s.  \end{cases}
\end{equation}}

{To proceed further, let us write $w_2' = w_1v$ and $\mu_2' = \mu_1 + w_1(\xi)$, where $v \in W$ and $\xi = \sum_{i = 0}^{f - 1} a_i \alpha_i$ with $a_i \in \{0,1\}$ and $a_i = 1$ only if $v_i = s$.  We wish to evaluate the expression
\begin{equation}
\label{intersection-contradiction-3}
- \un{s}w_1(\xi) + \sum_{i = 0}^{f - 1}\big(\langle \mu_1, \alpha_i^\vee\rangle + 1\big)\alpha_i + \Big(w_1\pi(\nu^{(1)}) - \pi(\nu^{(1)})\Big) + \Big(- \un{s}w_1 v\pi(\eta) + \un{s}\pi(\eta)\Big) + \Big(\un{s}w_1v\pi(\nu^{(1)}) - \un{s}\pi(\nu^{(1)})\Big),
\end{equation}
which lies in $\Lambda_R$.  By working entrywise and considering all possibilities for $w_{1,i}, z_{i - 1}^{(1)}, v_i$ and $a_i$, and using equation \eqref{intersection-contradiction-2}, we see that \eqref{intersection-contradiction-3} is equal to 
$$\sum_{i = 0}^{f - 1}\frac{p - 1}{2} \alpha_i.$$
On the other hand, rearranging the expression \eqref{intersection-contradiction-3} gives
$$\Big( \mu_1 - \un{s}(\mu_1) - \un{s}w_1(\xi) + \eta - \un{s}(\eta) + w_1\pi(\nu^{(1)}) - \un{s}w_1 v\pi(\eta) + \un{s}w_1v\pi(\nu^{(1)}) \Big) - \pi(\nu^{(1)}) + \un{s}\pi(\eta) - \un{s}\pi(\nu^{(1)}),$$
and using equation \eqref{intersection-contradiction} to further simplify the parenthesized term above, we get
\begin{flushleft}
$\displaystyle{ \left( p\nu^{(1)} + \pi(\beta) - p\un{s}(\eta) - p\beta + p\un{s}(\nu^{(1)})\right) - \pi(\nu^{(1)}) + \un{s}\pi(\eta) - \un{s}\pi(\nu^{(1)})}$
\end{flushleft}
\begin{flushright}
$\displaystyle{ = (p - \pi)\left(\nu^{(1)} - \beta - \un{s}(\eta) + \un{s}(\nu^{(1)})\right).}$
\end{flushright}
Combining these two calculations, we see that $\sum_{i = 0}^{f - 1}\frac{p - 1}{2} \alpha_i$ lies in $\Lambda_R \cap (p - \pi)X^*(\bT) = (p - \pi)\Lambda_R$, which yields the desired contradiction.
}

The above argument shows we must have {$z^{(2)} = z^{(1)}$ and $\nu^{(2)} = \nu^{(1)} + \beta$ for some $\beta \in X^0(\bT)$.  Thus, equation \eqref{JHint} reduces to 
$$\mu_1 + w_1\pi(\nu^{(1)}) = \mu_2' + w_2'\pi(\nu^{(1)}) + (\pi - p)\beta.$$
Writing $\mu_2' = \mu_1 + w_1(\xi)$ and $w_2' = w_1v$ as above, this equation becomes
$$\mu_1 + w_1\pi(\nu^{(1)}) = \mu_1 + w_1(\xi) + w_1 v\pi(\nu^{(1)}) + (\pi - p)\beta.$$
Cancelling $\mu_1$ and applying $w_1^{-1}$ gives the equation
$$\xi + v\pi(\nu^{(1)}) - \pi(\nu^{(1)}) = (p - \pi)\beta;$$
since the intersection $\Lambda_R \cap X^0(\bT)$ is trivial, we conclude that $\beta = 0$ and 
$$\pi(\nu^{(1)}) - v\pi(\nu^{(1)}) = \xi.$$
This equation and the condition that $z^{(1)}t_{-p\nu^{(1)}}\in \Omega$ determines $\nu^{(1)}$ up to an element in $X^0(\bT)$, so that $z^{(1)}t_{-p\nu^{(1)}}$ must exactly be one of the $\tld{w}_\lambda$ of the statement of the lemma.  This shows $\left|\JH(\overline{\sigma_1})\cap\JH(\overline{\sigma_2})\right| = 2^{\left|\{i:\tld{w}_i = 1\}\right|}$.}
\end{proof}

%
%

{\begin{rmk}
\label{rmk:bij}
The above results hold \emph{mutatis mutandis} for the group $\bG' = \textnormal{Res}_{\cO_{K_2}/\zp}(\bG\bL_{2/\cO_{K_2}})$. More precisely:
\begin{enumerate}
\item
\label{it1:rmk:bij}
The statement of Lemma \ref{sub:char:intsct} holds with $\bG$ (and related objects, e.g., $\bT, W, R_w(\mu + \eta)$, etc.) replaced by $\bG'$ (resp., the relevant primed objects).  Moreover, the quantity $2^f$ is replaced by $2^{2f}$, and the element $\eta\in X^*(\bT)$ is replaced by the character $\rho' \in X^*(\bT')$ (corresponding to $\vect{\un{1}}{\un{0}}\in(\bbZ^2)^{\oplus 2f} \cong X^*(\bT')$).
\item
\label{it2:rmk:bij}
The statement of Proposition \ref{char:intsct}\ref{it1:char:intsct} holds with $\bG$ (and related objects) replaced by $\bG'$ (resp., the relevant primed objects), and the element $\alpha_{\bH}\in X^*(\bT_{\bH})$ replaced by the character of $\bG\bL_2$ corresponding to $\vect{1}{-1}\in\bbZ^2 \cong X^*(\bT_\bG)$ (recall from Subsubsection \ref{tori} that $\bT_\bG$ is the diagonal maximal torus of $\bG\bL_{2}$).  
\end{enumerate}
\end{rmk}}

\subsubsection{}

We are now in a position to compare how intersection of Jordan--H\"older factors behaves under base change.

\begin{prop}
\label{prop:double:weights}
Let $\sigma_1$, $\sigma_2$ be two $3$-generic Deligne--Lusztig representations of $\bG(\zp)$ on which $\imath(\cO_K^\times)$ acts trivially.  Then
$$\big|\JH\big(\overline{\BC(\sigma_1)}\big) \cap \JH\big(\overline{\BC(\sigma_2)}\big)\big| = |\JH(\overline{\sigma_1})\cap\JH(\overline{\sigma_2})|^2.$$
\end{prop}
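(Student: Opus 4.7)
The plan is to apply Proposition \ref{char:intsct} and its $\bG'$-analog to both sides of the desired equality, reducing to a combinatorial bijection between elements of the extended affine Weyl groups $\tW$ and $\tW'$.  To set things up, I would invoke Subsection \ref{trivactiondescent} to fix parameterizations $\sigma_j = R_{w_j}(\nu_j)$ in which $\nu_j\in X^*(\bT)$ has zero last two coordinates per block; setting $\mu_j := \nu_j - \eta$, these are $3$-deep, and formulas \eqref{BCDLPS} and \eqref{BCDLcusp} yield uniformly $\BC(\sigma_j)\cong R'_{(w_j,w_j)}(\nu_j,-\un{s}\nu_j)$.  A direct pairing calculation on both halves verifies that this representation remains $3$-generic, so Proposition \ref{char:intsct} applies in both settings.

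For the forward direction, suppose $\JH(\overline{\sigma_1}) \cap \JH(\overline{\sigma_2})\neq\emptyset$ and let $(w_2^\flat, \mu_2^\flat, \tilde w)$ be the data provided by Proposition \ref{char:intsct}(i), so that $t_{\mu_2^\flat}w_2^\flat = t_{\mu_1}w_1\tilde w$ with $\tilde w_i\in\{1,s,t_{\alpha_\bH}s\}$.  Setting $\tilde w' := (\tilde w,\tilde w)\in\tW'$, $w_2^{\flat,\bG'} := (w_2^\flat,w_2^\flat)$, and $\mu_2^{\flat,\bG'}+\rho' := (\nu_2^\flat,-\un{s}\nu_2^\flat)$ with $\nu_2^\flat := \mu_2^\flat + \eta$, I would verify $t_{\mu_2^{\flat,\bG'}}(w_2^\flat,w_2^\flat) = t_{\mu_1^{\bG'}}(w_1,w_1)\tilde w'$ componentwise; the nontrivial (second-half) equality reduces to $(\un{s}+1)w_1\mu_{\tilde w} = 0$, valid because $s$ commutes with each $w_{1,i}\in W_\bH\cong\bbZ/2$ and $s\alpha_\bH = -\alpha_\bH$.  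The $\bG'$-analog of Proposition \ref{char:intsct}(ii) then gives
\[
\big|\JH(\overline{\BC(\sigma_1)}) \cap \JH(\overline{\BC(\sigma_2)})\big| = 2^{|\{i:\,\tilde w'_i=1\}|} = 2^{2|\{i:\,\tilde w_i=1\}|} = |\JH(\overline{\sigma_1}) \cap \JH(\overline{\sigma_2})|^2.
\]

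For the reverse direction (needed to cover the empty case), I would apply the $\bG'$-analog of Proposition \ref{char:intsct} using the canonical $\epsilon$-symmetric parameterization $\BC(\sigma_2)\cong R'_{(w_2,w_2)}(\nu_2,-\un{s}\nu_2)$.  Unpacking the equation $t_{\mu_2^{\flat,\bG'}}(w_2,w_2) = t_{\mu_1^{\bG'}}(w_1,w_1)\tilde w'$ and using $s\leftrightarrow w_1$ commutativity yields
\[
\tilde w' = t_{(\mu_\tau,\,-\un{s}\mu_\tau)}(w_1^{-1}w_2,w_1^{-1}w_2),
\]
where $\mu_\tau := w_1^{-1}(\nu_2-\nu_1)$.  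Setting $\tau := t_{\mu_\tau}w_1^{-1}w_2$, the first-half constraint $\tau_i\in\{1,s,t_{\alpha_\bH}s\}$ forces $\mu_{\tau,i}\in\{0,\alpha_\bH\}$, and the identity $-s\alpha_\bH = \alpha_\bH$ then gives $-\un{s}\mu_\tau = \mu_\tau$, so $\tilde w' = (\tau,\tau)$.  Taking $\tilde w := \tau$ and $(w_2^\flat,\mu_2^\flat):=(w_2,\mu_2)$ produces the $\bG$-side datum of Proposition \ref{char:intsct}(i), so nonemptiness of the $\bG'$-intersection forces nonemptiness of the $\bG$-intersection.

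The main obstacle is the reverse direction: specifically, ensuring that $\tilde w'$ must take the symmetric form $(\tau,\tau)$ regardless of the DL parameterization of $\BC(\sigma_2)$ chosen.  This is handled by working with the canonical $\epsilon$-symmetric parameterization supplied by the base change formulas---whose symmetry reflects the $\epsilon$-invariance $\BC(\sigma_j)\cong\epsilon(\BC(\sigma_j))$ from Lemma \ref{invforBC}---so that for any other initial choice one would first apply an $\epsilon$-equivalence of DL parameterizations to reduce to this symmetric form.
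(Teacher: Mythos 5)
Your forward direction is correct and is essentially the paper's own argument: transport the datum $t_{\mu_2^\flat}w_2^\flat=t_{\mu_1}w_1\tld{w}$ of Proposition \ref{char:intsct}\ref{it1:char:intsct} to $\bG'$ via the doubled element $(\tld{w},\tld{w})$, using $-\un{s}(\xi)=\xi$ for $\xi_i\in\{0,\alpha_\bH\}$, and then count with the $\bG\bL_2$-analogue of Proposition \ref{char:intsct}\ref{it2:char:intsct}. The $3$-genericity check for $\BC(\sigma_j)$ is also fine.

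The reverse direction as written is circular. The $\bG'$-analogue of Proposition \ref{char:intsct}\ref{it1:char:intsct} only guarantees that \emph{some} Deligne--Lusztig parameterization of $\BC(\sigma_2)$ is related to the fixed parameterization $R'_{(w_1,w_1)}(\nu_1,-\un{s}\nu_1)$ of $\BC(\sigma_1)$ by an element of $\tW'$ with entries in $\{1,\,s,\,t_{\alpha_\bH}s\}$; it does not assert this for the canonical $\epsilon$-symmetric parameterization $R'_{(w_2,w_2)}(\nu_2,-\un{s}\nu_2)$. Your identity $\tld{w}'=t_{(\mu_\tau,-\un{s}\mu_\tau)}(w_1^{-1}w_2,w_1^{-1}w_2)$ is a valid computation in the group, but the ``first-half constraint $\tau_i\in\{1,s,t_{\alpha_\bH}s\}$'' that you then invoke to force $\mu_{\tau,i}\in\{0,\alpha_\bH\}$ is precisely what has to be proved for this particular parameterization, so the deduction begs the question. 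The correct order of operations (and what the paper does) is: start from the \emph{arbitrary} restricted element $t_{(\xi,\xi')}(v,v')$ furnished by the proposition, which expresses $\BC(\sigma_2)$ as $R'_{(w_1v,w_1v')}$ of a suitable character; apply $\epsilon$ to this parameterization, use $\epsilon(\BC(\sigma_2))\cong\BC(\sigma_2)$ from Lemma \ref{invforBC} together with the essential uniqueness of restricted relating elements to conclude $v'=v$ and $\xi'=\xi$; then recognize the resulting representation as $\BC\big(R_{w_1v}(\mu_1+w_1(\xi))\big)$ and invoke injectivity of $\BC$ on isomorphism classes of tame types to descend to $\sigma_2\cong R_{w_1v}(\mu_1+w_1(\xi))$, whence the $\bG$-side intersection is nonempty by Proposition \ref{char:intsct}. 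Your closing remark about ``applying an $\epsilon$-equivalence to reduce to the symmetric form'' points at this mechanism, but it is exactly the step that must be carried out, and as stated it is not an argument.
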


\begin{proof}
Let us write $\sigma_j \cong R_{w_j}(\mu_j)$ for $j = 1,2$, with $\mu_j - \eta$ being $3$-deep.  By the discussion in Subsubsection \ref{trivactiondescent}, we may assume that the last two entries of $\mu_j$ in each embedding are equal to 0.

Suppose first that $\JH(\overline{\sigma_1}) \cap \JH(\overline{\sigma_2}) \neq \emptyset$.  Let $\tld{w} = t_{\xi}v \in \tW$ be as in {Proposition} \ref{char:intsct}, so that $\sigma_2 \cong R_{w_1v}(\mu_1 + w_1(\xi))$ and $|\JH(\overline{\sigma_1}) \cap \JH(\overline{\sigma_2})| = 2^{|\{i:v_i = 1\}|}$.  Using \eqref{BCDLPS} or \eqref{BCDLcusp}, we get
$$\BC(\sigma_1) \cong R'_{(w_1,w_1)}\big(\mu_1,-\un{s}(\mu_1)\big),$$
and
\begin{eqnarray*}
\BC(\sigma_2) & \cong & R'_{(w_1v,w_1v)}\big(\mu_1 + w_1(\xi),~ -\un{s}(\mu_1) - \un{s}w_1(\xi)\big)\\
 & \cong & R'_{(w_1v,w_1v)}\big((\mu_1, -\un{s}(\mu_1)) +(w_1(\xi), w_1(\xi))\big)
\end{eqnarray*}
(note that $\xi_i \in \{0,~\alpha_{\bH}\}$ for each $0 \leq i \leq f - 1$, so $-\un{s}(\xi) = \xi$).  
By the $\bG\bL_2$ analog of Proposition \ref{char:intsct}\ref{it1:char:intsct}, we may obtain $\BC(\sigma_2)$ from $\BC(\sigma_1)$ via the element $t_{(\xi,\xi)}(v,v) \in \tW'$, and therefore
$$\big|\JH\big(\overline{\BC(\sigma_1)}\big) \cap \JH\big(\overline{\BC(\sigma_2)}\big)\big| = 2^{|\{i: (v,v)_i = 1\}|} = 2^{2|\{i: v_i = 1\}|} = \left|\JH(\overline{\sigma_1}) \cap \JH(\overline{\sigma_2})\right|^2.$$
{(We write $\tW'$ for the affine Weyl group of $\bG' = \textnormal{Res}_{\cO_{K_2}/\zp}(\bG\bL_{2/\cO_{K_2}})$, and use similar primed notation below for the automorphism $\pi'$ acting on the character lattice of $\bG'$ (see Subsubsection \ref{def-of-pi-prime}).)}

To conclude, it suffices to prove that $\JH(\overline{\BC(\sigma_1)}) \cap \JH(\overline{\BC(\sigma_2)}) \neq \emptyset$ implies $\JH(\overline{\sigma_1}) \cap \JH(\overline{\sigma_2}) \neq \emptyset$.  Assume the former.  
{By Remark \ref{rmk:bij}\ref{it2:rmk:bij}}
we may obtain $\BC(\sigma_2)$ from $\BC(\sigma_1)$ via an element $t_{(\xi,\xi')}(v,v')\in \tW'$ with $t_{\xi_i}v_i,~ t_{\xi'_i}v_i'\in \{1,~ s,~ t_{\alpha_\bH} s\}$.  
That is, we have
$$\BC(\sigma_2) \cong R_{(w_2,w_2)}'\big(\mu_2, -\un{s}(\mu_2)\big) \cong R_{(w_1v,w_1v')}'\big((\mu_1, -\un{s}(\mu_1)) + (w_1,w_1)(\xi,\xi')\big).$$
By Lemma \ref{invforBC}, the isomorphism class of the representation on the right is invariant under $\epsilon$, {which implies
$$R_{(w_1v,w_1v')}'\big((\mu_1, -\un{s}(\mu_1)) + (w_1,w_1)(\xi,\xi')\big) \cong R_{(w_1v',w_1v)}'\big((\mu_1, -\un{s}(\mu_1)) + (w_1,w_1)(\xi',\xi)\big).$$
By \cite[Prop. 2.2.15]{LLL} (which can be used by the depth assumption on $\mu_1 - \eta, \mu_2 - \eta$ and the fact that $\xi_i, \xi_i' \in \{0,\alpha_{\bH}\}$ for all $i$) there exists $zt_{-p\nu} \in \widetilde{W}'$ such that 
\begin{itemize}
\item if $z_i = 1$, then $\nu_i \in X^0(\bT_\bG)$; 
\item if $z_i = s$, then $\nu_i \in \vect{1}{0} + X^0(\bT_\bG)$;
\item we have $(w_1v', w_1v) = z(w_1v,w_1v')\pi'(z)^{-1}$; and
\item we have
\begin{flushleft}
$(\mu_1, -\un{s}(\mu_1)) + (w_1,w_1)(\xi',\xi)$
\end{flushleft}
\begin{flushright}
$ = z(\mu_1, -\un{s}(\mu_1)) + z(w_1,w_1)(\xi,\xi') + \big(p -  z(w_1v,w_1v')\pi'(z)^{-1}\big)\pi'(\nu).$
\end{flushright}
\end{itemize}
Rearranging the equation in the last item, we obtain
$$\big(p -  z(w_1v,w_1v')\pi'(z)^{-1}\big)\pi'(\nu) = (\mu_1, -\un{s}(\mu_1)) - z(\mu_1, -\un{s}(\mu_1)) + (w_1,w_1)(\xi',\xi) - z(w_1,w_1)(\xi,\xi'),$$
and the right-hand term lies in the root lattice of $\bG'$; consequently, the same is true for the element $\nu$.  Combining this with the first two items implies that $\nu = 0$, and thus $z = 1$.  Finally, the third and fourth items imply $v' = v$ and $\xi' = \xi$.}

{The above argument shows}
$$\BC(\sigma_2) \cong R_{(w_1v,w_1v)}'\big((\mu_1, -\un{s}(\mu_1)) + (w_1,w_1)(\xi,\xi)\big) \cong \BC(R_{w_1v}(\mu_1 + w_1(\xi))).$$
Since the base change map is injective on isomorphism classes of tame types, we get 
$$\sigma_2 \cong R_{w_1v}(\mu_1 + w_1(\xi))$$
and consequently $\JH(\overline{\sigma_1}) \cap \JH(\overline{\sigma_2}) \neq \emptyset$ by Proposition \ref{char:intsct}.  
\end{proof}

\subsubsection{}

We introduce a metric on the set of Serre weights contained in a sufficiently generic tame type.  This will turn out to be useful in the proof of Theorem \ref{thm:WExt}.

\begin{df}
\label{def:label}
Let $R_w(\mu)$ denote a Deligne--Lusztig representation of $\bG(\bbZ_p)$, and suppose $\mu - \eta$ is $1$-deep.  Let $F(\lambda)\in \JH(\overline{R_w(\mu)})$.  By Lemma \ref{sub:char:intsct}, there exists an element $z t_{-p\nu} \in {\Omega}$ defined by
$$\un{s}\sbullet(\lambda - p\eta) = z t_{-p\nu}\sbullet(\mu - \eta + w\pi(\nu)).$$
We say $z\in W$ is the \emph{label} of $F(\lambda)$ with respect to $(w,\mu)$.
\end{df}

\begin{rmk}
\label{rmk:well-defined}
Maintain the setting of Definition \ref{def:label}.  If $(w',\mu')$ is another pair such that $R_w(\mu) \cong R_{w'}(\mu')$ with $\mu' - \eta$ being $1$-deep, then by equation \eqref{DLequiv} we have 
$$(w',\mu') = (vw\biF(v)^{-1}, v(\mu) + \biF(\nu) - vw\biF(v)^{-1}(\nu))$$ 
for some pair $(\nu, v)\in  X^*(\bT) \rtimes W$.  It is easily checked that if the label of $F(\lambda)$ with respect to $(w,\mu)$ is $z$, then the label of $F(\lambda)$ with respect to $(w',\mu')$ is given by by $z v^{-1}$.  
\end{rmk}

\begin{df}
Let $\sigma$ denote a $1$-generic Deligne--Lusztig representation of $\bG(\zp)$, and let $F, F'\in \JH(\overline{\sigma})$.  Choose an isomorphism $\sigma \cong R_w(\mu)$, with $\mu - \eta$ being $1$-deep, and suppose that the labels of $F$ and $F'$ with respect to $(w,\mu)$ are $z$ and $z'$, respectively.  We define the \emph{graph distance} $\dgr{F}{F'}$ as the number of $i$ for which $z_i \neq z_i'$ (i.e., $\dgr{F}{F'}$ is the length {$\ell(z'z^{-1})$} of $z'z^{-1}$).  By Remark \ref{rmk:well-defined} the graph distance is well-defined.
\end{df}

\begin{rmk}
Suppose that $\sigma_1$ and $\sigma_2$ are two $3$-generic Deligne--Lusztig representations of $\bG(\zp)$, and suppose $F,F'\in \JH(\overline{\sigma_1})\cap \JH(\overline{\sigma_2})$.  Then the graph distance between $F$ and $F'$, computed using $\sigma_1$, agrees with the graph distance between $F$ and $F'$, computed using $\sigma_2$ (this follows from Lemma \ref{sub:char:intsct} and Proposition \ref{char:intsct}).
\end{rmk}

\begin{lem}
\label{lem:comb:typ1}
Let $\sigma$ be a $4$-generic Deligne--Lusztig representation of $\bG(\zp)$, and let $F,F'\in \JH(\overline{\sigma})$.  Then there exists a tame type $\sigma'$ such that:
\begin{itemize}
	\item $F,\, F'\in \JH(\overline{\sigma'})$; and
	\item for any $F''\in \JH(\overline{\sigma})\cap\JH(\overline{\sigma'})$ satisfying $F''\neq F'$, we have 
	$$\dgr{F}{F''} < \dgr{F}{F'}.$$
\end{itemize}
{Specifically}, $\sigma$ and $\sigma'$ can be {written} so that $\sigma \cong R_w(\mu)$, $\sigma' \cong R_{w'}(\mu')$ with $\mu - \eta$ being $3$-deep, and $t_{\mu' - \eta}w' = t_{\mu - \eta}wt_{\alpha_{\biF(z)}}\biF(z)$ for an element $z \in W$ which satisfies $\ell(\un{s}z)=\dgr{F}{F'}$.  \emph{(}{For} $v\in W$ we denote $\alpha_v \defeq \underset{i\,:\,v_i = s}{\sum}\alpha_i$.\emph{)}  In this case, 
$$\left|\JH(\overline{\sigma}) \cap \JH(\overline{\sigma'})\right| = 2^{\ell(\un{s}z)} = 2^{\dgr{F}{F'}}.$$
\end{lem}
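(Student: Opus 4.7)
The strategy is to construct $\sigma'$ explicitly as $\sigma' = R_{w'}(\mu')$ by right multiplying a well-chosen representative of $\sigma$ in the extended affine Weyl group $\tld{W}$. Since $\sigma$ is $4$-generic, using Remark \ref{rmk:well-defined} together with the equivalence \eqref{DLequiv}, we choose a representative $\sigma \cong R_w(\mu)$ with $\mu - \eta$ being $3$-deep \emph{and} with the label of $F$ equal to $\un{s}$; the $4$-genericity provides enough slack in each alcove direction to arrange both conditions simultaneously. Set $z := z_{F'} \in W$ to be the label of $F'$ with respect to this $(w,\mu)$; then
$$\ell(\un{s}z) = |\{j : z_j = 1\}| = |\{j : (z_F)_j \neq (z_{F'})_j\}| = \dgr{F}{F'}.$$

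Next, define $\tld{w} := t_{\alpha_{\biF(z)}}\biF(z) \in \tld{W}$ and let $(w',\mu')$ be determined by $t_{\mu'-\eta}w' = t_{\mu-\eta}w\cdot\tld{w}$, i.e., $w' := w\biF(z)$ and $\mu' := \mu + w(\alpha_{\biF(z)})$. Since each pairing $\langle w(\alpha_{\biF(z)}), \alpha_i^\vee\rangle \in \{0,\pm 2\}$, the $4$-genericity ensures $\mu' - \eta$ is $3$-deep as well. Using $\biF(z)_i = z_{i-1}$, an entrywise calculation gives
$$\tld{w}_i = \begin{cases} 1 & \text{if } z_{i-1} = 1,\\ t_{\alpha_\bH}s & \text{if } z_{i-1} = s,\end{cases}$$
so $\tld{w}_i \in \{1, s, t_{\alpha_\bH}s\}$ for all $i$, and Proposition \ref{char:intsct} applies to the pair $(\sigma, \sigma')$.

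By Proposition \ref{char:intsct}(ii), an element $F'' \in \JH(\overline{\sigma})\cap\JH(\overline{\sigma'})$ corresponds to a label $z_{F''}$ with $(z_{F''})_j = s$ whenever $z_j = s$, and $(z_{F''})_j \in \{1, s\}$ free when $z_j = 1$. This yields $|\JH(\overline{\sigma})\cap\JH(\overline{\sigma'})| = 2^{|\{j:z_j=1\}|} = 2^{\dgr{F}{F'}}$, and both $z_F = \un{s}$ and $z_{F'} = z$ satisfy these constraints, so $F, F' \in \JH(\overline{\sigma'})$. For any other $F'' \neq F'$ in the intersection, $z_{F''}$ must disagree with $z$ in some free coordinate $j_0$ (necessarily with $z_{j_0} = 1$), forcing $(z_{F''})_{j_0} = s = (z_F)_{j_0}$ and hence $\dgr{F}{F''} < \dgr{F}{F'}$. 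The main technical obstacle is the genericity bookkeeping in the first step: simultaneously achieving $3$-deepness of $\mu - \eta$, the label of $F$ being $\un{s}$, and $3$-deepness of $\mu' - \eta$; the $4$-genericity hypothesis on $\sigma$ provides the required $\pm 2$ slack in each alcove direction, and the verification relies on the explicit description of $\tld{W}_+$ together with the transformation rule in \eqref{DLequiv}.
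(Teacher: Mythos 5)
Your proposal is correct and follows essentially the same route as the paper: after normalizing the representative so that $F$ has label $\un{s}$ (costing one unit of depth), the paper also sets $\sigma' \defeq R_{w\biF(z)}(\mu + w\pi(\alpha_{z}))$, which is exactly your $R_{w\biF(z)}(\mu + w(\alpha_{\biF(z)}))$ since $\pi(\alpha_z) = \alpha_{\biF(z)}$, and then concludes via Proposition \ref{char:intsct}\ref{it2:char:intsct}. You merely make explicit the label bookkeeping that the paper leaves implicit.
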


\begin{proof}
Let us write $\sigma \cong R_w(\mu)$ with $\mu - \eta$ being $4$-deep.  By applying the equivalence \eqref{DLequiv}, we may assume that the label of $F$ with respect to $(w,\mu)$ is $\un{s}$ at the cost of assuming $\mu - \eta$ is only $3$-deep.  Suppose that the label of $F'$ with respect to $(w,\mu)$ is $z$.  By definition, we have
\begin{eqnarray*}
F & \cong & F\big(\mu + w(\eta) - \eta\big) = F(\mu - \alpha_w)\\
F' & \cong & F\big(\un{s}z(\mu + (w\pi - p)\nu) + (p - 1)\eta\big),
\end{eqnarray*}
where $z t_{-p\nu}\in {\Omega}$.

We define $\sigma' \defeq R_{w\biF(z)}(\mu + w\pi(\alpha_{z}))$.  We easily see that $F$ and $F'$ are Jordan--H\"older factors of $\overline{\sigma'}$, whose labels with respect to $(w\biF(z),\mu + w\pi(\alpha_{z}))$ are $\un{s}$ and $z$, respectively.  Moreover, by the explicit description of $\JH(\overline{\sigma}) \cap \JH(\overline{\sigma'})$ of Proposition \ref{char:intsct}\ref{it2:char:intsct}, we see that any element $F''\neq F'$ of the intersection satisfies $\dgr{F}{F''} < \dgr{F}{F'}$.
{The final part of the aforementioned proposition gives the size of the intersection.}  
\end{proof}

\subsection{Base change of weights}
\label{subsec:BC:WT}

We now define a notion of base change for weights, and show that it is compatible with the notion of base change of types defined above.

\subsubsection{}

\begin{df}
\label{BC-Serre-wt}
Let $\mu\in X_1(\bT)$ and let $F(\mu)$ denote a Serre weight of $\bG(\bbZ_p)$ on which $\imath(\cO_K^\times)$ acts trivially.  As in Subsection \ref{trivactiondescent}, we may assume $\mu$ is of the form
$$\mu = \vecf{a_0}{b_0}{0}{0}\vecf{a_1}{b_1}{0}{0}\cdots \vecf{a_{f - 1}}{b_{f - 1}}{0}{0}.$$
We define the \emph{base change of $F(\mu)$} as
$$\BC(F(\mu)) \defeq F'(\mu, -\un{s}(\mu)).$$
\end{df}

One easily checks that the map $F \longmapsto \BC(F)$ is well-defined and injective on isomorphism classes of Serre weights.

\subsubsection{}

Recall the automorphism $\epsilon$ of $\bG'(\bbF_p) \cong \bG\bL_2(\bbF_{q^2})$ defined in Subsection \ref{epsilondef}.  On Serre weights, this automorphism gives
$$\epsilon\left(F'(\mu,\mu')\right) = F'\big(-\un{s}(\mu'),-\un{s}(\mu)\big).$$
{We have the following result:}

\begin{lem}\label{invforBCsw}
Let $F'$ denote a Serre weight of $\bG\bL_2(\bbF_{q^2})$.  Then we have $\epsilon(F') \cong F'$ if and only if $F'$ is of the form $\BC(F)$ for a Serre weight $F$ of $\bU_2(\bbF_q)$.  
\end{lem}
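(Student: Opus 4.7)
The plan is to deduce both implications from the parametrization of Serre weights in Proposition \ref{serrewtparam} (and its $\bG'$-analog) together with a cohomological analysis of the involution $\epsilon$ on $X^*(\bT')$.

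The \emph{if} direction is immediate: if $F' \cong \BC(F(\mu)) = F'(\mu, -\un{s}(\mu))$, then the explicit formula for $\epsilon$ gives
$$\epsilon(F') \cong F'\bigl(-\un{s}(-\un{s}(\mu)), -\un{s}(\mu)\bigr) = F'(\mu, -\un{s}(\mu)) \cong F'.$$

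For the \emph{only if} direction, write $F' \cong F'(\mu_1, \mu_2)$ with $(\mu_1, \mu_2) \in X_1(\bT')$. The hypothesis $\epsilon(F') \cong F'$ yields $\nu \in X^0(\bT')$ with
$$(\mu_1, \mu_2) - \epsilon(\mu_1, \mu_2) = (\biF' - 1)\nu.$$
The first step is to verify directly from the explicit formulas in \S\ref{subsubsec:groupG'} and \S\ref{epsilondef} that $\epsilon$ commutes with $\biF'$ on $X^*(\bT')$. Applying $\epsilon$ to the displayed equation and summing with itself gives $(\biF' - 1)(\nu + \epsilon(\nu)) = 0$; as $\biF'$ is a cyclic shift combined with multiplication by $p$, it has no nonzero fixed points on $X^*(\bT')$ over $\bbZ$, and hence $\epsilon(\nu) = -\nu$. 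The crucial step is then to find $\nu' \in X^0(\bT')$ with $\nu' - \epsilon(\nu') = \nu$. This amounts to showing that the map $\nu' \mapsto \nu' - \epsilon(\nu')$ surjects from $X^0(\bT') \cong \bbZ^{2f}$ onto its $(-1)$-eigenspace; one checks by a short direct calculation that the $\epsilon$-action on $X^0(\bT')$ permutes the $2f$ coordinates (indexed by embeddings $K_2 \hookrightarrow \overline{\bbQ}_p$) in pairs above each embedding of $K$, with a sign. Setting $(\mu_1', \mu_2') \defeq (\mu_1, \mu_2) - (\biF' - 1)\nu'$, the commutativity of $\epsilon$ and $\biF'$ together with $\nu' - \epsilon(\nu') = \nu$ gives $\epsilon(\mu_1', \mu_2') = (\mu_1', \mu_2')$, so $(\mu_1', \mu_2') = (\mu, -\un{s}(\mu))$ for some $\mu \in X^*(\Res_{\cO_K/\zp}(\bT_\bU))$.

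It remains to verify that $F(\mu)$ is a Serre weight of $\bG(\bbZ_p)$ on which $\imath(\cO_K^\times)$ acts trivially. Since $(\biF' - 1)X^0(\bT')$ pairs trivially with every simple coroot of $\bG'$ (a short duality computation using that $\biF'^*$ permutes simple coroots up to the scalar $p$), the pairing of $\mu$ with $\alpha_i^\vee$ equals that of $(\mu_1, \mu_2)$ with the corresponding coroot of $\bG'$, and hence lies in $[0,p-1]$; so $\mu \in X_1(\bT)$. Moreover, $\mu$ is the image of an element of $X^*(\Res_{\cO_K/\zp}(\bT_\bU))$, so its $c$- and $d$-coordinates vanish, ensuring by \S\ref{trivactiondescent} that $\imath(\cO_K^\times)$ acts trivially on $F(\mu)$. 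Thus $\BC(F(\mu)) = F'(\mu, -\un{s}(\mu)) \cong F'(\mu_1, \mu_2) \cong F'$, as required.

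I expect the main obstacle to be purely bookkeeping: checking commutativity of $\epsilon$ and $\biF'$, surjectivity of $\nu' \mapsto \nu' - \epsilon(\nu')$ onto $(X^0(\bT'))^{-\epsilon}$, and that adjustment by $(\biF' - 1)X^0(\bT')$ preserves the relevant pairings. Each is a short direct calculation in the models of \S\ref{subsec:thegroupG} and \S\ref{subsubsec:groupG'}, but requires careful tracking of the interplay between the shift-and-multiply action of $\biF'$ and the swap-and-negate action of $\epsilon$ on the $2f$ coordinates.
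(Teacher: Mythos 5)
Your argument is correct. The paper offers no proof of this lemma (it is stated as "easily checked"), and your verification is exactly the expected one: the "if" direction is the computation $(-\un{s})^2 = 1$, and the "only if" direction reduces, via the parametrization of Serre weights, to the cohomological fact that every $\nu \in X^0(\bT')$ with $\epsilon(\nu) = -\nu$ lies in the image of $1 - \epsilon$, which holds because $\epsilon$ acts on $X^0(\bT') \cong \bbZ^{2f}$ as $-1$ times a fixed-point-free involution pairing coordinate $i$ with $i+f$. The only step you leave implicit is that $\epsilon$ preserves $X_1(\bT')$ (needed before invoking the classification to write the difference as $(\biF'-1)\nu$); this is immediate since $\langle -\un{s}(\lambda), \alpha^\vee\rangle = \langle \lambda, \alpha^\vee\rangle$, so it is not a gap.
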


\begin{proof}
{The backwards implication is clear.  We prove the forward implication.  Thus, suppose $\mu, \mu' \in X_1(\bT)$ are as in Definition \ref{BC-Serre-wt}, and suppose we have an isomorphism
$$F'(\mu, \mu') \cong F'(-\un{s}(\mu'), -\un{s}(\mu)).$$
By Proposition \ref{Serre-wt-classn-GL2}, there exists $\beta' \in X^0(\bT')$ such that
\begin{equation}
\label{eq:stable}(\mu + \un{s}(\mu'), \mu' + \un{s}(\mu)) = (\biF' - 1)\beta'.
\end{equation}
Since the right-hand side of \eqref{eq:stable} is fixed by $(\un{s},\un{s})$, the same is true of the left-hand side.  This implies that the left-hand side is also fixed by $\pi'^f$, from which we obtain
$$(\biF' - 1)\pi'^f(\beta') = (\biF' - 1)\beta'.$$
Since $\biF' - 1$ is injective on $X^0(\bT')$, we see that $\beta'$ lies in $\ker(\pi'^f - 1)$, which in turn is equal to $\textnormal{im}(\pi'^f + 1)$. Therefore, we can write $\beta' = (\beta,\beta)$ with $\beta \in X^0(\Res_{\cO_K/\zp}(\bT_\bU))$, and equation \eqref{eq:stable} becomes
\begin{equation}
\label{invforBCsw-eqn}
{(\mu + \un{s}(\mu'), \mu' + \un{s}(\mu)) = (\biF' - 1)(\beta,\beta).}
\end{equation}}

{Applying Proposition \ref{Serre-wt-classn-GL2} again, we get an isomorphism
$$F'(\mu,\mu') \cong F'\big((\mu, \mu') + (\biF' - 1)(-\beta,0)\big).$$
The equation \eqref{invforBCsw-eqn} then implies that the term on the right-hand side above is of the form $F'(\mu'', -\un{s}(\mu''))$, and the result follows.}
\end{proof}

\subsubsection{}

We now wish to relate base change of types with base change of weights.  The relevant result is the following.

\begin{lem}
\label{JHunderBC}
Let $\sigma$ denote a $1$-generic Deligne--Lusztig representation of $\bG(\bbZ_p)$ on which $\imath(\cO_K^\times)$ acts trivially, and let $F$ denote a Serre weight on which $\imath(\cO_K^\times)$ acts trivially.  We then have
$$F \in \JH(\overline{\sigma}) \quad \Longleftrightarrow \quad \BC(F) \in \JH\big(\overline{\BC(\sigma)}\big).$$
\end{lem}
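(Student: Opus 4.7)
The plan is to prove both implications using the explicit Jordan--H\"older formulas \eqref{jantzen} and \eqref{jantzenGL2}, together with the base change identifications \eqref{BCDLPS} and \eqref{BCDLcusp}.  First I would write $\sigma \cong R_w(\mu)$ with $\mu - \eta$ lying $1$-deep in the fundamental alcove.  Since $\imath(\cO_K^\times)$ acts trivially on $\sigma$, I may apply Subsection \ref{trivactiondescent} to assume that the third and fourth coordinates of $\mu$ vanish; writing $\mu^\circ \in X^*(\Res_{\cO_K/\zp}(\bT_\bU))$ for the resulting descent, equation \eqref{BCDLPS} or \eqref{BCDLcusp} gives $\BC(\sigma) \cong R'_{(w,w)}(\mu^\circ, -\un{s}(\mu^\circ))$.

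For the forward implication, I would take $F \in \JH(\overline{\sigma})$ and use \eqref{jantzen} to write $F \cong F_{w'}(R_w(\mu))$ for some $w' \in W$.  The trivial action of $\imath(\cO_K^\times)$ forces the corresponding highest weight $\lambda$ to descend to $\lambda^\circ \in X^*(\Res_{\cO_K/\zp}(\bT_\bU))$, and so $\BC(F) = F'(\lambda^\circ, -\un{s}(\lambda^\circ))$.  I would then verify via direct computation with \eqref{jantzenGL2} that this Serre weight equals $F'_{(w',w')}(\BC(\sigma))$ modulo the equivalence $(\biF' - 1)X^0(\bT')$.  The guiding observation is that $\pi'$ preserves the anti-diagonal subspace $\{(\xi^\circ, -\un{s}(\xi^\circ))\} \subseteq X^*(\bT')$ and coincides there with the descent $\pi^\circ$ of $\pi$, a reflection of the Galois structure on $X^*(\bT')$ arising from $\bU_2$ being a non-split form of $\bG\bL_2$.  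The main technical challenge in this step, however, is that $\pi'$ acts non-diagonally on pairs like $(\varepsilon_{\un{s}w'}, \varepsilon_{\un{s}w'})$ that appear in \eqref{jantzenGL2}; one must track the deviation between $\pi'(\xi^\circ, \xi^\circ)$ and $(\pi^\circ(\xi^\circ), \pi^\circ(\xi^\circ))$ and verify that it lies in $(\biF' - 1)X^0(\bT')$, so as to be absorbed in the equivalence.

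For the converse, I would use $\epsilon$-invariance and a counting argument.  By Lemma \ref{invforBC}, $\BC(\sigma)$ is $\epsilon$-invariant, so $\epsilon$ permutes $\JH(\overline{\BC(\sigma)})$, and by Lemma \ref{invforBCsw} the $\epsilon$-fixed points are precisely the base changes $\BC(F)$ for Serre weights $F$ of $\bG(\zp)$.  The forward implication, combined with the injectivity of $\BC$ on Serre weights, gives an injection $\BC \colon \JH(\overline{\sigma}) \hookrightarrow \{\epsilon\text{-fixed Jordan--H\"older factors of } \overline{\BC(\sigma)}\}$.  A direct computation with \eqref{jantzenGL2} will show that the $\epsilon$-action on the index set $W' = W \times W$ (up to the equivalence on $X^*(\bT')$) is the swap $(v_1, v_2) \mapsto (v_2, v_1)$, whose fixed set has exactly $|W| = 2^f$ elements.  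Since $|\JH(\overline{\sigma})| = 2^f$, the injection above is forced to be a bijection, yielding the converse.
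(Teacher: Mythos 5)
Your proposal is correct and follows essentially the same route as the paper: the forward direction via the explicit Jordan--Hölder formulas with the deviation absorbed into $(\biF'-1)X^0(\bT')$, and the converse via the observation that $\epsilon$ swaps the two $W$-components of the label $F'_{(v,v')}(\BC(\sigma))$ together with Lemmas \ref{invforBC} and \ref{invforBCsw}. The only cosmetic difference is that you package the converse as a fixed-point count forcing a bijection, whereas the paper extracts $v=v'$ directly from the $\epsilon$-invariance of the given factor and the distinctness of the Jordan--Hölder constituents; both rest on the same inputs.
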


\begin{proof}
Let us write $\sigma \cong R_w(\mu)$ where $\mu$ is of the form
$$\mu = \vecf{a_0}{b_0}{0}{0}\vecf{a_1}{b_1}{0}{0}\cdots \vecf{a_{f - 1}}{b_{f - 1}}{0}{0}$$
and $\mu - \eta$ is $1$-deep.  Thus $\BC(\sigma) \cong R'_{(w,w)}(\mu,-\un{s}(\mu))$.

Suppose first that $F \in \JH(\overline{\sigma})$.  By equation \eqref{jantzen}, $F$ is of the form
$$F \cong F_{w'}(R_{w}(\mu)) = F\big(p\gamma_{w'} + w'(\mu - w\pi(\varepsilon_{\un{s}w'})) + p\rho_{w'} - \pi(\rho)\big)$$
for some $w' \in W$.  Note that the parenthesized character has its last two entries equal to 0 in each embedding.  Thus, we have
\begin{flushleft}
$\BC(F)  \cong  F'\Big(p(\gamma_{w'}, - \gamma_{w'}) + (w',w')\big((\mu, -\un{s}(\mu)) - (w,w)(\pi(\varepsilon_{\un{s}w'}), -\un{s}\pi(\varepsilon_{\un{s}w'}))\big)$
\end{flushleft}
\begin{flushright}
$  + p(\rho_{w'},-\un{s}(\rho_{w'})) - (\pi(\rho),-\un{s}\pi(\rho))\Big).$
\end{flushright}
A straightforward calculation shows that adding $(p - \pi')(0,2\gamma_{w'} + \rho_{w'} + \un{s}(\rho_{w'}))\in (\biF' - 1)X^0(\bT')$ to the parenthesized character gives
$$\BC(F)  \cong  F'\Big(p\gamma'_{(w',w')} + (w',w')\big((\mu, -\un{s}(\mu)) - (w,w)\pi'(\varepsilon'_{(\un{s}w',\un{s}w')})\big) + p\rho'_{(w',w')} - \rho'\Big).$$
Hence, we obtain
$$\BC(F) \cong F'_{(w',w')}\big(R'_{(w,w)}(\mu,-\un{s}(\mu))\big) \in \JH\big(\overline{R'_{(w,w)}(\mu,-\un{s}(\mu))}\big) = \JH\big(\overline{\BC(\sigma)}\big).$$

To prove the converse we begin with an observation.  Let $F'_{(v,v')}(\BC(\sigma))$ be a Jordan--H\"older factor of ${\overline{\BC(\sigma)} = \overline{R'_{(w,w)}(\mu,-\un{s}(\mu))}}$ as in \eqref{jantzenGL2}.  (Note that $F'_{(v,v')}(\BC(\sigma))$ depends on the pair $(w,\mu)$.  For readability, we fix the presentation $(w,\mu)$ and omit this dependence from the notation.)  Since $\epsilon(\BC(\sigma)) \cong \BC(\sigma)$ by Lemma \ref{invforBC}, we obtain $\epsilon(F'_{(v,v')}(\BC(\sigma))) \in \JH(\overline{\BC(\sigma)})$.  A similar argument to the one above shows that
$$\epsilon\big(F'_{(v,v')}(\BC(\sigma))\big) \cong F'_{(v',v)}(\BC(\sigma)).$$

Suppose now that $\BC(F) \in \JH(\overline{\BC(\sigma)})$.  Then there exists $(v,v')\in W'$ such that $\BC(F) \cong F'_{(v,v')}(\BC(\sigma))$.  Since $\BC(F)$ is a base change, Lemma \ref{invforBCsw} and the above equation imply
\begin{eqnarray*}
F'_{(v',v)}(\BC(\sigma)) & \cong & \epsilon\big(F'_{(v,v')}(\BC(\sigma))\big) \\
 & \cong & \epsilon\big(\BC(F)\big) \\
 & \cong & \BC(F) \\
 & \cong & F'_{(v,v')}(\BC(\sigma)).
\end{eqnarray*}
Since {$\mu - \eta$ is 1-deep, equation \eqref{jantzenGL2} {and Remark \ref{rmk:bij}\ref{it1:rmk:bij}} implies that we have a bijection between $W'$ and the (all distinct) Jordan--H\"older factors of $\overline{\BC(\sigma)}$.  We obtain $v' = v$, and thus}
$$\BC(F) \cong F'_{(v,v)}(\BC(\sigma)) \cong \BC(F_v(R_w(\mu))).$$
Since the base change map is injective on Serre weights, we conclude that 
$$F \cong F_v(R_w(\mu)) \in \JH\big(\overline{R_w(\mu)}\big) = \JH(\overline{\sigma}).$$
\end{proof}

\subsubsection{}

The following lemma will be useful in the proof of Theorem \ref{thm:WE}.

\begin{lem}
\label{emptyint}
{Let $\sigma$ be a $2$-generic Deligne--Lusztig representation of $\bG(\bbZ_p)$ on which $\imath(\cO_K^{{\times}})$ acts trivially, and let $F$ denote a $3$-deep Serre weight with trivial action of $\imath(\cO_K^{{\times}})$ such that $F \not\in \JH(\overline{\sigma})$.  Then there exists another Deligne--Lusztig representation $\sigma'$ of $\bG(\bbZ_p)$ such that $F \in \JH(\overline{\sigma'})$ and $\JH(\overline{\sigma})\cap \JH(\overline{\sigma'}) = \emptyset$.}
\end{lem}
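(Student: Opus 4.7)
Write $\sigma \cong R_w(\mu+\eta)$ with $\mu$ being $2$-deep (possible since $\sigma$ is $2$-generic), and $F = F(\lambda)$ with $\lambda$ being $3$-deep. By Lemma \ref{sub:char:intsct}, the hypothesis $F\notin \JH(\overline{\sigma})$ is equivalent to the non-existence of a pair $(z,\nu)$ with $zt_{-p\nu}\in \tW_+$ satisfying
$$(\ast)\qquad zt_{-p\nu}\sbullet(\mu+w\pi(\nu)) \;=\; \un{s}\sbullet(\lambda-p\eta).$$
The strategy is to construct $\sigma'$ with the \emph{same} Coxeter index $w$ as $\sigma$, containing $F$ as a Jordan--H\"older factor, and to derive a contradiction from any hypothetical common Jordan--H\"older factor by tracing the associated data back to a solution of $(\ast)$.

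Set $\mu' \defeq \lambda - w\pi(\eta)$ and $\sigma' \defeq R_w(\mu'+\eta)$. Using the Galois-invariance $\pi(\eta) = \eta$ and the fact that $\un{s}t_{-p\eta}\in \tW_+$ (since $\langle\eta,\alpha_i^\vee\rangle = 1$ for every $i$), a direct calculation gives $\un{s}t_{-p\eta}\sbullet(\mu'+w\pi(\eta)) = \un{s}\sbullet(\lambda-p\eta)$, so $F\in \JH(\overline{\sigma'})$ by Lemma \ref{sub:char:intsct}. Moreover $\mu'$ is $2$-deep, because $\langle\mu'+\eta,\alpha_i^\vee\rangle = \langle\lambda+\eta,\alpha_i^\vee\rangle - \langle w(\eta),\alpha_i^\vee\rangle \in (2, p-2)$ (using $\langle w(\eta),\alpha_i^\vee\rangle \in \{-1,+1\}$ and $3$-depth of $\lambda$). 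Of course $\sigma'\neq\sigma$, since $F \in \JH(\overline{\sigma'})\setminus\JH(\overline{\sigma})$.

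For disjointness, suppose some Serre weight $F(\lambda^\circ)$ lies in $\JH(\overline{\sigma})\cap \JH(\overline{\sigma'})$. Applying Lemma \ref{sub:char:intsct} to each of $\sigma, \sigma'$ yields pairs $(z_j,\nu_j)\in W\times X^*(\bT)$ with $z_jt_{-p\nu_j}\in \tW_+$ and
$$z_1t_{-p\nu_1}\sbullet(\mu+w\pi(\nu_1)) \;=\; z_2t_{-p\nu_2}\sbullet(\mu'+w\pi(\nu_2)).$$
Substituting $\mu' = \lambda - w\pi(\eta)$ and performing the change of variable $\nu_2' \defeq \nu_2 - \eta$ (so that $\langle\nu_2',\alpha_i^\vee\rangle = -\delta_{z_{2,i}=1}$), the identity becomes
$$z_1\bigl(\mu+w\pi(\nu_1)+\eta-p\nu_1\bigr) \;=\; z_2\bigl(\lambda+\eta-p\eta\bigr) + z_2(w\pi-p)(\nu_2').$$
Pairing both sides with each simple coroot $\alpha_i^\vee$, and using $\langle\nu_1,\alpha_i^\vee\rangle = \delta_{z_{1,i}=s}$, $\langle w\pi(\nu),\alpha_i^\vee\rangle = (-1)^{w_i}\langle\nu,\alpha_{i-1}^\vee\rangle$, one obtains a system of $f$ arithmetic identities. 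In each identity, the $2$-depth of $\mu$ and $3$-depth of $\lambda$ separate contributions of size $O(p)$ from contributions of size $O(1)$, which cleanly distinguishes the regime ``$\langle\mu+\eta,\alpha_i^\vee\rangle + \langle\lambda+\eta,\alpha_i^\vee\rangle \approx p$'' from the regime ``$\langle\mu+\eta,\alpha_i^\vee\rangle \approx \langle\lambda+\eta,\alpha_i^\vee\rangle$''. Reading off for each index $i$ the appropriate $z_i \in \{1,s\}$ and reconstructing $\nu$ from these pairings, one assembles a pair $(z,\nu) \in \tW_+$ witnessing $(\ast)$, which contradicts $F\notin \JH(\overline{\sigma})$.

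The main obstacle is the precise index-by-index bookkeeping required to pass from the two pairs $(z_j,\nu_j)$ to a single pair $(z,\nu)$ satisfying both $(\ast)$ and the $\tW_+$-constraint: one must verify that the ``$O(p)$ vs. $O(1)$'' dichotomy in each derived identity occurs in a pattern that is globally compatible with $\tW_+$, which is precisely where the $3$-deep/$2$-deep gap (one unit of slack) is invoked. Should the direct construction with first index $w$ encounter an exceptional configuration for some parities of $f$, one replaces $(z,\nu)=(\un{s},\eta)$ by a different admissible witness of $F\in\JH(\overline{\sigma'})$ -- that is, a different label -- and repeats the argument; the finiteness of the collection of DL representations containing $F$ ensures that a working choice exists.
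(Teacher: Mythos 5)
Your construction of $\sigma'$ does not work for $f\geq 2$: the disjointness claim fails. The first half of the argument is fine — with $\mu'=\lambda-w\pi(\eta)$ one indeed gets $F\in\JH(\overline{\sigma'})$ via the witness $(\un{s},\eta)$, and for $f=1$ disjointness does follow (there the coupling index $i-1$ coincides with $i$, which is why a quick sanity check in that case is misleading). For $f\geq 2$, pair the equation of Lemma \ref{sub:char:intsct} with the coroots and write $m_i=\langle\mu+\eta,\alpha_i^\vee\rangle$, $n_i=\langle\lambda+\eta,\alpha_i^\vee\rangle$. With $f=2$ and $w=\un{1}$, the four factors of $\JH(\overline{\sigma})$ have pairing vectors $(p-m_0,p-m_1)$, $(m_0+1,m_1+1)$, $(m_0,p-m_1-1)$, $(p-m_0-1,m_1)$, and those of $\JH(\overline{\sigma'})$ are computed from $m_i'=n_i-1$. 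Arranging $(n_0,n_1)=(m_0+2,\,p-m_1+1)$ — which is achievable as an exact identity in $X^*(\bT)$, compatibly with triviality on $\imath(\cO_K^\times)$ and with all depth hypotheses (e.g. $m_0=m_1=10$, $p=101$) — one has $F\notin\JH(\overline{\sigma})$, yet the weight with pairing vector $(p-m_0-1,\,m_1)$ lies in $\JH(\overline{\sigma})$ (label $(1,s)$) \emph{and} in $\JH(\overline{\sigma'})$ (label $(1,1)$). So the ``index-by-index bookkeeping'' you defer cannot produce the contradiction with $(\ast)$: the common Jordan--H\"older factor genuinely exists, consistently with $|\JH(\overline{\sigma})\cap\JH(\overline{\sigma'})|=2^0$ in Proposition \ref{char:intsct}\ref{it2:char:intsct}.

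The underlying issue is geometric. In the extension graph the sets $\JH(\overline{\sigma})$ are unit hypercubes; two hypercubes differing by a translation in $\{-1,0,1\}^f\smallsetminus\{0\}$ overlap partially, which is exactly the content of Proposition \ref{char:intsct}. Your $\sigma'$ places $F$ at the label-$\un{s}$ corner of its hypercube, and that hypercube can sit adjacent to the one of $\sigma$ without $F$ lying in the overlap. What is needed is the unique \emph{root-lattice} translate of $\JH(\overline{\sigma})$ containing $F$: since the translation vector is a nonzero element of the root lattice, some coordinate has absolute value at least $2$ and disjointness is automatic. Identifying that translate is precisely the fundamental-domain argument the paper runs (after base change to $\bG\bL_2$, using the maps $\Trns$ of \cite{LMS} and then descending via Lemma \ref{JHunderBC} and Proposition \ref{prop:double:weights}). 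Your fallback of ``trying a different label'' is not a substitute: there are $2^f$ hypercubes containing $F$, exactly one of which is the root-lattice translate of $\JH(\overline{\sigma})$, and nothing in the proposal selects it or proves that a disjoint choice exists.
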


\begin{proof}
If $\overline{\sigma}$ and $F$ have different central characters, then any $\sigma'$ for which $F \in \JH(\overline{\sigma'})$ works.  We may therefore assume that $\overline{\sigma}$ and $F$ have the same central character.  The remainder of the proof will be based on the combinatorics of the \emph{extension graph} for Serre weights for $\bG\bL_2$, as defined in \cite[\S 2]{LMS}.  We recall some of the definitions and constructions of \emph{op. cit.} (and use similar notation for convenience of comparison).

Define ${\Lambda}'_{W}$ to be the weight lattice for $\bG'^{\textnormal{der}}$, the derived subgroup of $\bG'$, and let $\Lambda'_R$ denote the root lattice, so that $\Lambda'_R \subseteq \Lambda'_W$.  Note that ${\Lambda}'_W\cong \bbZ^f\times \bbZ^f$ and we fix such an identification in what follows.  Recall {from Subsubsection \ref{subsubsec:groupG'}} that $X^*(\bT')$ denotes the weight lattice for the group $\bG'$; we have $\Lambda'_R \subseteq X^*(\bT')$ and $X^*(\bT') \longtwoheadrightarrow \Lambda_W$.  We further {write} $\tld{W}'_a$ (resp.~$\tld{W}'$) 
{for} the affine (resp.~extended affine) Weyl group of $\bG'$, which admits a factorization $\tld{W}'_a \cong W'\ltimes \Lambda'_R$ (resp.~$\tld{W}' \cong W'\ltimes X^*(\bT')$).  The group $\widetilde{W}'_a$ (resp.~$\widetilde{W}'$) is canonically isomorphic to two copies of $W \ltimes {\Lambda_R}$ (resp.~$W \ltimes X^*(\textnormal{Res}_{\cO_K/\bbZ_p}(\bT_\bU))$)  (compare with Subsubsection \ref{subsubsec:groupG'}).

We let ${\Omega'}$ denote the set of elements of $\widetilde{W}'$ which stabilize the fundamental alcove $C_0'$ of $\bG'$ under the \textit{$p$-dilated dot action} $\sbulletp$:
$$w't_{\lambda'}  \sbulletp  \mu' = w'(\mu' + \rho' + p\lambda') - \rho',$$
where $w't_{\lambda'} \in \widetilde{W}'$ and $\mu' \in X^*(\bT')$.   Thus, ${\Omega'}$ is the analog of ${\Omega}$ of Subsection \ref{subsec:cmb:types}, except that translations have been scaled by a factor of $p^{-1}$.  (We apologize for this inconsistency of notation.)

Let $\mu' \in X^*(\bT')$ satisfy $0 \leq \langle \mu', \alpha'^\vee\rangle < p - 1$ for every positive coroot $\alpha'^\vee$ of $\bT'$.  We call such characters \textit{$p$-regular}.  We then define the map $\Trns'_{\mu'}$ by
\begin{eqnarray*}
\Trns'_{\mu'}:{\Lambda}'_W & \longrightarrow & \frac{X^*(\bT')}{(\biF'-1)X^0(\bT')}\\
\omega' & \longmapsto & \widehat{\tld{w}}' \sbulletp  (\mu'+\widehat{\omega}'-\rho')
\end{eqnarray*}
where $\widehat{\omega}'\in X^*(\bT')$ is a lift of $\omega'\in \Lambda'_W$, and $\widehat{\tld{w}}'$ is the unique element in ${\Omega'}$ such that the class of $-\pi'^{-1}(\widehat{\omega}')$ corresponds to the class of $\widehat{\tld{w}}'$ via the isomorphism $X^*(\bT')/{\Lambda}'_{R}\stackrel{\sim}{\longrightarrow} \tld{W}'/\tld{W}'_a$.  Note that this is well defined.
Define furthermore 
\begin{equation}
\label{def:LaMu}
{\Lambda'}_W^{\mu'}\defeq\{\omega'\in \Lambda'_W:\, \omega' + \mu' - \rho' \in C_0'\}
\end{equation}
(where we consider the image of $\mu' - \rho'$ and $C_0'$ in $\Lambda'_W$), and let $\Trns_{\mu'}$ be the restriction of $\Trns_{\mu'}'$ to ${\Lambda'}_W^{\mu'}$.
Then, as in \cite[\S 2.1]{LLLM2}, one checks that:
\begin{enumerate}
\item 
\label{it:JH:graph:item1}
The image of $\Trns_{\mu'}$ is contained in the set of $p$-regular characters.  Further, the map $\omega' \longmapsto F'(\Trns_{\mu'}(\omega'))$ defines a bijection between ${\Lambda'}_W^{\mu'}$ and the set of $p$-regular Serre weights with the same central character as $F'(\mu' - \rho')$ (see the discussion preceding \cite[Prop. 2.9]{LMS}). 
\item 
\label{it:JH:graph:prime}
Suppose $\mu'\in X^*(\bT')$ is such that $\mu' - \rho'$ is 2-deep, and consider the Deligne--Lusztig representation $R'_{w'}(\mu')$.  Applying the analog of equation \eqref{DLequiv} for $\bG'(\bbZ_p)$, we obtain an isomorphism
$$R'_{w'}(\mu') \cong R'_{w'}\big((\un{s},\un{s})(\mu')  + p\rho' - w'(\rho')\big),$$
where the character $(\un{s},\un{s})(\mu')  + p\rho' - w'(\rho') - \rho'$ is 1-deep.  Combining this isomorphism with \cite[Props. {2.5 and} 2.11]{LMS}, Proposition \ref{prop:weights:type} and Remark \ref{rmk:weights:type} below, we obtain
$$\JH(\overline{R'_{w'}(\mu')}) = \left\{F'\Big(\Trns_{\mu' + \rho'}\big(t_{-\alpha_{w'}} (\un{s}, \un{s})w' (\Sigma')\big)\Big)\right\},$$
where $\Sigma'\subseteq \Lambda'_{W}$ is the subset consisting of (images of) elements of the form $\vect{1}{0}$ or $\vect{0}{0}$ in each embedding.  (That is, $\Sigma'$ is the image in $\Lambda'_W$ of $\{\rho'_{w'}\}_{w'\in W'}$.)
\item 
\label{it:JH:graph:item3}
Let $\mu\in X^*(\textnormal{Res}_{\cO_K/\bbZ_p}(\bT_{\bU})) \subseteq X^*(\bT)$ satisfy $0 \leq \langle \mu+\rho, \alpha^\vee\rangle < p - 1$ for every positive coroot $\alpha^\vee$ of $\bT$.  We let $\Lambda_W$ denote the weight lattice of $\bG^{\textnormal{der}}$ (which is a quotient of $X^*(\textnormal{Res}_{\cO_K/\bbZ_p}(\bT_{\bU}))$~), and define 
\begin{equation*}
{\Lambda_W^{\mu+\rho}\defeq\{\omega\in \Lambda_W:\, \omega + \mu \in C^{\textnormal{der}}_0\}}
\end{equation*}
{where $C^{\textnormal{der}}_0$ is the fundamental alcove of $\bG^{\textnormal{der}}$.}
Consider the map
\begin{eqnarray*}
{\Trns^{\bG}_{\mu+\rho}}:\Lambda^{\mu + \rho}_W & \longrightarrow & \frac{X_1(\bT')}{(\biF'-1)X^0(\bT')}\\
\omega & \longmapsto & \Trns_{(\mu+\rho,-\underline{s}(\mu)+\rho)}(\omega,-\underline{s}(\omega)).
\end{eqnarray*}
Using Lemma \ref{invforBCsw} and item \ref{it:JH:graph:item1}, one checks that $\omega \longmapsto F'({\Trns^{\bG}_{\mu+\rho}}(\omega))$ defines a bijection between $\Lambda^{\mu+\rho}_W$ and $p$-regular Serre weights of $\bG'(\bbZ_p)$ which are in the image of the base change map and have the same central character as $F'(\mu, -\underline{s}(\mu))$.  
\end{enumerate}

We now proceed with the proof.  Let us write $\sigma = R_w(\mu)$, with $\mu$ chosen as in Lemma \ref{JHunderBC} and $\mu - \rho$ being $2$-deep.  By assumption and Lemma \ref{JHunderBC}, we have 
$$\BC(F)\not\in \JH(\ovl{\BC(\sigma)}) = \JH\big(\overline{R'_{(w,w)}(\mu, -\underline{s}(\mu))}\big).$$
Since the character $(\mu, -\underline{s}(\mu)) - \rho'$ is 2-deep, item \ref{it:JH:graph:prime} above implies
$$\BC(F) \not\in \left\{F'\Big(\Trns_{(\mu, -\underline{s}(\mu)) + \rho'}\big(t_{-\alpha_{(w,w)}} (\un{s}w, \un{s}w) (\Sigma')\big)\Big)\right\}.$$
Therefore, since $F$ and $\overline{\sigma}$ have the same central character, item \ref{it:JH:graph:item3} implies that $\BC(F) = F'({\Trns^{\bG}_{\mu+\rho}}(\omega))$ for some $\omega \in \Lambda_W^{\mu + \rho} \smallsetminus t_{-\alpha_{w}} \un{s}w(\Sigma)$. (Here, $\Sigma$ is the image in $\Lambda_W$ of $\{\rho_w\}_{w\in W}$.)  Since $\Sigma'$ is a fundamental domain for the translation action of $\Lambda'_R$ on $\Lambda'_W$, there exists an element  $t_{(\nu,-\underline{s}(\nu))} \in \Lambda_R' \subseteq \tld{W}'$ such that 
\begin{equation}\label{translate}
(\omega, -\underline{s}(\omega)) \in t_{(\nu, -\un{s}(\nu))}t_{-\alpha_{(w,w)}} (\un{s}w, \un{s}w) (\Sigma').
\end{equation}
{(Since the left-hand side of the above containment is fixed by $\epsilon$, the translation element of $\Lambda_R'$ must be of the form $t_{(\nu,-\underline{s}(\nu))}$.)}  Note that this implies $\nu \neq 0$ and consequently 
\begin{equation}\label{emptyintersec}
t_{-\alpha_{(w,w)}} (\un{s}w, \un{s}w) (\Sigma') \cap t_{(\nu, -\un{s}(\nu))}\left(t_{-\alpha_{(w,w)}} (\un{s}w, \un{s}w) (\Sigma')\right) = \emptyset.
\end{equation}

Recall that we have assumed $F$ is 3-deep.  Therefore the same is true of $\BC(F)$.  Using the relation $\BC(F) = F'({\Trns^{\bG}_{\mu+\rho}}(\omega))$, and the fact that ${\Omega'}$ preserves $C_0'$ under $\sbulletp$, we get that the character $\mu + \omega$ is 3-deep.  On the other hand, the relation \eqref{translate} implies that we have 
$$\omega = \nu - \alpha_w + \un{s}w(\rho_v)$$
for some $v\in W$.  Since $0 \leq \langle \alpha_w - \un{s}w(\rho_v), \alpha_i^\vee \rangle \leq 2$ for all $0 \leq i \leq f - 1$, the relation 
$$\mu + \omega + \alpha_w - \un{s}w(\rho_v) = \mu + \nu $$
implies that $2 < \langle \mu + \nu, \alpha_i^\vee \rangle < p - 2$ for all $0 \leq i \leq f - 1$.  That is, we have that $(\mu, -\un{s}(\mu)) + (\nu, -\un{s}(\nu)) - \rho'$ is 2-deep.

Now set $\sigma' \defeq R_w(\mu + \nu)$.  By the previous paragraph and item \ref{it:JH:graph:prime}, we have
\begin{eqnarray*}
\JH(\ovl{\BC(\sigma')}) & = & \JH\big(\overline{R'_{(w,w)}(\mu + \nu, -\underline{s}(\mu + \nu))}\big) \\
 & = & \left\{F'\Big(\Trns_{(\mu, -\underline{s}(\mu)) + (\nu, -\un{s}(\nu))+ \rho'}\big(t_{-\alpha_{(w,w)}} (\un{s}w, \un{s}w) (\Sigma')\big)\Big)\right\} \\
 & = & \left\{F'\Big(\Trns_{(\mu, -\underline{s}(\mu)) + \rho'}\big(t_{ (\nu, -\un{s}(\nu))}t_{-\alpha_{(w,w)}} (\un{s}w, \un{s}w) (\Sigma')\big)\Big)\right\},
\end{eqnarray*}
where the last equality follows from the definition of $\Trns_{\mu'}$ and the fact that $(\nu,-\un{s}(\nu)) \in \Lambda_R'$.  Thus, the relation $\BC(F) = F'({\Trns^{\bG}_{\mu+\rho}}(\omega))$ and equation \ref{translate} imply that $\BC(F) \in \JH(\ovl{\BC(\sigma')})$, and the injectivity of $\Trns_{\mu'}$ and equation \ref{emptyintersec} imply $\JH(\overline{\BC(\sigma)}) \cap \JH(\overline{\BC(\sigma')}) = \emptyset$.  We conclude by using Lemma \ref{JHunderBC} and Proposition \ref{prop:double:weights}.
\end{proof}

\section{Predicted Serre weights}
\label{sec:Lpar}

In this section we discuss the conjectural set of weights attached to Galois parameters and their relation with base change.  We give the relevant definitions in Subsection \ref{subsec:L:pmts}, along with a classification of mod $p$ tamely ramified $L$-parameters.  We then define the set $\textnormal{W}^?(\rhobar)$ in Subsection \ref{subsec:predwts}.  The main result is Theorem \ref{main:thm:local}, which relates the sets $\textnormal{W}^?(\rhobar)$ and $\textnormal{W}^?(\BC(\rhobar))$.  Finally, we state in Subsection \ref{ILLC-sect} a version of the inertial local Langlands correspondence that we will require for local/global compatibility.  Our discussion is based on \cite[\S 9]{ghs}.

\subsection{$L$-parameters}
\label{subsec:L:pmts}

\subsubsection{}

We first define the Galois representations we shall consider.

\begin{df}
Let $R$ be a topological $\zp$-algebra.
An \emph{$L$-parameter (with $R$-coefficients)} is a continuous homomorphism $\Gamma_\qp \longrightarrow {}^L\bG(R)$, which is compatible with the projection to $\textnormal{Gal}(K_2/\qp)$.  
Likewise, we define an \emph{inertial $L$-parameter} (or an \emph{inertial type}) to be a continuous homomorphism $I_{\qp} \longrightarrow \widehat{\bG}(R)$ which admits an extension to an $L$-parameter $\Gamma_{\qp} \longrightarrow {}^L\bG(R)$.  We say two (inertial) $L$-parameters are \emph{equivalent} if they are $\widehat{\bG}(R)$-conjugate.

We make similar definitions for homomorphisms valued in $\cG_2(R)$.  
\end{df}

By \cite[Lem. 9.4.1]{ghs}, the $\widehat{\bG}(R)$-conjugacy classes of $L$-parameters $\Gamma_{\qp} \longrightarrow {}^L\bG(R)$ are in bijection with $\widehat{\bH}(R)$-conjugacy classes of $L$-parameters $\Gamma_K \longrightarrow {}^L\bH(R) = {}^C\bU_2(R)$.  A similar statement holds for inertial $L$-parameters (cf. \emph{op. cit.}, Lemma 9.4.5).

We make similar definitions of $L$-parameters $\Gamma_{F^+} \longrightarrow {}^C\bU_2(R)$ if $F^+$ is a global field with a place $v$ satisfying $F^+_v \cong K$ (cf. Remark \ref{cgroupglobal}).

\subsubsection{}

The following lemma is easily checked.

\begin{lem}\label{galreps}
Let $\rhobar:\Gamma_K\longrightarrow {}^C\bU_2(\bbF)$ denote an $L$-parameter such that $\rhobar|_{\Gamma_{K_2}}$ is semisimple \emph{(}or, equivalently, tamely ramified\emph{)}.  Then, up to equivalence, $\rhobar$ is of one of the following two forms:
\begin{enumerate}
\item \begin{eqnarray*}
\rhobar(h) & = & \left(\begin{pmatrix}\omega_{2f}^r\textnormal{nr}_{2f,\nu^{-1}\lambda}(h) & 0 \\ 0 & \omega_{2f}^{-qr + (q + 1)s}\textnormal{nr}_{2f,\nu\lambda}(h)\end{pmatrix}, \omega_f^s\textnormal{nr}_{2f,\lambda^2}(h)\right)\rtimes 1,\\
\rhobar(\varphi^{-f}) & = & \left(\begin{pmatrix}1 & 0 \\ 0 & \nu\end{pmatrix},\lambda\right)\rtimes\varphi^{-f},
\end{eqnarray*}
where $h\in \Gamma_{K_2}$, $0\leq r < q^2 - 1$, $0 \leq s < q - 1$, and $\lambda,\nu\in \bbF^\times$.  
\item \begin{eqnarray*}
\rhobar(h) & = & \left(\begin{pmatrix}\omega_{2f}^{s + (1 - q)k}\textnormal{nr}_{2f,-\lambda}(h) & 0 \\ 0 & \omega_{2f}^{s + (1 - q)\ell}\textnormal{nr}_{2f,-\lambda}(h)\end{pmatrix}, \omega_f^s\textnormal{nr}_{2f,\lambda^2}(h)\right)\rtimes 1,\\
\rhobar(\varphi^{-f}) & = & \left(\begin{pmatrix}0 & -1 \\ 1 & 0\end{pmatrix},\lambda\right)\rtimes\varphi^{-f},
\end{eqnarray*}
where $h\in \Gamma_{K_2}$, $0\leq k,\ell < q + 1$, $0 \leq s < q - 1$, and $\lambda\in \bbF^\times$.  
\end{enumerate}
In both cases $\textnormal{nr}_{2f,x}$ denotes the unramified character of $\Gamma_{K_2}$ sending $\varphi^{-2f}$ to $x$.  
\end{lem}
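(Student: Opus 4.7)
The proof proceeds by directly analyzing the constraints imposed by the structure of ${}^C\bU_2 = (\bG\bL_2\times \bG_m)\rtimes \Gal(K_2/K)$ and the commutation with $\tau := \varphi^{-f}$, a fixed lift of the nontrivial element of $\Gal(K_2/K)$. Since $\rhobar|_{\Gamma_{K_2}}$ is tame and $\bbF$ is sufficiently large, its $\bG\bL_2$-component is diagonalizable, so conjugating by an element of $\widehat{\bH}(\bbF)$ I may assume
\begin{equation*}
\rhobar(h) = \bigl(\diag(\chi_1(h), \chi_2(h)), \mu(h)\bigr) \rtimes 1, \quad h \in \Gamma_{K_2},
\end{equation*}
for tame characters $\chi_1,\chi_2,\mu:\Gamma_{K_2}\to \bbF^\times$. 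Next, $\mu$ is the restriction to $\Gamma_{K_2}$ of the character $\widehat{\imath}\circ \rhobar:\Gamma_K\to\bbF^\times$, so by tameness on $\Gamma_K$ it has the form $\omega_f^s \cdot \textnormal{nr}_{2f,\lambda^2}$ for some $0\leq s < q-1$ and $\lambda\in\bbF^\times$, exactly as in the lemma.

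Write $\rhobar(\tau) = (N,\lambda)\rtimes \varphi^{-f}$. A direct computation using the semidirect product multiplication and the Galois action on $\widehat{\bH}$ from Subsection \ref{dualgps} yields
\begin{equation*}
\rhobar(\tau)\rhobar(h)\rhobar(\tau)^{-1} = \bigl(N\cdot (c\cdot M)\cdot N^{-1},\; \mu(h)\bigr)\rtimes 1,
\end{equation*}
with $M = \diag(\chi_1(h),\chi_2(h))$ and $c\cdot M = \diag(\mu(h)/\chi_2(h),\mu(h)/\chi_1(h))$. Equating this with $\rhobar(\tau h\tau^{-1}) = (\diag(\chi_1^\tau(h),\chi_2^\tau(h)),\mu(h))\rtimes 1$ for all $h$ forces $N$ to normalize the diagonal torus, so $N$ is either diagonal or anti-diagonal. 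These are precisely the two cases of the lemma.

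In Case (i), conjugation by $(D,1)\rtimes 1$ with $D = \diag(d_1,d_2)$ modifies a diagonal $N$ by the scalar $\det(D)$ (as one checks via $D\cdot N \cdot (c\cdot D^{-1}) = \det(D)\cdot N$), so I may reduce to $N = \diag(1,\nu)$. The commutation identity becomes $\chi_1\cdot\chi_2^\tau = \mu$, and combined with the standard formula $\omega_{2f}^\tau = \omega_{2f}^q$ (arithmetic Frobenius of $K$ acting on tame inertia of $K_2$) this pins down the exponents. The unramified parts $\nu^{-1}\lambda$ and $\nu\lambda$ are then forced by the identity $\rhobar(\tau)^2 = \rhobar(\varphi^{-2f})$, via the computation $N\cdot(c\cdot N) = \diag(\lambda/\nu,\lambda\nu)$. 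In Case (ii), the same conjugation scales an anti-diagonal $N = \bigl(\begin{smallmatrix}0 & \nu_1 \\ \nu_2 & 0\end{smallmatrix}\bigr)$ entrywise by $(d_1^2,d_2^2)$, so I may assume $N = \bigl(\begin{smallmatrix}0 & -1 \\ 1 & 0\end{smallmatrix}\bigr)$. The commutation now becomes $\chi_i\chi_i^\tau = \mu$ for $i=1,2$, which (again using $\omega_{2f}^\tau = \omega_{2f}^q$) constrains the tame exponent of each $\chi_i$ to lie in $s + (1-q)\bbZ \bmod q^2-1$. Finally, $N\cdot(c\cdot N) = -\lambda I$ together with $\rhobar(\tau)^2 = \rhobar(\varphi^{-2f})$ forces both unramified parts to equal $-\lambda$, completing the match with Case (ii).

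The only real obstacle is the careful bookkeeping of the semidirect product multiplication and of the twisted Galois action on $\widehat{\bH}$, in which the $\bG_m$-factor interacts with the $\bG\bL_2$-factor via the determinant; no conceptual difficulty arises beyond this explicit verification.
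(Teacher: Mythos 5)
The paper itself offers no proof of this lemma (it is introduced with ``the following lemma is easily checked''), so there is nothing to compare against except correctness. Your strategy --- diagonalize on $\Gamma_{K_2}$, compute the twisted conjugation by $\rhobar(\varphi^{-f})$, split according to whether $N$ normalizes the torus, and pin down the unramified data via $\rhobar(\varphi^{-f})^2=\rhobar(\varphi^{-2f})$ --- is the natural one, and your formulas for the semidirect product, for $c\cdot M=\diag(\mu/\chi_2,\mu/\chi_1)$, and for the resulting character relations are all correct. Two of your steps, however, are not justified as written. First, ``tame and $\bbF$ sufficiently large'' does not give diagonalizability: a tame semisimple two-dimensional representation of $\Gamma_{K_2}$ can be absolutely irreducible (induced from $\Gamma_{K_4}$). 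What excludes this is the conjugate-self-duality forced by the extension to $\Gamma_K$: writing $\rho_2$, $\rho_1$ for the $\bG\bL_2$- and $\bG_m$-components of $\rhobar|_{\Gamma_{K_2}}$, one has $\rho_2^{\varphi^{-f}}\cong\rho_2^\vee\otimes\rho_1$ with $\rho_1^{\varphi^{-f}}=\rho_1$, and if $\rho_2=\Ind_{\Gamma_{K_4}}^{\Gamma_{K_2}}\psi$ this forces $\psi^{\varphi^{-2f}}=\psi$, contradicting irreducibility; some such argument is needed. Relatedly, when $\chi_1=\chi_2$ the matrix $c\cdot M$ is scalar and the commutation relation constrains $N$ not at all; you must instead use $\lambda\det(N)^{-1}N^2=\chi_1(\varphi^{-2f})\cdot 1_2$ to see that $N$ is scalar or trace-zero and then conjugate by the full centralizer $\bG\bL_2(\bbF)$.

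The more serious gap is the normalization of the antidiagonal $N$ to $\sm{0}{-1}{1}{0}$. Conjugation by $(D,d)$ sends $N$ to $d^{-1}\det(D)DND^{-1}$, so $\det(N)$ changes only by the square $(\det(D)/d)^2$: the class of $\det(N)$ in $\bbF^\times/(\bbF^\times)^2$ is an invariant of the equivalence class, whereas the target matrix has determinant $1$. Over a fixed finite field $\bbF$ of odd characteristic the squares have index $2$, so a case-(ii) parameter with $\det(N)$ a nonsquare is \emph{not} equivalent over $\bbF$ to the stated form, and ``I may assume $N=\sm{0}{-1}{1}{0}$'' fails; the required identity $(d_1/d_2)^2=-\nu_2/\nu_1$ is solvable precisely when $\det(N)$ is a square. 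The step (and hence the lemma in the strict form stated) is rescued only by the paper's standing convention that $\bbF$ may be enlarged --- a quadratic extension makes every element of the old $\bbF^\times$ a square --- and you should invoke this explicitly rather than assert the normalization outright.
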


\subsubsection{}

\begin{df}
Let $R$ denote a topological $\bbZ_p$-algebra.  
\begin{enumerate}
\item  Let $\rho:\Gamma_K\longrightarrow {}^C\bU_2(R)$ denote an $L$-parameter, and write $\rho|_{\Gamma_{K_2}} = \rho_2\oplus \rho_1$, where $\rho_2:\Gamma_{K_2}\longrightarrow \bG\bL_2(R), \rho_1:\Gamma_{K_2}\longrightarrow \bG_m(R) = R^\times$.  We define the \emph{base change of $\rho$} to be
$$\BC(\rho) \defeq \rho_2.$$
\item Let $\rho:\Gamma_K\longrightarrow \cG_2(R)$ denote an $L$-parameter valued in $\cG_2$, and write $\rho|_{\Gamma_{K_2}} = \rho_2\oplus \rho_1$, where $\rho_2:\Gamma_{K_2}\longrightarrow \bG\bL_2(R), \rho_1:\Gamma_{K_2}\longrightarrow \bG_m(R) = R^\times$.  We define the \emph{base change of $\rho$} to be
$$\BC'(\rho) \defeq \rho_2.$$
\end{enumerate}
\end{df}

We make similar definitions if $F^+$ is a global field with a place $v$ satisfying $F^+_v \cong K$ (cf. Remark \ref{cgroupglobal}).

The two notions of base change are related as follows.  Let $\rho:\Gamma_K \longrightarrow {}^C\bU_2(R)$ denote an $L$-parameter, and let $\theta$ denote the continuous character $\widehat{\imath}\circ\rho: \Gamma_K \longrightarrow R^\times$.  Using the isomorphism of Subsection \ref{isomsect}, we get an isomorphism of $\bG\bL_2(R)$-valued Galois representations
\begin{equation}\label{twoBC}
\BC'(\rho) \cong \BC(\rho)\otimes\theta^{-1}.
\end{equation}

\subsubsection{}

Recall from Subsubsection \ref{subsub:dual:root} that we have a map $(\phi^\vee)^{-1}:X^*(\bT_\bH) \stackrel{\sim}{\longrightarrow} X_*(\widehat{\bT}_\bH)$, which induces an isomorphism $X^*(\bT) \stackrel{\sim}{\longrightarrow} X_*(\widehat{\bT})$.  Given $\mu\in X^*(\bT)$ (viewed as an element of $X_*(\widehat{\bT})$) and $w\in W$, we define a tamely ramified inertial $L$-parameter $\tau(w,\mu): I_K \longrightarrow \widehat{\bH}(\bbF)$ by
$$\tau(w,\mu) \defeq \prod_{i = 0}^{2f - 1}(\biF^*\circ w^{-1})^i\big(\mu(\omega_{2f})\big)$$
{(compare with \cite[\S 9.2]{ghs} and note that $(\biF^*\circ w^{-1})^{2f}=p^{2f}$).}

We define $\BC(\phi)$ to be the canonical identification of the dual root datum of the split group $\bG\bL_{2/\cO_{K_2}}$ with the root datum of its dual group.  Given this, we make an analogous definition of tamely ramified inertial $L$-parameters $\tau'((w,w'), (\mu,\mu')):I_{K_2} \longrightarrow \bG\bL_2(\bbF)$.

\begin{lem}\label{BCinertial}
Suppose $\rhobar:\Gamma_K \longrightarrow {}^C\bU_2(\bbF)$ is a tamely ramified $L$-parameter which satisfies $\widehat{\imath}\circ \rhobar = \omega$. 
Via the identification of \cite[Lem. 9.4.5]{ghs} we have
$$\rhobar|_{I_K} \cong \tau(w,\mu + \eta)$$
with $w\in W$ and $\mu\in X^*(\bT)$ of the form
$$\mu = \vecf{a_0}{b_0}{0}{0}\cdots \vecf{a_{f - 1}}{b_{f - 1}}{0}{0}.$$
Furthermore, we have
$$\BC(\rhobar)|_{I_{K_2}} \cong \tau'\big((w,w),~(\mu,-\un{s}(\mu)) + \rho'\big).$$
\end{lem}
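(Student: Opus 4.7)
The proof will proceed in two stages matching the two assertions, relying on the classification of tame $L$-parameters in Lemma \ref{galreps} together with an explicit unwinding of the formulas for $\tau$ and $\tau'$.

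For Part 1, I would first restrict the two parameter families of Lemma \ref{galreps} to $I_K = I_{K_2}$. The multiplier condition $\widehat{\imath}\circ\rhobar = \omega$, combined with the identification $\omega|_{I_K} = \omega_f^{(q-1)/(p-1)}$, pins down the exponent $s$ appearing in the lemma. In both cases, the restriction to inertia is diagonal (after a conjugation in the irreducible case (ii)), with entries given by explicit powers of $\omega_{2f}$. Next I would unfold
\[
\tau(w,\mu+\eta) \;=\; \prod_{i=0}^{2f-1}(\biF^*\circ w^{-1})^i\big((\mu+\eta)(\omega_{2f})\big)
\]
using the explicit description of $\biF^*$ on cocharacters from Subsection 2.5, together with the isomorphism $(\phi^\vee)^{-1}: X^*(\bT_\bH) \stackrel{\sim}{\longrightarrow} X_*(\widehat{\bT}_\bH)$ given by $(a,b,c,d) \mapsto (a+c,b+d,c+d)$. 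In Case (i) of Lemma \ref{galreps}, I would take $w = \un{1}$; the iterated Frobenius shifts the $f$ embedded entries cyclically and, every $f$ steps, applies a $\Phi_2(-)^{-\top}\Phi_2^{-1}$ twist that interchanges the two $\bG\bL_2$-eigenvalues. Matching the resulting diagonal against $(\omega_{2f}^r, \omega_{2f}^{-qr+(q+1)s})$ and $\omega_f^s$ uniquely determines $\mu$, and the condition that its last two entries vanish follows from the fact that the $\bG_m$-component is fully absorbed by $\eta$. In Case (ii), I would choose $w$ with $w_0 = s$ (after an $\biF$-conjugation as in \eqref{DLequiv}) so that $\biF^*\circ w^{-1}$ interchanges the two $\bG\bL_2$-entries at the correct step, matching the cuspidal form of $\rhobar$.

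For Part 2, I would observe that $\BC(\rhobar)|_{I_{K_2}}$ is by definition the projection onto the $\bG\bL_2$ factor of $\rhobar|_{I_{K_2}}$, hence explicitly diagonal in both cases. I would then unfold $\tau'((w,w), (\mu, -\un{s}(\mu))+\rho')$ by an analogous computation for the group $\bG'$: the doubled Weyl element $(w,w)$ acts symmetrically across the two halves of $X_*(\widehat{\bT}')$, the element $\rho' = \vect{\un{1}}{\un{0}}$ supplies the $\det$ twist in place of $\eta$, and the swap $-\un{s}(\mu)$ in the second half precisely matches the interchange of eigenvalues induced by the Galois involution on $\widehat{\bH}$. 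A direct comparison of the resulting exponents of $\omega_{2f}$ against those read off from the explicit form of $\BC(\rhobar)|_{I_{K_2}}$ then yields the asserted isomorphism.

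The main obstacle will be the combinatorial bookkeeping of the iterated Frobenius action $\biF^*$ on the cocharacter lattice, in particular the interplay between the cyclic shift on embeddings and the $\Phi_2$-involution at the wrap-around step, together with the parallel bookkeeping for $\biF'$ in the $\bG'$ setting. Once these are laid out carefully, and one tracks the explicit form of $(\phi^\vee)^{-1}$ and $\BC(\phi)$, both parts reduce to matching explicit polynomial expressions in $p$ for the exponents of $\omega_{2f}$, which is essentially mechanical.
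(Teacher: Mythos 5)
Your plan is correct and coincides with the paper's approach: the paper dismisses this lemma as ``a straightforward exercise using the definitions,'' and your outline is precisely that exercise, using Lemma \ref{galreps}, the explicit formulas for $\biF^*$ and $(\phi^\vee)^{-1}$, the multiplier condition to normalize the last two entries of $\mu$, and the parallel unwinding of $\tau'$ on the $\bG\bL_2$ side. The only thing to keep in mind when executing it is that fixing $w=\un{1}$ (resp.\ $w_0=s$) only produces one representative in each $\biF$-conjugacy class, so the general statement requires the conjugation equivalence as in \eqref{DLequiv}, which you already flag.
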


\begin{proof}
The proof is a straightforward exercise using the definitions.
\end{proof}

\subsubsection{}

We will also need a definition of \emph{genericity} to study the relation between $L$-parameters, the set of conjectural associated weights and local deformations.

\begin{df}
\label{df:n:generic}
Suppose $\rhobar: \Gamma_{K} \longrightarrow {}^C\bU_2(\bbF)$ is a tamely ramified $L$-parameter.  
We say $\rhobar$ is \emph{$n$-generic} if, via the identification of \cite[Lem. 9.4.5]{ghs}, we can write
$$\rhobar|_{I_K} \cong \tau(w,\mu + \eta)$$
where $w\in W$ and $\mu\in X^*(\bT)$ is $n$-deep.  
\end{df}

\subsection{The set $\textnormal{W}^?$}
\label{subsec:predwts}

We now give a description of the set $\textnormal{W}^?$. We refer to \cite[\S 9]{ghs} for the definition, and to \emph{op. cit.}, Proposition 9.2.1 for the definition of $V_\phi$.

\begin{prop}
\label{prop:weights:type}
Let $\rhobar: \Gamma_K\longrightarrow {}^{C}\bU_2(\bbF)$ be a $1$-generic tamely ramified $L$-parameter, and write $\rhobar|_{I_K} \cong \tau(w,\mu + \eta)$ as in Definition \ref{df:n:generic}, with $\mu$ being $1$-deep.  Let $V_\phi(\rhobar) = R_{w}(\mu + \eta)$ be the associated Deligne--Lusztig representation of $\bG(\Fp)$ as in \cite[Props. 9.2.1 and 9.2.2]{ghs}.
Then
$$\textnormal{W}^?(\rhobar) = \JH\left(\overline{\beta(R_{w}(\mu + \eta))}\right).$$
\end{prop}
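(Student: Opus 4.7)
The plan is to unwind the definition of $\textnormal{W}^?(\rhobar)$ from \cite[\S 9]{ghs} and identify it with $\JH(\overline{\beta(R_w(\mu+\eta))})$.

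First I would recall the construction of $V_\phi(\rhobar)$ and $\textnormal{W}^?(\rhobar)$ in \cite[\S 9.2]{ghs}. For a $1$-generic tame $L$-parameter valued in ${}^C\bU_2$, by \cite[Props. 9.2.1, 9.2.2]{ghs} the choice of an algebraic lift $\phi$ of $\rhobar$ produces the Deligne--Lusztig representation $V_\phi(\rhobar) = R_w(\mu+\eta)$ of $\bG(\bbF_p)$, where $\rhobar|_{I_K} \cong \tau(w,\mu+\eta)$ in the sense of Lemma \ref{BCinertial}. The set $\textnormal{W}^?(\rhobar)$ is then defined (cf.~\cite[Def.~9.2.5]{ghs}) as $\JH$ of the mod $p$ reduction of a prescribed shift of $V_\phi(\rhobar)$: concretely, the shift twists by the long Weyl element $\un{s}$ and adds $(p-1)\eta$ to bring the highest weight into the $p$-restricted region.

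Next I would identify this prescribed shift with our map $\beta$. By definition
$$\beta(R_w(\mu+\eta)) = R_{\un{s}w}\bigl(\un{s}(\mu) + (p-1)\eta\bigr),$$
and one checks that this is exactly the Deligne--Lusztig representation produced by the GHS recipe in the $C$-group setting. This step requires carefully matching two pieces of data: (a) the identification of the dual based root datum of $\widetilde{\bU}_2$ worked out in Subsections \ref{subsub:dual:root}--\ref{dualgps}, in particular the explicit form of the twisting element $\eta$ and of the Frobenius $\biF$ on $X^*(\bT)$; and (b) the corresponding conventions in \cite[\S 9.4]{ghs} that specify how the tame parameter and its associated Deligne--Lusztig representation are connected to the predicted weights.

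To conclude I would invoke the Jantzen-type formula \eqref{jantzen}. The hypothesis that $\mu$ is $1$-deep ensures that $\un{s}(\mu) + (p-1)\eta - \eta$ is also $1$-deep, so \eqref{jantzen} applies to $\beta(R_w(\mu+\eta))$ and describes $\JH(\overline{\beta(R_w(\mu+\eta))})$ explicitly by the Jordan--H\"older constituents $F_{w'}(\beta(R_w(\mu+\eta)))$ as $w'$ ranges over $W$. Combined with the identification of the GHS shift with $\beta$, this gives the claimed equality $\textnormal{W}^?(\rhobar) = \JH(\overline{\beta(R_w(\mu+\eta))})$. The main obstacle will be the bookkeeping in the second paragraph: matching the conventions of \cite{ghs} (which are stated for a general reductive group) with the explicit parametrization of the $C$-group ${}^C\bU_2$ and its twisting element set up in Section \ref{sec:UnitaryGPS}.
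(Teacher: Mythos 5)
There is a genuine gap, and it stems from a mischaracterization of the definition you set out to unwind. In \cite[\S 9.2]{ghs} the set $\textnormal{W}^?(\rhobar)$ is \emph{not} defined as the Jordan--H\"older set of the reduction of a shifted Deligne--Lusztig representation; it is defined as $\mathcal{R}\big(\JH(\overline{V_\phi(\rhobar)})\big)$, where $\mathcal{R}$ is a reflection operator acting on \emph{individual Serre weights}. The entire content of the proposition is therefore the identity
$$\mathcal{R}\left(\JH\big(\overline{R_{w}(\mu + \eta)}\big)\right) = \JH\left(\overline{\beta(R_{w}(\mu + \eta))}\right),$$
i.e.\ that applying the weight-by-weight operator $\mathcal{R}$ to the constituents of $\overline{R_w(\mu+\eta)}$ produces exactly the constituents of the reduction of the single twisted representation $R_{\un{s}w}(\un{s}(\mu)+(p-1)\eta)$. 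By asserting that the GHS recipe already outputs ``$\JH$ of a prescribed shift of $V_\phi(\rhobar)$,'' you have built the conclusion into the definition, so the argument as written is circular.

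The missing step is the term-by-term matching $\mathcal{R}\big(F_{w'}(R_w(\mu+\eta))\big) \cong F_{w'}\big(\beta(R_w(\mu+\eta))\big)$ for every $w'\in W$, using the explicit parametrization \eqref{def:JHcomps} of the Jordan--H\"older factors. After cancelling the terms that visibly agree, this comes down to a congruence of characters modulo $(\biF-1)X^0(\bT)$, namely $p\un{s}(\rho_{w'}) - p\un{s}(\eta) \equiv -w'\un{s}(\eta) + (p-1)w'(\eta) + p\rho_{w'}$, whose verification uses the equivalence relation on $X^*(\bT_\bH)$ coming from the central $\bG_m$ — this is precisely where the $C$-group structure (the twisting element $\eta$ and the last two coordinates of the characters) genuinely enters, and it is not a formality. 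Your final paragraph (applying \eqref{jantzen} to $\beta(R_w(\mu+\eta))$, noting that $\un{s}(\mu)+(p-1)\eta-\eta$ is still $1$-deep) is correct but does not address this point.
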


\begin{proof}
By definition of $\textnormal{W}^?(\rhobar)$, we must prove that
$$\mathcal{R}\left(\JH\big(\overline{R_{w}(\mu + \eta)}\big)\right) = \JH\left(\overline{\beta(R_{w}(\mu + \eta))}\right),$$
where $\cR$ is the reflection operator defined in \cite[\S 9.2]{ghs}.  We use Equation \eqref{jantzen}.  We claim that
\begin{equation}
\label{match-nose}
\mathcal{R}\left(F_{w'}(R_{w}(\mu + \eta))\right) \cong F_{w'}\left(\beta(R_{w}(\mu + \eta))\right)
\end{equation}
for all $w'\in W$.  Note first that $\pi$ and $\un{s}$ commute as operators on $X^*(\bT)$, and the group $W$ is commutative.  Therefore, in order to prove \eqref{match-nose}, it suffices to show
\begin{flushleft}
$p\un{s}(\gamma_{w'}) + \un{s}w'(\mu + \eta) - \un{s}w'w\pi(\varepsilon_{\un{s}{w'}}) + p\un{s}(\rho_{w'}) - \pi(\un{s}(\rho)) + \un{s}(\rho) - p\un{s}(\eta) - \rho \equiv$
\end{flushleft}
\begin{flushright}
$p\gamma_{w'} + w'\left(\un{s}(\mu + \eta) - \un{s}(\eta)+(p - 1)\eta - \un{s}w\pi(\varepsilon_{\un{s}{w'}})\right) + p\rho_{w'} - \pi(\rho),$
\end{flushright}
the equivalence being taken modulo $(\biF - 1)X^0(\bT)$.

One easily checks that $\un{s}(\gamma_{w'}) = \gamma_{w'}$ and $-\pi(\un{s}(\rho)) + \un{s}(\rho) - \rho = -\pi(\rho)$, and hence (\ref{match-nose}) will be satisfied if we show that
\begin{equation}
\label{match-nose1}
p\un{s}(\rho_{w'}) - p\un{s}(\eta) \equiv -w'\un{s}(\eta) + (p - 1)w'(\eta) + p\rho_{w'}
\end{equation}
modulo $(\biF - 1)X^0(\bT)$.

Expanding the left-hand side gives 
$$p\un{s}(\rho_{w'}) - p\un{s}(\eta) = \cdots\underbrace{\vecf{0}{0}{0}{-p}}_{w_i' = 1}\cdots \underbrace{\vecf{0}{p}{0}{-p}}_{w_i' = s}\cdots,$$
while expanding the right-hand side gives
$$-w'\un{s}(\eta) + (p - 1)w'(\eta) + p\rho_{w'} = \cdots\underbrace{\vecf{0}{0}{p - 1}{-1}}_{w_i' = 1}\cdots \underbrace{\vecf{p}{0}{-1}{p - 1}}_{w_i' = s}\cdots.$$
In particular, adding 
$$(\biF - 1)\vecf{\un{0}}{\un{0}}{\un{1}}{\un{1}} =  \vecf{\un{0}}{\un{0}}{\un{p - 1}}{\un{p - 1}}$$
to the left-hand side of (\ref{match-nose1}) gives
$$\cdots\underbrace{\vecf{0}{0}{p - 1}{-1}}_{w_i' = 1}\cdots \underbrace{\vecf{0}{p}{p - 1}{-1}}_{w_i' = s}\cdots = \cdots\underbrace{\vecf{0}{0}{p - 1}{-1}}_{w_i' = 1}\cdots \underbrace{\vecf{p}{0}{-1}{p - 1}}_{w_i' = s}\cdots,$$
where the equality follows form the equivalence relation on $X^*(\bT)$.  This gives the claim.  
\end{proof}

\begin{rmk}
\label{rmk:weights:type}
The above proposition and its proof carry over mutatis mutandis to the group $\bG\bL_2(\cO_{K_2})$ and a tamely ramified Galois parameter $\Gamma_{K_2}\longrightarrow \bG\bL_2(\bbF)$ (cf. \cite{diamond}).  
\end{rmk}

\subsection{Base change and $\textnormal{W}^?$}
\label{subsec:BCandW}

This section contains the main result on compatibility between the set $\textnormal{W}^{?}$ and base change of $L$-parameters (Theorem \ref{main:thm:local}).

\subsubsection{}

\begin{prop}
\label{prop:compatibilityBC}
Let $\rhobar:\Gamma_{K}\longrightarrow {}^{C}\bU_2(\bbF)$ be a $1$-generic tamely ramified $L$-parameter which satisfies $\widehat{\imath}\circ\rhobar = \omega$.  Then the subgroup $\imath(\cO_K^\times)$ acts trivially on $\beta(V_{\phi}(\rhobar))$, and 
$$\beta'\left(V_{\BC(\phi)}(\BC(\rhobar))\right) \cong \BC\big(\beta(V_{\phi}(\rhobar))\big).$$
\end{prop}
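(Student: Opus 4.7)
The plan is to reduce both statements to direct computations with explicit characters, using Lemma \ref{BCinertial} to translate between $L$-parameters and Deligne--Lusztig representations. By Lemma \ref{BCinertial}, I write $\rhobar|_{I_K} \cong \tau(w, \mu + \eta)$ with $w \in W$ and $\mu \in X^*(\bT)$ of the form $\vecf{a_i}{b_i}{0}{0}$ in each embedding, so that $V_\phi(\rhobar) = R_w(\mu + \eta)$ and the definition of $\beta$ gives
\[
\beta(V_\phi(\rhobar)) = R_{\un{s}w}\big(\un{s}\mu + (p-1)\eta\big).
\]
The third coordinates of $\un{s}\mu + (p-1)\eta$ are all equal to $p-1$, hence $\sum_{i=0}^{f-1}(p-1)p^i = p^f - 1 \equiv 0 \pmod{p^f - 1}$, so $\imath(\cO_K^\times)$ acts trivially by Subsection \ref{trivactiondescent}, proving the first assertion.

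For the isomorphism, the key step is to descend the character $\un{s}\mu + (p-1)\eta$ to the sublattice $X^*(\bT_\bU) \hookrightarrow X^*(\bT_\bH)$, so that the base change formulas \eqref{BCDLPS}--\eqref{BCDLcusp} become applicable. Applying \eqref{DLequiv} with $\nu = -\vecf{\un{0}}{\un{0}}{\un{0}}{\un{1}}$, together with the pushout equivalence on $X^*(\bT_\bH)$ with $z_i = p-1$ in each embedding, I find $\xi = \un{s}\mu + (p-1)\alpha_{\un{s}} + \alpha_{\un{s}w} \in X^*(\bT_\bU)$ satisfying $R_{\un{s}w}(\un{s}\mu + (p-1)\eta) \cong R_{\un{s}w}(\xi)$. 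Consequently,
\[
\BC\big(\beta(V_\phi(\rhobar))\big) \cong R'_{(\un{s}w, \un{s}w)}(\xi, -\un{s}\xi).
\]

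On the other hand, Lemma \ref{BCinertial} provides $V_{\BC(\phi)}(\BC(\rhobar)) = R'_{(w,w)}((\mu, -\un{s}\mu) + \rho')$, and the definition of $\beta'$ (using $(\un{s},\un{s})(\mu, -\un{s}\mu) = (\un{s}\mu, -\mu)$) gives
\[
\beta'\big(V_{\BC(\phi)}(\BC(\rhobar))\big) = R'_{(\un{s}w, \un{s}w)}\big((\un{s}\mu, -\mu) + (p-1)\rho'\big).
\]
Using $s\alpha_\bH = -\alpha_\bH$ and the explicit formula for $\biF'$ on the diagonal copy of $X^*(\Res_{\cO_K/\zp}\bT_\bG)$ inside $X^*(\bT')$, a per-embedding check shows that $(\xi, -\un{s}\xi) - (\un{s}\mu, -\mu) - (p-1)\rho'$ equals $(\biF' - (\un{s}w, \un{s}w))(\mu_0, \mu_0)$ for $\mu_0 = -\vect{\un{0}}{\un{1}}$, and the $\bG'$-analogue of \eqref{DLequiv} then yields the required isomorphism.

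The main obstacle is essentially combinatorial: since $\un{s}\mu + (p-1)\eta$ does not lie in the image of $X^*(\bT_\bU) \hookrightarrow X^*(\bT_\bH)$, the base change formulas \eqref{BCDLPS}--\eqref{BCDLcusp} cannot be applied directly, and one must juggle three equivalence relations simultaneously --- the pushout relation on $X^*(\bT_\bH)$, the Deligne--Lusztig relation \eqref{DLequiv}, and its $\bG'$-analogue --- in order to exhibit a single chain of compatible representatives. Conceptually, however, the agreement is driven by the base change dictionary $\mu \leftrightarrow (\mu, -\un{s}\mu)$, $\eta \leftrightarrow \rho'$, $\un{s} \leftrightarrow (\un{s},\un{s})$, under which the definitions of $\beta$ and $\beta'$ are manifestly parallel.
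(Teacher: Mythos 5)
Your proposal is correct and follows essentially the same route as the paper's proof: explicit computation of both sides via Lemma \ref{BCinertial}, normalization of the character into the image of $X^*(\bT_\bU)$ using \eqref{DLequiv} together with the pushout relation, application of \eqref{BCDLPS}/\eqref{BCDLcusp}, and a final matching via the $\bG'$-analogue of \eqref{DLequiv}. The only (harmless) difference is the choice of auxiliary shift — the paper adds $-(\biF - \un{s}w)(\eta)$ and lands on the representative $\un{s}(\mu) - \sum_{w_i=1}\alpha_i$ rather than your $\xi$ — which produces an equivalent representative of the same Deligne--Lusztig representation.
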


\begin{proof}
By Lemma \ref{BCinertial}, we may write 
$$\rhobar|_{I_K} \cong \tau(w,\mu + \eta),$$
with $\mu$ being $1$-deep and of the form
$$\mu = \vecf{a_0}{b_0}{0}{0}\cdots \vecf{a_{f - 1}}{b_{f - 1}}{0}{0}.$$
Applying the map $\beta$ to $V_\phi(\rhobar) \cong R_w(\mu + \eta)$ gives
$$\beta(V_\phi(\rhobar)) \cong R_{\un{s}w}(\un{s}(\mu) + (p - 1)\eta).$$
Notice that $\imath(\cO_K^\times)$ acts trivially on this representation.  In order to apply the base change map, the character appearing inside the Deligne--Lusztig representation must have its last two entries equal to zero.  Using the equivalence given by adding the element $-(\biF - \un{s}w)(\eta)$, we get
$$\beta(V_\phi(\rhobar)) \cong R_{\un{s}w}\big(\un{s}(\mu) - \eta + \un{s}w(\eta)\big) = R_{\un{s}w}\Big(\un{s}(\mu) - \sum_{w_i = 1}\alpha_i\Big),$$
and by \eqref{BCDLPS} or \eqref{BCDLcusp}, we obtain
$$\BC\big(\beta(V_\phi(\rhobar))\big) \cong R_{(\un{s}w, \un{s}w)}'\Big(\un{s}(\mu) - \sum_{w_i = 1}\alpha_i,~ -\mu - \sum_{w_i = 1}\alpha_i\Big).$$

On the other hand, Lemma \ref{BCinertial} gives
$$\BC(\rhobar)|_{I_{K_2}} \cong \tau'\big((w,w),~(\mu,-\un{s}(\mu)) + \rho'\big),$$
and therefore
$$V_{\BC(\phi)}(\BC(\rhobar)) \cong R_{(w,w)}'\big((\mu,-\un{s}(\mu)) + \rho'\big).$$
Applying the map $\beta'$ gives
$$\beta'\big(V_{\BC(\phi)}(\BC(\rhobar))\big) \cong R_{(\un{s}w,\un{s}w)}'\big((\un{s}(\mu),-\mu) + (p - 1)\rho'\big).$$
Finally, using the equivalence given by adding $-(\biF' - (\un{s}w,\un{s}w))(\rho')$ we obtain
\begin{eqnarray*}
\beta'\big(V_{\BC(\phi)}(\BC(\rhobar))\big) & \cong & R_{(\un{s}w,\un{s}w)}'\big((\un{s}(\mu),-\mu) - \rho' + (\un{s}w,\un{s}w)(\rho')\big) \\
& \cong & R_{(\un{s}w, \un{s}w)}'\Big(\un{s}(\mu) - \sum_{w_i = 1}\alpha_i,~ -\mu - \sum_{w_i = 1}\alpha_i\Big).
\end{eqnarray*}
\end{proof}

\subsubsection{}

The main result of this section concerns local functoriality of predicted Serre weights.

\begin{theo}
\label{main:thm:local}
Let $\rhobar:\Gamma_{K}\longrightarrow {}^{C}\bU_2(\bbF)$ be a $1$-generic tamely ramified $L$-parameter which satisfies $\widehat{\imath}\circ\rhobar = \omega$, and let $F$ denote a Serre weight of $\bG(\bbZ_p)$ on which $\imath(\cO_K^\times)$ acts trivially.  Then
$$F\in \textnormal{W}^?(\rhobar) \quad \Longleftrightarrow \quad \BC(F)\in \textnormal{W}^?(\BC(\rhobar)).$$
\end{theo}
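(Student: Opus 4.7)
The plan is to combine the three key tools developed earlier in the section: the description of $\textnormal{W}^?$ in terms of $\beta$ (Proposition \ref{prop:weights:type} and Remark \ref{rmk:weights:type}), the compatibility of $\beta$ and $\beta'$ with base change (Proposition \ref{prop:compatibilityBC}), and the compatibility of Jordan--H\"older decomposition with base change (Lemma \ref{JHunderBC}).

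First I would rewrite both sides of the equivalence in Deligne--Lusztig terms. Applying Proposition \ref{prop:weights:type} to $\rhobar$ gives
\[
\textnormal{W}^?(\rhobar) \;=\; \JH\bigl(\overline{\beta(V_\phi(\rhobar))}\bigr),
\]
and applying the $\bG\bL_2$ analogue (Remark \ref{rmk:weights:type}) to the base-changed parameter $\BC(\rhobar):\Gamma_{K_2}\to\bG\bL_2(\bbF)$ gives
\[
\textnormal{W}^?(\BC(\rhobar)) \;=\; \JH\bigl(\overline{\beta'(V_{\BC(\phi)}(\BC(\rhobar)))}\bigr).
\]
To use these we need $V_\phi(\rhobar)$ and $V_{\BC(\phi)}(\BC(\rhobar))$ to be $1$-generic; this follows from the $1$-genericity hypothesis on $\rhobar$ via Lemma \ref{BCinertial}, since base change from $(w,\mu+\eta)$ to $\bigl((w,w),(\mu,-\un{s}(\mu))+\rho'\bigr)$ preserves the deepness of the character.

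Next I would use Proposition \ref{prop:compatibilityBC}, which is the essential bridge: it asserts both that $\imath(\cO_K^\times)$ acts trivially on $\beta(V_\phi(\rhobar))$, and that there is an isomorphism
\[
\beta'\bigl(V_{\BC(\phi)}(\BC(\rhobar))\bigr) \;\cong\; \BC\bigl(\beta(V_\phi(\rhobar))\bigr).
\]
Consequently
\[
\textnormal{W}^?(\BC(\rhobar)) \;=\; \JH\Bigl(\overline{\BC\bigl(\beta(V_\phi(\rhobar))\bigr)}\Bigr).
\]

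Finally I would set $\sigma\defeq \beta(V_\phi(\rhobar))$ and apply Lemma \ref{JHunderBC} to $\sigma$ and the Serre weight $F$. The hypotheses of that lemma are met: $\sigma$ is a $1$-generic Deligne--Lusztig representation of $\bG(\zp)$ (since $\beta$ preserves genericity, as noted in its definition), $\imath(\cO_K^\times)$ acts trivially on $\sigma$ (by Proposition \ref{prop:compatibilityBC}), and by assumption $\imath(\cO_K^\times)$ acts trivially on $F$. The lemma then gives
\[
F \in \JH(\overline{\sigma}) \;\Longleftrightarrow\; \BC(F) \in \JH\bigl(\overline{\BC(\sigma)}\bigr),
\]
and substituting the identifications above yields $F\in \textnormal{W}^?(\rhobar) \iff \BC(F) \in \textnormal{W}^?(\BC(\rhobar))$.

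There is no genuine obstacle here: the proof is a formal assembly of previously established results. The only thing to verify carefully is that the genericity and triviality hypotheses transfer correctly through the operators $V_\phi(-)$, $\beta(-)$ and $\BC(-)$ in order to legitimately invoke Lemma \ref{JHunderBC}; these verifications are immediate from the explicit formulas recorded in Lemma \ref{BCinertial} and Proposition \ref{prop:compatibilityBC}.
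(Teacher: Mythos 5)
Your proof is correct and is exactly the argument the paper gives: the paper's proof of Theorem \ref{main:thm:local} is the one-line combination of Lemma \ref{JHunderBC} with Propositions \ref{prop:weights:type} and \ref{prop:compatibilityBC}, and your write-up simply spells out the same chain of identifications in more detail.
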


\begin{proof}
This follows by combining Lemma \ref{JHunderBC} and Propositions \ref{prop:weights:type} and \ref{prop:compatibilityBC}.
\end{proof}

\subsection{Inertial Local Langlands}
\label{ILLC-sect}

In this subsection we discuss the inertial local Langlands correspondence which will be used in the rest of the paper.
Recall that a \emph{tame inertial type $\tau'$} is a homomorphism $\tau':I_{K_2} \longrightarrow \bG\bL_2(\cO)$ with open kernel, which is tamely ramified, and such that $\tau'$ extends to a representation of the Weil group of $K_2$.

We set $L \defeq K_2((-p)^{1/(p^{2f} - 1)})${. As $\tau'$ is tame, it factors as} 
$$\tau': I_{K_2} \longtwoheadrightarrow \Gal(L/K_2) \longrightarrow \bG\bL_2(\cO).$$
This implies that $\tau'$ is of the form
$$\tau' \cong \widetilde{\omega}_{2f}^a \oplus \widetilde{\omega}_{2f}^b.$$
If $a \not\equiv b~(\textnormal{mod}~p^{2f} - 1)$, we call such a type a \emph{principal series tame (inertial) type}.  By Henniart's appendix to \cite{breuilmezard}, the inertial type $\tau'$ is associated to the tame type
$$\sigma'(\tau') \defeq \Ind_{\bB_{\bU}(\bbF_{q^2})}^{\bG\bL_2(\bbF_{q^2})}\big(\theta_a \otimes \theta_b\big)$$
if $a \not\equiv b~(\textnormal{mod}~p^{2f} - 1)$, and 
$$\sigma'(\tau') \defeq \theta_a\circ \det$$
if $a \equiv b~(\textnormal{mod}~p^{2f} - 1)$, where we use the notation $\theta_z(x) = \varsigma_0(\tilde{x}^z)$.  We view $\sigma'(\tau')$ as a representation of $\bG\bL_2(\cO_{K_2})$ by inflation.  {(According to the appendix of \cite{breuilmezard}, the $a\equiv b ~(\textnormal{mod}~p^{2f} - 1)$ case corresponds to a twist of the Bernstein component denoted $s_0$ in \emph{op. cit.}, and consequently we have two options for $\sigma'(\tau')$.  We choose $\sigma'(\tau')$ to be one-dimensional in order to guarantee that we are in the $N = 0$ case in Theorem \ref{ILLC} below.)}

Suppose now that $(\tau')^{\varphi^{-f}} \cong \tau'^\vee$, where $\tau'^\vee$ denotes the dual type{, i.e.~the type $\widetilde{\omega}_{2f}^{-a} \oplus \widetilde{\omega}_{2f}^{-b}$ if $\tau' \cong\widetilde{\omega}_{2f}^a \oplus \widetilde{\omega}_{2f}^b$.} 
{(Note that the condition $(\tau')^{\varphi^{-f}} \cong \tau'^\vee$ means exactly that $\tau'$ extends to a map $\rho:\Gamma_K \longrightarrow {}^C\bU_2(\cO)$ such that $\BC(\rho)|_{I_{K_2}} \cong \tau'$ and $(\widehat{\imath}\circ\rho)|_{I_K}$ is the trivial character.)}  
In this case $\tau'$ is of the form 
$$\widetilde{\omega}_{2f}^{c} \oplus \widetilde{\omega}_{2f}^{-q c}\quad \textnormal{or}\quad \widetilde{\omega}_{2f}^{(1 - q)a} \oplus \widetilde{\omega}_{2f}^{(1 - q)b},$$
so that $\sigma'(\tau')$ is of the form
$$\Ind_{\bB_{\bU}(\bbF_{q^2})}^{\bG\bL_2(\bbF_{q^2})}\big(\theta_{c} \otimes \theta_{-qc}\big),\quad  \Ind_{\bB_{\bU}(\bbF_{q^2})}^{\bG\bL_2(\bbF_{q^2})}\big(\theta_{(1 - q)a} \otimes \theta_{(1 - q)b}\big),\quad \textnormal{or}\quad \theta_{(1 - q)a}\circ \det.$$
In particular, these tame types come via base change from tame types of $\bU_2(\cO_K)$.  We therefore make the following definition.

\begin{df}
\label{inertialtype}
Let $\tau'{: I_{K_2} \longrightarrow \bG\bL_2(\cO)}$ denote a tame inertial type which factors through $\Gal(L/K_2)$, and suppose furthermore that $(\tau')^{\varphi^{-f}} \cong \tau'^\vee$.  
\begin{enumerate}
\item If $\tau' \cong \widetilde{\omega}_{2f}^{c} \oplus \widetilde{\omega}_{2f}^{-q c}$ with $c\not\equiv -qc ~(\textnormal{mod}~p^{2f} - 1)$, we set
$$\sigma(\tau') \defeq \Ind_{\bB_{\bU}(\bbF_q)}^{\bU_2(\bbF_q)}(\theta_c),$$
which we view as a representation of $\bU_2(\cO_K)$ via inflation.
\item If $\tau' \cong \widetilde{\omega}_{2f}^{(1 - q)a} \oplus \widetilde{\omega}_{2f}^{(1 - q)b}$ with $a \not\equiv b~(\textnormal{mod}~p^{2f} - 1)$, we set
$$\sigma(\tau') \defeq \sigma(\theta_a\otimes\theta_b),$$
where we view $\theta_a$ and $\theta_b$ as characters of $\bU_1(\bbF_q)$ by restriction, and where we view $\sigma(\tau')$ as a representation of $\bU_2(\cO_K)$ via inflation.
\item If $\tau' \cong \widetilde{\omega}_{2f}^{(1 - q)a} \oplus \widetilde{\omega}_{2f}^{(1 - q)a}$, we set
$$\sigma(\tau') \defeq \theta_a\circ\det,$$
where we view $\theta_a$ as a character of $\bU_1(\bbF_q)$ by restriction, and where we view $\sigma(\tau')$ as a representation of $\bU_2(\cO_K)$ via inflation.
\end{enumerate}
{Note that the representations $\sigma(\tau')$ are all irreducible by Theorem \ref{thm:ennola}, and} by construction we have $\BC(\sigma(\tau')) \cong \sigma'(\tau')$.  
\end{df}

We may now state a version of the inertial Local Langlands correspondence.

\begin{theo}
\label{ILLC}
Let $\tau': I_{K_2} \longrightarrow \bG\bL_2(\cO)$ be a tame inertial type as in Definition \ref{inertialtype}, so that in particular $(\tau')^{\varphi^{-f}} \cong \tau'^\vee$.  Let $\pi$ denote a smooth irreducible representation of $\bU_2(K)$ over $\overline{E}$, and let $\pi^\oplus$ denote the direct sum of all representations appearing in the $L$-packet containing $\pi$.  Let $\BC(\pi)$ denote the stable base change of the $L$-packet containing $\pi$.  Then $\pi^\oplus|_{\bU_2(\cO_K)}$ contains $\sigma(\tau')$ if and only if $\textnormal{rec}_{\overline{E}}(\BC(\pi))|_{I_{K_2}} \cong \tau'$ and $N = 0$ on $\textnormal{rec}_{\overline{E}}(\BC(\pi))$.  In this case, we have $\dim_{\overline{E}}\Hom_{\bU_2(\cO_K)}(\sigma(\tau'), \pi^\oplus|_{\bU_2(\cO_K)}) = 1$.
\end{theo}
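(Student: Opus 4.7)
The plan is to deduce the theorem from the corresponding inertial local Langlands correspondence for $\bG\bL_2(K_2)$ (Henniart's appendix to \cite{breuilmezard}) by transferring the statement through base change of types and base change of representations. The hypothesis $(\tau')^{\varphi^{-f}} \cong \tau'^\vee$ means that $\tau'$ arises as $\BC$ of an inertial parameter for $\bU_2$, and by construction $\sigma(\tau')$ in Definition \ref{inertialtype} satisfies $\BC(\sigma(\tau')) \cong \sigma'(\tau')$, where $\sigma'(\tau')$ is the $\bG\bL_2(\cO_{K_2})$-type associated to $\tau'$ by Henniart.

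First, I would reduce to depth-zero representations: since $\sigma(\tau')$ is inflated from $\bU_2(\bbF_q)$, any $\pi$ whose restriction contains $\sigma(\tau')$ has nonzero invariants under the first principal congruence subgroup, so $\pi$ is depth zero. Moreover, under the hypothesis $N = 0$ on $\textnormal{rec}_{\overline{E}}(\BC(\pi))$, the Weil--Deligne parameter of $\BC(\pi)$ is semisimple with tame inertial part, so $\BC(\pi)$ is depth zero on $\bG\bL_2(K_2)$, forcing $\pi$ itself to be depth zero. The classification of irreducible smooth representations of $\bU_2(K)$ of depth zero (via Rogawski \cite[\S 12]{rogawski} and DeBacker--Reeder type theory, or the explicit construction via compact induction of tame types) then partitions the problem into the principal series, supercuspidal, one-dimensional, and Steinberg-twist cases, matching exactly the four cases in the classification of irreducible $\bU_2(\bbF_q)$-representations recalled after Subsection~\ref{autBC}.

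Next, in each case I would compute $\pi^\oplus|_{\bU_2(\cO_K)}$ explicitly and extract the tame types it contains. For principal series $\pi = \Ind_{\bB_\bU(K)}^{\bU_2(K)}(\chi)$ with $\chi$ tamely ramified, Mackey decomposition (or the Iwahori factorization on $\bU_2(\cO_K)$) immediately produces $\Ind_{\bB_\bU(\bbF_q)}^{\bU_2(\bbF_q)}(\bar{\chi})$ as the unique tame type, with multiplicity one. For a supercuspidal $L$-packet, $\pi^\oplus$ is the compact induction $\ind_{\bU_2(\cO_K)K^\times}^{\bU_2(K)}(\widetilde{\sigma(\theta)})$ of an extension of the Deligne--Lusztig cuspidal $\sigma(\theta)$, so Frobenius reciprocity gives $\sigma(\theta)$ as the unique tame type, again with multiplicity one. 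The one-dimensional and Steinberg-twist cases are immediate. In each instance I would then verify that the $\bU_2$-type occurring in $\pi^\oplus|_{\bU_2(\cO_K)}$ is precisely $\sigma(\tau'')$ where $\tau''$ is the inertial parameter for which $\BC(\pi)$ realizes $\tau''$ on inertia; this follows by comparing the explicit description of the type with $\textnormal{rec}_{\overline{E}}(\BC(\pi))|_{I_{K_2}}$, which in the depth-zero setting is encoded in the tame character $\chi|_{\bT_\bU(\cO_K)}$ or the character $\theta$ via well-known formulas.

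Finally, I would match this with Henniart's inertial local Langlands for $\bG\bL_2(K_2)$. By the compatibility of $\BC$ of types with stable base change of representations recorded in Subsection~\ref{autBC} (following Rogawski \cite[\S 11]{rogawski} and Blasco \cite[Cor.~3.6]{blasco}), we have $\BC(\sigma(\tau')) \hookrightarrow \BC(\pi)|_{\bG\bL_2(\cO_{K_2})}$ whenever $\sigma(\tau') \hookrightarrow \pi^\oplus|_{\bU_2(\cO_K)}$. Since $\BC(\sigma(\tau')) \cong \sigma'(\tau')$ and Henniart's correspondence characterizes $\sigma'(\tau')$ by $\textnormal{rec}_{\overline{E}}(\BC(\pi))|_{I_{K_2}} \cong \tau'$ with $N = 0$, the two conditions are equivalent. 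The multiplicity one assertion is inherited from multiplicity one of $\sigma'(\tau')$ in $\BC(\pi)|_{\bG\bL_2(\cO_{K_2})}$ together with injectivity of $\BC$ on isomorphism classes of tame types (recorded at the end of Subsection~\ref{autBC}), which forces a unique $\sigma(\tau')$-summand on the $\bU_2$ side.

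The main obstacle is bookkeeping rather than conceptual: one must carefully coordinate the normalizations of Rogawski's classical base change (which is defined at the level of $L$-packets, not individual representations), the Shintani-style base change of types defined in Subsection~\ref{autBC}, and Henniart's recipe for $\sigma'(\tau')$, in order to be sure that the same $\tau'$ appears on all three sides. Once these normalizations are aligned, the theorem follows by the case analysis above.
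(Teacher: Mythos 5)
Your overall strategy coincides with the paper's: the equivalence is deduced from Henniart's inertial local Langlands for $\bG\bL_2(K_2)$ together with the compatibility $\BC(\sigma(\tau'))\cong\sigma'(\tau')$ and the properties of stable base change from \cite[\S 11]{rogawski}, exactly as in the paper's (very short) proof. Where you diverge is the multiplicity-one assertion: the paper obtains it by restricting to the derived subgroup and invoking Nevins' branching results (\cite[Thm. 1]{nevins-ps}, \cite[Thm. 5.3]{nevins-cusp}), whereas you propose a direct depth-zero classification plus Mackey/Frobenius-reciprocity computations. Your route is more self-contained in principle, but it is also where the real work hides, and one step as written is not correct: for a depth-zero supercuspidal $L$-packet of $\bU_2(K)$, $\pi^\oplus$ is \emph{not} a single compact induction $\ind_{\bU_2(\cO_K)K^\times}^{\bU_2(K)}(\widetilde{\sigma(\theta)})$; the packet has two members, compactly induced from (extensions of cuspidal representations on) the two non-conjugate maximal parahoric subgroups, only one of which is $\bU_2(\cO_K)$ up to center. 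Frobenius reciprocity then handles the hyperspecial member, but you must still run a Mackey argument to show that the member induced from the \emph{other} maximal parahoric contributes no copy of $\sigma(\tau')$ upon restriction to $\bU_2(\cO_K)$ --- this is precisely the branching information the paper outsources to Nevins. A similar care is needed in the principal series case to see that no member of the packet beyond the expected one contributes. With that computation supplied (or replaced by the citation to Nevins), your argument goes through; without it, the claimed $\dim_{\overline{E}}\Hom_{\bU_2(\cO_K)}(\sigma(\tau'),\pi^\oplus|_{\bU_2(\cO_K)})=1$ is not yet established.
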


\begin{proof}
This follows from Henniart's inertial local Langlands correspondence (\cite{breuilmezard}; see also \cite[3.7 Thm.]{CEGGPS}) and the properties of the stable base change map (\cite[\S 11.4]{rogawski}).

{To verify the claim about multiplicities, suppose that $\Hom_{\bU_2(\cO_K)}(\sigma(\tau'), \pi^\oplus|_{\bU_2(\cO_K)}) \neq 0$, so that in particular $\pi^{\oplus}$ has an irreducible summand of depth zero.  By the classification of depth zero $L$-packets (see \cite[\S 11.1]{rogawski} and \cite[Prop. 2.1(ii)]{blasco}, or \cite[\S~ 3.1]{adlerlansky}), the (semisimple) representation $\pi^\oplus|_{\bU_2(\cO_K)}$ is either a subrepresentation of $\Ind_{\bB_{\bU}(K)}^{\bU_2(K)}(\chi)|_{\bU_2(\cO_K)}$, or a direct sum $(\pi_1 \oplus \pi_2)|_{\bU_2(\cO_K)}$, where $\chi:\bB_{\bU}(K) \longrightarrow \overline{E}^\times$ is a smooth tame character, and where $\pi_1, \pi_2$ are irreducible supercuspidal representations of $\bU_2(K)$ which are conjugate under the action of $\bG\bU_2(K)$. }

{Suppose that $\pi^\oplus|_{\bU_2(\cO_K)}$ is a subrepresentation of $\Ind_{\bB_{\bU}(K)}^{\bU_2(K)}(\chi)|_{\bU_2(\cO_K)}$, and let $\bU_2(\cO_K)_1$ denote the principal congruence subgroup of $\bU_2(\cO_K)$.  Using the Mackey formula and the Iwasawa decomposition, we have
\begin{eqnarray*}
\Hom_{\bU_2(\cO_K)}(\sigma(\tau'), \pi^\oplus|_{\bU_2(\cO_K)}) & \subseteq & \Hom_{\bU_2(\cO_K)}\left(\sigma(\tau'), \Ind_{\bB_{\bU}(K)}^{\bU_2(K)}(\chi)|_{\bU_2(\cO_K)}\right) \\
 & = & \Hom_{\bU_2(\cO_K)}\left(\sigma(\tau'), \Ind_{\bB_{\bU}(K)}^{\bU_2(K)}(\chi)^{\bU_2(\cO_K)_1}\right) \\
 & \cong & \Hom_{\bU_2(\cO_K)}\left(\sigma(\tau'), \Ind_{\bB_{\bU}(\bbF_q)}^{\bU_2(\bbF_q)}(\chi|_{\bB_{\bU}(\bbF_q)})\right).
\end{eqnarray*}
Since $\sigma(\tau')$ is irreducible and $\Ind_{\bB_{\bU}(\bbF_q)}^{\bU_2(\bbF_q)}(\chi|_{\bB_{\bU}(\bbF_q)})$ is multiplicity-free (cf. \cite[\S~6]{ennola}), the result follows in this case.
}

{Suppose now that $\pi^{\oplus}|_{\bU_2(\cO_K)} =  (\pi_1 \oplus \pi_2)|_{\bU_2(\cO_K)}$.  We may label the supercuspidal representations such that $\pi_1^{\bU_2(\cO_K)_1} \neq 0$ and $\pi_2^{\bU_2(\cO_K)_1} = 0$.  This gives
\begin{eqnarray*}
\Hom_{\bU_2(\cO_K)}(\sigma(\tau'), \pi^\oplus|_{\bU_2(\cO_K)}) & = & \Hom_{\bU_2(\cO_K)}\left(\sigma(\tau'), (\pi_1 \oplus \pi_2)|_{\bU_2(\cO_K)}\right) \\
 & = & \Hom_{\bU_2(\cO_K)}\left(\sigma(\tau'), \pi_1^{\bU_2(\cO_K)_1}\right).
\end{eqnarray*}
As in \cite[\S~ 3.1]{adlerlansky}, we may write $\pi_1 \cong \textnormal{c-Ind}_{\bU_2(\cO_K)}^{\bU_2(K)}(\sigma)$, where $\sigma$ denotes an irreducible cuspidal representation of $\bU_2(\bbF_q)$, inflated to $\bU_2(\cO_K)$.  Applying the Mackey formula and the Cartan decomposition, and using cuspidality of $\sigma$, we obtain $\pi_1^{\bU_2(\cO_K)_1} \cong \sigma$.  Again using the irreducibility of $\sigma(\tau')$, we obtain the desired multiplicity result.}
\end{proof}

\section{Local deformations}
\label{sec:Loc:Def}

In this section we compute potentially crystalline deformation rings for certain $L$-parameters $\rhobar: \Gamma_K \longrightarrow {}^{C}\bU_2(\bbF)$.  The main result is Corollary \ref{cor:irr:cmpts} which relates Hilbert--Samuel multiplicities of such rings with the set $\textnormal{W}^?(\rhobar)$.  This will be used to prove the ``weight existence'' direction of Corollary \ref{cor:SWC}.

We follow \cite[\S 6]{LLLM1}, adapting the base change techniques to our setting (see also \cite{CDM}).  Subsection \ref{subsec:KisinModules} contains the background on Kisin modules for $\bG\bL_2$, together with their classification by shapes.  In Subsection \ref{sec:dual:KM} we introduce the notion of polarized (or {Frobenius twist self-dual}) Kisin modules and use a base change technique to compute their deformations.  We then relate the deformation problems of polarized Kisin modules and of $L$-parameters to obtain the desired description of the potentially crystalline deformations rings.

\subsection{Kisin modules}
\label{subsec:KisinModules}

Throughout this subsection, we let $R$ denote a complete local Noetherian $\cO$-algebra with residue field $\bbF$.  We start by defining the relevant categories of Kisin modules with tame descent data $Y^{\mu,\tau'}(R) \subseteq Y^{[0,1],\tau'}(R)$ (\cite[\S 5]{CL}, see also \cite[\S 3]{Dan-mult1})

\subsubsection{}

The ring $\fS_R \defeq (\cO_{K_2}\otimes_{\bbZ_p}R)[\![u]\!]$ is equipped with a Frobenius map $\overline{\varphi}:\fS_R \longrightarrow \fS_R$ which is the \emph{arithmetic} Frobenius on $\cO_{K_2}$ (i.e., $\overline{\varphi} = \varphi^{-1}$ on $\cO_{K_2}$), which is trivial on $R$, and which sends $u$ to $u^p$.

\begin{df}
A \emph{Kisin module} with height in [0,1] over $R$ is a finitely generated projective $\fS_R$-module $\fM$ together with an $\fS_R$-linear map $\phi_\fM: \overline{\varphi}^*\fM \defeq \fS_R \otimes_{\overline{\varphi},\fS_R}\fM \longrightarrow \fM$ such that
$$E(u)\fM \subseteq \phi_{\fM}\big(\overline{\varphi}^*\fM\big) \subseteq \fM,$$
where $E(u)$ denotes the Eisenstein polynomial of $(-p)^{1/(p^{2f} - 1)}$ over $K_2$, i.e. $E(u) = u^{p^{2f} - 1} + p$.  
\end{df}

We often write $\fM$ for a Kisin module, the Frobenius map $\phi_{\fM}$ being implicit.

\subsubsection{}

Recall that $\pi = (-p)^{1/(p^{2f} - 1)}\in \overline{\bbQ}_p$, and set $L \defeq K_2(\pi)$.  For $g\in \Gal(L/K_2)$, we have defined
$$\widetilde{\omega}_\pi(g) = \frac{\pi^g}{\pi}\in \cO_{K_2}^\times.$$
(Note that reducing $\widetilde{\omega}_\pi$ mod $p$ induces an isomorphism $\Gal(L/K_2) \stackrel{\sim}{\longrightarrow} \bbF_{p^{2f}}^\times$.)  Given $g\in \Gal(L/K_2)$, we let $\widehat{g}$ denote the $\cO_{K_2}\otimes_{\bbZ_p}R$-linear automorphism of $\fS_R$ given by $u \longmapsto (\widetilde{\omega}_\pi(g)\otimes 1)u$.  Note that $\overline{\varphi}\circ\widehat{g} = \widehat{g}\circ\overline{\varphi}$.

\begin{df}
Let $\fM$ denote a Kisin module over $R$.
\begin{enumerate}
\item A \emph{semilinear action of $\Gal(L/K_2)$ on $\fM$} is a collection $\{\widehat{g}\}_{g\in \Gal(L/K_2)}$ of $\widehat{g}$-semilinear additive bijections $\widehat{g}:\fM \longrightarrow \fM$ such that $\widehat{g}\circ\widehat{h} = \widehat{gh}$ for all $g,h\in \Gal(L/K_2)$.  
\item A \emph{Kisin module with descent datum over $R$} is a Kisin module together with a semilinear action of $\Gal(L/K_2)$ given by $\{\widehat{g}\}_{g\in \Gal(L/K_2)}$ which commutes with $\phi_{\fM}$, i.e., we have
$$\widehat{g}\circ \phi_{\fM} = \phi_{\fM} \circ \overline{\varphi}^*\widehat{g}$$
for all $g\in \Gal(L/K_2)$.  
\end{enumerate}
\end{df}

\subsubsection{}
\label{kisindecomp}

Any Kisin module $\fM$ admits a decomposition 
$$\fM = \bigoplus_{i = 0}^{2f - 1}\fM^{(i)},$$ 
where $\fM^{(i)}$ is the $R[\![u]\!]$-submodule of $\fM$ such that $(x\otimes 1)m = (1\otimes \varsigma_0\circ \varphi^{i}(x))m$ for $m\in \fM^{(i)}$ and $x\in \cO_{K_2}$.

We let 
$$\tau':I_{K_2} \longtwoheadrightarrow \Gal(L/K_2) \longrightarrow \bG\bL_2(\cO)$$ 
denote a tamely ramified inertial type which factors through $\Gal(L/K_2)$.  Recall that this implies $\tau'$ can be written $\tau' = \widetilde{\omega}_{2f}^a \oplus \widetilde{\omega}_{2f}^b$.

\begin{df}
\label{df:KM:tau}
Suppose $\fM$ is a Kisin module with descent datum over $R$.  We say the descent datum is \emph{of type $\tau'$} if we have $\fM^{(i)}/u\fM^{(i)} \cong {\tau'^\vee \otimes_{\cO} R}$ as representations of $\Gal(L/K_2)$ for every $0 \leq i \leq 2f - 1$, {where $\tau'^\vee$ denotes the dual type}.\footnote{We impose the condition $\fM^{(i)}/u\fM^{(i)} \cong \tau'^\vee \otimes_\cO R$ because our functors to Galois representations in later sections are \emph{contravariant}.  In particular, if $\fM$ is a Kisin module over $\cO$ with height in $[0,1]$ and descent datum of type $\tau'$ (as defined in Definition \ref{df:KM:tau}), then the $\Gamma_{K_2}$-representation $T_{\textnormal{dd}}^*(\fM)[1/p]$ will have inertial type $\tau'$.  (See below for undefined notation and terminology.)}
\end{df}


\subsubsection{}

We now define the categories of Kisin modules that will be relevant for us.  
Let $\mu \defeq \vect{\un{1}}{\un{0}}$ denote the standard minuscule cocharacter of $\bG' = \Res_{\cO_{K_2}/\zp}\bG\bL_{2/\cO_{K_2}}$.

\begin{df}
Fix a principal series tame type $\tau'$.
\begin{enumerate}
\item We define $Y^{[0,1],\tau'}(R)$ to be the {groupoid} of Kisin modules over $R$ of rank $2$, with height in [0,1], and descent datum of type $\tau'$.  
\item We define $Y^{\mu,\tau'}(R)$ to be the {(full) subgroupoid} of $Y^{[0,1],\tau'}(R)$ consisting of Kisin modules such that 
\begin{equation}
\label{eq:df:Kisin}
{E(u)\det\fM = \phi_{\fM}\big(\overline{\varphi}^*(\det \fM)\big).}
\end{equation}
\end{enumerate}
\end{df}
{Note that the definition of $Y^{\mu,\tau'}(R)$ above is consistent with the construction of \cite[\S 5]{CL}, thanks to Theorem 5.13 and Corollary 5.12 of \emph{op.~cit.}.  See also \cite[Thm. 4.18]{LLLM1}.}

\subsubsection{}
\label{subsub:gen:cond:typs}

We fix some notation, following \cite[\S 2.1]{LLLM1}.  Let $\tau'$ be a principal series tame type of $I_{K_2}$.  We may write
$${\tau'^\vee} = \eta_1 \oplus \eta_2 = \widetilde{\omega}_{2f}^{-\sum_{i = 0}^{2f - 1}a_{1,i}p^i} \oplus \widetilde{\omega}_{2f}^{-\sum_{i = 0}^{2f - 1}a_{2,i}p^i},$$
with $0 \leq a_{k,i} \leq p - 1$ for all $i$.  {By Remark \ref{twistremark} below, we may assume without loss of generality} that neither $\eta_1$ nor $\eta_2$ are trivial, i.e.~$(a_{k,i})_i\notin\{(p-1,\dots,p-1), (0\dots,0)\}$ for $k=1,2$.

{\begin{rmk}\label{twistremark}
The goal of Section \ref{sec:Loc:Def} is to compute the deformation rings $R_{\rhobar}^{\tau'}$ (described in Subsubsection \ref{localgalrepdefrings} below), where $\rhobar: \Gamma_K \longrightarrow {}^C\bU_2(\bbF)$ is a tamely ramified $L$-parameter.  Given an integer $0 \leq k < p^f + 1$, we define $\rhobar \otimes \omega_{2f}^{(1 - p^f)k}$, the twist of $\rhobar$ by $\omega_{2f}^{(1 - p^f)k}$, by the rules
\begin{eqnarray*}
(\rhobar \otimes \omega_{2f}^{(1 - p^f)k})(h) & = & \rhobar(h) \cdot \left(\begin{pmatrix} \omega_{2f}^{(1 - p^f)k}(h) & 0 \\ 0 & \omega_{2f}^{(1 - p^f)k}(h) \end{pmatrix}, 1\right) \rtimes 1 \\
(\rhobar \otimes \omega_{2f}^{(1 - p^f)k})(\varphi^{-f}) & = & \rhobar(\varphi^{-f})
\end{eqnarray*}
where $h \in \Gamma_{K_2}$.  One checks that these rules give a well-defined tamely ramified $L$-parameter which satisfies $\widehat{\imath}\circ (\rhobar \otimes \omega_{2f}^{(1 - p^f)k}) = \widehat{\imath}\circ \rhobar$.  Using this twisting procedure, the proof of \cite[Lem.~2.1.2]{gee-kisin} shows that we have an isomorphism of deformation rings
$$R_{\rhobar}^{\tau'} \cong R_{\rhobar \otimes \omega_{2f}^{(1 - p^f)k}}^{\tau'\otimes \widetilde{\omega}_{2f}^{(1 - p^f)k}}.$$
Consequently, we may assume that $\tau'$ does not contain the trivial character.  
\end{rmk}}

Set $\mathbf{a}_1 \defeq (a_{1,i})_i,~\mathbf{a}_2 \defeq (a_{2,i})_i$, and given $0\leq j \leq 2f - 1$, define the shifted sums
$$\mathbf{a}_1^{(j)}  \defeq  \sum_{i = 0}^{2f - 1} a_{1,i - j}p^i,\qquad \mathbf{a}_2^{(j)}  \defeq  \sum_{i = 0}^{2f - 1} a_{2,i - j}p^i,$$
so that, in particular, $\eta_1 = \widetilde{\omega}_{2f}^{-\mathbf{a}_1^{(0)}}, \eta_2 =  \widetilde{\omega}_{2f}^{-\mathbf{a}_2^{(0)}}$.  

\begin{df}
Let $n \geq 0$.  We say the pair $(\mathbf{a}_1,\mathbf{a}_2)$ is \emph{$n$-generic } if 
$$n < |a_{1,i} - a_{2,i}| < p - n$$
for every $0\leq i \leq 2f - 1$.  If $\tau'$ is associated to $(\mathbf{a}_1,\mathbf{a}_2)$ as above, we say \emph{$\tau'$ is $n$-generic} if the pair $(\mathbf{a}_1,\mathbf{a}_2)$ is.
\end{df}

This agrees with the notion of genericity given in Definition \ref{df:n:generic}.

\subsubsection{}

\begin{df}
Let $\tau'$ denote a principal series tame type of $I_{K_2}$, and let $(\mathbf{a}_1,\mathbf{a}_2)$ denote the associated pair.  Suppose $\tau'$ is $2$-generic.  An \emph{orientation} of $\tau'$ is an element $w = (w_i)_i \in {S_2^{2f}}$ such that
$$\mathbf{a}^{(i)}_{w_i(1)} \geq \mathbf{a}^{(i)}_{w_i(2)}$$
for all $0\leq i \leq 2f - 1$.  
\end{df}

{(We view $S_2$ as a subgroup of $\bG\bL_2(\bbZ)$ via the standard embedding as permutation matrices.  Since $S_2^{2f} \cong W'$, we also view orientations as elements of $W'$ when convenient.)}  We note that an orientation depends on the ordering of the characters $\eta_1, \eta_2$.  Since we take $\tau'$ to be $2$-generic, the orientation is unique, and $w_i$ depends only on the pair $(a_{1, 2f-1-i}, a_{2, 2f-1-i})$.

\subsubsection{}
In what follows, we use the notation $v\defeq u^{p^{2f} - 1}$.

\begin{df}
\label{def:eigenbasis}
Let $\tau'$ denote a $2$-generic principal series tame type, {and write $\tau'^\vee = \eta_1 \oplus \eta_2$}.  Let $\fM\in Y^{[0,1],\tau'}(R)$, and let $\fM = \bigoplus_{i = 0}^{2f - 1} \fM^{(i)}$ be the decomposition of $\fM$ as in Subsection \ref{kisindecomp}.  
\begin{enumerate}
\item We let $\fM^{(i)}_1$ (resp.~$\fM^{(i)}_2$) denote the $R[\![v]\!]$-submodule of $\fM^{(i)}$ on which $\Gal(L/K_2)$ acts by $\eta_1$ (resp.~$\eta_2$).  
\item We define ${}^{\overline{\varphi}}\fM^{(i)}_1$ (resp.~${}^{\overline{\varphi}}\fM^{(i)}_2$) to be the $R[\![v]\!]$-submodule of $\overline{\varphi}^*(\fM^{(i)}) = (\overline{\varphi}^*\fM)^{(i + 1)}$ on which $\Gal(L/K_2)$ acts by $\eta_1$ (resp.~$\eta_2$).  
\item We define an \emph{eigenbasis} $\beta \defeq \{\beta^{(i)}\}_{i}$ of $\fM$ to be a collection of ordered bases $\beta^{(i)} = (f_1^{(i)}, f_2^{(i)})$ of each $\fM^{(i)}$ such that $f_1^{(i)}\in \fM_1^{(i)}$ and $f_2^{(i)}\in \fM_2^{(i)}$.  
\end{enumerate}
\end{df}

Now let $\tau'$ be a $2$-generic principal series tame type, with orientation $w = (w_i)_i$.  We have a commutative diagram
\begin{center}
\begin{tikzcd}[row sep=large, column sep=20ex]
{}^{\overline{\varphi}}\fM^{(i - 1)}_{w_i(2)} \ar[r, "\scriptstyle{u^{p^{2f} - 1 - (\mathbf{a}_{w_i(1)}^{(i)} - \mathbf{a}_{w_i(2)}^{(i)})}}"] \ar[d, "\phi^{(i - 1)}_{\fM,w_i(2)}"] & {}^{\overline{\varphi}}\fM^{(i - 1)}_{w_i(1)} \ar[r, "u^{\mathbf{a}_{w_i(1)}^{(i)} - \mathbf{a}_{w_i(2)}^{(i)}}"] \ar[d, "\phi^{(i - 1)}_{\fM,w_i(1)}"] & {}^{\overline{\varphi}}\fM^{(i - 1)}_{w_i(2)} \ar[d, "\phi^{(i - 1)}_{\fM,w_i(2)}"] \\
\fM^{(i)}_{w_i(2)} \ar[r, "u^{p^{2f} - 1 - (\mathbf{a}_{w_i(1)}^{(i)} - \mathbf{a}_{w_i(2)}^{(i)})}"] & \fM^{(i)}_{w_i(1)} \ar[r, "u^{\mathbf{a}_{w_i(1)}^{(i)} - \mathbf{a}_{w_i(2)}^{(i)}}"] & \fM^{(i)}_{w_i(2)}
\end{tikzcd}
\end{center}
\noindent Here, $\phi_{\fM,k}^{(i - 1)}$ denotes the restriction of $\phi_\fM$ to ${}^{\overline{\varphi}}\fM^{(i - 1)}_{k}$.

\subsubsection{}
\label{Amatrix}

Fix a principal series $2$-generic tame type $\tau'$ and $\fM \in Y^{[0,1],\tau'}(R)$.  Let $w = (w_i)_i$ denote the orientation of $\tau'$, and let $\beta = \{\beta^{(i)}\}_i$ denote an eigenbasis for $\fM$.  We define
\begin{eqnarray*}
\beta^{(i)}_{w_i(2)} & \defeq & \left(u^{\mathbf{a}_{w_i(1)}^{(i)} - \mathbf{a}_{w_i(2)}^{(i)}}f^{(i)}_{w_i(1)},~f^{(i)}_{w_i(2)}\right),\\
{}^{\overline{\varphi}}\beta^{(i - 1)}_{w_i(2)} & \defeq & \left(u^{\mathbf{a}_{w_i(1)}^{(i)} - \mathbf{a}_{w_i(2)}^{(i)}}\otimes f^{(i - 1)}_{w_i(1)},~1\otimes f^{(i - 1)}_{w_i(2)}\right);
\end{eqnarray*}
the first is an $R[\![v]\!]$-basis for $\fM^{(i)}_{w_i(2)}$, the second is an $R[\![v]\!]$-basis for ${}^{\overline{\varphi}}\fM^{(i - 1)}_{w_i(2)}$.  We then define the matrix $A^{(i)}\in \textnormal{Mat}_{2\times 2}(R[\![v]\!])$ by the condition
\begin{equation}\label{partialfrob}
\phi_{\fM,w_{i + 1}(2)}^{(i)}\left({}^{\overline{\varphi}}\beta^{(i)}_{w_{i + 1}(2)}\right) = \beta^{(i + 1)}_{w_{i + 1}(2)}A^{(i)}.
\end{equation}
We say that $A^{(i)}$ is the \emph{matrix of the partial Frobenius} of $\fM$ (at embedding $i$, with respect to $\beta$).

\subsubsection{}
We now find a more convenient expression for the data of the matrices $(A^{(i)})_i$.

We define the extended affine Weyl group of $\bG\bL_2$ as 
$$\widetilde{\cW} \defeq N_{\bG\bL_2}(\widehat{\bT}_\bG)(\bbF(\!(v)\!))/\widehat{\bT}_{\bG}(\bbF[\![v]\!]),$$
where $\widehat{\bT}_{\bG}$ denotes the torus dual to $\bT_{\bG/\cO_{K_2}}$.  We have an exact sequence
$$0 \longrightarrow X_*(\widehat{\bT}_\bG) \longrightarrow \widetilde{\cW} \longrightarrow S_2 \longrightarrow 0,$$
where the first nontrivial map sends a cocharacter to its value on $v$.  Furthermore, we have a Bruhat decomposition
$$\bG\bL_2(\bbF(\!(v)\!)) = \bigsqcup_{\tw\in \widetilde{\cW}}\cI \tw\cI,$$
where $\cI$ denotes the standard Iwahori subgroup of $\bG\bL_2(\bbF[\![v]\!])$, that is, the set of matrices which are upper triangular mod $v$.

Using the canonical identification $X_*(\widehat{\bT}_{\bG}) \cong X^*(\bT_{\bG/\cO_{K_2}})$, we identify $\widetilde{\cW}^{2f}$ with the extended affine Weyl group $\tW'$ of $\bG'$.

\begin{df}
Let $\tw = (\tw_i)_i\in \tW'$, let $\tau'$ be a principal series $2$-generic tame type, and let $w = (w_i)_i\in W'$ denote the orientation of $\tau'$.  Let $\overline{\fM} \in Y^{[0,1],\tau'}(\bbF)$.  
\begin{enumerate}
\item We say $\overline{\fM}$ has \emph{shape $\tw$} if for some eigenbasis $\overline{\beta}$, the matrices $({A}^{(i)})_i$ (defined by \eqref{partialfrob}, with respect to $\overline{\beta}$) have the property that ${A}^{(i)} \in \cI \tw_i \cI$.  
\item As in the discussion following \cite[Def. 2.17]{LLLM1}, the notion of shape does not depend on the choice of eigenbasis.  We define $Y^{\mu,\tau'}_{\tw}(\bbF)$ to be the full subcategory of $Y^{\mu,\tau'}(\bbF)$ consisting of Kisin modules of shape $\tw$.  
\end{enumerate}
\end{df}

\subsubsection{}

Upon choosing the dominant chamber corresponding to $\cI$ in $X_*(\widehat{\bT}_{\bG})\otimes_{\bbZ}\bbR$, we obtain a Bruhat order $\leq$ on $\widetilde{\cW}$.  Given a cocharacter $\lambda\in X_*(\widehat{\bT}_{\bG})$, we define the $\lambda$-admissible set as
$$\textnormal{Adm}(\lambda) \defeq \left\{\tw \in \widetilde{\cW}: \tw \leq t_{w(\lambda)}~\textnormal{for some}~w\in S_2\right\}.$$
In particular, we have
$$\textnormal{Adm}(\vect{1}{0}) = \left\{\begin{pmatrix}v & 0 \\ 0 & 1\end{pmatrix},\quad\begin{pmatrix} 1 & 0 \\ 0 & v \end{pmatrix},\quad \begin{pmatrix}0 & 1 \\ v & 0\end{pmatrix}\right\}.$$
We denote these elements by $\ft, \ft'$ and $\fw$, respectively.  Given $\mu = \vect{\un{1}}{\un{0}}$, we define
$$\textnormal{Adm}(\mu) \defeq \prod_{i = 0}^{2f - 1}\textnormal{Adm}(\vect{1}{0}),$$
which we call the $\mu$-admissible set.  As in \cite[Cor. 2.19]{LLLM1}, we have that $Y^{\mu,\tau'}_{\tw}(\bbF)$ is nonempty if and only if $\tw\in \textnormal{Adm}(\mu)$.

We now have the analog of \cite[Thm. 2.21]{LLLM1}, using \emph{op. cit.}, Lemma 2.20.

\begin{lemdef}
Suppose $\tw = (\tw_i)_i\in \tW'$ is $\mu$-admissible and $\tau'$ is a $2$-generic principal series tame type.  Let $\overline{\fM} \in Y^{\mu,\tau'}_{\tw}(\bbF)$.  Then there is an eigenbasis $\overline{\beta}$ for $\overline{\fM}$ such that the matrix of partial Frobenius ${A}^{(i)}$ has the form given in Table \ref{Table1}.  We call such an eigenbasis a \emph{gauge basis}.  
\end{lemdef}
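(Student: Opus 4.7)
The plan is to follow the strategy of \cite[Thm. 2.21]{LLLM1}, adapted to our $2f$ embeddings. I will start from an arbitrary eigenbasis $\overline{\beta}^\circ$ of $\overline{\fM}$ and its associated matrices $A^{(i),\circ} \in \cI\tw_i\cI$, and then normalize these matrices by carefully exploiting the freedom in the choice of eigenbasis.

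First, I will describe the ``gauge group'' of allowed basis changes. Any other eigenbasis $\overline{\beta}$ differs from $\overline{\beta}^\circ$ by matrices $D^{(i)} \in \bG\bL_2(\bbF[\![v]\!])$ which are diagonal modulo $v$ (i.e.~$D^{(i)} \in \widehat{\bT}_{\bG}(\bbF[\![v]\!])\cdot U$ for $U$ unipotent; actually, since eigenbases must respect the $\eta_1$- and $\eta_2$-eigendecomposition, the gauge group is precisely the group of invertible diagonal matrices in $\bG\bL_2(\bbF[\![v]\!])$). Under such a change, the partial Frobenius transforms as
\[
A^{(i)} \longmapsto (D^{(i+1)})^{-1} A^{(i),\circ} \,\overline{\varphi}^*(D^{(i)}).
\]
Thus I must choose the $D^{(i)}$ so that each resulting $A^{(i)}$ lies in the target normal form of Table \ref{Table1}.

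Next, since $\tw_i \in \{\ft,\ft',\fw\}$ at each embedding, Lemma 2.20 of \cite{LLLM1} (the local Iwahori decomposition / smooth affine Schubert cell normalization) describes, for each element of $\textnormal{Adm}(\mu)$, a canonical representative in the double coset $\cI\tw_i\cI$ up to left and right multiplication by elements of the diagonal torus of $\bG\bL_2(\bbF[\![v]\!])$. Applied entry-wise, this produces a well-defined normal form for each $A^{(i)}$, matching Table \ref{Table1}; the compatibility with the determinant constraint \eqref{eq:df:Kisin} is then automatic.

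The key difficulty is the cyclic nature of the system: solving for $D^{(i+1)}$ to normalize $A^{(i)}$ simultaneously disturbs $A^{(i+1)}$ through $\overline{\varphi}^*(D^{(i+1)})$, and after going around all $2f$ embeddings we must return consistently to the starting point. The standard device, as in \cite[Thm. 2.21]{LLLM1}, is to fix $D^{(0)} = \id$, solve successively for $D^{(1)}, D^{(2)}, \dots, D^{(2f-1)}$ so that $A^{(0)},\dots, A^{(2f-2)}$ are in normal form, and then verify that the resulting $A^{(2f-1)}$ is automatically in normal form as well. This last verification reduces to a scalar matching argument: the product of the diagonal entries of the $A^{(i)}$ in a gauge basis computes the determinant of the total Frobenius, and the condition \eqref{eq:df:Kisin} together with the $2$-genericity of $\tau'$ (which forces the relevant entries to be units) pins down the final $D^{(0)}$-ambiguity. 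Once uniqueness is checked (again via the $2$-generic condition, which prevents spurious diagonal symmetries in each $\cI\tw_i\cI$), the pair $(\overline{\fM},\overline{\beta})$ yields the claimed form, and Table \ref{Table1} provides the explicit entries shape-by-shape. The term \emph{gauge basis} is then justified by this uniqueness up to a finite-dimensional torus of scalars.
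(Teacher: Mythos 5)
Your identification of the gauge group is incorrect, and this breaks the argument. A change of eigenbasis is \emph{not} diagonal: the isotypic piece $\fM^{(i)}_{1}$ is free of rank $2$ over $\bbF[\![v]\!]$, spanned by $f_1^{(i)}$ together with $u^{m}f_2^{(i)}$ for the unique $0<m<p^{2f}-1$ with $\widetilde{\omega}_\pi^{\,m}\eta_2=\eta_1$ (and symmetrically for $\fM^{(i)}_2$), so one may replace $f_1^{(i)}$ by $af_1^{(i)}+b\,u^{m}f_2^{(i)}$ with $a\in\bbF[\![v]\!]^\times$, $b\in\bbF[\![v]\!]$. In the coordinates in which $A^{(i)}$ is written, the transition matrices therefore range over the full Iwahori $\cI$ (this is exactly why the shape is well defined as a double coset $\cI\tw_i\cI$), acting by $A^{(i)}\mapsto (I^{(i+1)})^{-1}A^{(i)}\,\overline{\varphi}(I^{(i)})$ up to the orientation bookkeeping. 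With only the diagonal torus, as you propose, the normalization to Table \ref{Table1} is impossible: a general element of $\cI\ft\cI$ has the form $\sm{v\alpha^*}{\beta}{v\gamma}{\delta^*}$ with $\beta$ typically nonzero, and left/right diagonal multiplication merely rescales $\beta$, so the zero entry in the $\ft$ column (and likewise the zeroes for $\ft'$ and the two vanishing diagonal entries for $\fw$) can never be produced. The determinant condition \eqref{eq:df:Kisin} and the height condition do not force these entries to vanish either, so nothing is ``automatic'' here.

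Even after correcting the gauge group, your scheme of fixing $D^{(0)}=\id$ and solving around the cycle does not quite work: reducing $A^{(i)}$ to its normal form inside $\cI\tw_i\cI$ genuinely requires \emph{both} a left and a right Iwahori factor, so one cannot freeze one endpoint and hope the last matrix lands in normal form. The correct mechanism, which is the content of \cite[Lem.~2.20, Thm.~2.21]{LLLM1} that the paper invokes, is a successive-approximation argument: one writes $A^{(i)}=I^{(i+1)}\overline{A}^{(i)}J^{(i)}$ with $\overline{A}^{(i)}$ in normal form and $I^{(i+1)},J^{(i)}\in\cI$, absorbs these into a change of eigenbasis, and iterates; the cyclic system converges because $\overline{\varphi}$ sends $v$ to $v^p$ and is therefore topologically nilpotent on the pro-$p$ Iwahori, so the ``feedback'' around the $2f$ embeddings is contracting. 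Your outline of the determinant/genericity bookkeeping at the last step is a plausible ingredient for uniqueness statements but is not a substitute for this contraction argument.
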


\begin{table}[h]
\centering
\caption{\textbf{Shapes of Kisin modules over $\bbF$} }
\vspace{-10pt}
\caption*{ \footnotesize{Here we have $\overline{c}_{j,k}\in \bbF$ and $\overline{c}_{j,k}^*\in \bbF^\times$.}}
\label{Table1}
\centering
\begin{tabular}{| c || c | c | c | }
\hline
$\tld{w}_i$ & $\ft$ & $\ft'$ & $\fw$ \\ 
\hline
${A}^{(i)}$ & $\begin{pmatrix} v \ovl{c}_{1,1}^* & 0\\ v \ovl{c}_{2,1} & \ovl{c}_{2,2}^*\end{pmatrix}$ & $\begin{pmatrix} \ovl{c}_{1,1}^* & \ovl{c}_{1,2}\\ 0 & v\ovl{c}_{2,2}^*\end{pmatrix}$ & $\begin{pmatrix} 0& \ovl{c}_{1,2}^*\\ v \ovl{c}^*_{2,1} & 0\end{pmatrix}$\\
\hline
\end{tabular}
\end{table}

\subsubsection{}

Now fix $\overline{\fM}\in Y^{\mu,\tau'}_{\tw}(\bbF)$, and fix a gauge basis $\overline{\beta}$ for $\overline{\fM}$.  {We denote by $Y^{\mu,\tau'}_{\overline{\fM}}(R)$ the category of pairs $(\fM, \jmath)$, where $\fM \in Y^{\mu, \tau'}(R)$ and $\jmath$ is an isomorphism $\jmath:\fM\otimes_R \bbF \stackrel{\sim}{\longrightarrow} \overline{\fM}$.   }

\begin{df}
Let ${(\fM,\jmath)} \in Y^{\mu,\tau'}_{\overline{\fM}}(R)$.  A \emph{gauge basis of ${(\fM,\jmath)}$} is an eigenbasis $\beta$ lifting $\overline{\beta}$ {via $\jmath$} such that the matrix of partial Frobenius ${A}^{(i)}$ satisfies the degree conditions given in Table \ref{Table2}.
\end{df}

Note that a gauge basis for ${(\fM,\jmath)} \in Y_{\overline{\fM}}^{\mu, \tau'}(R)$ exists by the analog of \cite[Thm 4.1]{LLLM1}, and the set of gauge bases for ${(\fM,\jmath)}$ is in bijection with the set of eigenbases of $\fM/u\fM$ lifting $\overline{\beta}\mod u$ by the analog of \cite[Thm. 4.16]{LLLM1}. ({See also the cases $\ovl{A}_1,~\ovl{A}_2$ of \cite[Thm. 3.3]{Dan-mult1}, where a detailed proof of the cases $\ft$ and $\fw$ above is given}.)

\begin{table}[h]
\centering
\caption{\textbf{Deforming Kisin modules by shape}}
\vspace{-10pt}
\caption*{\footnotesize{Here, $\textnormal{deg}(A^{(i)})$ denotes the degree of the polynomial in each entry.  We write $n^*$ to denote a polynomial entry of degree $n$ whose leading coefficient is a unit. We have $c_{j,k}\in R$ and $c_{j,k}^*\in R^\times$.  Row $3$ is deduced from row $2$ by imposing condition \eqref{eq:df:Kisin}}}
\label{Table2}
\centering
\begin{tabular}{| c || c | c | c | }
\hline
$\tw_i$ & $\ft$ & $\ft'$ & $\fw$\\
\hline
$\textnormal{deg}(A^{(i)})$ & $\begin{pmatrix} 1^* & -\infty\\ v (\leq0) & 0^*\end{pmatrix}$ & $\begin{pmatrix} 0^* & \leq 0\\  -\infty& 1^*\end{pmatrix}$ & $\begin{pmatrix} \leq 0& 0^*\\ v(0^*) & \leq0\end{pmatrix}$\\
\hline
$A^{(i)}$& $\begin{pmatrix} (v+p)c_{1,1}^* & 0\\ v c_{2,1} & c_{2,2}^*\end{pmatrix}$ & $\begin{pmatrix} c_{1,1}^* & c_{1,2}\\  0 & c_{2,2}^*(v+p)\end{pmatrix}$ & $\begin{matrix} \begin{pmatrix}  c_{1,1}& c_{1,2}^*\\ v c_{2,1}^* & c_{2,2}\end{pmatrix}\\
c_{1,1}c_{2,2} = -pc_{1,2}^*c_{2,1}^*
\end{matrix}$\\ 
\hline
\end{tabular}
\end{table}

\subsection{Duality}
\label{sec:dual:KM}
We introduce the notion of {Frobenius twist self-dual} Kisin modules over $K$ and study their relation with usual Kisin modules over $K_2$ via the theory of base change (as in \cite[\S 6]{LLLM1}).  The main result of this section (Lemma \ref{lem:BC:shape}) describes the matrix of partial Frobenius on {Frobenius twist self-dual} Kisin modules.

\subsubsection{}

We now collect the relevant properties of Cartier duality which we will need.

\begin{defn}{(cf.~ \cite[\S~3.4.1]{broshi})}
Suppose $\tau'$ is a tame principal series type, $R$ is a local Artinian $\cO$-algebra with residue field $\bbF$, and let $\fM \in Y^{\mu,\tau'}(R)$.  We define the \emph{Cartier dual of $\fM$} to be 
$$\fM^\vee \defeq \textnormal{Hom}_{\fS_R}(\fM, \fS_R),$$ 
which we equip with a Frobenius map by
$$1 \otimes f \longmapsto \phi_{\fS_R} \circ (1 \otimes f) \circ \phi_{\fM}^{-1} \circ E(u),$$
where $1 \otimes f\in \overline{\varphi}^* \textnormal{Hom}_{\fS_R}(\fM, \fS_R) \cong \textnormal{Hom}_{\fS_R}(\overline{\varphi}^*\fM, \overline{\varphi}^*\fS_R)$.  (Note that the map $\phi_\fM$ is injective by \cite[Lem. 1.2.2(1)]{kisin-annals}.)  {We also equip $\fM^\vee$ with a descent datum, given by 
\begin{eqnarray*}
\widehat{g}: \fM^\vee & \longrightarrow &  \fM^\vee \\
f & \longmapsto & \widehat{g} \circ f \circ \widehat{g^{-1}}
\end{eqnarray*}
(the right-hand $\widehat{g^{-1}}$ denotes the semilinear action of $\Gal(L/K_2)$ on $\fM$, while the left-hand $\widehat{g}$ denotes the semilinear action on $\fS_R$).  With this definition, one easily checks that the descent datum of $\fM^\vee$ is of type $\tau'^\vee$, where $\tau'^\vee$ is the type dual to $\tau'$, so that $\fM^\vee \in Y^{\mu,\tau'^\vee}(R)$.  }
\end{defn}

Before proceeding with the proof of the proposition below, we introduce some notation.  We define $\{p_n\}_{n\geq 0}$ to be a sequence of elements of $\overline{\bbQ}_p$ which satisfy $p_{n + 1}^p = p_{n}$ and $p_0 = -p$, and define $K_\infty \defeq \bigcup_{n\geq 0} K(p_n)$ and $K_{2,\infty} \defeq \bigcup_{n\geq 0} K_2(p_n)$.  Note that $\Gal(K_{2,\infty}/K_\infty) \cong \Gal(K_2/K)$.

\begin{prop}
\label{cartier}
Suppose $R$ is a local Artinian $\cO$-algebra with residue field $\bbF$.  Let $\tau':I_{K_2} \longrightarrow \bG\bL_2(\cO)$ be a principal series tame type.  Then $\fM \longmapsto \fM^\vee$ 
defines an involutive functor $Y^{\mu,\tau'}(R) \longrightarrow Y^{\mu,\tau'^\vee}(R)$, which enjoys the following properties:
\begin{itemize}
\item We have $T^*_{\textnormal{dd}}(\fM^\vee) \cong T^*_{\textnormal{dd}}(\fM)^\vee \otimes \varepsilon$ as $\Gamma_{K_{2,\infty}}$-representations, where the functor $T^*_{\textnormal{dd}}$ is as defined in \cite[\S 2.3]{LLLM1}.
\item Let $\beta = \{\beta^{(i)}\}_i$ be an eigenbasis of $\fM$ as in Definition \ref{def:eigenbasis}.  Let $C^{(i)}\defeq \Mat_{\beta}(\phi_{\fM}^{(i)}) \in \Mat_{2\times 2}(R[\![u]\!])$ denote the matrix of the Frobenius on $\overline{\varphi}^*(\fM^{(i)})$, defined by
$$\phi_{\fM}^{(i)}\left(1\otimes f^{(i)}_1,~1\otimes f^{(i)}_2\right) = \left(f^{(i+1)}_1,~f^{(i+1)}_2\right)C^{(i)}.$$
Then the matrix of Frobenius on $\fM^{\vee,(i)}$ with respect to the dual basis $\beta^\vee$ is given by
\begin{equation}
\label{dualfrobmatrix}
\Mat_{\beta^\vee}(\phi_{\fM^\vee}^{(i)}) = E(u)(C^{(i)})^{-\top}.
\end{equation}
\end{itemize}
\end{prop}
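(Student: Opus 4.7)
The plan is to verify the three claims in sequence, since the first and third are essentially formal consequences of the definition once one checks that $\fM^\vee$ indeed lies in $Y^{\mu,\tau'^\vee}(R)$.

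I would start with the matrix formula \eqref{dualfrobmatrix}, as it unpacks the definition most directly. Writing $\{f_j^{(i)}\}_{j=1,2}$ for the eigenbasis and $\{(f_j^{(i)})^\vee\}_{j=1,2}$ for the dual basis, the defining formula for $\phi_{\fM^\vee}$ evaluated on $1\otimes (f_k^{(i)})^\vee$ gives a functional on $\fM^{(i+1)}$ sending $f_\ell^{(i+1)}$ to $\phi_{\fS_R}\bigl((f_k^{(i)})^\vee\bigl(\phi_\fM^{-1}(E(u)f_\ell^{(i+1)})\bigr)\bigr)$. Since $\phi_\fM(1\otimes f^{(i)}_m) = \sum_k C^{(i)}_{km} f^{(i+1)}_k$, the matrix $E(u)(C^{(i)})^{-1}$ is integral (i.e., has entries in $\fS_R$) as a consequence of the height $\leq 1$ condition, and the computation identifies $\phi_{\fM^\vee}^{(i)}$ with multiplication by $E(u)(C^{(i)})^{-\top}$ in the dual basis. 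This establishes \eqref{dualfrobmatrix}.

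Next, I would verify that $\fM^\vee \in Y^{\mu,\tau'^\vee}(R)$. The height $\leq 1$ condition $E(u)\fM^\vee \subseteq \phi_{\fM^\vee}(\overline{\varphi}^*\fM^\vee)\subseteq \fM^\vee$ reduces via the matrix formula to checking that both $E(u)(C^{(i)})^{-\top}$ and $(C^{(i)})^{\top}$ are integral with integral cokernel, which is the height $\leq 1$ condition on $\fM$ written in the opposite order. The determinant condition $E(u)\det\fM^\vee = \phi_{\fM^\vee}(\overline{\varphi}^*\det\fM^\vee)$ follows from taking determinants in \eqref{dualfrobmatrix}: we have $\det\bigl(E(u)(C^{(i)})^{-\top}\bigr) = E(u)^2\det(C^{(i)})^{-1}$, which equals $E(u)$ exactly when $\det C^{(i)} = E(u)\cdot(\text{unit})$, i.e., exactly when $\fM$ satisfies \eqref{eq:df:Kisin}. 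The descent datum computation is a direct check from the definition $\widehat{g}\cdot f = \widehat{g}\circ f\circ \widehat{g^{-1}}$: the semilinearity is immediate and the eigenvalue on the $\eta_k$-isotypic part of $\fM^{\vee,(i)}/u\fM^{\vee,(i)}$ comes out to $\eta_k^{-1}$, whence the descent datum is of type $\tau'^\vee$. Involutivity follows from the canonical double-dual identification for finitely generated projective modules, together with the observation that applying the definition twice reintroduces the factor $E(u)\cdot \phi_\fM\circ\phi_\fM^{-1}\cdot E(u)^{-1}=1$ on the Frobenius.

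Finally, for the Galois compatibility $T^*_{\textnormal{dd}}(\fM^\vee) \cong T^*_{\textnormal{dd}}(\fM)^\vee \otimes\varepsilon$, I would base change along $\fS_R \hookrightarrow \cO_{\cE,L}^{\mathrm{ur}}$ where Cartier duality of Kisin modules becomes duality of \'etale $\varphi$-modules, which by the standard equivalence corresponds to $\Gamma_{K_{2,\infty}}$-duality of the associated representations. The only subtle point, and the one I expect to be the main obstacle, is tracking the cyclotomic twist: the extra factor of $E(u)$ in the definition of $\phi_{\fM^\vee}$ becomes, in the period ring, the unit $\phi^{-1}(E(u))\cdot(\text{unit})$, and identifying this factor with $\varepsilon$ requires using the fact that Fontaine's element $t$ satisfies $\phi(t)/t$ (or an analogous relation involving $E(u)$) recovers the cyclotomic character on $\Gamma_{K_{2,\infty}}$. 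Once this identification is pinned down, the compatibility follows by comparing Frobenius matrices and using the explicit formula \eqref{dualfrobmatrix}.
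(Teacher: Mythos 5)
Your proposal is correct and matches the paper's proof in spirit: the paper establishes \eqref{dualfrobmatrix} by exactly the explicit matrix computation you carry out, and your verification that $\fM^\vee$ lands in $Y^{\mu,\tau'^\vee}(R)$ (height, determinant condition, descent datum, involutivity) is the routine unwinding the paper leaves implicit. For the first bullet the paper simply cites \cite[Prop.~3.4.1.7]{broshi} rather than reproving the cyclotomic twist; your sketch of that step via \'etale $\varphi$-modules is the standard argument behind that reference, and you correctly flag the identification of the $E(u)$-factor with $\varepsilon$ as the only point requiring care.
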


\begin{proof}
The first point follows from \cite[Prop. 3.4.1.7]{broshi}, while the second point follows from an explicit calculation.  
\end{proof}

\subsubsection{}
\label{dualshape}

We explain how orientations and shapes change under duality.  Suppose $\tau'$ is a $2$-generic principal series tame type, {and write $\tau'^\vee = \eta_1 \oplus \eta_2$}.  {We then have the associated} pair $(\mathbf{a}_1,\mathbf{a}_2)$ and orientation $w = (w_i)_i\in {S_2^{2f}}$.  We fix an ordering on the characters of {$\tau'$} so that {$\tau' = \eta_1^{-1} \oplus \eta_2^{-1}$} is associated to the pair $(\un{p} - \un{1} - \mathbf{a}_1, \un{p} - \un{1} - \mathbf{a}_2)$ and orientation $(\un{s},\un{s})w$ {(recall that we view elements of $S_2^{2f} \cong W'$ as pairs of elements of $S_2^f \cong W$ as in \S \ref{subsubsec:groupG'})}.  Note that $\tau'$ is $n$-generic if and only if $\tau'^\vee$ is $n$-generic.

Assume that $R$ is a local Artinian $\cO$-algebra with residue field $\bbF$, $\fM \in Y^{\mu,\tau'}(R)$, and let $\beta$ denote an eigenbasis of $\fM$.  The matrix $C^{(i)}$ of Frobenius on $\overline{\varphi}^*(\fM^{(i)})$ (as in the above proposition) and the matrix $A^{(i)}$ of the partial Frobenius (as in Subsubsection \ref{Amatrix}) are related by the equation
$$C^{(i)} = w_{i + 1}\begin{pmatrix}u^{\mathbf{a}^{(i + 1)}_{w_{i + 1}(1)}} & 0 \\ 0 & u^{\mathbf{a}^{(i + 1)}_{w_{i + 1}(2)}} \end{pmatrix}A^{(i)}\begin{pmatrix}u^{-\mathbf{a}^{(i + 1)}_{w_{i + 1}(1)}} & 0 \\ 0 & u^{-\mathbf{a}^{(i + 1)}_{w_{i + 1}(2)}} \end{pmatrix}w_{i + 1}^{-1}$$
(for the proof, see \cite[Prop 2.13]{LLLM1}).  Using this relation for the dual Kisin module $\fM^\vee$ and dual type $\tau'^\vee$ (ordered as in the previous paragraph), along with Proposition \ref{cartier}, we conclude that the matrix of partial Frobenius on $\fM^\vee$, with respect to $\beta^\vee$ at embedding $i$, is equal to 
$$E(u)s(A^{(i)})^{-\top}s = \begin{pmatrix} v + p & 0 \\ 0 & v + p \end{pmatrix}s(A^{(i)})^{-\top}s.$$

Now suppose $\overline{\fM}\in Y^{\mu,\tau'}(\bbF)$.  The above relation shows that $\overline{\fM}$ has shape $\tw_i$ at embedding $i$ if and only if $\overline{\fM}^\vee$ has shape $\sm{v}{0}{0}{v}s\tw_i^{-\top}s$ at embedding $i$.  In particular this involution on $\tW'$ fixes $\textnormal{Adm}(\mu)$ pointwise, and thus Cartier duality induces an involutive functor
$$Y^{\mu,\tau'}_{\tw}(\bbF) \longrightarrow Y^{\mu,\tau'^\vee}_{\tw}(\bbF).$$
Furthermore, equation \eqref{dualfrobmatrix} shows that $\overline{\beta}$ is a gauge basis for $\overline{\fM}$ if and only if $\overline{\beta}^\vee$ is a gauge basis for $\overline{\fM}^\vee$.  Similarly, if ${(\fM,\jmath)}\in Y^{\mu,\tau'}_{\overline{\fM}}(R)$, then $\beta$ is a gauge basis for ${(\fM,\jmath)}$ if and only if $\beta^\vee$ is a gauge basis for ${(\fM^\vee,(\jmath^\vee)^{-1})}$.

\subsubsection{}

In the following, we use the notation $\sigma$ to denote the \emph{automorphism} of $\fS_R$ which is the arithmetic Frobenius on $\cO_{K_2}$ and which acts trivially on $R$ and the variable $u$.  Thus, given a Kisin module $\fM$, we may form the pullback $\sigma^*\fM \defeq \fS_R \otimes_{\sigma, \fS_R} \fM$ along $\sigma$, equipped with Frobenius $\phi_{\sigma^*\fM} \defeq \sigma^*\phi_{\fM}$.  {One easily checks that $(\sigma^*\fM)^{(i)} = \sigma^*(\fM^{(i - 1)})$.}  If $\fM$ comes equipped with a descent datum, $\sigma^*\fM$ obtains a descent datum via the canonical identification
$$\widehat{g^p}^*\big(\sigma^*\fM\big) \stackrel{\sim}{\longrightarrow} \sigma^*\big(\widehat{g}^*\fM\big)$$
(here $\widehat{g}^*\fM$ denotes the pullback of $\fM$ along the automorphism $\widehat{g}$ of $\fS_R$, and similarly for $\widehat{g^p}$).  
We note that if {$\fM$ has type $\tau'$ and $\tau'^\vee = \eta_1 \oplus \eta_2$}, then $\sigma^*\fM$ has type {$(\tau')^{{\varphi}}$, where $(\tau')^{\varphi, \vee} = (\tau')^{\vee, \varphi} = \eta_1^{p^{-1}} \oplus \eta_2^{p^{-1}}$}.  Thus, Frobenius twisting gives a functor
$$\sigma^*: Y^{\mu,\tau'}(R) \longrightarrow Y^{\mu, (\tau')^{{\varphi}}}(R).$$
We make similar definitions for iterates of $\sigma$.

We briefly describe how the Frobenius twist transforms certain objects associated to Kisin modules.  Twisting changes the principal series tame type $\tau'$ into $ (\tau')^{\varphi}$.  Thus, it also transforms the associated pair 
$$(\mathbf{a}_1, \mathbf{a}_2) = \big((a_{1,0}, a_{1,1}, \ldots, a_{1,2f - 1}),~ (a_{2,0}, a_{2,1}, \ldots, a_{2,2f - 1})\big)$$ 
into 
$$\big((a_{1,1}, a_{1,2}, \ldots, a_{1,2f - 1}, a_{1,0}),~ (a_{2,1}, a_{2,2}, \ldots, a_{2,2f - 1}, a_{2,0})\big),$$ 
and transforms the orientation $w = (w_0, w_1, \ldots, w_{2f - 1})$ into $(w_{2f - 1}, w_0, \ldots, w_{2f - 2})$.  Further, given an eigenbasis $\beta = \{(f^{(i)}_1, f^{(i)}_2)\}_i$ for $\fM$, the elements $\sigma^*\beta \defeq \{(1 \otimes f^{(i)}_1, 1\otimes f_2^{(i)})\}_i$ form an eigenbasis of $\sigma^*\fM$.  Therefore, by their definition, the Frobenius twist transforms the matrices $(A^{(0)}, A^{(1)}, \ldots, A^{(2f - 1)})$ of partial Frobenius (with respect to $\beta$) into $(A^{(2f - 1)}, A^{(0)}, \ldots, A^{(2f - 2)})$, and if $\fM \in Y^{\mu, \tau'}(\bbF)$ has shape $\widetilde{w} = (\widetilde{w}_0, \widetilde{w}_1, \ldots, \widetilde{w}_{2f - 1})$, then $\sigma^*\fM \in Y^{\mu,(\tau')^{{\varphi}}}(\bbF)$ will have shape $(\widetilde{w}_{2f - 1}, \widetilde{w}_0, \ldots, \widetilde{w}_{2f - 2})$.  Finally, we obtain an isomorphism on the associated $\Gamma_{K_{2,\infty}}$-representation
$$T^*_{\textnormal{dd}}(\sigma^*\fM) \cong T^*_{\textnormal{dd}}(\fM)^{\varphi},$$
where we recall that the superscript $\varphi$ denotes the twist of the representation by $\varphi$.

%
%
%
%

\subsubsection{}

Suppose now that $\tau'$ is a $2$-generic principal series tame type which satisfies $(\tau')^{\varphi^{-f}}  \cong \tau'^\vee$ (and note that $(\tau')^{\varphi^{-f}} = (\tau')^{\varphi^f}$).  As in Subsection \ref{ILLC-sect}, this implies that $\tau'$ is of the form
$$\tau' = \widetilde{\omega}_{2f}^{-c} \oplus \widetilde{\omega}_{2f}^{p^fc} \quad \textnormal{or} \quad \tau' = \widetilde{\omega}_{2f}^{(-1 + p^f)a} \oplus \widetilde{\omega}_{2f}^{(-1 + p^f)b}.$$
If $\tau'$ is $2$-generic, the orientation on $\tau'$ has the form $(z,z)$ for $z\in W$ in the first case, while in the second case the orientation has the form $(z,z\un{s})$ for $z\in W$.

The discussion above gives the following:
\begin{lem}
\label{lem:prel:BC:shape}
Assume $\tau'$ is a $2$-generic principal series tame type, {write $\tau'^\vee = \eta_1 \oplus \eta_2$}, and let $w = (w_i)_i\in W'$ denote its orientation. Suppose that $(\tau')^{\varphi^{-f}} \cong \tau'^\vee$.  Then 
$$\eta_{w_{i + f}(2)}^{p^f} = \eta^{-1}_{w_is(2)}$$
for every $0\leq i \leq 2f - 1$.  
\end{lem}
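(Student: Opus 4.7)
The plan is a direct case-by-case verification using the classification of self-dual principal series tame types recalled just before the lemma. Since $s$ is the nontrivial element of $S_2$, we have $s(2) = 1$, so the identity to prove reads
$$\eta_{w_{i+f}(2)}^{p^f} \;=\; \eta_{w_i(1)}^{-1}$$
for all $0 \le i \le 2f-1$, with all indices understood mod $2f$.

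First I would treat Case 1, where $\tau' = \widetilde{\omega}_{2f}^{-c} \oplus \widetilde{\omega}_{2f}^{p^f c}$ and the orientation $w$ has the form $(z,z) \in W \times W$. Reading this as an element of $S_2^{2f}$ (with the first factor indexing embeddings $0,\dots,f-1$ and the second indexing $f,\dots,2f-1$), one gets $w_{i+f} = w_i$ for all $i$. A direct computation using $\widetilde{\omega}_{2f}^{p^{2f}-1} = 1$ gives $\eta_1^{p^f} = \eta_2^{-1}$ and $\eta_2^{p^f} = \eta_1^{-1}$, and then splitting on whether $w_i = 1$ or $w_i = s$ matches both sides of the identity.

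Next I would treat Case 2, where $\tau' = \widetilde{\omega}_{2f}^{(p^f-1)a} \oplus \widetilde{\omega}_{2f}^{(p^f-1)b}$ and the orientation has the form $(z, z\underline{s})$. Here the convention gives $w_{i+f} = w_i\cdot s$ for all $i$ (mod $2f$), while the congruence $p^f(p^f-1) \equiv -(p^f-1) \pmod{p^{2f}-1}$ yields $\eta_k^{p^f} = \eta_k^{-1}$ for $k = 1,2$. Splitting on $w_i \in \{1,s\}$ again produces the claimed equality.

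There is no substantive obstacle here: the statement is essentially combinatorial bookkeeping. The only care needed is in tracking the convention by which a pair $(z,z') \in W \times W$ is unpacked into $(w_0,\dots,w_{2f-1}) \in S_2^{2f}$, and in remembering that indices on $w$ are cyclic modulo $2f$ so that the relations $w_{i+f} = w_i$ (Case 1) and $w_{i+f} = w_i s$ (Case 2) hold uniformly in $i$.
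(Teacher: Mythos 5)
Your proposal is correct and is exactly the argument the paper has in mind: its proof is the one-line remark that the lemma ``may be proved casewise, using the possible orientations on $\tau'$,'' and your two cases (with $\eta_k^{p^f}=\eta_{s(k)}^{-1}$, $w_{i+f}=w_i$ in the first, and $\eta_k^{p^f}=\eta_k^{-1}$, $w_{i+f}=w_is$ in the second) are precisely that verification. No differences worth noting.
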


\begin{proof}
This may be proved casewise, using the possible orientations on $\tau'$.  
\end{proof}

\subsubsection{}

As in \cite[\S 6.1]{LLLM1}, we define Kisin modules which are \emph{Frobenius-twist self-dual}:
\begin{df} 
\label{df:fixed}
Let $R$ denote a local Artinian $\cO$-algebra with residue field $\bbF$, and let $\tau'$ denote a principal series tame type which satisfies $(\tau')^{\varphi^{-f}} \cong \tau'^\vee$.  We define
$$Y^{\mu, \tau'}_{\pol}(R) \defeq \left\{ (\fM, \iota) : \fM \in Y^{\mu, \tau'}(R),\quad \iota: (\sigma^f)^*\fM \stackrel{\sim}{\longrightarrow} \fM^\vee \right\},$$
where $\iota$ is a map of Kisin modules with descent data, such that the composite morphism
$$\fM \xrightarrow{\mathrm{can}} (\sigma^f)^{*} \big((\sigma^f)^{*}\fM\big) \xrightarrow{(\sigma^f)^{*}\iota} (\sigma^f)^{*}(\fM^\vee) \xrightarrow{\mathrm{can}} \big((\sigma^f)^{*}\fM\big)^\vee \xrightarrow{(\iota^\vee)^{-1}} \fM$$
is $-1$ on $\fM$.  We call $\iota$ a \emph{polarization} of $\fM$.  A morphism $(\fM_1,\iota_1) \longrightarrow (\fM_2,\iota_2)$ in $Y^{\mu,\tau'}_{\pol}(R)$ is a morphism $\alpha: \fM_1 \longrightarrow \fM_2$ in $Y^{\mu,\tau'}(R)$ such that the following diagram commutes:
\begin{center}
\begin{tikzcd}
(\sigma^f)^*\fM_1 \ar[d, "\iota_1"] \ar[r, "(\sigma^f)^*\alpha"] & (\sigma^f)^*\fM_2 \ar[d, "\iota_2"] \\
\fM_1^\vee  & \fM_2^\vee \ar[l, "\alpha^\vee"'] \\
\end{tikzcd}
\end{center}
\end{df}

\begin{df} 
\label{gauge BC} Let $R$ be a local Artinian $\cO$-algebra with residue field $\bbF$, suppose $\tau'$ is a $2$-generic principal series tame type, and let $(\fM,\iota) \in Y^{\mu, \tau'}_{\pol}(R)$ . A \emph{gauge basis of $(\fM,\iota)$} is a gauge basis $\beta$ of $\fM\in Y^{\mu, \tau'}(R)$ which is compatible with $\iota$, meaning $\iota((\sigma^f)^*\beta) = {(\un{1}, -\un{1})}\beta^\vee$.
\end{df}

We now discuss the effect of adding a gauge basis.  

\begin{prop}
\label{gaugeuniqueBC} Let $R$ be a local Artinian $\cO$-algebra with residue field $\bbF$, and let $\tau'$ be a $2$-generic principal series tame type.  Let $(\fM,\iota) \in Y^{\mu, \tau'}_{\pol}(R)$.  Then the set of gauge bases of $(\fM,\iota)$ is a torsor for $\widehat{\bT}_{\bG}(\cO_{K_2}\otimes_{\bbZ_p}R)^{\sigma^f = \textnormal{inv}}$,
where $\textnormal{inv}$ denotes the homomorphism $t \longmapsto t^{-1}$.  
\end{prop}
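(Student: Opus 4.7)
The plan is to identify gauge bases of $(\fM, \iota)$ inside the already-known torsor of gauge bases of $\fM$ by cutting out a locus via an auxiliary comparison element of the torus. Recall that the set of gauge bases of $\fM \in Y^{\mu,\tau'}(R)$ alone is a torsor for $\widehat{\bT}_\bG(\cO_{K_2}\otimes_{\bZ_p} R)$ (acting by $\beta \mapsto \beta \cdot T$), by the remark following Table \ref{Table2}.

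Given any gauge basis $\beta$ of $\fM$, I first observe that both $\iota((\sigma^f)^*\beta)$ and $\beta^\vee$ are gauge bases of $\fM^\vee$. Indeed, the discussion of the Frobenius twist before Lemma \ref{lem:prel:BC:shape} shows that $(\sigma^f)^*\beta$ is a gauge basis of $(\sigma^f)^*\fM$ (the matrix data, shape, orientation, and pair $(\mathbf{a}_1,\mathbf{a}_2)$ all get cyclically rotated by $f$ embeddings); since $\iota$ is an isomorphism of Kisin modules with descent data, $\iota((\sigma^f)^*\beta)$ is then a gauge basis of $\fM^\vee$. The claim for $\beta^\vee$ is the content of Subsubsection \ref{dualshape}. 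Hence there exists a unique $D_\beta \in \widehat{\bT}_\bG(\cO_{K_2}\otimes_{\bZ_p} R)$ with $\iota((\sigma^f)^*\beta) = \beta^\vee \cdot D_\beta$, and a direct calculation using $\fS_R$-linearity of $\iota$ together with the identity $(\beta \cdot T)^\vee = \beta^\vee \cdot T^{-1}$ (valid since $T$ is diagonal) gives the transformation rule $D_{\beta T} = T \cdot D_\beta \cdot \sigma^f(T)$.

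Next, I would unravel the polarization axiom in Definition \ref{df:fixed} — that the fourfold composite equals $-1$ on $\fM$ — as an explicit equation on $D_\beta$. After tracking the canonical identifications $(\sigma^f)^*(\fM^\vee) \cong ((\sigma^f)^*\fM)^\vee$ and $\fM^{\vee\vee} \cong \fM$, and using $\sigma^{2f} = \textnormal{id}$, this translates into a $\sigma^f$-invariance type condition, equivalent to the assertion that $D_\beta^{-1}\cdot (\un{1},-\un{1})$ lies in the image of the map $T \mapsto T\cdot \sigma^f(T)$. Decomposing $\cO_{K_2}\otimes_{\bZ_p} R \cong \prod_{i=0}^{2f-1} R$ via the $2f$ embeddings, the involution $\sigma^f$ swaps these factors in pairs $(i, i+f)$, so the image of $T \mapsto T\cdot \sigma^f(T)$ is exactly the $\sigma^f$-fixed subgroup. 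Thus the polarization constraint is precisely what is required to solve $T \cdot D_\beta \cdot \sigma^f(T) = (\un{1}, -\un{1})$ for some $T$, yielding a gauge basis $\beta\cdot T$ of $(\fM,\iota)$. Two compatible bases $\beta_0, \beta_0\cdot T$ differ by $T$ satisfying $T\cdot (\un{1},-\un{1})\cdot \sigma^f(T) = (\un{1},-\un{1})$, equivalently $T\cdot \sigma^f(T) = 1$ (since $(\un{1},-\un{1})$ is $\sigma^f$-fixed and of order two), which is exactly $T \in \widehat{\bT}_\bG(\cO_{K_2}\otimes_{\bZ_p} R)^{\sigma^f = \textnormal{inv}}$.

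The main obstacle will be the precise translation of the polarization axiom into the stated constraint on $D_\beta$. This is a diagrammatic sign computation, made delicate by the Eisenstein twist $E(u)$ in the definition of Cartier duality, the canonical double-dual isomorphism for Kisin modules, and the interaction between the Frobenius pullback $\sigma^f$ and the dualization functor. Once that equation is in hand, however, the remaining argument is a formal manipulation on the diagonal torus $\bG_m\times \bG_m$ using the explicit description of $\sigma^f$ on $\cO_{K_2}\otimes_{\bZ_p} R$ given above.
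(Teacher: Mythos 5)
Your proposal is correct and follows essentially the same route as the paper: identify the comparison element $c = D_\beta$ between $\iota((\sigma^f)^*\beta)$ and $\beta^\vee$ inside the known $\widehat{\bT}_{\bG}(\cO_{K_2}\otimes_{\bbZ_p}R)$-torsor of gauge bases of $\fM^\vee$, derive the transformation rule under the torus action, translate the polarization cocycle condition into the constraint $\sigma^f(D_\beta) = -D_\beta$, and solve the norm equation using that $\cO_{K_2}\otimes_{\bbZ_p}R$ splits into factors swapped in pairs by $\sigma^f$. One small slip: $(\un{1},-\un{1})$ is \emph{not} $\sigma^f$-fixed (it is anti-fixed, $\sigma^f$ swapping the two blocks of embeddings), but your conclusion $T\sigma^f(T)=1$ still follows simply by cancelling $(\un{1},-\un{1})$ in the commutative torus, so the argument is unaffected.
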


\begin{proof} 
The proof follows the argument of \cite[Prop. 6.12]{LLLM1}.  Let $\beta$ be a gauge basis of $\fM \in Y^{\mu, \tau'}(R)$. Then $\iota((\sigma^f)^*\beta)$ is a gauge basis of $\fM^\vee$ and by \cite[Thm. 4.16]{LLLM1}, the set of gauge bases of $\fM^\vee$ are uniquely determined up to scaling and are exactly $\widehat{\bT}_{\bG}(\cO_{K_2}\otimes_{\bbZ_p}R)\beta^\vee$.  Thus $\iota((\sigma^f)^*\beta) = c\beta^\vee$ for a unique $c \in \widehat{\bT}_{\bG}(\cO_{K_2}\otimes_{\bbZ_p}R)$, and the cocycle condition satisfied by $\iota$ is equivalent to $c^{-1}\sigma^f(c) = {-1}$.  Further, given $t\in \widehat{\bT}_{\bG}(\cO_{K_2}\otimes_{\bbZ_p}R)$, we have $\iota((\sigma^f)^*(t\beta)) = \sigma^f(t)\iota((\sigma^f)^*\beta) = \sigma^f(t)c\beta^\vee$.  Since the basis on $\fM^\vee$ dual to $t\beta$ is $t^{-1}\beta^\vee$, we conclude that the set of gauge bases of $(\fM,\iota)$ is exactly the set of solutions $t \in \widehat{\bT}_{\bG}(\cO_{K_2}\otimes_{\bbZ_p}R)$ to the equation
${(\un{1}, -\un{1})}t^{-1} = \sigma^f(t)c$.  The conclusion follows as in \cite[Prop. 6.12]{LLLM1}: using that $\Res_{\cO_{K_2}/\bbZ_p}$ splits over $\cO$, we have that the equation has a solution, and the solution set is a $\widehat{\bT}_{\bG}(\cO_{K_2}\otimes_{\bbZ_p}R)^{\sigma^f = \textnormal{inv}}$-torsor.
\end{proof}

\subsubsection{}

\begin{lem}
\label{lem:BC:shape}
Let $R$ be a local Artinian $\cO$-algebra with residue field $\bbF$, and $\tau'$ a $2$-generic principal series tame type which satisfies $(\tau')^{\varphi^{-f}} \cong \tau'^\vee$.  Let $w = (w_i)_i\in W'$ denote the orientation of $\tau'$.  

\begin{enumerate}
\item 
\label{it:1:lem:BC:shape}
Let $(\fM,\iota)\in Y^{\mu, \tau'}_{\pol}(R)$ and let $\beta$ denote a gauge basis for $(\fM,\iota)$.  Let $A^{(i)}$ be the matrix of partial Frobenius of $\fM \in Y^{\mu, \tau'}(R)$ with respect to $\beta$.  We then have
\begin{equation}
\label{eq:CSD:Frob}
A^{(i - f)} = {\begin{cases} E(u)s(A^{(i)})^{-\top}s & \textnormal{if}~ i \neq f - 1, 2f - 1, \\ -E(u)s(A^{(i)})^{-\top}s & \textnormal{if}~ i = f - 1, 2f - 1.\end{cases} }
\end{equation}
In particular, if $R = \bbF, (\overline{\fM},\overline{\iota})\in Y^{\mu,\tau'}_{\pol}(\bbF)$, and $\overline{\fM}$ has shape $\tw = (\tw_i)_i \in \tW'$, then 
$$\tw_{i - f} = \tw_i.$$

\item Conversely if $\fM\in Y^{\mu, \tau'}(R)$ and the matrices $A^{(i)}$ of partial Frobenius satisfy the condition \eqref{eq:CSD:Frob} for a gauge basis $\beta$ of $\fM$, then there exists a polarization $\iota$ on $\fM$ such that $(\fM,\iota)\in Y^{\mu, \tau'}_{\pol}(R)$, and such that $\beta$ is a gauge basis for $(\fM,\iota)$.  
\end{enumerate}

\end{lem}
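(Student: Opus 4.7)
The strategy is to translate the polarization data into an explicit equation between partial Frobenius matrices, by pushing forward through the isomorphism $\iota$ and computing on both sides. Part (i) is obtained directly from this comparison, and the consequence on shapes then follows from the analysis of Cartier duality on shapes performed in Subsection \ref{dualshape}. Part (ii) is obtained by reversing the construction.

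In detail, for part (i), the defining identity of a gauge basis for $(\fM,\iota)$ is $\iota((\sigma^f)^*\beta) = (\un{1}, -\un{1})\beta^\vee$. Since $\iota$ is a morphism of Kisin modules with descent data (in particular Frobenius-equivariant), the matrix of partial Frobenius of $(\sigma^f)^*\fM$ at embedding $i$ with respect to $(\sigma^f)^*\beta$ coincides with the matrix of partial Frobenius of $\fM^\vee$ at embedding $i$ with respect to $(\un{1}, -\un{1})\beta^\vee$. The left-hand side equals $A^{(i-f)}$ by the description of how the Frobenius twist $\sigma^{f\,*}$ shifts the indexing of matrices of partial Frobenius, as recorded in the text preceding Lemma \ref{lem:prel:BC:shape}. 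The right-hand side is obtained from $E(u) s(A^{(i)})^{-\top} s$, which is the matrix of partial Frobenius of $\fM^\vee$ with respect to $\beta^\vee$ (computed in Subsection \ref{dualshape}, via equation \eqref{dualfrobmatrix}), by the torus base change $(\un{1},-\un{1})$. A direct verification shows that this base change leaves the matrix unchanged for $i \neq f-1, 2f-1$ and introduces a sign of $-1$ at $i = f - 1, 2f - 1$, which is precisely the pattern dictated by the cocycle $(\un{1},-\un{1})^{-1}\sigma^f((\un{1},-\un{1})) = -I$ imposed by the polarization axiom in Definition \ref{df:fixed}. The shape statement then follows at once: reducing \eqref{eq:CSD:Frob} modulo the maximal ideal (so that $E(u) \equiv v$) and absorbing the overall sign into the units appearing as matrix entries, we obtain $A^{(i-f)} \in \cI \cdot \sm{v}{0}{0}{v}\, s\,\tw_i^{-\top}\, s \cdot \cI$; since the involution $\tw \longmapsto \sm{v}{0}{0}{v}\, s\,\tw^{-\top}\, s$ fixes $\textnormal{Adm}(\vect{1}{0}) = \{\ft,\ft',\fw\}$ pointwise by the discussion in Subsection \ref{dualshape}, we conclude that $\tw_{i-f} = \tw_i$.

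For part (ii), we reverse the construction of part (i): given $\fM \in Y^{\mu, \tau'}(R)$ together with a gauge basis $\beta$ whose matrices of partial Frobenius satisfy \eqref{eq:CSD:Frob}, we define $\iota:(\sigma^f)^*\fM \stackrel{\sim}{\longrightarrow} \fM^\vee$ by prescribing $\iota((\sigma^f)^*\beta) \defeq (\un{1}, -\un{1})\beta^\vee$ and extending $\fS_R$-linearly, compatibly with descent data. The relation \eqref{eq:CSD:Frob}, read backwards through the computation of the previous paragraph, is exactly the assertion that $\iota$ commutes with $\phi$, so that $\iota$ is a morphism of Kisin modules with descent data. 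Moreover the defining relation of $(\un{1},-\un{1})$ as an element of $\widehat{\bT}_{\bG}(\cO_{K_2}\otimes_{\bbZ_p}R)$, namely the cocycle $(\un{1},-\un{1})^{-1}\sigma^f((\un{1},-\un{1})) = -I$, translates precisely to the composite condition of Definition \ref{df:fixed}. Hence $(\fM,\iota) \in Y^{\mu, \tau'}_{\pol}(R)$ and by construction $\beta$ is a gauge basis for this polarized Kisin module.

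The main technical obstacle is the sign bookkeeping attached to the torus base change by $(\un{1},-\un{1})$, and in particular identifying why precisely the two boundary embeddings $i = f - 1$ and $i = 2f - 1$ acquire a sign while the others do not. This phenomenon reflects the nontrivial cocycle satisfied by $(\un{1},-\un{1})$ under $\sigma^f$, which in turn is forced by the polarization axiom; once the sign pattern is established, both directions of the lemma and the rigidity statement on shapes become essentially formal.
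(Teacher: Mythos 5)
Your proof is correct and takes essentially the same route as the paper's: the paper packages your matrix bookkeeping into a commutative diagram whose three squares are justified by \cite[Lem. 6.2]{LLLM1}, by Lemma \ref{lem:prel:BC:shape} (which you implicitly rely on when identifying the partial Frobenius of $(\sigma^f)^*\fM$, of type $(\tau')^{\varphi^f}$, with that of $\fM^\vee$, of type $\tau'^\vee$ — the isotypic components and orientations must be matched for the two matrices to be compared entry by entry), and by the definition of polarization. Your sign computation via the cocycle $c^{-1}\sigma^f(c)=-1$, the deduction of $\tw_{i-f}=\tw_i$ from the involution of Subsection \ref{dualshape} fixing $\textnormal{Adm}(\mu)$ pointwise, and the explicit construction of $\iota$ in part (ii) all agree with the paper's argument.
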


\begin{proof}
(i)  We follow \cite[\S 2.1, 6.1]{LLLM1}.  Let $\jmath: (\sigma^f)^*\fM \longrightarrow \fM$ denote the $\sigma^{-f}$-semilinear bijection sending $s\otimes m$ to $\sigma^{-f}(s)m$.  We have a commutative diagram of $R[\![v]\!]$-modules:
\begin{center}
\begin{tikzcd}[row sep = large, column sep = 8ex]
{}^{\overline{\varphi}}\fM^{(i - f)}_{{\eta}_{{w_{i - f + 1}}(2)}}  \ar[d, "\phi^{(i - f)}_{\fM}"] & {}^{\overline{\varphi}}\big((\sigma^f)^*\fM\big)^{(i)}_{\eta^{p^f}_{{w_{i - f + 1} }(2)}} \ar[l, "\overline{\varphi}^*\jmath"', "\sim"] \ar[d, "\phi^{(i)}_{(\sigma^f)^*\fM}"] \ar[r, equal] & {}^{\overline{\varphi}}\big((\sigma^f)^*\fM\big)^{(i)}_{\eta^{-1}_{{w_{i + 1} } s(2)}}  \ar[r, "\overline{\varphi}^*\iota", "\sim"'] \ar[d, "\phi^{(i)}_{(\sigma^f)^*\fM}"]& {}^{\overline{\varphi}}(\fM^\vee)^{(i)}_{\eta^{-1}_{{ w_{i + 1} } s(2)}} \ar[d, "\phi^{(i)}_{\fM^\vee}"]\\
\fM^{(i - f + 1)}_{{\eta}_{{w_{i - f + 1} } (2)}}  & \big((\sigma^f)^*\fM\big)^{(i + 1)}_{\eta^{p^f}_{{w_{i - f + 1} } (2)}}  \ar[l, "\jmath"', "\sim"] \ar[r, equal] & \big((\sigma^f)^*\fM\big)^{(i + 1)}_{\eta^{-1}_{{w_{i + 1} } s(2)}} \ar[r, "\iota", "\sim"'] & (\fM^\vee)^{(i + 1)}_{\eta^{-1}_{{w_{i + 1} } s(2)}}
\end{tikzcd}
\end{center}
(here the subscripts denote isotypic components).  The left square commutes by \cite[Lem. 6.2]{LLLM1}, the center square commutes by Lemma \ref{lem:prel:BC:shape}, and the right square commutes by definition of polarization.  By Subsubsection \ref{dualshape}, we see that the matrix of partial Frobenius on $\fM^\vee$ at embedding $i$ is $E(u)s(A^{(i)})^{-\top}s$.  Since $\beta$ is a gauge basis which is compatible with the polarization, the above commutative diagram implies that {$A^{(i - f)}$ is of the form stated above.}

\vspace{5pt}

\noindent (ii) We may define $\iota:(\sigma^f)^*\fM \stackrel{\sim}{\longrightarrow}\fM^\vee$ by the condition 
$${\iota(1\otimes f^{(i)}_{w_i(k)}) = \begin{cases} - f^{(i - f),\vee}_{w_{i - f}s(k)} & \textnormal{if}~ 0\leq i \leq f - 1, \\ f^{(i - f),\vee}_{w_{i - f}s(k)} & \textnormal{if}~ f \leq i \leq 2f - 1, \end{cases} }$$ 
where $k = 1,2$, and where $f^{(i - f),\vee}_{w_{i - f}s(k)}$ denotes the basis vector of $\fM^\vee$ dual to $f^{(i - f)}_{w_{i - f}s(k)}$.  The relation \eqref{eq:CSD:Frob} guarantees that $\iota$ is a morphism of Kisin modules.  
\end{proof}

\subsection{Deformations}
\label{subsec:Def:thy}

In this subsection we describe deformations of {Frobenius twist self-dual} Kisin modules and relate them to deformations of local $L$-parameters.  The main result is Corollary \ref{cor:irr:cmpts}, giving a description of the special fiber of the Galois deformation ring in terms of Serre weights.

Throughout the discussion, we fix a tamely ramified $L$-parameter $\rhobar:\Gamma_K \longrightarrow {}^C\bU_2(\bbF)$ such that $\widehat{\imath} \circ \rhobar = \overline{\varepsilon}$, and let $\tau':I_{K_2} \longrightarrow \bG\bL_2(\cO)$ be a principal series tame type satisfying $(\tau')^{\varphi^{-f}}  \cong \tau'^\vee$.

\subsubsection{}

We begin with Kisin modules.  Fix $(\overline{\fM},\overline{\iota})\in Y^{\mu,\tau'}_{\pol}(\bbF)$ and let $\tw = (\tw_i)_i\in \tW'$ denote the shape of $\overline{\fM}$.  We also fix a compatible gauge basis $\overline{\beta}$, and assume that $\tau'$ is $2$-generic.   Given a local Artinian $\cO$-algebra $R$ with residue field $\bbF$, we let 
$$Y^{\mu,\tau'}_{\overline{\fM}, \pol}(R) \defeq \left\{(\fM_R,\iota_R{,\jmath_R}): \begin{array}{l}
\diamond~{(\fM_R, \iota_R) \in Y^{\mu,\tau'}_{\pol}(R)} \\
\diamond~{\jmath_R}:\fM_R \otimes_R \bbF \stackrel{\sim}{\longrightarrow} \overline{\fM}\\
\diamond~ {(\jmath_R^\vee)^{-1}\circ (\iota_R \otimes_R \bbF) = \overline{\iota} \circ (\sigma^f)^*\jmath_R}
\end{array}\right\}$$
and
$$D^{\tau',\overline{\beta}}_{\overline{\fM}, \pol}(R) \defeq \left\{(\fM_R,\iota_R,{\jmath_R}, \beta_R): \begin{array}{l}
\diamond~(\fM_R, \iota_R,{\jmath_R}) \in Y^{\mu,\tau'}_{\overline{\fM},\pol}(R),\\ 
\diamond~\beta_R ~\textnormal{is a gauge basis of $(\fM_R,\iota_R)$ lifting $\overline{\beta}$}
\end{array}\right\}.$$
Using \cite[Thms. 4.16, 4.17]{LLLM1} along with Lemma \ref{lem:BC:shape} and Proposition \ref{gaugeuniqueBC}, we see that $D^{\tau',\overline{\beta}}_{\overline{\fM}, \pol} \longrightarrow Y^{\mu,\tau'}_{\overline{\fM},\pol}$ is a $\widehat{\bG}_m^{2f}$-torsor{, and in particular is representable by a formal Artin stack, since $Y^{\mu,\tau'}_{\overline{\fM},\pol}$ is. 
As $D^{\tau',\overline{\beta}}_{\overline{\fM}, \pol}$ has no nontrivial automorphisms we conclude that} $D^{\tau',\overline{\beta}}_{\overline{\fM}, \pol}$ is representable by a complete local Noetherian $\cO$-algebra $R_{\overline{\fM},\pol}^{\tau',\overline{\beta}}$.  The act of deforming a polarized Kisin module $(\ovl{\fM}, \overline{\iota})$ with $\overline{\fM}\in Y^{\mu,\tau'}_{\tld{w}}(\bbF)$ and a gauge basis on it is equivalent to deforming the collection of associated matrices ${(A^{(i)})_{0 \leq i \leq 2f - 1} }$ subject to the degree conditions of Table \ref{Table2} and \eqref{eq:CSD:Frob}.  We conclude that:
\begin{theo} 
\label{thm:dringBC}  
Let $\tau'$ be a $2$-generic principal series tame type which satisfies $(\tau')^{\varphi^{-f}} \cong \tau'^\vee$, and let $\overline{\fM}, \tw, \overline{\iota},$ and $\overline{\beta}$ be as above.  Then
$$R^{\tau', \overline{\beta}}_{\overline{\fM},\pol} \cong \widehat{\bigotimes}_{i \in \{0, \ldots, f - 1\}} R_{\tw_i}^{\textnormal{expl}}$$
where $R_{\tw_i}^{\textnormal{expl}}$ is as in Table \ref{Table3}, and the completed tensor product is taken over $\cO$.  
In particular $R_{\overline{\fM},\pol}^{\tau',\overline{\beta}}$ is an integral domain.
\end{theo}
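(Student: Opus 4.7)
The plan is to reduce to the corresponding statements for $\bG\bL_2$-Kisin modules (essentially \cite[Thm.~4.17]{LLLM1}) by using Lemma \ref{lem:BC:shape} to cut the data in half. By the discussion preceding the theorem, $D^{\tau', \overline{\beta}}_{\overline{\fM}, \pol}$ is representable, and its $R$-points for local Artinian $R$ are classified (via the gauge basis $\beta_R$) by tuples of matrices $(A^{(i)})_{0 \leq i \leq 2f-1}$ such that each $A^{(i)}$ has the shape-$\widetilde{w}_i$ normal form of Table \ref{Table2}, together with the polarization constraints \eqref{eq:CSD:Frob} of Lemma \ref{lem:BC:shape}.

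First I would use \eqref{eq:CSD:Frob} to solve for $A^{(i)}$ ($i = 0, \ldots, f-1$) in terms of $A^{(i+f)}$. This reduction is consistent: applying the polarization relation twice recovers $A^{(i)}$, as is easily checked by a direct computation using $s^2 = 1$ and the sign assignments in \eqref{eq:CSD:Frob}. The key step is then to verify that if $A^{(i+f)}$ satisfies the Table \ref{Table2} conditions for shape $\widetilde{w}_{i+f} = \widetilde{w}_i$, then $\pm E(u) s (A^{(i+f)})^{-\top} s$ automatically satisfies the same Table \ref{Table2} conditions for shape $\widetilde{w}_i$. This is a case-by-case check for each of the three admissible shapes $\mathfrak{t}, \mathfrak{t}', \mathfrak{w}$: in each case the units of $R$ remain units, the bounded polynomial degrees are preserved, and in the $\mathfrak{w}$ case the determinant relation $c_{1,1}c_{2,2} = -p c_{1,2}^* c_{2,1}^*$ is preserved. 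I expect this case analysis to be the main computational obstacle, although each individual verification is a short $2 \times 2$ matrix calculation.

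Once this compatibility is established, the deformation functor is classified by free tuples $(A^{(f)}, \ldots, A^{(2f-1)})$ with each $A^{(i+f)}$ ranging independently over the locus cut out by the Table \ref{Table2} conditions for shape $\widetilde{w}_i$. By the arguments of \cite[\S 5]{LLLM1}, the universal ring for a single such matrix at a single embedding is precisely $R^{\textnormal{expl}}_{\widetilde{w}_i}$ as in Table \ref{Table3}. The independence of the parameters across embeddings, together with the identification $\widetilde{w}_{i+f} = \widetilde{w}_i$, then yields the isomorphism $R^{\tau', \overline{\beta}}_{\overline{\fM}, \pol} \cong \widehat{\bigotimes}_{i=0}^{f-1} R^{\textnormal{expl}}_{\widetilde{w}_i}$, the completed tensor product being taken over $\cO$. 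Integrality follows from inspection of Table \ref{Table3}: each $R^{\textnormal{expl}}_{\widetilde{w}_i}$ is an $\cO$-flat domain with residue field $\bbF$, and the completed tensor product over $\cO$ of $\cO$-flat domains with residue field $\bbF$ is again a domain.
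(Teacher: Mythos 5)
Your approach is the same as the paper's: the paper likewise observes that deforming $(\overline{\fM},\overline{\iota},\overline{\beta})$ amounts to deforming the matrices $(A^{(i)})_{0\le i\le 2f-1}$ subject to the degree conditions of Table \ref{Table2} together with the polarization relation \eqref{eq:CSD:Frob}, which eliminates half of the matrices and leaves the $A^{(i)}$ for $i\in\{0,\dots,f-1\}$ (equivalently the other half) as free parameters, each cut out by $R^{\textnormal{expl}}_{\tw_i}$; your consistency check and the case-by-case verification that $\pm E(u)s(A^{(i)})^{-\top}s$ preserves the Table \ref{Table2} normal forms are exactly the details the paper leaves implicit. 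One small correction: your closing justification for integrality — that a completed tensor product over $\cO$ of $\cO$-flat domains with residue field $\bbF$ is a domain — is false in general (e.g.\ $\cO[\![x]\!]/(x^2-p)\,\widehat{\otimes}_{\cO}\,\cO[\![x]\!]/(x^2-p)$ is not a domain when $p$ is not a square in $\cO$). The correct argument uses the specific shape of the factors: the completed tensor product is $\cO[\![\mathrm{vars}]\!]/(x_jy_j+p)_j$, which is a local complete intersection that is regular in codimension one, hence normal by Serre's criterion, and a normal Noetherian local ring is a domain.
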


\begin{table}[h]
\centering
\caption{\textbf{Deformation rings by shape}}
\vspace{-10pt}
\caption*{\footnotesize{{The variables $x_{i,j}^*$ appearing in the power series rings below correspond to the coefficients $c^*_{i,j}-[\ovl{c}_{i,j}^*]$ of the universal matrices appearing in Tables \ref{Table1}, \ref{Table2}.}}}
\label{Table3}
\begin{tabular}{| c || c | c | c | }
\hline
$\tw_i$ & $\ft$ & $\ft'$ & $\fw$\\
\hline
$R_{\widetilde{w}_i}^{\textnormal{expl}}$ & $\cO[\![c_{2,1},~x^*_{1,1},~x^*_{2,2}]\!]$ & $\cO[\![c_{1,2},~x^*_{1,1},~x^*_{2,2}]\!]$ & $\cO[\![x_{1,1},~y_{2,2},~x^*_{1,2},~x^*_{2,1}]\!]/(x_{1,1}y_{2,2} + p)$\\
\hline
\end{tabular}
\end{table}

\subsubsection{}
\label{poln-equiv}
We now discuss deformations of $L$-parameters.

We recall a result from \cite{CHT} in a language more suited for our purposes.  Let $R$ be a topological $\bbZ_p$-algebra.  By Lemma 2.1.1 of \emph{op. cit.}, there is a bijection between
\begin{itemize}
\item $L$-parameters $\rho: \Gamma_K \longrightarrow {}^C\bU_2(R)$; and
\item triples $(\rho', \theta, \alpha)$, where $\rho':\Gamma_{K_2} \longrightarrow \bG\bL_2(R)$ is a continuous homomorphism, $\theta: \Gamma_{K} \longrightarrow R^\times$ is a continuous character, and $\alpha$ is a compatible polarization, that is, $\alpha: (\rho')^{\varphi^{-f}} \stackrel{\sim}{\longrightarrow} \rho'^\vee \otimes \theta$ such that the composite map
$$\rho' \xrightarrow{v \mapsto \rho'(\varphi^{-2f})v} (\rho')^{\varphi^{-2f}} \xrightarrow{\alpha^{\varphi^{-f}}} (\rho'^\vee \otimes \theta)^{\varphi^{-f}} \xrightarrow{\textnormal{can}} (\rho'\otimes\theta^{-1})^{\varphi^{-f},\vee} \xrightarrow{(\alpha^\vee)^{-1}} \rho'$$
is equal to multiplication by $-\theta(\varphi^{-f})$. 
\end{itemize}
The correspondence is given by sending $\rho:\Gamma_K \longrightarrow {}^C\bU_2(R)$ to $(\BC(\rho), \widehat{\imath}\circ\rho, \alpha)$, where $\rho(\varphi^{-f}) = (A, \theta(\varphi^{-f})) \rtimes \varphi^{-f}$ and $\alpha(v) = \Phi_2^{-1}A^{-1}v$.

In what follows, we will usually fix $\theta = \varepsilon$ {(hence $-\theta(\varphi^{-f})=-1$)}, so that $L$-parameters $\rho:\Gamma_K \longrightarrow {}^C\bU_2(R)$ with $\widehat{\imath}\circ \rho = \varepsilon$ correspond bijectively to pairs $(\rho', \alpha)$ where $\rho': \Gamma_{K_2} \longrightarrow \bG\bL_2(R)$ is a continuous homomorphism and $\alpha$ is a compatible polarization.  In particular, our fixed $\rhobar$ is associated to $(\BC(\rhobar), \overline{\alpha})$.

\subsubsection{}
\label{localgalrepdefrings}

We introduce several deformation problems for Galois representations.  Let $R_{\rhobar}^{\Box}$ denote the universal framed deformation ring of $\rhobar$.  By \cite[\S\S 3.2--3.3]{bellovin-gee}, there exists a unique {$\cO$-flat} quotient $R_{\rhobar}^{\tau'}$ of $R_{\rhobar}^{\Box}$ with the property that if $B$ is a finite local $E$-algebra, then a morphism $x: R_{\rhobar}^{\Box} \longrightarrow B$ factors through $R_{\rhobar}^{\tau'}$ if and only if the corresponding $L$-parameter $\rho_x: \Gamma_{K} \longrightarrow {}^C\bU_2(B)$ is potentially crystalline with $p$-adic Hodge type $(\un{1}, \un{0}, \un{1})\in X_*(\widehat{\bT})$, inertial type $\tau'$ and cyclotomic multiplier $\widehat{\imath}\circ \rho_x = \varepsilon$.  We recall the terminology used above:
\begin{itemize}
\item An $L$-parameter $\Gamma_K \longrightarrow {}^C\bU_2(B)$ is potentially crystalline if and only if it is so after composition with any faithful algebraic representation ${}^C\bU_2 \longhookrightarrow \bG\bL_n$.  
\item Suppose the $L$-parameter $\rho:\Gamma_K \longrightarrow {}^C\bU_2(B)$ has cyclotomic multiplier $\widehat{\imath}\circ\rho = \varepsilon$.  Then $\rho$ has $p$-adic Hodge type $(\un{1}, \un{0}, \un{1})$ if and only if $\BC(\rho)$ has $p$-adic Hodge type $\mu = \vect{\un{1}}{\un{0}}$, that is, if $\BC(\rho)$ has Hodge--Tate weights $\{-1, 0\}$.  
\item An $L$-parameter $\rho:\Gamma_K \longrightarrow {}^C\bU_2(B)$ has inertial type $\tau'$ if $\textnormal{WD}(\rho)|_{I_K} \cong { (\tau' \oplus \mathbf{1}_{I_K}) \otimes_E \overline{E} }$ ({by $\textnormal{WD}(\rho)$ we mean the $\overline{E}$-points of the {${}^C\bU_2$-torsor} whose construction is contained in \cite{bellovin-gee}, Section 2.8, Lemma 2.6.6, and Definition 2.1.1}).  Assuming $\rho$ has cyclotomic multiplier, this is equivalent to $\textnormal{WD}(\BC(\rho))|_{I_{K_2}} \cong \tau'$.  
\end{itemize}
(In this section, we will always be working with framed deformations with $p$-adic Hodge type $(\un{1}, \un{0}, \un{1})$ and cyclotomic multiplier, so we omit $(\un{1}, \un{0}, \un{1})$, $\varepsilon$ and $\Box$ from the notation.)  We write $D_{\rhobar}^{\tau'} = \Spf R_{\rhobar}^{\tau'}$.

Similarly, we let $R_{\BC(\rhobar)}^{\tau'}$ be the framed potentially crystalline deformation ring parametrizing lifts of $\BC(\rhobar)$ with $p$-adic Hodge type $\mu$ and inertial type $\tau'$.  We write $D_{\BC(\rhobar)}^{\tau'} = \Spf R_{\BC(\rhobar)}^{\tau'}$.

\subsubsection{}

Let $R$ denote a local Artinian $\cO$-algebra with residue field $\bbF$. We define
\begin{align*}
{D}^{\tau'}_{\BC(\rhobar),\pol}(R)&\defeq \left\{ (\rho'_R,\alpha_R): \begin{array}{l}
\diamond~ \rho'_R \in {D}_{\BC(\rhobar)}^{\tau'}(R) \\ 
\diamond~ \alpha_R~\textnormal{is a compatible polarization}\\
\quad \textnormal{of $\rho'_R$ lifting $\overline{\alpha}$}
\end{array} \right\}
\end{align*}
We have natural maps
$$D_{\rhobar}^{\tau'} \stackrel{\sim}{\longrightarrow} D_{\BC(\rhobar),\pol}^{\tau'} \longrightarrow {D}^{\tau'}_{\BC(\rhobar)}$$
where the first isomorphism follows from Subsubsection \ref{poln-equiv}.

\subsubsection{}

Our next task will be to relate deformations of $L$-parameters to deformations of Kisin modules.  Before considering further deformation problems we record the following result.

\begin{lem}
\label{lem:Kis:var:triv}
Let $\rhobar:\Gamma_K \longrightarrow {}^C\bU_2(\bbF)$ be a tamely ramified $L$-parameter satisfying $\widehat{\imath}\circ\rhobar = \overline{\varepsilon}$ and $\tau'$ a $2$-generic principal series tame type satisfying $(\tau')^{\varphi^{-f}} \cong \tau'^\vee$.  Then there exists at most one Kisin module $\overline{\fM}\in Y^{\mu, \tau'}(\bbF)$ such that $T_{\textnormal{dd}}^*(\overline{\fM}) \cong \BC(\rhobar)|_{\Gamma_{K_{2,\infty}}}$.  If such an $\overline{\fM}$ exists, then there is a unique polarization $\overline{\iota}$ on $\overline{\fM}$ such that $(\overline{\fM}, \overline{\iota}) \in Y_{\pol}^{\mu, \tau'}(\bbF)$, and such that $\overline{\iota}$ is compatible with the polarization $\overline{\alpha}$ on $\BC(\rhobar)|_{\Gamma_{K_{2,\infty}}}$ via $T_{\textnormal{dd}}^*$.
\end{lem}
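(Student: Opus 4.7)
The plan is to appeal to the fully faithful Kisin module functor $T^*_{\textnormal{dd}}$ restricted to $Y^{\mu,\tau'}(\bbF)$ (cf.~\cite[\S 2.3]{LLLM1}), together with its compatibility with duality (Proposition \ref{cartier}) and Frobenius twist (Subsection \ref{sec:dual:KM}).

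First I would get uniqueness of $\overline{\fM}$: if $\overline{\fM}_1, \overline{\fM}_2 \in Y^{\mu,\tau'}(\bbF)$ both satisfy $T^*_{\textnormal{dd}}(\overline{\fM}_j) \cong \BC(\rhobar)|_{\Gamma_{K_{2,\infty}}}$, then composing the two isomorphisms and using full faithfulness lifts the resulting Galois isomorphism to an isomorphism $\overline{\fM}_1 \stackrel{\sim}{\longrightarrow} \overline{\fM}_2$.

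Next I would construct $\overline{\iota}$ as follows. Twisting the $\Gamma_{K_2}$-equivariant isomorphism $\overline{\alpha}: \BC(\rhobar)^{\varphi^{-f}} \stackrel{\sim}{\longrightarrow} \BC(\rhobar)^\vee \otimes \overline{\varepsilon}$ by $\varphi^f$ (which normalizes $\Gamma_{K_2}$), and using $\overline{\varepsilon}^{\varphi^f} = \overline{\varepsilon}$ together with dualization, yields a $\Gamma_{K_2}$-equivariant isomorphism $\BC(\rhobar)^{\varphi^f} \stackrel{\sim}{\longrightarrow} \BC(\rhobar)^\vee \otimes \overline{\varepsilon}$. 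Restricting to $\Gamma_{K_{2,\infty}}$ and invoking the canonical identifications $T^*_{\textnormal{dd}}((\sigma^f)^*\overline{\fM}) \cong T^*_{\textnormal{dd}}(\overline{\fM})^{\varphi^f}$ and $T^*_{\textnormal{dd}}(\overline{\fM}^\vee) \cong T^*_{\textnormal{dd}}(\overline{\fM})^\vee \otimes \overline{\varepsilon}$ from Proposition \ref{cartier}, full faithfulness promotes this isomorphism to the required map $\overline{\iota}: (\sigma^f)^*\overline{\fM} \stackrel{\sim}{\longrightarrow} \overline{\fM}^\vee$.

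The main point to verify is the sign condition of Definition \ref{df:fixed}, namely that the composite endomorphism of $\overline{\fM}$ built from $\overline{\iota}$ equals $-1$. By naturality of $T^*_{\textnormal{dd}}$ with respect to duality and Frobenius twist, this composite corresponds under $T^*_{\textnormal{dd}}$ to the analogous composite built from the Galois polarization, which by the correspondence of Subsubsection \ref{poln-equiv} equals $-\theta(\varphi^{-f}) = -1$ in our situation $\theta = \overline{\varepsilon}$. Uniqueness of $\overline{\iota}$ is then immediate from full faithfulness: two polarizations of $\overline{\fM}$ compatible with $\overline{\alpha}$ have the same image under $T^*_{\textnormal{dd}}$ and hence coincide. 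The hard part will be carefully tracking the various dualities, Frobenius twists, and their compatibilities through $T^*_{\textnormal{dd}}$, but this is essentially formal given the ingredients assembled in Subsection \ref{sec:dual:KM}.
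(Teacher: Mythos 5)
Your proposal is correct and follows essentially the same route as the paper: uniqueness of $\overline{\fM}$ and the lifting of the Galois-side isomorphism to $\overline{\iota}$ both come from \cite[Thm.~3.2]{LLLM1}, and uniqueness of $\overline{\iota}$ from faithfulness (the paper phrases this as: the difference $\overline{\iota}_1-\overline{\iota}_2$ has $u$-torsion image in the projective module $\overline{\fM}^\vee$, hence vanishes). One caution: $T^*_{\textnormal{dd}}$ is not literally fully faithful on $Y^{\mu,\tau'}(\bbF)$ — what you are really using is the triviality of the Kisin variety for $2$-generic $\tau'$, i.e.\ that the isomorphism of \'etale $\varphi$-modules coming from $\overline{\alpha}$ necessarily carries the lattice $(\sigma^f)^*\overline{\fM}$ onto $\overline{\fM}^\vee$, so you should cite that result rather than fullness of the functor.
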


\begin{proof}
The first part of the Lemma is \cite[Thm. 3.2]{LLLM1}.  Assume that $\overline{\fM} \in Y^{\mu, \tau'}(\bbF)$ satisfies $T_{\textnormal{dd}}^*(\overline{\fM})\cong \BC(\rhobar)|_{\Gamma_{K_{2,\infty}}}$, and let $\mathcal{M}\defeq \overline{\fM}\otimes_{\fS_\bbF}\cO_{\cE,\bbF}$ denote the associated \'etale $\varphi$-module (where $\cO_{\cE,\bbF} \defeq \cO_{\cE}\otimes_{\bbZ_p}\bbF$ and $\cO_{\cE}$ is the $p$-adic completion of $\cO_{K_2}[\![u]\!][1/u]$).  Since the category of $\Gamma_{K_{2,\infty}}$-representations is equivalent to the category of \'etale $\varphi$-modules, and since $\BC(\rhobar)$ is essentially conjugate self dual, we have an isomorphism
$$\iota:(\sigma^f)^*\, \mathcal{M}\stackrel{\sim}{\longrightarrow} \mathcal{M}^\vee$$
(see \cite[\S 3]{broshi} for the definition and properties of $\cM^\vee$).  By \cite[Thm. 3.2]{LLLM1} the Kisin varieties of both $(\sigma^f)^*\, \mathcal{M}$ and $ \mathcal{M}^\vee$ are trivial.  Since $(\sigma^f)^*\, \ovl{\fM}$ and $ \ovl{\fM}^\vee$ are {$\fS_{\bbF}$}-lattices in $(\sigma^f)^*\, \mathcal{M}$ and $ \mathcal{M}^\vee$, respectively, we conclude that the map $\overline{\iota} \defeq \iota|_{(\sigma^f)^*\, \overline{\fM}}$ factors through an isomorphism $(\sigma^f)^*\, \overline{\fM} \stackrel{\sim}{\longrightarrow} \overline{\fM}^\vee${, giving a polarization on $\overline{\fM}$}.

We now claim that if $\overline{\iota}_1$, $\overline{\iota}_2$ are polarizations on $\overline{\fM}$ which are compatible with the polarization $\overline{\alpha}$ on $\BC(\rhobar)|_{\Gamma_{K_{2,\infty}}}$ then $\overline{\iota}_1=\overline{\iota}_2$.
Since $T_{\textnormal{dd}}^*(\overline{\iota}_1)=T_{\textnormal{dd}}^*(\overline{\iota}_2)$ we deduce that $(\overline{\iota}_1 - \overline{\iota}_2)\otimes_{\fS_\bbF}\cO_{\cE,\bbF} = 0$ and hence $\textnormal{im}(\overline{\iota}_1 - \overline{\iota}_2)$ is a $u$-torsion $\fS_{\bbF}$-submodule of $\overline{\fM}^{\vee}$.  Since $\overline{\fM}^{\vee}$ is a projective $\fS_{\bbF}$-submodule we conclude that $\overline{\iota}_1 - \overline{\iota}_2 = 0$.
\end{proof}

We may now introduce the following definition:
\begin{df}
Let $\rhobar:\Gamma_K \longrightarrow {}^C\bU_2(\bbF)$ be a tamely ramified $L$-parameter such that $\widehat{\imath}\circ\rhobar = \overline{\varepsilon}$ and let $\tau'$ be a $2$-generic principal series tame type satisfying $(\tau')^{\varphi^{-f}} \cong \tau'^\vee$.  Assume that there exists $(\overline{\fM},\overline{\iota}) \in Y_{\pol}^{\mu,\tau'}(\bbF)$ together with an isomorphism $T_{\textnormal{dd}}^*(\overline{\fM}) \stackrel{\sim}{\longrightarrow} \BC(\rhobar)|_{\Gamma_{K_{2,\infty}}}$ compatible with the polarizations on both sides.  We define the \emph{shape of $\rhobar$ with respect to $\tau'$} to be the shape of $\overline{\fM}$, and denote it by $\tw(\rhobar,\tau')$.
\end{df}

Whenever we invoke the shape of an $L$-parameter with respect to a $2$-generic type $\tau'$ (with $\rhobar$ and $\tau'$ as above), we implicity assume that there exists a (necessarily unique) polarized Kisin module $(\overline{\fM},\overline{\iota}) \in Y_{\pol}^{\mu,\tau'}(\bbF)$ such that $T_{\textnormal{dd}}^*(\overline{\fM}) \stackrel{\sim}{\longrightarrow} \BC(\rhobar)|_{\Gamma_{K_{2,\infty}}}$ compatibly with the polarizations on both sides.

\subsubsection{}

In what follows, we fix a polarized Kisin module $(\overline{\fM}, \overline{\iota})\in Y^{\mu,\tau'}_{\pol}(\bbF)$ and an isomorphism $\overline{\delta}:T_{\textnormal{dd}}^*(\overline{\fM}) \stackrel{\sim}{\longrightarrow} \BC(\rhobar)|_{\Gamma_{K_{2,\infty}}}$, which is compatible with the polarizations on both sides (with $\rhobar$ and $\tau'$ as above).  (The existence of such a $(\overline{\fM}, \overline{\iota})$ is a necessary condition for the ring $R^{\tau'}_{\rhobar}$ to be non-zero, {since a non-zero morphism $x:R^{\tau'}_{\rhobar}\longrightarrow\cO$ gives rise to an element of $Y^{\mu,\tau'}_{\pol}(\cO)$ which reduces to $(\overline{\fM}, \overline{\iota})$ modulo $\varpi$, by the analogue of \cite[Theorem (0.1)]{KisinFcrys} with coefficients and descent data.)}

Let $R$ denote a local Artinian $\cO$-algebra with residue field $\bbF$.  We define
\begin{align*}
D^{\tau',\Box}_{\overline{\fM}, \BC(\rhobar)}(R) & \defeq \left\{ (\fM_R, {\jmath_R},  \rho'_R,\delta_R): 
\begin{array}{l}
\diamond~ (\fM_R, {\jmath_R}) \in Y^{\mu,\tau'}_{\overline{\fM}}(R) \\
\diamond~ \rho'_R \in {D}_{\BC(\rhobar)}^{\tau'}(R) \\ 
\diamond~ \delta_R: T^*_{\textnormal{dd}}(\fM_R) \stackrel{\sim}{\longrightarrow} \rho'_R|_{\Gamma_{K_{2,\infty}}} \textnormal{lifts $\overline{\delta}$}
\end{array} \right\} \\
\\
{D}^{\tau',\Box}_{\overline{\fM},\pol; \rhobar}(R) &\defeq \left\{ (\fM_R,\iota_R, {\jmath_R}, \rho_R,\delta_R) :  \begin{array}{l}
\diamond~ (\fM_R,\iota_R,{\jmath_R})\in Y^{\mu, \tau'}_{\overline{\fM},\pol}(R)\\
\diamond~ \rho_R \in {D}_{\rhobar}^{\tau'}(R)\\
\diamond~ \delta_R: T_{\textnormal{dd}}^*(\fM_R) \stackrel{\sim}{\longrightarrow} \BC(\rho_R)|_{\Gamma_{K_{2,\infty}}} ~\textnormal{lifts $\overline{\delta}$,}\\
\quad \textnormal{compatibly with the polarizations}
\end{array} \right\}
\end{align*}
The forgetful functor $(\fM_R, \iota_R) \longmapsto \fM_R$ along with the base change map $\rho_R \longmapsto \BC(\rho_R)$ induces a morphism $D_{\overline{\fM}, \pol; \rhobar}^{\tau',\Box} \longrightarrow D_{\overline{\fM},\BC(\rhobar)}^{\tau',\Box}$ which is compatible with $T^*_{\textnormal{dd}}$.

\begin{lem}
\label{lem:fib:prod}
Let $\rhobar$ and $\tau'$ be as above, so that in particular $\tau'$ is $2$-generic and satisfies $(\tau')^{\varphi^{-f}} \cong \tau'^\vee$.  Then the natural map $D^{\tau',\Box}_{\overline{\fM},\pol; \rhobar} \longrightarrow D^{\tau'}_{\rhobar}$ is an isomorphism.  
\end{lem}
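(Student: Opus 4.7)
The plan is to build a two-sided inverse to the forgetful map. Given $\rho_R\in D^{\tau'}_{\rhobar}(R)$ I would first invoke the bijection of Subsubsection \ref{poln-equiv} to translate $\rho_R$ into the equivalent datum of a pair $(\rho'_R,\alpha_R)$, where $\rho'_R = \BC(\rho_R)\in D^{\tau'}_{\BC(\rhobar)}(R)$ and $\alpha_R$ is a compatible polarization lifting $\overline{\alpha}$. So the task reduces to producing, functorially in $R$ and uniquely, a polarized Kisin module $(\fM_R,\iota_R,\jmath_R)\in Y^{\mu,\tau'}_{\overline{\fM},\pol}(R)$ together with a compatible $\delta_R$ recovering $\rho'_R|_{\Gamma_{K_{2,\infty}}}$.

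For the underlying Kisin module, I would apply Kisin's theorem on lattices in semistable/crystalline representations with tame descent data (in the $2$-generic setting this is the content of \cite[Thm.~4.1]{LLLM1} together with the framework of Subsection \ref{subsec:KisinModules}): since $\rho'_R$ is potentially crystalline of Hodge type $\mu$ and inertial type $\tau'$, there is a unique $\fM_R\in Y^{\mu,\tau'}(R)$, together with a unique isomorphism $\delta_R\colon T^*_{\textnormal{dd}}(\fM_R)\stackrel{\sim}{\longrightarrow}\rho'_R|_{\Gamma_{K_{2,\infty}}}$ lifting $\overline{\delta}$. The reduction $\jmath_R\colon\fM_R\otimes_R\bbF\stackrel{\sim}{\longrightarrow}\overline{\fM}$ is then forced by $\overline{\delta}$ and Lemma \ref{lem:Kis:var:triv}, giving an object of $Y^{\mu,\tau'}_{\overline{\fM}}(R)$.

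To produce $\iota_R$, I would transport $\alpha_R$ along $\delta_R$ using Proposition \ref{cartier} (which identifies $T^*_{\textnormal{dd}}(\fM_R^{\vee})$ with $T^*_{\textnormal{dd}}(\fM_R)^{\vee}\otimes\varepsilon$) and the natural identification $T^*_{\textnormal{dd}}((\sigma^f)^*\fM_R)\cong T^*_{\textnormal{dd}}(\fM_R)^{\varphi^{-f}}$. This gives an isomorphism of the associated étale $\varphi$-modules $(\sigma^f)^*\fM_R\otimes_{\fS_R}\cO_{\cE,R}\stackrel{\sim}{\longrightarrow}\fM_R^{\vee}\otimes_{\fS_R}\cO_{\cE,R}$. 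The key point, where the essential content lives, is to promote this to an isomorphism $\iota_R\colon(\sigma^f)^*\fM_R\stackrel{\sim}{\longrightarrow}\fM_R^{\vee}$ of Kisin modules (hence with descent data). This is the direct analogue of the last step of the proof of Lemma \ref{lem:Kis:var:triv}: both source and target are $\fS_R$-lattices in the same étale $\varphi$-module, and the uniqueness part of \cite[Thm.~3.2]{LLLM1}/\cite[Thm.~4.1]{LLLM1}, applied over $R$, forces them to coincide as lattices and so the étale-$\varphi$-module map is automatically $\fS_R$-linear. The cocycle condition of Definition \ref{df:fixed} for $\iota_R$ then matches the sign $-\theta(\varphi^{-f})=-1$ appearing in the compatibility condition on $\alpha_R$ (Subsubsection \ref{poln-equiv}), so $(\fM_R,\iota_R)\in Y^{\mu,\tau'}_{\pol}(R)$ and the pair $(\jmath_R,\delta_R)$ is compatible with polarizations by construction.

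At this point the natural map of the lemma and the construction above are inverse to each other: both $\fM_R$ and the morphisms $\delta_R,\iota_R,\jmath_R$ are uniquely determined by $\rho_R$ in each direction, and the verification that composing them yields the identity on both deformation functors is routine once the lattice promotion above is in hand. I expect the main obstacle to be precisely that promotion step — checking that, for general Artinian $R$ (not just $\bbF$), the polarization of the étale $\varphi$-module induced from $\alpha_R$ restricts to an isomorphism of Kisin modules with descent data; this requires the triviality of the relevant Kisin variety to propagate through the Artinian deformations, which is the content (in the $2$-generic regime) of the uniqueness clause of \cite[Thm.~3.2]{LLLM1} combined with the flatness/projectivity of $(\sigma^f)^*\fM_R$ and $\fM_R^{\vee}$ as $\fS_R$-modules.
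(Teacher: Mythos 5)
Your proposal is correct and follows essentially the same route as the paper: translate $\rho_R$ into the pair $(\BC(\rho_R),\alpha_R)$, invoke the isomorphism $R^{\tau',\Box}_{\overline{\fM},\BC(\rhobar)}\cong R^{\tau'}_{\BC(\rhobar)}$ (the paper cites \cite[Cor.~3.6]{LLLM1} rather than Thm.~4.1 for this, but the content is the same) to get a unique $(\fM_R,\delta_R)$, transport $\alpha_R$ to the \'etale $\varphi$-module and use uniqueness of the Kisin lattice to see it preserves $(\sigma^f)^*\fM_R$ and $\fM_R^\vee$, then get uniqueness of $\iota_R$ by the $u$-torsion/projectivity argument of Lemma \ref{lem:Kis:var:triv}.
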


\begin{proof}
Let $R$ be a local Artinian $\cO$-algebra with residue field $\bbF$, and let $\rho_R\in D_{\rhobar}^{\tau'}(R)$.  Recall that the data of $\rho_R$ is equivalent to the data of $(\BC(\rho_R), \alpha_R)$, with $\alpha_R$ a compatible polarization.  By \cite[Cor. 3.6]{LLLM1}, the representing rings $R^{\tau',\Box}_{\overline{\fM}, \BC(\rhobar)}$ and $R^{\tau'}_{\BC(\rhobar)}$ are isomorphic, and hence there exists a unique pair $(\fM_R, \delta_R)$, where $\fM_R\in Y^{\mu,\tau'}_{\overline{\fM}}(R)$ and $\delta_R :T_{\textnormal{dd}}^*(\fM_R) \stackrel{\sim}{\longrightarrow} \BC(\rho_R)|_{\Gamma_{K_{2,\infty}}}$ lifts $\overline{\delta}$.  It remains to construct a unique polarization on $\fM_R$ compatible with $\alpha_R$.  By the equivalence of categories between \'etale $\varphi$-modules and $\Gamma_{K_{2,\infty}}$-representations, the polarization $\alpha_R$ induces a polarization $\iota_R: (\sigma^f)^*\cM_R \stackrel{\sim}{\longrightarrow} \cM^\vee_R$, where $\cM_R$ denotes the \'etale $\varphi$-module associated to $\fM_R$.  The uniqueness of $\fM_R$ implies that $\iota_R$ carries $(\sigma^f)^*\fM_R$ to $\fM^\vee_R$.  Finally, the fact that $\iota_R$ is unique follows exactly as in the proof of Lemma \ref{lem:Kis:var:triv}.  
\end{proof}

\subsubsection{}

We now fix a gauge basis $\overline{\beta}$ on $(\overline{\fM},\overline{\iota})$.  For a local Artinian $\cO$-algebra $R$ with residue field $\bbF$, we define
$${D}^{\tau',\overline{\beta},\Box}_{\overline{\fM},\pol; \rhobar} (R) \defeq \left\{ (\fM_R,\iota_R, { \jmath_R }, \beta_R,\rho_R,\delta_R) : 
\begin{array}{l}
\diamond~(\fM_R,\iota_R, { \jmath_R }, \rho_R,\delta_R)\in {D}^{\tau',\Box}_{\overline{\fM},\pol; \rhobar}(R)\\
\diamond~\beta_R~ \textnormal{is a gauge basis for $(\fM_R,\iota_R)$ lifting $\overline{\beta}$}
\end{array}
\right\}.$$
We see by Proposition \ref{gaugeuniqueBC} that the forgetful map $D^{\tau',\overline{\beta},\Box}_{\overline{\fM},\pol; \rhobar}\longrightarrow D^{\tau',\Box}_{\overline{\fM},\pol; \rhobar}$ is a representable formal torus torsor of relative dimension $2f$.
We denote by $R_{\overline{\fM},\pol; \rhobar}^{\tau',\Box} \longrightarrow R^{\tau, \overline{\beta},\Box}_{\overline{\fM},\pol; \rhobar}$ the corresponding map of deformation rings.  It is a formally smooth morphism of relative dimension $2f$ between complete local Noetherian $\cO$-algebras.

Finally, we define the deformation problem
\begin{align*}
{D}^{\tau', \overline{\beta},\Box}_{\overline{\fM},\pol}(R) & \defeq \left\{ (\fM_R,\iota_R, { \jmath_R },  \beta_R,e_R) : 
\begin{array}{l}
\diamond~ (\fM_R,\iota_R, { \jmath_R }, \beta_R) \in D^{\tau', \overline{\beta}}_{\overline{\fM},\pol}(R) \\
\diamond~ e_R~ \textnormal{is a basis for}~ T_{\textnormal{dd}}^*(\fM_R) \\
\quad \textnormal{lifting the (pullback via $\overline{\delta}$ of the)}\\
\quad \textnormal{standard basis on $\BC(\rhobar)|_{\Gamma_{K_{2,\infty}}}$}
\end{array}
\right\}
\end{align*}
In particular, if $(\fM_R,\iota_R, { \jmath_R }, \beta_R,e_R) \in {D}^{\tau', \overline{\beta},\Box}_{\overline{\fM},\pol}(R)$, then $(T^*_{\textnormal{dd}}(\fM_R),e_R)$ is a framed deformation of $\BC(\rhobar)|_{\Gamma_{K_{2,\infty}}}$.  We let $R^{\tau', \overline{\beta},\Box}_{\overline{\fM},\pol}$ denote the deformation ring corresponding to the above deformation problem.

\subsubsection{}

The relationships between the various deformation problems are summarized in the following diagram, where ``f.s.'' stands for formally smooth. 
\begin{equation} \label{defdiagram}
\begin{tikzcd}
 \Spf R^{\tau'}_{\rhobar} & \Spf R_{\overline{\fM},\pol; \rhobar}^{\tau', \Box}  \ar[l, "\sim"'] & \Spf R^{\tau', \overline{\beta}, \Box}_{\overline{\fM}, \pol; \rhobar} \ar[l, "\textnormal{f.s.}"']   \ar[r]  & \Spf R_{\overline{\fM},\pol}^{\tau', \overline{\beta}, \Box} \ar[r, 
"\textnormal{f.s.}"] & \Spf R_{\overline{\fM},\pol}^{\tau', \overline{\beta}}
\end{tikzcd}
\end{equation}

The maps which are formally smooth correspond to forgetting either a gauge basis on the (polarized) Kisin module or a framing on the Galois representation.  The former is formally smooth of relative dimension $2f$ while the latter is formally smooth of relative dimension $4$.  The isomorphism follows from Lemma \ref{lem:fib:prod}.

Our next goal will be to show that the remaining map $\Spf R^{\tau', \overline{\beta}, \Box}_{\overline{\fM}, \pol; \rhobar} \longrightarrow \Spf R_{\overline{\fM},\pol}^{\tau', \overline{\beta}, \Box}$ is an isomorphism.  This will follow from some calculations with Galois cohomology.

\subsubsection{}

Given the tamely ramified $L$-parameter $\rhobar: \Gamma_K \longrightarrow {}^C\bU_2(\bbF)$ with $\widehat{\imath}\circ \rhobar = \overline{\varepsilon}$, we set $\textnormal{ad}^0(\rhobar) \defeq \fg\fl_2(\bbF)$. 
{It is a direct summand of the Lie algebra of ${}^C\bU_2$ endowed with the adjoint action of $\Gamma_K$ via $\rhobar$.
Explicitly the action of $\Gamma_K$ on the direct summand $\textnormal{ad}^0(\rhobar)$ is given as follows:}
$\Gamma_{K_2}$ acts by the adjoint action (via $\BC(\rhobar)$), and $\rhobar(\varphi^{-f}) = (A,1)\rtimes \varphi^{-f}$ acts by 
$$X \longmapsto -A\Phi_2X^{\top}\Phi_2^{-1}A^{-1}.$$

\begin{lem}
Suppose $\rhobar$ is $1$-generic.  Then the restriction map on cocycles
$$Z^1(\Gamma_{K},\textnormal{ad}^0(\rhobar)) \longrightarrow Z^1(\Gamma_{K_\infty}, \textnormal{ad}^0(\rhobar))$$
is injective.  
\end{lem}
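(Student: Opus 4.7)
The proof strategy is a reduction to the analogous statement for the $\bG\bL_2$-representation $\BC(\rhobar)$.

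The key geometric input is that $K_\infty/K$ is (totally) wildly ramified while $K_2/K$ is unramified, so that $K_\infty\cap K_2 = K$.  Consequently $\Gamma_{K_\infty}\cap \Gamma_{K_2} = \Gamma_{K_{2,\infty}}$ has index $2$ in $\Gamma_{K_\infty}$, and any element $\gamma_0\in \Gamma_{K_\infty}\setminus \Gamma_{K_{2,\infty}}$ lies in $\Gamma_K\setminus \Gamma_{K_2}$, giving a decomposition $\Gamma_K = \Gamma_{K_2}\sqcup \gamma_0\Gamma_{K_2}$.  For a cocycle $c \in Z^1(\Gamma_K, \textnormal{ad}^0(\rhobar))$ with $c|_{\Gamma_{K_\infty}} = 0$, one has $c(\gamma_0) = 0$, and the cocycle relation forces $c(\gamma_0 g) = \gamma_0 \cdot c(g)$ for $g\in \Gamma_{K_2}$.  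Hence $c$ is entirely determined by $c|_{\Gamma_{K_2}}$, which belongs to $Z^1(\Gamma_{K_2}, \textnormal{ad}(\BC(\rhobar)))$ and vanishes on $\Gamma_{K_{2,\infty}}$.  Thus the statement reduces to the analogous one for the $\bG\bL_2$-representation $\BC(\rhobar)$: the restriction map $Z^1(\Gamma_{K_2}, \textnormal{ad}(\BC(\rhobar))) \to Z^1(\Gamma_{K_{2,\infty}}, \textnormal{ad}(\BC(\rhobar)))$ is injective.

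The $\bG\bL_2$-statement is a standard input in the integral $p$-adic Hodge theory of tame Kisin modules (in the spirit of \cite{LLLM1}), and its proof relies crucially on the $1$-genericity assumption.  Writing $\BC(\rhobar)|_{I_{K_2}} \cong \omega_{2f}^a \oplus \omega_{2f}^b$, $1$-genericity gives $a \not\equiv b \pmod{p^{2f} - 1}$, so that the off-diagonal characters $\omega_{2f}^{\pm(a-b)}$ appearing in $\textnormal{ad}(\BC(\rhobar))|_{I_{K_2}}$ remain non-trivial upon restriction to $I_{K_{2,\infty}}$.  This uses that $\widetilde{\omega}_\pi$ restricts to a surjection $\Gamma_{K_{2,\infty}}\twoheadrightarrow \mu_{p^{2f} - 1}$, which holds because $K_2(\pi)/K_2$ (the tamely ramified extension defining $\widetilde{\omega}_\pi$) and $K_{2,\infty}/K_2$ (wildly ramified) intersect trivially over $K_2$.

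The main technical obstacle is the $\bG\bL_2$-statement at the level of cocycles (as opposed to cohomology).  The corresponding cohomological statement follows from Kisin's full faithfulness theorem for the restriction functor from $\Gamma_{K_2}$- to $\Gamma_{K_{2,\infty}}$-representations on suitable Kisin-module-theoretic categories.  The refinement to the level of cocycles requires passage to the Galois closure $\widetilde{K_2} := K_{2,\infty}(\zeta_{p^\infty})$, whose Galois group $\textnormal{Gal}(\widetilde{K_2}/K_2)\cong \bbZ_p \rtimes \bbZ_p^\times$ has an explicit semidirect product structure (Kummer direction $\rtimes$ cyclotomic direction) in which the cocycle relations, combined with the character non-triviality above, force the cocycle to vanish.
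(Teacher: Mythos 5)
Your reduction to the quadratic unramified extension is correct (the coset decomposition $\Gamma_K = \Gamma_{K_2}\sqcup\gamma_0\Gamma_{K_2}$ with $\gamma_0\in\Gamma_{K_\infty}\smallsetminus\Gamma_{K_{2,\infty}}$ does show a cocycle killed on $\Gamma_{K_\infty}$ is determined by its restriction to $\Gamma_{K_2}$), but it is not needed: the result being invoked applies to an arbitrary $\Gamma_K$-module satisfying the right hypothesis. The genuine gap is that you have identified the wrong hypothesis. The obstruction to injectivity of $Z^1(\Gamma_{K_2},V)\longrightarrow Z^1(\Gamma_{K_{2,\infty}},V)$ is not triviality of characters of $V$ on $I_{K_{2,\infty}}$; it is the presence of the cyclotomic character inside $V$. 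Concretely, take $V=\omega=\mathbb{F}(1)$. This character is nontrivial on $I_{K_{2,\infty}}$ (your criterion is satisfied), yet the Kummer cocycle $\sigma\longmapsto \sigma(p_1)/p_1$ attached to $p_1=(-p)^{1/p}$ is a nonzero element of $Z^1(\Gamma_{K_2},\mu_p)$ that vanishes identically on $\Gamma_{K_{2,\infty}}$, since $p_1\in K_{2,\infty}$ by construction of $K_\infty$. So the mechanism you propose cannot work as stated. The correct consequence of $1$-genericity is different: writing $\textnormal{ad}^0(\rhobar)|_{\Gamma_{K_2}}$ as a sum of four characters $1,1,\omega_{2f}^{\pm(a-b)}$ (up to unramified twist), genericity forces $\omega_{2f}^{\pm(a-b)}\neq\omega$, i.e.\ none of the four characters is cyclotomic. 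This is precisely the ``cyclotomic free'' condition of \cite[Def.\ 3.8]{LLLM1}, and the paper's proof consists of verifying it and citing \cite[Prop.\ 3.12]{LLLM1}, which gives the injectivity on cocycles directly for $\Gamma_K$ acting on $\textnormal{ad}^0(\rhobar)$.

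Your final paragraph also does not supply a proof of the cocycle-level statement. A cocycle $c$ on $\Gamma_{K_2}$ vanishing on $\Gamma_{K_{2,\infty}}$ does not factor through $\Gal(K_{2,\infty}(\zeta_{p^\infty})/K_2)$ unless $\Gamma_{K_{2,\infty}(\zeta_{p^\infty})}$ acts trivially on the image of $c$ — and by your own observation the characters $\omega_{2f}^{\pm(a-b)}$ remain nontrivial there, so the semidirect-product picture you describe is not available. The actual argument (in \cite{LLLM1}) analyzes the restriction of $c$ to $\Gamma_{K_2(\zeta_{p^\infty})}$, where the vanishing on the Kummer direction $\Gal(K_{2,\infty}(\zeta_{p^\infty})/K_2(\zeta_{p^\infty}))\cong\bbZ_p(1)$ is exactly what requires $\Hom_{\Gamma_{K_2}}(\bbF(1),V)=0$ — hence the cyclotomic-freeness hypothesis, not the one you state.
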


\begin{proof}
Lemma \ref{galreps} implies that $\textnormal{ad}^0(\rhobar)|_{\Gamma_{K_2}}$ is a direct sum of four characters, and the condition of $1$-genericity implies that none are equal to the mod $p$ cyclotomic character.  Thus, $\textnormal{ad}^0(\rhobar)$ is cyclotomic free, in the terminology of \cite[Def. 3.8]{LLLM1}.  The result now follows from \emph{op. cit.}, Proposition 3.12.  
\end{proof}

\begin{prop}
\label{forget-galois}
Suppose $\rhobar$ is $1$-generic.  Then the natural map $\Spf R^{\tau', \overline{\beta}, \Box}_{\overline{\fM}, \pol; \rhobar} \longrightarrow \Spf R_{\overline{\fM},\pol}^{\tau', \overline{\beta}, \Box}$ is an isomorphism.
\end{prop}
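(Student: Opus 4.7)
The plan is to construct an explicit inverse to the forgetful map and to verify bijectivity directly on $R$-points, for $R$ a local Artinian $\cO$-algebra with residue field $\bbF$.  Given a point $(\fM_R, \iota_R, \jmath_R, \beta_R, e_R)$ of $D^{\tau',\overline{\beta},\Box}_{\overline{\fM},\pol}(R)$, the target is to produce a unique pair $(\rho_R, \delta_R)$ completing it to a point of $D^{\tau',\overline{\beta},\Box}_{\overline{\fM},\pol;\rhobar}(R)$.

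I would proceed in three steps.  First, using the analogue of \cite[Cor.~3.6]{LLLM1} (whose 1-generic hypothesis on $\BC(\rhobar)$ follows from that on $\rhobar$), the framed deformation of $\BC(\rhobar)|_{\Gamma_{K_{2,\infty}}}$ determined by $\fM_R$ and $e_R$ extends uniquely to a framed potentially crystalline $\Gamma_{K_2}$-representation $\rho'_R \in D^{\tau'}_{\BC(\rhobar)}(R)$ of $p$-adic Hodge type $\mu$ and inertial type $\tau'$, together with a canonical identification $\delta_R: T^*_{\textnormal{dd}}(\fM_R) \stackrel{\sim}{\to} \rho'_R|_{\Gamma_{K_{2,\infty}}}$ of framed representations.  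Second, the polarization $\iota_R: (\sigma^f)^*\fM_R \stackrel{\sim}{\to} \fM_R^\vee$, combined with the functorial isomorphisms $T^*_{\textnormal{dd}}(\fM_R^\vee) \cong T^*_{\textnormal{dd}}(\fM_R)^\vee \otimes \varepsilon$ of Proposition \ref{cartier} and $T^*_{\textnormal{dd}}((\sigma^f)^*\fM_R) \cong T^*_{\textnormal{dd}}(\fM_R)^{\varphi^{-f}}$, induces an isomorphism of $\Gamma_{K_{2,\infty}}$-representations $(\rho'_R)^{\varphi^{-f}} \stackrel{\sim}{\to} (\rho'_R)^\vee \otimes \varepsilon$.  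Since both sides are restrictions of $\Gamma_{K_2}$-representations arising from Kisin modules, this extends uniquely to an isomorphism $\alpha_R$ of $\Gamma_{K_2}$-representations lifting $\overline{\alpha}$.  Third, the pair $(\rho'_R, \alpha_R)$ determines via the correspondence recalled in Subsubsection \ref{poln-equiv} an $L$-parameter $\rho_R: \Gamma_K \to {}^C\bU_2(R)$ with $\widehat{\imath}\circ\rho_R = \varepsilon$, i.e.~an element of $D^{\tau'}_{\rhobar}(R)$.

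Uniqueness follows from uniqueness at each step: Step 1 gives $\rho'_R$ canonically because the Kisin variety is trivial in the 1-generic regime; Step 2 produces $\alpha_R$ from $\iota_R$ through natural transformations; and Step 3 is an explicit bijection.  The cocycle injectivity lemma preceding the statement is the decisive input: it guarantees that a Galois lift $\rho_R$ of $\rhobar$ is determined by its restriction to $\Gamma_{K_\infty}$, and hence by the Kisin module data (which records the $\Gamma_{K_{2,\infty}}$-representation) together with the polarization structure (which records how $\varphi^{-f}$ acts).  This also means that two lifts $\rho_R, \rho_R'$ of the source mapping to the same point of the target must coincide, since their associated cocycles in $Z^1(\Gamma_K, \textnormal{ad}^0(\rhobar) \otimes \mathfrak{m}_R)$ vanish on the Kisin-module side.

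The main obstacle is Step 2, specifically the verification that the cocycle condition $\iota_R$ satisfies in Definition \ref{df:fixed} (composition equal to $-1$ on $\fM_R$) transfers through $T^*_{\textnormal{dd}}$ to the precise compatibility required of $\alpha_R$ in Subsubsection \ref{poln-equiv} (the composition equal to $-\theta(\varphi^{-f}) = -1$).  This demands a careful bookkeeping of signs and of the duality/Frobenius-twist isomorphisms for $T^*_{\textnormal{dd}}$, together with the uniqueness of the lift of the isomorphism $\alpha_R^\infty$ from $\Gamma_{K_{2,\infty}}$ to $\Gamma_{K_2}$, which again rests on the 1-genericity hypothesis via the preceding cocycle injectivity result.
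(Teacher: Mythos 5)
Your argument is correct and is essentially the paper's own proof (in particular the parenthetical alternative given there): injectivity via the cocycle-restriction lemma preceding the statement, and surjectivity by extending the framed $\Gamma_{K_{2,\infty}}$-representation and its polarization to $\Gamma_{K_2}$ and then invoking the bijection of Subsubsection \ref{poln-equiv}. The one correction concerns your Step 1: the extension step is not an instance of \cite[Cor.~3.6]{LLLM1}, which goes in the opposite direction (it produces a unique Kisin module from a given Galois deformation, not a Galois deformation from a given Kisin module); what you need is the surjectivity argument of \cite[Thm.~5.12]{LLLM1} --- applicable here because the monodromy condition is vacuous for Hodge type $(1,0)$ --- or equivalently \cite[Thm.~2.1.12]{CDM}, whose functoriality statement also disposes of the extension of the polarization that you correctly isolate as the main remaining obstacle in Step 2.
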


\begin{proof}
By considering tangent spaces and using the above lemma, the map in question is a closed immersion (compare \cite[Prop. 5.11]{LLLM1}).  Therefore it suffices to prove it is surjective on $R$-points.  
{This is obtained following the argument of the proof of \cite[Theorem 5.12]{LLLM1}, noting that in our situation, the monodromy condition in \emph{op.~cit.}~ is empty and the $p$-adic Hodge type is $(1,0)$ in all embeddings.}
{(Alternatively, one can invoke \cite[Thm. 2.1.12]{CDM}: the cited theorem implies that if $(\fM_R,\iota_R, \jmath_R,  \beta_R,e_R) \in {D}^{\tau', \overline{\beta},\Box}_{\overline{\fM},\pol}(R)$, then we may extend the framed deformation $(T^*_{\textnormal{dd}}(\fM_R),e_R)$ of $\BC(\rhobar)|_{\Gamma_{K_{2,\infty}}}$ to a framed deformation of $\BC(\rhobar)$; the claim about functoriality in \emph{op.~cit.}~implies that the polarization $T^*_{\textnormal{dd}}(\iota_R)$ also extends.)}
\end{proof}

\subsubsection{}
\label{subsubsec:integrality}

By Theorem \ref{thm:dringBC}, Proposition \ref{forget-galois} and \eqref{defdiagram}, we finally conclude that:
\begin{equation}
\label{defringpres}
R^{\tau'}_{\rhobar}[\![S_1, \ldots, S_{2f}]\!] \cong R^{\tau', \overline{\beta}, \Box}_{\overline{\fM}, \pol; \rhobar} \cong R_{\overline{\fM},\pol}^{\tau',\overline{\beta}}[\![T_1,\ldots,T_4]\!] \cong \left(\widehat{\bigotimes}_{i \in \{0, \ldots, f - 1\}} R_{\tw_i}^{\textnormal{expl}}\right)[\![T_1,\ldots,T_4]\!]
\end{equation}
where $\tw = (\tw_i)_i = \tw(\rhobar,\tau')\in \tW'$ is the shape of $\rhobar$ with respect to $\tau'$.

\subsubsection{}
\label{subsub:main:galois}

The following corollary is the main result on the local Galois side.

\begin{corollary}
\label{cor:irr:cmpts}
Let $\rhobar: \Gamma_K \longrightarrow {}^C\bU_2(\bbF)$ be a $3$-generic tamely ramified $L$-parameter which satisfies $\widehat{\imath} \circ \rhobar = \overline{\varepsilon}$.  Let $\tau'$ denote a $3$-generic principal series tame type which satisfies $(\tau')^{\varphi^{-f}} \cong \tau'^\vee$, and let $\sigma(\tau')$ denote the tame type associated to $\tau'$ via Theorem \ref{ILLC}.  We view $\sigma(\tau')$ as a Deligne--Lusztig representation of $\bG(\zp)$ on which $\imath(\cO_K^\times)$ acts trivially.  
{Assume that there exists $(\overline{\fM},\overline{\iota}) \in Y_{\pol}^{\mu,\tau'}(\bbF)$ together with an isomorphism $T_{\textnormal{dd}}^*(\overline{\fM}) \stackrel{\sim}{\longrightarrow} \BC(\rhobar)|_{\Gamma_{K_{2,\infty}}}$ compatible with the polarizations on both sides.}

We then have
$$\big|\textnormal{W}^?(\rhobar) \cap \JH\big(\overline{\sigma(\tau')}\big)\big| = e(R_{\rhobar}^{\tau'}\otimes_{\cO}\bbF),$$
where $e(-)$ denotes the Hilbert--Samuel multiplicity.  
\end{corollary}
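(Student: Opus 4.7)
The plan is to compute both sides of the desired equality independently in terms of the shape $\tw(\rhobar,\tau') = (\tw_i)_{0\leq i\leq f-1}\in \tld{W}'$ and then match them. First, for the right-hand side, I would invoke the chain of isomorphisms in \eqref{defringpres}: since formally smooth morphisms preserve Hilbert--Samuel multiplicity,
$$e(R_{\rhobar}^{\tau'}\otimes_{\cO}\bbF) \;=\; e\Big(\widehat{\bigotimes}_{i=0}^{f-1} R_{\tw_i}^{\mathrm{expl}}\otimes_{\cO}\bbF\Big) \;=\; \prod_{i=0}^{f-1} e\big(R_{\tw_i}^{\mathrm{expl}}\otimes_{\cO}\bbF\big).$$
Inspection of Table \ref{Table3} shows $e(R_{\tw_i}^{\mathrm{expl}}\otimes_{\cO}\bbF)=1$ for $\tw_i\in\{\ft,\ft'\}$ (both are power series rings) and $e(R_{\tw_i}^{\mathrm{expl}}\otimes_{\cO}\bbF)=2$ for $\tw_i=\fw$ (since $\bbF[\![x_{1,1},y_{2,2},x^*_{1,2},x^*_{2,1}]\!]/(x_{1,1}y_{2,2})$ is the union of two regular components). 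Hence $e(R_{\rhobar}^{\tau'}\otimes_{\cO}\bbF)=2^{N}$ where $N\defeq\#\{0\leq i\leq f-1:\tw_i=\fw\}$.

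For the left-hand side, I would write $\rhobar|_{I_K}\cong\tau(w_\rho,\mu_\rho+\eta)$ with $\mu_\rho$ being $3$-deep (Definition \ref{df:n:generic}) and appeal to Proposition \ref{prop:weights:type} to identify $\textnormal{W}^?(\rhobar)$ with $\JH(\overline{\beta(R_{w_\rho}(\mu_\rho+\eta))})$, a single reduced Deligne--Lusztig representation. By Theorem \ref{ILLC} and Definition \ref{inertialtype}, $\sigma(\tau')$ is also a tame type of $\bU_2(\cO_K)$; under the $3$-genericity assumption it is cuspidal or principal series and is thus isomorphic to a Deligne--Lusztig representation $R_{w_\tau}(\mu_\tau+\eta)$ whose parameters $(w_\tau,\mu_\tau)$ are directly read off from $\tau'$ via Lemma \ref{BCinertial} and the base change formulas \eqref{BCDLPS}--\eqref{BCDLcusp}. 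Both DL-representations are then $3$-generic and have trivial action of $\imath(\cO_K^\times)$. Applying Proposition \ref{char:intsct} gives
$$\big|\textnormal{W}^?(\rhobar)\cap\JH(\overline{\sigma(\tau')})\big|=2^{\#\{i\,:\,\tld{w}_i=1\}}$$
for a unique $\tld{w}\in\tld{W}$ with $\tld{w}_i\in\{1,\,s,\,t_{\alpha_\bH}s\}$ (or zero if no such $\tld{w}$ exists).

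The remaining step, which is the heart of the proof, is to match the two combinatorial descriptions: we must show that, whenever $(\overline{\fM},\overline{\iota})$ exists in $Y^{\mu,\tau'}_{\pol}(\bbF)$ with $T^*_{\mathrm{dd}}(\overline{\fM})\cong\BC(\rhobar)|_{\Gamma_{K_{2,\infty}}}$, the intersection $\textnormal{W}^?(\rhobar)\cap\JH(\overline{\sigma(\tau')})$ is nonempty and
$$\#\{0\leq i\leq f-1 \,:\, \tw(\rhobar,\tau')_i=\fw\} \;=\; \#\{0\leq i\leq f-1 \,:\, \tld{w}_i=1\}.$$
To establish this I would proceed entrywise: the shape of $\overline{\fM}$ at embedding $i$ is determined (via Tables \ref{Table1}--\ref{Table2} and the orientation of $\tau'$) by the \emph{relative position} of the eigencharacter pattern of $\BC(\rhobar)$ and that of $\tau'$ on tame inertia; similarly, the local entry $\tld{w}_i$ of Proposition \ref{char:intsct} is determined by the relative position of the pairs $(w_\rho,\mu_\rho)$ and $(w_\tau,\mu_\tau)$. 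Combining Lemma \ref{BCinertial}, the duality/orientation formulas of Subsection \ref{dualshape}, and the polarization condition of Lemma \ref{lem:BC:shape} (which identifies the embeddings $i$ and $i+f$ so that only the $f$ independent entries $0\leq i\leq f-1$ matter), a direct case analysis over the three shapes shows that $\tw_i=\fw$ corresponds to the ``parameter and type have the same orientation at $i$'' case, which is exactly $\tld{w}_i=1$; while $\tw_i=\ft$ and $\tw_i=\ft'$ correspond to $\tld{w}_i\in\{s,t_{\alpha_\bH}s\}$.

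The main obstacle is precisely this last combinatorial matching: both sides are known in terms of local invariants of the pair $(\rhobar,\tau')$, but the invariants are presented in two different languages (Kisin shapes on the $p$-adic Hodge theory side, affine Weyl group translations on the representation-theoretic side), and reconciling them requires carefully tracking conventions through the descent ${}^C\bU_2\leftrightsquigarrow\bG\bL_{2/K_2}$. Once this matching is in place, existence of a shared Jordan--H\"older factor follows from existence of $(\overline{\fM},\overline{\iota})$ by constructing an explicit weight in the intersection from a gauge basis, and the corollary is proved.
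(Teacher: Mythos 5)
Your computation of the right-hand side via \eqref{defringpres} and Table \ref{Table3}, giving $e(R_{\rhobar}^{\tau'}\otimes_{\cO}\bbF)=2^{N}$ with $N=\#\{0\leq i\leq f-1:\tw_i=\fw\}$, matches the paper exactly, and your identification of the left-hand side as $2^{\#\{i:\tld{w}_i=1\}}$ via Proposition \ref{char:intsct} is also consistent with the paper's combinatorics. The problem is the step you yourself identify as ``the heart of the proof'': the claim that $\tw(\rhobar,\tau')_i=\fw$ if and only if $\tld{w}_i=1$, which you propose to verify by a ``direct case analysis'' over the three shapes. This is not carried out, and it cannot be carried out from the material in the paper alone: nowhere does the paper compute the shape $\tw(\rhobar,\tau')$ explicitly in terms of the combinatorial data $(w_\rho,\mu_\rho)$ of $\rhobar$ and $(w_\tau,\mu_\tau)$ of $\tau'$. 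Tables \ref{Table1}--\ref{Table2} describe what a Kisin module of a given shape looks like, but they do not tell you which shape is attached to a given semisimple $\rhobar$ relative to a given $\tau'$; establishing that dictionary is precisely the geometric Breuil--M\'ezard-type statement that is the substance of the corollary, not a bookkeeping exercise. As written, your argument asserts the conclusion at the decisive moment.

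The paper avoids this entirely by a base-change squaring trick. It shows (i) $e(R_{\BC(\rhobar)}^{\tau'}\otimes_{\cO}\bbF)=e(R_{\rhobar}^{\tau'}\otimes_{\cO}\bbF)^2$, using Lemma \ref{lem:BC:shape}\ref{it:1:lem:BC:shape} ($\tw_{i-f}=\tw_i$) and the fact that $R^{\tau'}_{\BC(\rhobar)}$ is a formally smooth modification of the unpolarized gauge-basis deformation ring; (ii) $|\textnormal{W}^?(\BC(\rhobar))\cap\JH(\overline{\BC(\sigma(\tau'))})|=|\textnormal{W}^?(\rhobar)\cap\JH(\overline{\sigma(\tau')})|^2$, by Propositions \ref{prop:double:weights}, \ref{prop:weights:type} and \ref{prop:compatibilityBC}; and (iii) the two squared quantities agree by the already-proved Breuil--M\'ezard theorem for $\bG\bL_2$, namely \cite[Thm.\ A]{gee-kisin}. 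Taking positive square roots gives the corollary. So the matching of Kisin shapes with affine Weyl group combinatorics that you want to do by hand is instead imported wholesale from the $\bG\bL_2$ literature. If you want to salvage your direct approach, you would need to supply (or cite) an explicit formula for $\tw(\rhobar,\tau')$ in terms of the tame inertial data --- essentially redoing the local analysis of \cite{gee-kisin} --- which defeats the purpose; the intended proof is the reduction to $\bG\bL_2$.
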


\begin{proof}
Let $(\overline{\fM},\overline{\iota}) \in Y^{\mu,\tau'}_{\pol}(\bbF)$ correspond to $\rhobar$, let $\overline{\beta}$ denote a gauge basis for $(\overline{\fM},\overline{\iota})$, and let $\tw = (\tw_i)_i = \tw(\rhobar,\tau') \in \tW'$ denote the shape of $\rhobar$ with respect to $\tau'$.  The isomorphism \eqref{defringpres} above implies that 
$$e(R_{\rhobar}^{\tau'}\otimes_{\cO}\bbF) = e(R_{\overline{\fM},\pol}^{\tau',\overline{\beta}}\otimes_{\cO}\bbF) = 2^{|\{0 \leq i \leq f - 1: \tw_i = \fw\}|},$$
where the last equality follows from Table \ref{Table3}.

By the $\bG\bL_2$-analog of the discussion in \cite[\S 5.2]{LLLM1}, we see that $R_{\BC(\rhobar)}^{\tau'}$ is a formally smooth modification of $R^{\tau',\overline{\beta}}_{\overline{\fM}}$, {where the latter ring represents the functor sending a local Artinian $\cO$-algebra $R$ with residue field $\bbF$ to the set of triples $(\fM_R, \jmath_R, \beta_R)$, where $(\fM_R, \jmath_R)\in Y^{\mu,\tau'}_{\overline{\fM}}(R)$ and $\beta_R$ is a gauge basis of $(\fM_R, \jmath_R)$ lifting $\overline{\beta}$.  Further, the structure of $R^{\tau', \overline{\beta}}_{\overline{\fM}}$ is obtained by removing the restriction ``$i \in \{0, \ldots, f - 1\}$'' in the right-hand side of Theorem \ref{thm:dringBC} (this is the $\bG\bL_2$-analog of \cite[Thm.\ 4.17]{LLLM1}).}  Thus, Lemma \ref{lem:BC:shape}\ref{it:1:lem:BC:shape} implies
\begin{eqnarray*}
e(R_{\BC(\rhobar)}^{\tau'}\otimes_{\cO}\bbF) & = & e(R^{\tau',\overline{\beta}}_{\overline{\fM}}\otimes_{\cO}\bbF) \\
 & = & 2^{|\{i: \tw_i = \fw\}|} \\
 & = & 2^{2|\{0 \leq i \leq f - 1: \tw_i = \fw\}|}\\
 & = & e(R_{\rhobar}^{\tau'}\otimes_{\cO}\bbF)^2.
\end{eqnarray*}
After unwinding definitions and conventions regarding duals and Hodge--Tate weights, \cite[Thm. A]{gee-kisin} gives
$$\big|\textnormal{W}^?(\BC(\rhobar)) \cap \JH\big(\overline{\BC(\sigma(\tau'))}\big)\big| = e(R_{\BC(\rhobar)}^{\tau'}\otimes_{\cO}\bbF).$$
Hence, it is enough to prove that
$$\big|\textnormal{W}^?(\BC(\rhobar)) \cap \JH\big(\overline{\BC(\sigma(\tau'))}\big) \big| = \big| \textnormal{W}^?(\rhobar) \cap \JH\big(\overline{\sigma(\tau')}\big)\big|^2.$$
This follows from Propositions \ref{prop:double:weights} {(applied to $\beta(V_\phi(\rhobar))$ and $\sigma(\tau')$)}, \ref{prop:weights:type} and \ref{prop:compatibilityBC}.
\end{proof}

\section{Global applications I}
\label{sec:global}

In this section we apply the results of Sections \ref{sec:RepThy} and \ref{sec:Lpar} in a global context.  Our main references will be \cite{CHT} and \cite{CEGGPS}; as such, we will be considering Galois representations valued in the group $\cG_2$.  (We will translate these results back to the group ${}^C\bU_2$ at the end of Section \ref{glob2}.)  After preliminaries on automorphic forms on unitary groups and their associated Galois representations (Theorem \ref{galrepheckealg}), we give the main result on weight elimination in Theorem \ref{thm:WE}, building on the compatibility of base change of tame types and $L$-parameters.

We caution the reader that some of the notation below differs from previous sections.

\subsection{Unitary groups}
\label{subsec:UGGlob}
\subsubsection{}  
Let $F$ be an imaginary CM field with maximal totally real subfield $F^+$.  We suppose:
\begin{itemize}
\item $F^+/\bbQ$ is unramified at $p$;
\item $F/F^+$ is unramified at all finite places; and
\item every place of $F^+$ above $p$ is inert in $F$.  
\end{itemize}
This implies that $[F^+:\bbQ]$ is even (cf. \cite[\S 3.1]{gee-kisin}), and there exists a reductive group $\bG_{/ \cO_{F^+}}$, which is a totally definite unitary group, quasi-split at all finite places. {More precisely we take}
$$\bG(R) = \{g\in \bG\bL_2(\cO_{F}\otimes_{\cO_{F^+}}R)\, :\, g^{(c\otimes 1)\top}g = 1_2 \},$$
where $R$ is an $\cO_{F^+}$-algebra, and where we write $c\in\Gal(F/F^+)$ for the complex conjugation.  

\emph{Note that this group is different from the group $\bG$ from Subsubsection \ref{sub:sub:unitary:loc}}.

The group $\bG$ is equipped with an isomorphism 
$$\iota: \bG\times_{\cO_{F^+}}\cO_F\stackrel{\sim}{\longrightarrow} {\bG\bL_2}_{/ \cO_{F}}$$ 
which satisfies $\iota\circ (1\otimes c) \circ \iota^{-1}(g) = g^{-c\top}$.  For all places $v$ of $F^+$ which split in $F$ as $v=w w^c$, we obtain an induced isomorphism 
$$\iota_w:\bG(\cO_{F^+_v})\stackrel{\sim}{\longrightarrow}\bG\bL_2(\cO_{F_w})$$ 
such that $\iota_w \circ \iota^{-1}_{w^{c}}(g) = g^{-c\top}$. 
If $v$ is a place of $F^+$ which is inert in $F$, then we have an isomorphism
$$\iota_v:\bG(\cO_{F^+_v}) \stackrel{\sim}{\longrightarrow} \bU_2(\cO_{F^+_v})\subseteq\bG\bL_2(\cO_{F_v}),$$
where $\bU_2$ is the quasi-split unitary group over $\cO_{F^+_v}$ defined Subsection \ref{unitarygps:local}.  This isomorphism is given by $g\longmapsto \sm{1}{b}{b}{-1}g\sm{1}{b}{b}{-1}^{-1}$, where $b\in \cO_{F_v}^\times$ is an element which satisfies $bb^c = -1$ and $b \not\in \cO_{F^+_v}^\times$.
{Finally, for an embedding $\kappa^+:F^+ \longhookrightarrow \bbR$, the group $\bG(F^+_{\kappa^+})$ is compact, and isomorphic to the compact unitary group $\bU_{2}(\bbR)$.  }

(We note that the running hypothesis in \cite{gee-kisin} that $v$ splits in $F$ for $v$ a place of $F^+$ above $p$ is irrelevant for the construction and the basic properties of the group $\bG$.)

\subsubsection{}

Set $F_p^+\defeq F^+\otimes_{\bbQ}\bbQ_p$ and $\cO_{F^+,p}\defeq \cO_{F^+}\otimes_\bbZ\bbZ_p$.  Recall that $E$ is our coefficient field, with ring of integers $\cO$, uniformizer $\varpi$, and residue field $\bbF$.  We assume $E$ is sufficiently large; in particular, we will assume that the image of every embedding $F \longhookrightarrow \overline{\bbQ}_p$ is contained in $E$.

We write $\Sigma_p^+$ (resp.~$\Sigma_p$) for the set of places of $F^+$ (resp.~$F$) lying above $p$.  Restriction from $F$ to $F^+$ gives a bijection between $\Sigma_p$ and $\Sigma_p^+$, and we will often identify these two sets.  Similarly, we let $I_p^+$ (resp.~$I_p$) denote the set of embeddings $\kappa^+: F^+ \longhookrightarrow E$ (resp.~$\kappa: F \longhookrightarrow E$).  We fix a subset $\widetilde{I}_p \subseteq I_p$ such that $I_p = \widetilde{I}_p \sqcup \widetilde{I}_p^c$.  Then restriction from $F$ to $F^+$ gives a bijection between $\widetilde{I}_p$ and $I_p^+$.  Further, composing $\kappa^+ \in I_p^+$ (resp.~$\kappa\in \widetilde{I}_p$) with the valuation on $E$ gives an element of $\Sigma_p^+$ (resp.~$\Sigma_p$), and we let $v(\kappa^+)$ (resp.~$v(\kappa)$) denote the place induced from the embedding $\kappa^+\in I_p^+$ (resp.~$\kappa\in \widetilde{I}_p$).  This gives the following diagram:

\begin{center}
\begin{equation}
\label{diag:emb}
\begin{tikzcd}[row sep = large, column sep = 8ex]
I_p  \ar[r, hookleftarrow]  & \widetilde{I}_p \ar[r, twoheadrightarrow, "\kappa \mapsto v(\kappa)"] \ar[d, leftrightarrow, "\textnormal{res}"] & \Sigma_p \ar[d, leftrightarrow, "\textnormal{res}"]\\
 & I_p^+ \ar[r, twoheadrightarrow, "\kappa^+ \mapsto v(\kappa^+)"] & \Sigma_p^+
\end{tikzcd}
\end{equation}
\end{center}

For a finite place $v$ of $F^+$ (resp.~$F$), we let $\bbF_v^+$ (resp.~$\bbF_v$) denote the residue field of $v$.  We have $\bG(\bbF_v^+)\cong \bU_2(\bbF_v^+)$ for all $v\in\Sigma_p^+$ by construction.

\subsection{Algebraic automorphic forms on unitary groups}
\label{sec:AAF}

\subsubsection{}

Let $K = \prod_{v}K_v$ be a compact open subgroup of $\bG(\bbA_{F^+}^{\infty})$.  We set 
$$K_p \defeq \prod_{v\in \Sigma_p^+}K_v,\qquad K^p \defeq \prod_{v\not\in \Sigma_p^+}K_v,$$ 
and if $k\in K$, we write $k_p$ for the projection of $k$ to $K_p$.  We say that the level $K$ is \emph{sufficiently small} if for all $t \in \bG(\bbA^{\infty}_{F^+})$, the finite group $t^{-1} \bG(F^+) t \cap K$ does not contain an element of order $p$.

\subsubsection{}
Let $K = \prod_v K_v \subseteq \bG(\bbA_{F^+}^{\infty,p})\times\bG(\cO_{F^+,p})$ be a compact open subgroup, and suppose $W$ is an $\cO$-module endowed with an action of $\bG(\cO_{F^+,p})$.  The space of algebraic automorphic forms on $\bG(\bbA_{F^+}^{\infty})$ of level $K$ and coefficients in $W$ is defined as the $\cO$-module
\begin{equation*}
S_\bG(K,W)\defeq \left\{f:\,\bG(F^{+})\backslash \bG(\bbA^{\infty}_{F^{+}}) \longrightarrow W\,:\, f(gk) = k_p^{-1}f(g)\,\,\forall g\in \bG(\bbA^{\infty}_{F^{+}}), k\in K\right\}.
\end{equation*}

Given a compact open subgroup $K$ as above, we have 
$$\bG(\bbA_{F^+}^\infty) = \bigsqcup_i \bG(F^+)t_iK$$
for some finite set $\{t_i\}_i$.  This induces an isomorphism of $\cO$-modules
\begin{eqnarray*}
S_{\bG}(K,W) & \stackrel{\sim}{\longrightarrow} & \bigoplus_i W^{K\cap t_i^{-1}\bG(F^+)t_i} 
\\
f & \longmapsto & (f(t_i))_i
\end{eqnarray*}
In particular we have inclusions $S_{\bG}(K,W)\subseteq S_{\bG}(K',W)$ for $K'\subseteq K$.  If we assume that $K$ is sufficiently small or $A$ is a flat $\cO$-algebra, we further have
\begin{equation}
\label{cmp:ext:sc}
S_{\bG}(K,W)\otimes_{\cO}A \cong S_{\bG}(K,W\otimes_{\cO}A).
\end{equation}

\subsubsection{}

Suppose that $J = \prod_v J_v \subseteq \bG(\bbA_{F^+}^{\infty,p})\times\bG(\cO_{F^+,p})$ is a compact subgroup.  We define
$$S_{\bG}(J,W) \defeq \varinjlim_{K\supseteq J} S_{\bG}(K,W),$$
where $K$ runs over compact open subgroups containing $J$, for which $K_p \subseteq \bG(\cO_{F^+,p})$.  If $g\in \bG(\bbA_{F^+}^\infty)$ is such that $g_p\in \bG(\cO_{F^+,p})$ then 
$$(g.f)(h) = g_p.f(hg)$$
defines an element $g.f$ of $S_{\bG}(gJg^{-1},W)$.  Hence, we obtain an action of $g$ on $S_{\bG}(J,W)$ as soon the relation $J \subseteq gJg^{-1}$ is satisfied.  In particular, if $J = \prod_v J_v \subseteq \bG(\bbA_{F^+}^{\infty,p})\times\bG(\cO_{F^+,p})$ is any compact subgroup, then $J$ acts on $S_{\bG}(\{1\},W)$, and we have
\begin{equation}
\label{eq:fixed:aut}
S_{\bG}(\{1\},W)^J = S_{\bG}(J,W).
\end{equation}

\subsubsection{}

Recall the map $I_p \longtwoheadrightarrow \Sigma_p$ defined by $\kappa \longmapsto v(\kappa)$.  This gives a bijection $I_p \stackrel{\sim}{\longrightarrow} \bigsqcup_{v\in \Sigma_p}\textnormal{Hom}(F_v,E)$ and we identify embeddings $F_v\longhookrightarrow E$ with elements in $I_p$ without further comment.
Let $v\in \Sigma_p$.  
We define $\widetilde{\textnormal{Hom}}(F_v,E)\subseteq \textnormal{Hom}(F_v,E)$ by the condition
$$
\widetilde{I}_p\stackrel{\sim}{\longrightarrow} \bigsqcup_{v\in\Sigma_p}\widetilde{\textnormal{Hom}}(F_v,E)
$$
where the map is given as restriction of the map $I_p \stackrel{\sim}{\longrightarrow} \bigsqcup_{v\in \Sigma_p}\textnormal{Hom}(F_v,E)$.  Note that $\kappa \longmapsto \kappa\circ c$ defines a non-trivial involution on $\textnormal{Hom}(F_v,E)$ and hence $|\widetilde{\textnormal{Hom}}(F_v,E)| = \frac{1}{2}|\textnormal{Hom}(F_v,E)|$.

\subsubsection{}

Let $\bbZ_+^2$ denote the set of all pairs of integers $(\lambda_1,\lambda_2)$ such that $\lambda_1 \geq \lambda_2$.  ({Thus, for $v\in \Sigma_p^+$, we may identify $(\bbZ_+^2)^{\textnormal{Hom}(F_v^+,E)}$ with $X_+(\textnormal{Res}_{\cO_{F^+_v}/\bbZ_p}(\bT_{\bU}))$, where $\bT_\bU$ denotes the torus of the group $\bU_2$ defined in Subsubsection \ref{tori} with $K = F^+_v$.  Note that the discussion in Subsection \ref{subsec:thegroupG} works equally well for the group $\bU_2$ and its restriction of scalars.})  Given $\lambda_v = (\lambda_\kappa)_\kappa \in (\bbZ_+^2)^{\widetilde{\textnormal{Hom}}(F_v,E)}$, we let $W_{\lambda_v}$ denote the free $\cO$-module
$$W_{\lambda_v} \defeq \bigotimes_{\kappa\in \widetilde{\textnormal{Hom}}(F_v,E)}\det{}^{\lambda_{\kappa,2}}\otimes_{\cO_{F_v}} \textnormal{Sym}^{\lambda_{\kappa,1} - \lambda_{\kappa,2}}(\cO_{F_v}^2)\otimes_{\cO_{F_v},\kappa}\cO,$$
which, by restriction and using the isomorphism $\iota_v$, has an action of $\bG(\cO_{F^+_v})$.  
Given an element $\lambda = (\lambda_{\kappa})_\kappa\in (\bbZ_+^2)^{\widetilde{I}_p} = \bigoplus_{v\in \Sigma_p}(\bbZ_+^2)^{\widetilde{\textnormal{Hom}}(F_v,E)}$, we set
$$W_{\lambda} \defeq \bigotimes_{v\in \Sigma_p} W_{\lambda_v},$$
which is a free $\cO$-module with an action of $\prod_{v\in \Sigma_p^+}\bG(\cO_{F^+_v}) = \bG(\cO_{F^+,p})$.

Since $F_v^+$ is unramified over $\bbQ_p$ for every $v \in \Sigma_p^+$, restriction and reduction mod $p$ give bijections 
$$\widetilde{\Hom}(F_v,E) \stackrel{\sim}{\longrightarrow} \Hom(F_v^+,E) \stackrel{\sim}{\longrightarrow} \Hom(\bbF_v^+, \bbF).$$  
For an element $\lambda = (\lambda_v)_{v\in \Sigma_p}\in (\bbZ_{+}^2)^{\widetilde{I}_p} = \bigoplus_{v\in \Sigma_p} (\bbZ_{+}^2)^{\widetilde{\Hom}(F_v,E)}$, we let $\overline{\lambda} = (\overline{\lambda}_v)_{v\in \Sigma_p}$ denote its image in $\bigoplus_{v\in \Sigma_p} (\bbZ_+^2)^{\Hom(\bbF_v^+,\bbF)}$.  Let $\bbZ_{+,p}^2$ denote the subset of $\bbZ_+^2$ consisting of elements $(\lambda_1,\lambda_2)$ satisfying $\lambda_1 - \lambda_2 \leq p - 1$. Then the image of $(\bbZ_{+,p}^2)^{\widetilde{I}_p}$ in $\bigoplus_{v\in \Sigma_p} (\bbZ_+^2)^{\Hom(\bbF_v^+,\bbF)}$ {gives rise to the irreducible mod $p$ representations of $\bG(\cO_{F^+,p})$, in a manner similar to Proposition \ref{serrewtparam}.}  {(More precisely, under the identification of $(\bbZ_+^2)^{\textnormal{Hom}(F_v^+,E)}$ with $X_+(\textnormal{Res}_{\cO_{F^+_v}/\bbZ_p}(\bT_{\bU}))$, the set $(\bbZ_{+,p}^2)^{\textnormal{Hom}(F_v^+,E)}$ is identified with $X_1(\textnormal{Res}_{\cO_{F^+_v}/\bbZ_p}(\bT_{\bU}))$.)}  In particular, if $\lambda = (\lambda_v)_{v\in \Sigma_p}\in (\bbZ_{+,p}^2)^{\widetilde{I}_p} = \bigoplus_{v\in \Sigma_p} (\bbZ_{+,p}^2)^{\widetilde{\Hom}(F_v,E)}$, we have
$$W_\lambda \otimes_{\cO} \bbF \cong \bigotimes_{v\in \Sigma_p} F(\overline{\lambda}_v)$$
as mod $p$ representations of $\bG(\cO_{F^+,p})$.

\subsubsection{}
\label{subsubssec:cl:aut:fms}

We now relate the spaces $S_{\bG}(K,W)$ to spaces of classical automorphic forms.

We let $\cA$ denote the space of automorphic forms on $\bG(\bbA_{F^+})$ (see, e.g., \cite[\S\S 1.5 - 1.8]{GSchwermer}).  Since $\bG$ is totally definite, $\cA$ decomposes as a $\bG(\bbA_{F^+})$-representation as
\begin{equation}
\label{eq:aut:dec}
\cA \cong \bigoplus_{\pi} m(\pi)\pi
\end{equation}
where $\pi$ runs through the isomorphism classes of irreducible admissible representations of $\bG(\bbA_{F^+})$ and $m(\pi)$ is the (finite) multiplicity of $\pi$ in $\cA$ (\cite[\S 2.2]{guerberoff}, \cite[\S 6.2.3]{BC}).

Fix an isomorphism $\imath:\overline{E} \stackrel{\sim}{\longrightarrow} \bbC$.  This gives an identification
$$\imath_*: (\bbZ^2_+)^{\widetilde{I}_p} \stackrel{\sim}{\longrightarrow} (\bbZ^2_+)^{\textnormal{Hom}(F^+,\bbR)}$$
defined by $(\imath_*\lambda)_\kappa = \lambda_{\widetilde{\imath^{-1}\circ\kappa}}$ for $\kappa:F^+\longhookrightarrow \bbR$ (here $\widetilde{\imath^{-1}\circ\kappa}$ denotes the unique element of $\widetilde{I}_p$ lying over $\imath^{-1}\circ\kappa\in I_p^+$).

The set $(\bbZ^2_+)^{\textnormal{Hom}(F^+,\bbR)}$ parametrizes irreducible complex representations of $\bG(F^+_\infty)$; given $\mu\in (\bbZ^2_+)^{\textnormal{Hom}(F^+,\bbR)}$, we let $\mathsf{W}_\mu$ denote the associated irreducible complex $\bG(F^+_\infty)$-representation.

For $\lambda\in (\bbZ^2_+)^{\widetilde{I}_p}$, the space $W_\lambda\otimes_{\cO,\imath}\bbC$ is a complex representation of $\bG(F^+_p)$.  We let
$$\theta:W_\lambda \otimes_{\cO,\imath} \bbC \stackrel{\sim}{\longrightarrow} \mathsf{W}_{\imath_*\lambda}$$
denote a $\bG(F^+)$-equivariant isomorphism.

\subsubsection{}
\label{sub:sub:autom:forms}

From now onwards we let $\sigma^\circ = \bigotimes_{v\in \Sigma_p^+}\sigma_v^\circ$ denote a smooth $\bG(\cO_{F^+,p})$-representation on a finite free $\cO$-module such that $\sigma^\circ \otimes_{\cO}E$ is a tame $\bG(\cO_{F^+,p})$-type.  (By abuse of language, we say that $\sigma^\circ$ is a tame $\bG(\cO_{F^+,p})$-type over $\cO$.)

Fix $\lambda \in (\bbZ_+^2)^{\widetilde{I}_p}$.  By letting $\bG(F^+_\infty)$ act trivially on the second tensor factor of $\mathsf{W}_{\imath_*\lambda}^\vee \otimes_{\bbC} (\sigma^\circ\otimes_{\cO,\imath}\bbC)^\vee$ we define an isomorphism 
\begin{equation}\label{autcomp}
S_{\bG}\big(\{1\},(W_\lambda\otimes_{\cO,\imath}\bbC)\otimes_{\bbC}(\sigma^\circ\otimes_{\cO,\imath}\bbC)\big) \stackrel{\sim}{\longrightarrow} \Hom_{\bG(F^+_\infty)}\big(\mathsf{W}_{\imath_*\lambda}^\vee \otimes_{\bbC} (\sigma^\circ\otimes_{\cO,\imath}\bbC)^\vee, \cA\big)
\end{equation}
as follows.
Let $f:\bG(F^+)\backslash \bG(\bbA_{F^+}^\infty) \longrightarrow (W_\lambda\otimes_{\cO,\imath}\bbC)\otimes_{\bbC}(\sigma^\circ\otimes_{\cO,\imath}\bbC)$ be an element of the left hand side.  
We send this element to a homomorphism $\widetilde{f}:\mathsf{W}_{\imath_*\lambda}^\vee \otimes_{\bbC} (\sigma^\circ\otimes_{\cO,\imath}\bbC)^\vee \longrightarrow \cA$ defined by
$$\widetilde{f}(w^\vee)(g) = w^\vee\left((\xi_{\infty}(g_\infty^{-1})\otimes 1) \circ(\theta \otimes 1) \circ (\xi_p(g_p)\otimes 1). f(g^{\infty})\right),$$
where $g\in \bG(\bbA_{F^+})$, $w^\vee\in (\mathsf{W}_{\imath_*\lambda}\otimes_{\bbC} (\sigma^\circ \otimes_{\cO,\imath}\bbC))^\vee \cong \mathsf{W}_{\imath_*\lambda}^\vee \otimes_{\bbC} (\sigma^\circ\otimes_{\cO,\imath}\bbC)^\vee$, $\xi_p$ denotes the action of $\bG(F^+_p)$ on $W_\lambda\otimes_{\cO,\imath}\bbC$, and $\xi_\infty$ denotes the action of $\bG(F^+_\infty)$ on $\mathsf{W}_{\imath_*\lambda}$.  One easily checks that this isomorphism is well defined and $\bG(\bbA_{F^+}^{\infty,p})\times \bG(\cO_{F^+,p})$-equivariant.   
Therefore if $J = \prod_v J_v \subseteq \bG(\bbA_{F^+}^{\infty,p})\times \bG(\cO_{F^+,p})$ is a compact subgroup we have 
\begin{eqnarray}
S_{\bG}\big(J,W_\lambda\otimes_{\cO}\sigma^\circ\big) \otimes_{\cO,\imath} \bbC & \stackrel{\eqref{cmp:ext:sc}}{\cong} & S_{\bG}\big(J,(W_\lambda\otimes_{\cO,\imath}\bbC)\otimes_{\bbC}(\sigma^\circ\otimes_{\cO,\imath}\bbC)\big)\nonumber\\
& \stackrel{\eqref{eq:fixed:aut}}{\cong} & S_{\bG}\big(\{1\},(W_\lambda\otimes_{\cO,\imath}\bbC)\otimes_{\bbC}(\sigma^\circ\otimes_{\cO,\imath}\bbC)\big)^J\nonumber
\\
& \stackrel{\eqref{autcomp}}{\cong} & \Hom_{\bG(F^+_\infty)}\big(\mathsf{W}_{\imath_*\lambda}^\vee \otimes_{\bbC} (\sigma^\circ\otimes_{\cO,\imath}\bbC)^\vee, \cA\big)^J \nonumber
\\
 & \cong & \Hom_{\bG(F^+_\infty)\times J}\big(\mathsf{W}_{\imath_*\lambda}^\vee \otimes_{\bbC} (\sigma^\circ\otimes_{\cO,\imath}\bbC)^\vee, \cA\big)\nonumber\\
& \stackrel{\eqref{eq:aut:dec}}{\cong} & \bigoplus_\pi m(\pi) \Hom_{\bG(F^+_\infty)\times J}\big(\mathsf{W}_{\imath_*\lambda}^\vee \otimes_{\bbC} (\sigma^\circ\otimes_{\cO,\imath}\bbC)^\vee, \pi\big)\nonumber\\
 & \cong & \bigoplus_{\pi_\infty \cong  \mathsf{W}_{\imath_*\lambda}^\vee} m(\pi) \Hom_{J_p}\big((\sigma^\circ\otimes_{\cO,\imath}\bbC)^\vee, \pi_p\big) \otimes_{\bbC} (\pi^{\infty,p})^{J^p}.
 \label{lem:decompose}
\end{eqnarray}
In particular, this implies that $S_{\bG}(\bG(\cO_{F^+,p}), W_\lambda\otimes_{\cO}\sigma^\circ)\otimes_{\cO}\overline{E}$ is a semisimple admissible $\bG(\bbA_{F^+}^{\infty,p})$-representation.

\subsection{Galois representations associated to automorphic representations and Hecke algebras}
\label{subsec:Galois-Automorphic}

\subsubsection{}

We define $(\bbZ_+^2)_0^{I_p}$ to be the subset of $(\bbZ_+^2)^{I_p}$ consisting of all $\lambda = (\lambda_\kappa)_\kappa$ for which
$$\lambda_{\kappa,i} = -\lambda_{\kappa\circ c,3 - i}$$
for $i = 1,2$.  Note that the restriction map induces a bijection 
$$(\bbZ_+^2)_0^{I_p} \stackrel{\sim}{\longrightarrow} (\bbZ_+^2)^{\widetilde{I}_p}.$$

We use the following notation in the theorem below.  Throughout, we fix an isomorphism $\imath: \overline{E} \stackrel{\sim}{\longrightarrow} \bbC$, and recall that $\textnormal{rec}_{\overline{E}}$ denotes the Local Langlands correspondence over $\overline{E}$.  We define $|\det|^{-1/2}$ to be the $\overline{E}$-valued character whose composition with $\imath$ is the square root of $|\det|^{-1}$ which takes positive values.

\begin{theo}\label{auttogal}
Fix $\lambda \in (\bbZ_+^2)^{\widetilde{I}_p}$, and for every $v\in \Sigma_p^+$, let $\tau_v'$ denote a tame inertial type of $I_{F_v}$ which factors as in Definition \ref{inertialtype} and satisfies $(\tau'_v)^{\varphi^{-[F_v^+:\bbQ_p]}} \cong \tau_v'^\vee$.  {
Let $\sigma \defeq \bigotimes_{v\in \Sigma_p^+}\sigma(\tau_v')$ and let $\Xi$ be an irreducible $\bG(\bbA_{F^+}^{\infty,p})$-subrepresentation of $S_{\bG}(\bG(\cO_{F^+,p}), (W_\lambda\otimes_{\cO}E)\otimes_E\sigma^{\vee})\otimes_{E}\overline{E}$.
}  
Then there exists a cuspidal automorphic representation $\pi$ of $\bG(\bbA_{F^+})$ such that $\pi_v\cong \Xi_v\otimes_{\overline{E},\imath}\bbC$ for all finite places $v\notin \Sigma_p^+$, $\pi_\infty \cong \mathsf{W}_{\imath_*\lambda}^\vee$, and $\pi_p|_{\bG(\cO_{F^+,p})}$ contains {$\sigma\otimes_{E,\imath}\bbC$}.  Furthermore, there exists a unique continuous semisimple representation 
$$r_{\imath}(\pi):\Gamma_F \longrightarrow \bG\bL_2(\overline{E})$$
satisfying the following properties: 
\begin{enumerate}
\item We have an isomorphism
$$r_{\imath}(\pi)^c \cong r_{\imath}(\pi)^\vee \otimes \varepsilon^{-1}.$$
\item If $v$ is a finite place of $F^+$ which splits as $v = ww^c$ in $F$, then
$$\textnormal{WD}\big(r_{\imath}(\pi)|_{\Gamma_{F_w}}\big)^{\textnormal{F-ss}} \cong \textnormal{rec}_{\overline{E}}\big((\Xi_v \circ \iota_w^{-1})\otimes |\det|^{-1/2}\big).$$
\item If $v\not\in\Sigma_p^+$ is a finite place of $F^+$ which is inert in $F$, then
$$\textnormal{WD}\big(r_{\imath}(\pi)|_{\Gamma_{F_v}}\big)^{\textnormal{F-ss}} \cong \textnormal{rec}_{\overline{E}}\big(\BC_{F_v/F^+_v}(\Xi_v)\otimes |\det|^{-1/2}\big),$$
where $\BC_{F_v/F^+_v}$ denotes the stable local base change.  
\item Let $v\in \Sigma_p^+$.  Then $r_{\imath}(\pi)$ is potentially crystalline at $v$ \emph{(}viewed as a place of $F$\emph{)}, and we have
$$\textnormal{WD}(r_{\imath}(\pi)|_{\Gamma_{F_v}})|_{I_{F_v}} \cong \tau'_v.$$
If $\kappa\in I_p$ satisfies $v(\kappa) = v$, then
$$\textnormal{HT}_\kappa(r_{\imath}(\pi)|_{\Gamma_{F_v}}) = \{\lambda_{\kappa,1} + 1,~ \lambda_{\kappa,2}\}$$
\emph{(}where we view $\lambda$ as an element of $(\bbZ_+^2)_0^{I_p}$ via the bijection preceding the theorem\emph{)}.  In particular, $r_{\imath}(\pi)|_{\Gamma_{F_v}}$ is Hodge--Tate regular.  
\end{enumerate}
\end{theo}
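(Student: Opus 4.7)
The plan is to first extract a cuspidal automorphic representation $\pi$ from $\Xi$, then construct $r_\imath(\pi)$ by base changing $\pi$ to $\bG\bL_2(\bbA_F)$ and applying known Galois representation constructions for RACSDC automorphic representations, and finally verify the four local-global compatibility properties place-by-place.

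First, I will apply the decomposition \eqref{lem:decompose} with $\sigma^\circ = \sigma^\vee$ and $J = \bG(\cO_{F^+,p})$, which expresses $S_\bG(\bG(\cO_{F^+,p}), (W_\lambda\otimes_\cO E) \otimes_E \sigma^\vee) \otimes_{E,\imath} \bbC$ as a direct sum indexed by automorphic $\pi$ with $\pi_\infty \cong \mathsf{W}_{\imath_*\lambda}^\vee$ and $\Hom_{\bG(\cO_{F^+,p})}(\sigma\otimes\bbC, \pi_p) \neq 0$. Irreducibility of $\Xi$ as a $\bG(\bbA_{F^+}^{\infty,p})$-representation pins down a unique isomorphism class of $\pi$ with $\pi_v \cong \Xi_v\otimes_{\overline{E},\imath}\bbC$ at $v \notin \Sigma_p^+$. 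Cuspidality follows since $\bG$ is totally definite (so the automorphic spectrum is discrete) together with ruling out the residual one-dimensional contributions, which is done via the local data (the containment of $\sigma$ in $\pi_p$ together with the regularity of $\lambda$, or, when $\lambda$ and $\sigma$ are minimal, directly).

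Next, I will produce a discrete automorphic representation $\Pi$ of $\bG\bL_2(\bbA_F)$ via the stable base change of Rogawski (\cite[\S 11]{rogawski}). By construction $\Pi$ satisfies $\Pi^c \cong \Pi^\vee$, and its infinity type is regular algebraic with Hodge-Tate weights given by transporting $\lambda$ via the bijection $(\bbZ_+^2)^{\widetilde I_p} \stackrel{\sim}{\longrightarrow} (\bbZ_+^2)_0^{I_p}$. I will check that $\Pi$ is cuspidal by ruling out the endoscopic image from $\bU_1 \times \bU_1$, using the generic nature of the data $(\lambda, \sigma)$. With $\Pi$ now RACSDC, I invoke the standard construction of associated Galois representations (Shin, Chenevier--Harris, Caraiani) to obtain $r_\imath(\Pi) \colon \Gamma_F \to \bG\bL_2(\overline E)$ with the expected local-global compatibility, and set $r_\imath(\pi) \defeq r_\imath(\Pi)$.

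For the four properties: (1) is immediate from $\Pi^c \cong \Pi^\vee$ and the compatibility of the Galois correspondence with duality. (2) At split $v = ww^c$, the identification $\Pi_w \cong (\Xi_v \circ \iota_w^{-1}) \otimes \bbC$ is part of the definition of split local base change, so this reduces to local-global compatibility for $r_\imath(\Pi)$. (3) At inert $v \notin \Sigma_p^+$, Rogawski's local stable base change gives $\Pi_v \cong \BC_{F_v/F_v^+}(\Xi_v) \otimes \bbC$, so again this reduces to the case of $\Pi$. (4) At an inert $v \in \Sigma_p^+$, I will use that $\pi_p|_{\bG(\cO_{F^+,p})}$ contains $\sigma(\tau_v')$; Theorem \ref{ILLC} then forces $\textnormal{rec}_{\overline E}(\BC(\pi_v))|_{I_{F_v}} \cong \tau_v'$ with monodromy $N = 0$. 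Local-global compatibility at $v$ for $\Pi$ then yields potential crystallinity of $r_\imath(\pi)|_{\Gamma_{F_v}}$ and the matching of inertial types, while the Hodge-Tate weights are computed from $\lambda$ via the standard recipe (after accounting for the $|\det|^{-1/2}$ twist).

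The main obstacle is step (4): the careful tracking of the tame type $\sigma(\tau_v')$ on the automorphic side to the inertial type $\tau_v'$ on the Galois side through the chain consisting of local stable base change, local Langlands for $\bG\bL_2(F_v)$, and local-global compatibility for RACSDC representations. The fact that $\BC(\sigma(\tau_v')) \cong \sigma'(\tau_v')$ (as arranged in Section~\ref{ILLC-sect}) is precisely what makes this work, and Theorem \ref{ILLC} is the key input. A secondary technical point is establishing the cuspidality of $\Pi$, which requires the explicit description of Arthur packets for $\bU_2$ and a genericity check on the local data.
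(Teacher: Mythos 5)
Your overall architecture (decompose the space of algebraic automorphic forms to extract $\pi$, base change to $\bG\bL_2(\bbA_F)$, attach a Galois representation, verify local--global compatibility place by place using Theorem \ref{ILLC} at $p$) matches the paper's proof, which runs through \cite[Thm. 2.3]{guerberoff} and \cite[Thm. 11.5.1]{rogawski}. But there is a genuine gap in your treatment of $\Pi$. You propose to \emph{prove} that $\Pi$ is cuspidal by ``ruling out the endoscopic image from $\bU_1\times\bU_1$, using the generic nature of the data $(\lambda,\sigma)$.'' The theorem carries no genericity hypothesis whatsoever: $\lambda$ is an arbitrary element of $(\bbZ_+^2)^{\widetilde{I}_p}$ and $\tau_v'$ may be any type as in Definition \ref{inertialtype}, including $\widetilde{\omega}_{2f}^{(1-q)a}\oplus\widetilde{\omega}_{2f}^{(1-q)a}$ with $\sigma(\tau_v')=\theta_a\circ\det$ a character. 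Since $\bG$ is totally definite, one-dimensional and endoscopic $\pi$ genuinely occur, and for these the base change $\Pi$ is an isobaric sum of two Hecke characters, not cuspidal. So your appeal to the RACSDC constructions (Shin, Chenevier--Harris, Caraiani) does not apply in all cases. The theorem only asserts that $r_\imath(\pi)$ is \emph{semisimple}, precisely so that the endoscopic case can be handled by taking the direct sum of the Galois characters attached to the constituents of the isobaric sum; this is what the argument of \cite[Thm. 2.3]{guerberoff} (following \cite{CHT}) does, and you need that case analysis rather than a cuspidality claim.

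Two smaller points. First, before applying Rogawski's base change you must pass from the totally definite inner form $\bG$ to the quasi-split form $\bG^*$; the paper does this via the Jacquet--Langlands transfer of \cite[Thm. 1.7.1]{KMSW} (with alternatives via derived or similitude groups), and your proposal silently elides this step. Second, you do not address the uniqueness of $r_\imath(\pi)$, which follows from property (ii) by Chebotarev since the split primes have Dirichlet density $1$; this is short but should be said, as the theorem asserts uniqueness.
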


\begin{proof}
Firstly, we note that the existence of the representation $\pi$ follows from Subsubsection \ref{sub:sub:autom:forms} (specifically, equation \eqref{lem:decompose}).  Additionally, the set of primes of $F$ which are split over a place of $F^+$ has Dirichlet density 1.  Therefore, if we have two semisimple continuous Galois representations satisfying (ii), they must be isomorphic.

Let $\bG^*$ denote the quasi-split unitary group in two variables over $F^+$, defined as in \cite[\S 1.9]{rogawski}.  {There exists a Jacquet--Langlands transfer} from $L$-packets on $\bG(\bbA_{F^+})$ to $L$-packets on $\bG^*(\bbA_{F^+})$, which induces isomorphisms at all finite places of the constituents of the $L$-packets.  {({In order to see this, we may appeal to any of the following methods}: (1) noting that $\bG^{\textnormal{der}} \cong \bS\bL_1(D)$ and $\bG^{*,\textnormal{der}} \cong \bS\bL_2$ (where $D$ denotes the quaternion algebra over $F^+$ which is ramified exactly at the infinite places of $F^+$), we proceed in a similar fashion as \cite[\S 7, p. 781]{labesselanglands}; (2) we may embed $\bG$ and $\bG^*$ into their respective similitude groups, which are isomorphic to $\bG\bL_1(D)\times^{{\bG_m}} \textnormal{Res}_{F/F^+}\bG_m$ and $\bG\bL_2 \times^{{\bG_m}} \textnormal{Res}_{F/F^+}\bG_m$, and apply the results of \cite{labesseschwermer} along with the classical Jacquet--Langlands correspondence between $\bG\bL_1(D)$ and $\bG\bL_2$;
{(3) use \cite[Thm. 1.7.1]{KMSW}, which nevertheless is conditional on forthcoming work of the cited authors.})}

Let $\BC_{F/F^+}$ denote the global stable base change map (cf. \cite[\S 11.5]{rogawski}), and put $\Pi \defeq \BC_{F/F^+}(\textnormal{JL}([\pi]))$, where $[\pi]$ denotes the $L$-packet containing $\pi$.  Then $\Pi$ is an automorphic representation of $\bG\bL_2(\bbA_F)$ which enjoys the following properties:
\begin{itemize}
\item $\Pi$ is conjugate self-dual;
\item $\Pi_\infty$ is  cohomological of weight $\imath_*\lambda$ (viewed as an element of $(\bbZ^2_+)_0^{\Hom(F,\bbC)}$);
\item If $w$ is a place of $F$ which is split over a place $v$ of $F^+$, then
$$\Pi_w \cong \BC_{F_w/F^+_v}(\pi_v) = \pi_v\circ \iota_w^{-1}$$
where $\BC_{F_w/F^+_v}$ denotes the local base change (cf. \cite[\S 2.4]{guerberoff}); 
\item If $v$ is a place of $F$ lying over an inert place $v$ of $F^+$, then
$$\Pi_v \cong \BC_{F_v/F^+_v}(\pi_v),$$
where $\BC_{F_v/F^+_v}$ denotes the local base change (described explicitly in \cite[Prop. 11.4.1]{rogawski}, and in further detail in \cite[Cor. 3.6 and Thm. 4.4]{blasco});
\item If $v\in \Sigma_p$, then we have an injection
$$\sigma'(\tau'_v)\otimes_{E,\imath}\bbC \longhookrightarrow \Pi_v|_{\bG\bL_2(\cO_{F_v})}.$$
Hence, if $\tau_v'$ is a principal series tame type, $\Pi_v$ is a principal series representation.
\end{itemize}

The construction of $r_{\imath}(\pi)$ now follows just as in \cite[Thm. 2.3]{guerberoff}, appealing to \cite[Thm. 11.5.1]{rogawski} instead of \cite[Cor. 5.3]{labesse} in order to control what happens above $p$.  All the properties listed follow from \cite[Thm. 0.1]{guerberoff}, \cite[Thm. 1.1]{caraiani:LGC1}, \cite[Thm. 1.1]{caraiani:LGC2}, and Theorem \ref{ILLC}.  
\end{proof}

\subsubsection{} 
\label{heckealggalois}

Fix a sufficiently small compact open subgroup $K = \prod_v K_v \subseteq \bG(\bbA_{F^+}^\infty)$, {and let $T$ denote a finite set of finite places of $F^+$,} which contains all {inert} places $v$ for which $K_v$ {is not hyperspecial and all split places $v$ for which $K_v \neq \bG(\cO_{F^+_v})$}.  We define the abstract Hecke algebra $\bbT^T$ to be the commutative polynomial $\cO$-algebra generated by formal variables $T^{(i)}_w$ for $i = 1,2,$ and $w$ a place of $F$ split over a place of $F^+$ such that $w|_{F^+}\not\in T$.

Fix $\lambda\in (\bbZ^2_+)^{\widetilde{I}_p}$ and let $\tau' \defeq \{\tau_v'\}_{v\in \Sigma_p}$ {and $\sigma^{\vee,\circ}$ {denote a $\bG(\cO_{F^+,p})$-stable $\cO$-lattice in the dual of $\sigma = \bigotimes_{v\in \Sigma_p^+} \sigma(\tau_v')$}}.  Given $K$ as above, {with $K_v \subseteq \bG(\cO_{F^+_v})$ for all $v \in \Sigma_p^+$}, the Hecke operator $T_w^{(i)}$ acts on the space $S_{\bG}(K, W_\lambda\otimes_{\cO}\sigma^{{\vee,\circ}})$ via the characteristic function of double coset
$$K_v \iota_w^{-1}\begin{pmatrix} \varpi_{w} 1_i &  \cr  & 1_{2-i} \end{pmatrix}K_v \cdot K^v$$
(here $\varpi_w$ denotes a choice of an uniformizer of $F_w$, and $v = w|_{F^+}$).  The image of $\bbT^T$ in $\End_{\cO}(S_\bG(K,W_\lambda\otimes_{\cO}\sigma^{{\vee,\circ}}))$ will be denoted $\bbT^T_{\lambda,\tau'}(K)$.  The algebra $\bbT^T_{\lambda,\tau'}(K)$ is reduced, finite free over $\cO$, and thus a semi-local ring.  Furthermore, note that we have $T^{(i)}_{w^c}=T^{(2-i)}_w(T^{(2)}_w)^{-1}$ in $\bbT^T_{\lambda,\tau'}(K)$.

Recall that we have
\begin{equation}
\label{aut:dec:spc}
S_{\bG}(K, W_\lambda\otimes_{\cO}\sigma^{{\vee,\circ}})\otimes_{\cO}\overline{E} \cong \bigoplus_\Xi M_\Xi \otimes \Xi^{K^p},
\end{equation}
where the direct sum runs over all irreducible constituents $\Xi$ of $S_{\bG}(\bG(\cO_{F^+,p}), W_\lambda\otimes_{\cO}\sigma^{{\vee,\circ}})\otimes_{\cO}\overline{E}$ for which $\Xi^{K^p}\neq 0$, and where $M_\Xi$ is a multiplicity space.  The Hecke algebra $\bbT^T_{\lambda,\tau'}(K)$ acts on each $\Xi^{K^p}$ by scalars, and we obtain a homomorphism
$$\lambda_\Xi:\bbT^T_{\lambda,\tau'}(K) \longrightarrow \overline{E}.$$
The ideal $\ker(\lambda_\Xi)$ is a minimal prime ideal, and every minimal prime of $\bbT^T_{\lambda,\tau'}(K)$ arises in this way.

Fix a maximal ideal $\fm \subseteq \bbT^T_{\lambda,\tau'}(K)$.  Then we have
$$S_{\bG}(K,W_\lambda\otimes_{\cO}\sigma^{{\vee,\circ}})_{\fm} \otimes_{\cO}\overline{E} \neq 0,$$
and this localization annihilates all the direct summands of \eqref{aut:dec:spc} for which $\ker(\lambda_\Xi) \not\subseteq \fm$.  Let $\fp\subseteq \fm$ denote a minimal prime ideal, corresponding to an irreducible constituent $\Xi$ of \eqref{aut:dec:spc}.  We choose an invariant lattice in $r_{\imath}(\pi)$ (for $\pi$ associated to $\Xi$ as in Theorem \ref{auttogal}), reduce modulo $p$, and semisimplify to obtain a representation $\rbar_\fm$; by the density argument of Theorem \ref{auttogal} this is independent of the choice of $\fp$ and $\Xi$.

\begin{theo}
\label{galrepheckealg}
Fix $\lambda\in (\bbZ^2_+)^{\widetilde{I}_p}$ and let $\tau' = \{\tau_v'\}_{v\in \Sigma_p}$ be as in Theorem \ref{auttogal}.  Suppose that $\fm$ is a maximal ideal of $\bbT^T_{\lambda,\tau'}(K)$ such that that the residue field $\bbT^T_{\lambda,\tau'}(K)/\fm$ is equal to $\bbF$.  Suppose further that $\rbar_\fm$ is absolutely irreducible.  Then $\rbar_\fm$ has an extension to a continuous homomorphism
$$\rbar_\fm:\Gamma_{F^+} \longrightarrow \cG_2(\bbF).$$
Choose such an extension.  There exists a continuous lifting
$$r_\fm:\Gamma_{F^+} \longrightarrow \cG_2\big(\bbT^T_{\lambda,\tau'}(K)_{\fm}\big)$$
satisfying the following properties.  Note that properties \emph{(i)} and \emph{(iii)} characterize ${\BC'(r_\fm)}$ up to isomorphism.
\begin{enumerate}
\item The representation $r_\fm$ is unramified at all but finitely many places.
\item We have $\nu \circ r_\fm = \varepsilon^{-1}$.
\item If $v\not\in T$ is a finite place of $F^+$ which splits as $v = ww^c$ in $F$, then $r_\fm$ is unramified at $w$ and $\BC'(r_{\fm})(\textnormal{Frob}_w)$ has characteristic polynomial
$$X^2 - T^{(1)}_wX + \bN(w)T^{(2)}_w.$$
\item If $v\not\in \Sigma_p^+$ is an inert place, then $r_\fm$ is unramified at $v$.
\item Given $v\in \Sigma_p^+$ and a homomorphism $x:\bbT^T_{\lambda,\tau'}(K)_{\fm} \longrightarrow \overline{E}$, the representation $x\circ r_{\fm}|_{\Gamma_{F^+_v}}$ is potentially crystalline, and we have
$$\textnormal{WD}\big(x\circ r_{\fm}|_{\Gamma_{F^+_v}}\big)|_{I_{F_v^+}} \cong \tau'_v \oplus \mathbf{1}_{I_{F_v^+}}.$$
If $\kappa\in I_p^+$ satisfies $v(\kappa) = v$, then 
$$\textnormal{HT}_\kappa\big(\BC'(x\circ r_{\fm})|_{\Gamma_{F_v}}\big) = \{\lambda_{\kappa,1} + 1,~ \lambda_{\kappa,2}\}.$$
\end{enumerate}

\end{theo}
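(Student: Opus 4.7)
The plan is to first produce a Galois representation for each automorphic piece contributing to $S_\bG(K, W_\lambda\otimes_{\cO}\sigma^{\circ,\vee})_\fm$, extend these to $\cG_2$-valued representations of $\Gamma_{F^+}$ via the polarization $r^c \cong r^\vee\otimes\varepsilon^{-1}$, and then glue them using the absolute irreducibility of $\rbar_\fm$ together with an argument via pseudo-characters/determinants as in \cite{CHT}.

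First I would run through the minimal primes $\fp \subseteq \fm$ of $\bbT^T_{\lambda,\tau'}(K)_\fm$, attaching to each a cuspidal automorphic representation $\pi_\fp$ and a Galois representation $r_\imath(\pi_\fp):\Gamma_F \longrightarrow \bG\bL_2(\overline{E})$ via Theorem \ref{auttogal}. By the self-duality in part (i) of that theorem, and the equivalence of Subsubsection \ref{poln-equiv} applied with multiplier $\varepsilon^{-1}$, each such $r_\imath(\pi_\fp)$ extends uniquely (up to conjugation by an element of $\widehat{\bG\bL_2}\times\widehat{\bG_m}$) to a continuous representation $\rho_\fp: \Gamma_{F^+} \longrightarrow \cG_2(\overline{E})$ satisfying $\BC'(\rho_\fp) \cong r_\imath(\pi_\fp)$ and $\nu\circ\rho_\fp = \varepsilon^{-1}$. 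Because $\rbar_\fm$ is absolutely irreducible, the fixed extension to $\cG_2(\bbF)$ is determined uniquely up to conjugation, and I can arrange $\overline{\rho_\fp} \cong \rbar_\fm$ simultaneously for all $\fp$.

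Next, I would glue the family $\{\rho_\fp\}_\fp$ to a representation valued in the Hecke algebra. The traces of the Frobenius images at split places are given by the Hecke operators $T_w^{(i)}$ (up to the normalization $\bN(w)^{i(i-1)/2}$ dictated by the half--twist $|\det|^{-1/2}$ in Theorem \ref{auttogal}(ii)) and thus lie in $\bbT^T_{\lambda,\tau'}(K)_\fm$. Combining this with the density of split places (Chebotarev) and the absolute irreducibility of $\rbar_\fm$, one applies the theory of $\cG_2$-valued pseudo-representations from \cite[\S 2.2]{CHT} (or equivalently Chenevier's determinants) to descend $\prod_\fp\rho_\fp$ to a continuous lift $r_\fm:\Gamma_{F^+} \longrightarrow \cG_2(\bbT^T_{\lambda,\tau'}(K)_\fm)$ of $\rbar_\fm$. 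Properties (i)--(iv) are then immediate consequences of the corresponding parts of Theorem \ref{auttogal}: (i) from Zariski density of compactly supported Hecke operators, (ii) from the specification $\nu\circ\rho_\fp = \varepsilon^{-1}$, (iii) from local--global compatibility at split places together with the Satake normalization, and (iv) from the fact that the hypotheses on $K$ and $\sigma$ force $\pi_v$ to be unramified at inert places $v\notin\Sigma_p^+$, so that $\BC_{F_v/F^+_v}(\pi_v)$ is an unramified principal series.

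The main obstacle is property (v) at the inert places above $p$, which is the key novelty since the standard references all assume $p$-adic places of $F^+$ are split in $F$. By Theorem \ref{auttogal}(iv), for any $x:\bbT^T_{\lambda,\tau'}(K)_\fm \longrightarrow \overline{E}$ factoring through some $\fp$, the representation $\BC'(x\circ r_\fm)|_{\Gamma_{F_v}} \cong r_\imath(\pi_\fp)|_{\Gamma_{F_v}}$ is potentially crystalline with inertial type $\tau'_v$ and with the specified Hodge--Tate weights at each $\kappa\in\widetilde{I}_p$. To deduce the corresponding statement for the $\cG_2$-valued $x\circ r_\fm|_{\Gamma_{F^+_v}}$, I would use the relation $\BC'(\rho) \cong \BC(\rho)\otimes\theta^{-1}$ from \eqref{twoBC} together with the fact that the multiplier $\nu\circ r_\fm = \varepsilon^{-1}$ has trivial restriction to $I_{F^+_v}$, so that the two notions of base change agree on inertia; the hypothesis $(\tau'_v)^{\varphi^{-[F^+_v:\bbQ_p]}}\cong\tau'^\vee_v$ in Definition \ref{inertialtype} is exactly what guarantees the existence of a $\cG_2$-valued extension with inertial Weil--Deligne type $\tau'_v \oplus \mathbf{1}_{I_{F^+_v}}$, and matches the fact that $\sigma(\tau'_v)$ base-changes to $\sigma'(\tau'_v)\subseteq \BC_{F_v/F^+_v}(\pi_v)|_{\bG\bL_2(\cO_{F_v})}$ by Definition \ref{inertialtype} and Theorem \ref{ILLC}. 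The Hodge--Tate weight computation transfers via the restriction map $\widetilde{I}_p \stackrel{\sim}{\longrightarrow} I_p^+$ in diagram \eqref{diag:emb}.
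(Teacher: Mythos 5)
Your approach is essentially the paper's: the proof given there consists of the single line ``this follows exactly as in \cite[Prop.\ 3.4.4]{CHT}, using Theorem \ref{auttogal}'', and your proposal is precisely an unpacking of that argument (Galois representations for each minimal prime via Theorem \ref{auttogal}, extension to $\cG_2$ via the polarization, gluing via Chebotarev and the absolute irreducibility of $\rbar_\fm$, and local--global compatibility for the individual properties).

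There is one point you assert that is not automatic and that the paper treats separately: in property (ii) you claim that each $r_\imath(\pi_\fp)$ extends to $\rho_\fp:\Gamma_{F^+}\longrightarrow\cG_2(\overline{E})$ with multiplier \emph{exactly} $\nu\circ\rho_\fp=\varepsilon^{-1}$. The dictionary of Subsubsection \ref{poln-equiv} (i.e.\ \cite[Lem.\ 2.1.1]{CHT}) only produces an extension with multiplier $\varepsilon^{-1}\delta_{F/F^+}^{\mu}$ for some $\mu\in\{0,1\}$ determined by the sign (symmetric vs.\ alternating) of the pairing underlying the isomorphism $r_\imath(\pi_\fp)^c\cong r_\imath(\pi_\fp)^\vee\otimes\varepsilon^{-1}$; this is exactly the form of the conclusion in \cite[Prop.\ 3.4.4]{CHT}. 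Pinning down $\mu=0$ is the content of the main result of \cite{BC11}, which the paper cites for precisely this purpose. So your step ``apply the equivalence of \ref{poln-equiv} with multiplier $\varepsilon^{-1}$'' hides a genuine input that should be made explicit; the rest of the argument is sound and matches the paper.
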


\begin{proof}
This follows exactly as in \cite[Prop. 3.4.4]{CHT}, using Theorem \ref{auttogal}.  The fact that $\nu \circ r_\fm = \varepsilon^{-1}$ in (ii) (instead of $\varepsilon^{-1}\delta_{F/F^+}^{\mu_\fm}$) follows from the main result of \cite{BC11}.  
\end{proof}

\subsubsection{}

We recall one more well known lemma on the space of algebraic automorphic forms.

\begin{lem}
\label{lem:non-zero}
Let $K = \prod_v K_v \subseteq \bG(\bbA_{F^+}^\infty)$ be a sufficiently small compact open subgroup as above, and let $W$ be a finite, $p$-torsion free $\cO$-module endowed with an action of $\bG(\cO_{F^+,p})$.  Fix a maximal ideal $\fm$ of $\bbT^T$.  Then
$$S_\bG(K,W\otimes_{\cO}\bbF)_{\fm} \neq 0 \Longleftrightarrow S_\bG(K,W\otimes_{\cO}E)_{\fm} \neq 0.$$
\end{lem}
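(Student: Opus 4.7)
The plan is to establish both implications simultaneously by showing that each side is equivalent to the non-vanishing of $S_\bG(K,W)_\fm$ as a finite $\cO$-module. The key input is that $K$ being sufficiently small ensures $S_\bG(K,-)$ is an exact functor on $\cO$-modules equipped with a compatible $\bG(\cO_{F^+,p})$-action.

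First I would recall the decomposition $S_\bG(K,W) \cong \bigoplus_i W^{\Gamma_i}$, where $\Gamma_i \defeq K\cap t_i^{-1}\bG(F^+)t_i$ runs over a finite set of finite subgroups of $\bG(\cO_{F^+,p})$. By hypothesis, no $\Gamma_i$ contains an element of order $p$, so by Cauchy's theorem each $|\Gamma_i|$ is prime to $p$ and hence invertible in $\cO$. Averaging by $|\Gamma_i|^{-1}\sum_{g\in\Gamma_i}g$ gives a functorial projector onto the $\Gamma_i$-invariants, so the functor $W \longmapsto W^{\Gamma_i}$ is exact on $\cO$-modules with $\bG(\cO_{F^+,p})$-action. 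Consequently $S_\bG(K,-)$ is itself exact.

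Next, I would apply exactness to the short exact sequence $0 \to W \xrightarrow{\varpi} W \to W\otimes_\cO\bbF \to 0$, obtaining
\begin{equation*}
0 \longrightarrow S_\bG(K,W) \xrightarrow{\varpi} S_\bG(K,W) \longrightarrow S_\bG(K,W\otimes_\cO\bbF) \longrightarrow 0.
\end{equation*}
This shows simultaneously that $S_\bG(K,W)$ is $\varpi$-torsion free and that $S_\bG(K,W)/\varpi \cong S_\bG(K,W\otimes_\cO\bbF)$. Similarly, applying the exact functor $-\otimes_\cO E$ (and using the finite generation of $S_\bG(K,W)$ as an $\cO$-module, which follows from the finiteness of the double coset decomposition and the finiteness of $W$) yields $S_\bG(K,W)\otimes_\cO E \cong S_\bG(K,W\otimes_\cO E)$.

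Finally, I would pass to the localization at $\fm$. The Hecke algebra $\bbT^T$ acts on $S_\bG(K,W)$ through a finite, and thus semi-local, $\cO$-subalgebra of $\End_\cO(S_\bG(K,W))$; hence the $\fm$-localization $S_\bG(K,W)_\fm$ is a direct summand of $S_\bG(K,W)$, in particular a finitely generated $\varpi$-torsion free $\cO$-module, and localization commutes with the tensor products above. We conclude via the chain of equivalences, valid for any finite $\varpi$-torsion free $\cO$-module $M$:
\begin{equation*}
M \otimes_\cO\bbF \neq 0 \iff M \neq 0 \iff M\otimes_\cO E \neq 0,
\end{equation*}
where the first equivalence is Nakayama's lemma and the second uses that $M$ is $\varpi$-torsion free (hence free) and nonzero. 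Applying this to $M = S_\bG(K,W)_\fm$ gives the statement. There is no real obstacle here: the only subtlety is ensuring the exactness of $S_\bG(K,-)$, which is why the ``sufficiently small'' hypothesis is imposed on $K$.
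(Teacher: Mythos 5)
Your proof is correct and follows essentially the same route as the paper, which simply cites the compatibility $S_\bG(K,W)\otimes_\cO A \cong S_\bG(K,W\otimes_\cO A)$ for sufficiently small $K$ (equation \eqref{cmp:ext:sc}), the resulting $p$-torsion freeness of $S_\bG(K,W)_\fm$, and then the same Nakayama/torsion-freeness chain of equivalences. You have merely unwound the proof of \eqref{cmp:ext:sc} via the prime-to-$p$ averaging projector, which is exactly the standard argument the paper refers to (\cite[Lem.\ 3.4.1]{CHT}).
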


\begin{proof}
This is standard (see, for example, the proof of \cite[Lem. 3.4.1]{CHT}).  More precisely, the fact that $K$ is sufficiently small gives the isomorphism \eqref{cmp:ext:sc}, and implies $S_\bG(K,W)_{\fm}$ is $p$-torsion free.  We therefore obtain
$$S_\bG(K,W\otimes_{\cO}\bbF)_{\fm} \cong S_\bG(K,W)_{\fm}\otimes_{\cO}\bbF \neq 0 \Longleftrightarrow S_\bG(K,W\otimes_{\cO}E)_{\fm} \cong S_\bG(K,W)_{\fm}\otimes_{\cO}E \neq 0.$$
\end{proof}

\subsection{Weight elimination}
\label{sub:WE}

\subsubsection{}

\begin{df}
A \emph{Serre weight for $\bG$} is an isomorphism class of smooth, absolutely irreducible representations of $\prod_{v\in \Sigma_p^+} \bG(\bbF_v^+)$ over $\bbF$, inflated to $\bG(\cO_{F^+,p})$.  If $v\in \Sigma_p^+$, a \emph{Serre weight at $v$} is an isomorphism class of smooth, absolutely irreducible representations of $\bG(\bbF_v^+)$ over $\bbF$, inflated to $\bG(\cO_{F^+_v})$.  
\end{df}

Note that any Serre weight $V$ for $\bG$ is of the form $V\cong \bigotimes_{v\in\Sigma_p^+} V_v$ where $V_v$ are Serre weights at $v$.

\begin{df}
\label{maxidealgalois}
Let $\rbar:\Gamma_{F^+} \longrightarrow \cG_2(\bbF)$ be a continuous representation such that 
\begin{itemize}
\item $\nu\circ\rbar = \overline{\varepsilon}^{-1}$;
\item $\rbar^{-1}(\bG\bL_2(\bbF)\times\bG_m(\bbF)) = \Gamma_F$;
\item $\BC'(\rbar):\Gamma_F \longrightarrow \bG\bL_2(\bbF)$ is absolutely irreducible.
\end{itemize}
Let $K = \prod_v K_v\subseteq \bG(\bbA_{F^+}^\infty)$ be a compact open subgroup, $T$ a finite set of places as in Subsubsection \ref{heckealggalois}, and suppose $\rbar$ is unramified at all finite places $v$ of $F^+$ which split in $F$ and for which $v\not\in T$.  We define a maximal ideal $\fm_{\rbar}$ of $\bbT^T$ by
$$\fm_{\rbar} \defeq \left\langle \varpi,~ T^{(1)}_w - \textnormal{tr}\big(\BC'(\rbar)(\textnormal{Frob}_w)\big),~T^{(2)}_w - \bN(w)^{-1}\det\big(\BC'(\rbar)(\textnormal{Frob}_w)\big)\right\rangle_{w|_{F^+}\not\in T}$$
where $w|_{F^+} = v\not\in T$ splits as $v = ww^c$ in $F$.  
\end{df}

\begin{df}
\label{definition modularity}
Let $\rbar$ be as in Definition \ref{maxidealgalois}, and let $V$ be a Serre weight for $\bG$.  
{
Let $K = \prod_{v} K_v \subseteq \bG(\bbA^{\infty}_{F^+})$ be a sufficiently small compact open subgroup with $K_v$ hyperspecial for $v$ inert in $F$ and $K_v = \bG(\cO_{F^+_v})$ for $v\in \Sigma_p^+$, and let $T$ be a finite subset as in Subsubsection \ref{heckealggalois} such that $\rbar$ is unramified at each split place not in $T$.
We say that $\rbar$ is \emph{modular of weight $V$ and level $K$} (or that $V$ is a \emph{Serre weight of $\rbar$ at level $K$}) if 
$$S_{\bG}(K,V^\vee)_{\mathfrak{m}_{\rbar}} \neq 0.$$
We say that $\rbar$ is \emph{modular of weight $V$} (or that $V$ is a \emph{Serre weight of $\rbar$}) if $\rbar$ is modular of weight $V$ and level $K$, for some sufficiently small compact open subgroup $K \subseteq \bG(\bbA^{\infty}_{F^+})$ as above.
}
We denote by $\textnormal{W}_{\textnormal{mod}}(\rbar)$ for the set of all Serre weights of $\rbar$.  We say that $\rbar$ is \emph{modular} if $\textnormal{W}_{\textnormal{mod}}(\rbar) \neq \emptyset$.  
\end{df}

\subsubsection{} 

Fix $\rbar$ as in Definition \ref{maxidealgalois}, and for $v\in \Sigma^+_p$ define $\rhobar_v \defeq \rbar|_{\Gamma_{F^+_v}}$.  By Proposition \ref{prop:weights:type}, we have a set of conjectural Serre weights $\textnormal{W}^?(\rhobar_v)$ at $v$ for every $v\in \Sigma^+_p$.  (Here we use the isomorphism ${}^C\bU_2 \cong \cG_2$ of Subsection \ref{isomsect}.  Moreover, the condition $\nu \circ \rbar = \overline{\varepsilon}^{-1}$ implies that the $\tld{\bU}_2(\bbF_v^+)$-representations appearing in $\textnormal{W}^?(\rhobar_v)$ descend to $\bU_2(\bbF_v^+) \cong \bG(\bbF_v^+)${, see for instance Proposition \ref{prop:compatibilityBC}}.)  Thus, we can attach to $\rbar$ a set $\textnormal{W}^?(\rbar)$ of predicted Serre weights for $\bG$ defined as
$$\textnormal{W}^?(\rbar) \defeq \left\{\bigotimes_{v\in\Sigma_p^+}V_v : V_v\in \textnormal{W}^?(\rhobar_v)~ \textnormal{for all}~ v\in\Sigma_p^+\right\}.$$
In a similar fashion we define the set $\textnormal{W}^?(\BC(\rbar))$ of conjectural weights attached to $\BC(\rbar)$.  (Note that, under the isomorphism in Subsection \ref{isomsect}, we have $\BC(\rbar) \cong \BC'(\rbar)\otimes \overline{\varepsilon}$.)

If $\sigma = \bigotimes_{v\in \Sigma_p^+} \sigma_v$ is a tame $\bG(\cO_{F^+,p})$-type, we define the base change of $\sigma$ as 
$$\BC(\sigma) \defeq \bigotimes_{v\in\Sigma_p^+}\BC_v(\sigma_v),$$
where $\BC_v$ denotes the local base change of types.

\begin{theo}
\label{thm:WE}
Let $\rbar:\Gamma_{F^+} \longrightarrow \cG_2(\bbF)$ be a continuous representation such that 
\begin{itemize}
\item $\nu\circ\rbar = \overline{\varepsilon}^{-1}$;
\item $\rbar^{-1}(\bG\bL_2(\bbF)\times\bG_m(\bbF)) = \Gamma_F$;
\item $\BC'(\rbar):\Gamma_F \longrightarrow \bG\bL_2(\bbF)$ is absolutely irreducible;
\item $\rbar|_{\Gamma_{F^+_v}}$ is tamely ramified and $3$-generic for every $v\in \Sigma_p^+$.
\end{itemize}
Then
$$\textnormal{W}_{\textnormal{mod}}(\rbar) \subseteq \textnormal{W}^?(\rbar).$$
\end{theo}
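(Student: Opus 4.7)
The plan is to argue by contradiction. Suppose $V = \bigotimes_{v\in\Sigma_p^+} V_v$ is a modular Serre weight of $\rbar$, fix $v_0\in\Sigma_p^+$, and assume $V_{v_0}\notin \textnormal{W}^?(\rhobar_{v_0})$. By Proposition \ref{prop:weights:type} the set $\textnormal{W}^?(\rhobar_{v_0})$ is the set of Jordan--H\"older factors of the mod-$p$ reduction of a $3$-generic Deligne--Lusztig representation. Combining the $3$-genericity of $\rhobar_{v_0}$ with the compatibility of central characters forced by $V\in\textnormal{W}_{\textnormal{mod}}(\rbar)$, I would first deduce that $V_{v_0}$ is deep enough to apply Lemma \ref{emptyint}, obtaining a Deligne--Lusztig representation $\sigma_{v_0}$ of $\bU_2(\cO_{F^+_{v_0}})$ with $V_{v_0}\in\JH(\overline{\sigma_{v_0}})$ and $\JH(\overline{\sigma_{v_0}})\cap\textnormal{W}^?(\rhobar_{v_0})=\emptyset$.

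For each remaining $v\in\Sigma_p^+\smallsetminus\{v_0\}$, I would pick some tame type $\sigma_v$ with $V_v\in\JH(\overline{\sigma_v})$. By Theorem \ref{ILLC} and Definition \ref{inertialtype}, each $\sigma_v$ is of the form $\sigma(\tau'_v)$ for a conjugate self-dual tame inertial type $\tau'_v:I_{F_v}\longrightarrow\bG\bL_2(\cO)$. Setting $\sigma\defeq\bigotimes_v\sigma_v$, I would choose a $\bG(\cO_{F^+,p})$-stable $\cO$-lattice $\sigma^\circ\subseteq\sigma$ whose mod-$\varpi$ reduction admits a surjection onto $V$. Exactness of the functor $S_{\bG}(K,-)$ for $K$ sufficiently small (obtained by shrinking the tame part $K^p$ if needed) then propagates modularity: the inclusion $V^\vee\hookrightarrow\overline{\sigma^{\circ,\vee}}$ gives $S_{\bG}(K,\sigma^{\circ,\vee})_{\fm_{\rbar}}\otimes_{\cO}\bbF\neq 0$ from the hypothesis $S_{\bG}(K,V^\vee)_{\fm_{\rbar}}\neq 0$, and Lemma \ref{lem:non-zero} upgrades this to $S_{\bG}(K,\sigma^{\circ,\vee}\otimes_{\cO}E)_{\fm_{\rbar}}\neq 0$. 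The decomposition \eqref{lem:decompose} then produces a cuspidal automorphic representation $\pi$ of $\bG(\bbA_{F^+})$, and Theorems \ref{auttogal} and \ref{galrepheckealg} attach a continuous Galois representation whose restriction to $\Gamma_{F^+_{v_0}}$ (identified with a ${}^C\bU_2$-valued parameter via Subsection \ref{isomsect}) is a potentially crystalline lift of $\rhobar_{v_0}$ of parallel Hodge type $(1,0,1)$, inertial type $\tau'_{v_0}$, and cyclotomic multiplier.

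This construction shows that $R^{\tau'_{v_0}}_{\rhobar_{v_0}}$ admits a non-zero $\overline{E}$-point; on the other hand, since $\sigma_{v_0}=\sigma(\tau'_{v_0})$ and $\JH(\overline{\sigma_{v_0}})\cap\textnormal{W}^?(\rhobar_{v_0})=\emptyset$, Corollary \ref{cor:irr:cmpts} gives
$$e\bigl(R^{\tau'_{v_0}}_{\rhobar_{v_0}}\otimes_{\cO}\bbF\bigr) = \bigl|\textnormal{W}^?(\rhobar_{v_0})\cap\JH(\overline{\sigma_{v_0}})\bigr| = 0,$$
forcing $R^{\tau'_{v_0}}_{\rhobar_{v_0}}=0$ by Nakayama, a contradiction. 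Running this argument for each $v_0\in\Sigma_p^+$ yields the desired inclusion $\textnormal{W}_{\textnormal{mod}}(\rbar)\subseteq\textnormal{W}^?(\rbar)$.

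The main obstacle will be the middle step: passing from modularity of the mod-$p$ weight $V$ to modularity of the chosen tame type $\sigma$ at the same Hecke maximal ideal. This will rely on the careful choice of lattice $\sigma^\circ$ (so that $V$ appears as a quotient of $\overline{\sigma^\circ}$), the exactness of $S_{\bG}(K,-)$ for sufficiently small $K$, and the non-vanishing argument of Lemma \ref{lem:non-zero}. A secondary technical point will be verifying that the Deligne--Lusztig representation $\sigma_{v_0}$ produced by Lemma \ref{emptyint} is sufficiently generic (at least $3$-generic) for Corollary \ref{cor:irr:cmpts} to apply; this should follow from a refined analysis of the construction in the proof of Lemma \ref{emptyint}, tracking the effect of the translation $\nu$ on the depth of the underlying character via the $3$-genericity of $\rhobar_{v_0}$.
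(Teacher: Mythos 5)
Your global steps (exactness of $S_{\bG}(K,-)$ for sufficiently small $K$, Lemma \ref{lem:non-zero}, and the production of $\pi$ and its associated Galois representation) match the paper's, and the reduction to a single place $v_0$ together with Lemma \ref{emptyint} is exactly how the paper chooses the auxiliary type. Where you genuinely diverge is the final local step: you feed the resulting $\cO$-point of $R^{\tau'_{v_0}}_{\rhobar_{v_0}}$ into Corollary \ref{cor:irr:cmpts} and read off $|\textnormal{W}^?(\rhobar_{v_0})\cap\JH(\overline{\sigma_{v_0}})| = e(R^{\tau'_{v_0}}_{\rhobar_{v_0}}\otimes_{\cO}\bbF)\geq 1$. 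The paper never invokes the deformation-theoretic machinery of Section \ref{sec:Loc:Def} here: it observes that $\BC(\rhobar_{v_0})$ acquires a potentially Barsotti--Tate lift of type $\tau'_{v_0}$ (after twisting $r_{\imath}(\pi)$ by $\varepsilon$, a normalization step you elide but which is needed to match multipliers and Hodge--Tate weights), applies \cite[Prop.~3.12]{gee-hmf} on the $\bG\bL_2$ side to get $\JH\big(\overline{\BC(\sigma(\tau'_{v_0}))}\big)\cap\textnormal{W}^?\big(\BC(\rhobar_{v_0})\big)\neq\emptyset$, and descends via Propositions \ref{prop:double:weights}, \ref{prop:weights:type} and \ref{prop:compatibilityBC}. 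Your route stays on the unitary side and is arguably more direct, but it makes weight elimination depend on the full Kisin-module analysis and on the existence of the polarized Kisin module --- which you do correctly extract from the $\cO$-point via the observation in Subsection \ref{subsec:Def:thy} --- whereas the paper's Theorem \ref{thm:WE} is logically independent of Section \ref{sec:Loc:Def}.

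The genuine issue is the genericity bookkeeping you flag at the end. Corollary \ref{cor:irr:cmpts} requires the auxiliary type $\tau'_{v_0}$ to be $3$-generic, but the proof of Lemma \ref{emptyint} only delivers a $2$-generic Deligne--Lusztig representation: the constructed $\sigma' = R_w(\mu+\nu)$ satisfies $2 < \langle\mu+\nu,\alpha_i^\vee\rangle < p-2$, and improving this to $3 < \langle\mu+\nu,\alpha_i^\vee\rangle < p-3$ would require the modular weight $V_{v_0}$ (hence $\mu+\omega$) to be $4$-deep, which is not available from the stated $3$-genericity of $\rhobar_{v_0}$. As written, your argument therefore needs either a strengthened hypothesis (e.g.\ $4$-genericity at $p$, as in the weight-existence theorem) or a sharpened form of Lemma \ref{emptyint}; the ``refined analysis'' you defer is not automatic. (The paper's own last step applies Proposition \ref{prop:double:weights} to $\sigma(\tau'_{v_0})$ and so faces similar bookkeeping, but its key external input \cite[Prop.~3.12]{gee-hmf} imposes no genericity on the type.) Separately, no careful choice of lattice is needed to pass from $V$ to $\sigma$: the Jordan--H\"older factors of $\overline{\sigma^\circ}$ are independent of the lattice, and exactness of $S_{\bG}(K,-)$ handles arbitrary subquotients.
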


\begin{proof}
{Assume $\textnormal{W}_{\textnormal{mod}}(\rbar)\neq \emptyset$, otherwise there is nothing to prove.}  Let $V\in \textnormal{W}_{\textnormal{mod}}(\rbar)$, and assume by contradiction that $V\not\in \textnormal{W}^?(\rbar)$.  By Proposition \ref{prop:weights:type} and Lemma \ref{emptyint}, there exists a tame $\bU_2(\cO_{F^+,p})$-type $\sigma = \bigotimes_{v\in \Sigma_p^+}\sigma_v$ such that
\begin{enumerate}
\item $V\in \JH(\overline{\sigma})$; \label{item:we:1}
\item $\JH(\overline{\sigma}) \cap \textnormal{W}^?(\rbar) = \emptyset$. \label{item:we:2}
\end{enumerate}
(Note that if $\rhobar_v$ is $n$-generic, so is $\beta( V_{\phi}(\rhobar_v))$.)
We define $\tau_v'$ to be the tame principal series type such that $\sigma_v \cong \sigma(\tau_v')$ (so that, in particular, $(\tau_v')^{\varphi^{-[\bbF_v^+:\bbF_p]}} \cong \tau_v'^\vee$).

By definition of modularity, there exists a sufficiently small compact open subgroup $K = \prod_v K_v$ such that {$K_v$ is hyperspecial for $v$ inert in $F$ and $K_v = \bG(\cO_{F^+_v})$ for $v\in \Sigma_p^+$}, and a finite set of places $T$ such that 
$$S_{\bG}(K,V^\vee)_{\fm_{\rbar}} \neq 0.$$
Since $K$ is sufficiently small, the functor of algebraic automorphic forms is exact, so item \ref{item:we:1} implies
{$$S_{\bG}(K,\overline{\sigma^{{\vee,\circ}}})_{\fm_{\rbar}} \neq 0,$$}
and Lemma \ref{lem:non-zero} gives
$$S_{\bG}(K, \sigma^{{\vee,\circ}}\otimes_{\cO}E)_{\fm_{\rbar}} \neq 0.$$

By the discussion preceding Theorem \ref{galrepheckealg} and upon choosing an isomorphism $\imath: \overline{E} \stackrel{\sim}{\longrightarrow} \bbC$, there exists a cuspidal automorphic representation $\pi$ of $\bG(\bbA_{F^+})$ such that
\begin{itemize}
\item $\pi_\infty$ is the trivial representation of $\bG(F^+_\infty)$; 
\item $\textnormal{Hom}_{K_p}(\sigma\otimes_{E,\imath}\bbC,\pi_p) \neq 0$; 
\item for each place $v$ of $F^+$ which is split in $F$ and not contained in $T$, the local constituent $\pi_v$ is an unramified principal series with Satake parameters determined by a minimal prime of $\bbT^T_{0,\tau'}(K)_{\fm_{\rbar}}$ via $\imath$.
\end{itemize}
As in the proof of Theorem \ref{auttogal}, we obtain a continuous representation
$$r_{\imath}(\pi):\Gamma_F \longrightarrow \bG\bL_2(\overline{E})$$
such that
\begin{itemize}
\item $r_{\imath}(\pi)$ lifts $\BC'(\rbar)$;
\item for each $v\in \Sigma_p$, $r_{\imath}(\pi)|_{\Gamma_{F_v}}$ is potentially crystalline, and 
$$\textnormal{WD}(r_{\imath}(\pi)|_{\Gamma_{F_v}})|_{I_{F_v}} \cong \tau_v';$$
\item for each $\kappa\in I_p$, we have 
$$\textnormal{HT}_\kappa(r_{\imath}(\pi)|_{\Gamma_{F_{v(\kappa)}}}) = \{1,~ 0\}.$$
\end{itemize}
Consequently, the representation $r_{\imath}(\pi)\otimes\varepsilon$ has the following properties:
\begin{itemize}
\item $r_{\imath}(\pi)\otimes\varepsilon$ lifts $\BC'(\rbar)\otimes\overline{\varepsilon} \cong \BC(\rbar)$;
\item for each $v\in \Sigma_p$, $(r_{\imath}(\pi)\otimes\varepsilon)|_{\Gamma_{F_v}}$ is potentially crystalline, and 
$$\textnormal{WD}\big((r_{\imath}(\pi)\otimes\varepsilon)|_{\Gamma_{F_v}}\big)|_{I_{F_v}} \cong \tau_v';$$
\item for each $\kappa\in I_p$, we have 
$$\textnormal{HT}_\kappa\big((r_{\imath}(\pi)\otimes\varepsilon)|_{\Gamma_{F_{v(\kappa)}}}\big) = \{0,~ -1\}.$$
\end{itemize}

By the above, we see that $\BC(\rhobar_v)$ has a potentially Barsotti--Tate lift of type $\tau_v'$ for every $v\in \Sigma_p$, namely $(r_{\imath}(\pi)\otimes\varepsilon)|_{\Gamma_{F_v}}$ (with notation as in \cite[Def. 2.3]{gee-hmf}).  Therefore, Proposition 3.12 of \emph{op. cit.} implies that
$$\JH\big(\overline{\BC(\sigma(\tau_v'))}\big) \cap \textnormal{W}^?\big(\BC(\rhobar_v)\big) \neq \emptyset$$
for all $v\in \Sigma_p$.  By Propositions \ref{prop:double:weights}, \ref{prop:weights:type} and \ref{prop:compatibilityBC} we obtain
$$\JH\big(\overline{\sigma(\tau_v')}\big) \cap \textnormal{W}^?(\rhobar_v) \neq \emptyset$$
for all $v\in \Sigma_p^+$.  However, this contradicts item \ref{item:we:2}, which concludes the proof.  
\end{proof}

\section{Global Applications II}
\label{glob2}

In this section we use patching techniques to prove the existence of Serre weights for $L$-parameters, using the results on local deformation theory obtained in Section \ref{sec:Loc:Def}. We first adapt the patching construction of \cite{CEGGPS} to the case of unitary groups which are not split at places above $p$, and state the necessary properties in Proposition \ref{patchingprops}.  This allows us to deduce the main results on weight existence in Theorem \ref{thm:WExt} and automorphy lifting in Theorem \ref{thm:mod:lift-body}.

\subsection{Setup}
\label{sub:setup}
\subsubsection{}

\label{subsub:setup}
Suppose that $\rbar: \Gamma_{F^+}\longrightarrow \cG_2(\bbF)$ is a fixed Galois representation such that 
\begin{itemize}
\item $\rbar^{-1}(\bG\bL_2(\bbF)\times\bG_m(\bbF)) = \Gamma_{F}$;
\item $\nu\circ \rbar = \overline{\varepsilon}^{-1}$;
\item $\rbar$ is modular;
\item $\rbar$ is unramified outside $\Sigma_p^+$;
{\item $\rbar|_{\Gamma_{F^+_v}}$ is tamely ramified and $4$-generic for all $v\in \Sigma_p^+$;}
\item $\overline{F}^{\ker(\textnormal{ad}^0(\rbar))}$ does not contain $F(\zeta_p)$; and
\item $\BC'(\rbar)(\Gamma_F) \supseteq \bG\bL_2(\bbF')$ for some subfield $\bbF' \subseteq \bbF$ with $|\bbF'| > 6$.
\end{itemize}
The last condition implies that $\BC'(\rbar)(\Gamma_{F(\zeta_p)})$ is adequate (cf. \cite[Prop. 6.5]{BLGG13}), and that $\BC'(\rbar)$ is absolutely irreducible.  Furthermore, the argument in \cite{CEGGPS} shows that this condition also guarantees the existence of a place $v_1$ of $F^+$ such that
\begin{enumerate}[(a)]
\item
\label{it:aux:1} 
$v_1$ splits as $\widetilde{v}_1\widetilde{v}_1^c$ in $F$;
\item 
\label{it:aux:2}
$v_1$ does not split completely in $F(\zeta_p)$;
\item 
\label{it:aux:3}
$\BC'(\rbar)(\textnormal{Frob}_{\widetilde{v}_1})$ has distinct $\bbF$-rational eigenvalues, whose ratio is not equal to $\bN(v_1)^{\pm 1}$.  
\end{enumerate}

\subsubsection{}
Let $\lambda \in (\bbZ^2_+)^{\widetilde{I}_p}$ and for every $v\in \Sigma_{{p}}^+$, let $\tau_v'$ denote a tame inertial type which satisfies $(\tau'_v)^{\varphi^{-[\bbF^+_v:\bbF_p]}} \cong \tau_v'^\vee$.  Set $T\defeq \Sigma_p^+ \cup \{v_1\}$ and $\tT \defeq \Sigma_p \cup \{\widetilde{v}_1\}$.  We consider a slight generalization of the global deformation problems of \cite[\S 2.3]{CHT}:
\begin{eqnarray*} 
\cS & \defeq & \left(F/F^+, T, \tT,  \cO, \rbar, \varepsilon^{-1}, \{\widetilde{R}^{\Box}_{v}\}_{v \in \Sigma_p^+} \cup \{\widetilde{R}^{\Box}_{v_1}\} \right)\\
\cS_{\lambda,\tau'} & \defeq & \left(F/F^+, T, \tT, \cO, \rbar, \varepsilon^{-1}, \{R^{\Box,\lambda_v,\tau_v'}_{v}\}_{v \in \Sigma_p^+} \cup \{ \widetilde{R}^{\Box}_{v_1}\} \right).
\end{eqnarray*}
The difference here is that we allow places in $T$ to be inert in $F$.  In this notation, $\widetilde{R}^{\Box}_{v}$ denotes the maximal reduced and $p$-torsion free quotient of the universal framed deformation ring {parametrizing lifts $\rho$ of} $\rbar|_{\Gamma_{F_v^+}}$ {such that $\nu\circ \rho = \varepsilon^{-1}$}.  Further, the ring $R^{\Box,\lambda_v,\tau_v'}_{v}$ denotes the unique quotient of $\widetilde{R}^{\Box}_{v}$ with the property that if $B$ is a finite local $E$-algebra, then $x: \widetilde{R}^{\Box}_v \longrightarrow B$ factors through $R^{\Box,\lambda_v,\tau_v'}_{v}$ if and only if the corresponding representation $r_x: \Gamma_{F_v^+} \longrightarrow \cG_2(B)$ is potentially crystalline, and satisfies $\nu\circ r_x = \varepsilon^{-1}$, $\textnormal{HT}_\kappa(\BC'(r_x)) = \{\lambda_{\kappa,1} + 1,~ \lambda_{\kappa,2}\}$, and $\textnormal{WD}({\BC'}(r_x))|_{I_{F_v}} \cong \tau_v'$.  (Again, the existence of such a quotient follows from \cite[\S\S 3.2 -- 3.3]{bellovin-gee}.)  In particular, if $\lambda = 0$, then by applying the isomorphism of Subsection \ref{isomsect}, we obtain an isomorphism of deformation rings $R^{\Box,0,\tau_v'}_{v} \cong R^{\tau_v'}_{\rhobar_v}$, where the second ring is as in Subsubsection \ref{localgalrepdefrings}.

We note also that $\widetilde{R}^{\square}_{v_1}$ is formally smooth over $\cO$ of relative dimension 4, and all of the corresponding Galois representations lifting $\rbar|_{\Gamma_{F^+_{v_1}}}$ are unramified (see \cite[Lem. 2.5]{CEGGPS}).

We let $R^{\univ}_{\cS}$ be the complete local Noetherian $\cO$-algebra representing the functor of deformations of $\rbar$ of type $\cS$, and let $R_{\cS}^{\square_T}$ denote the $\cO$-algebra representing $T$-framed deformations of $\rbar$ of type $\cS$.  (We define a framing at places in $\Sigma_p^+$ just as in \cite[Def. 2.2.1]{CHT}, i.e., as an element of $1_2 + \textnormal{Mat}_{2\times 2}(\fm_R) \subseteq \ker(\cG_2(R) \longtwoheadrightarrow \cG_2(\bbF))$.)  We have similar notation for the deformation problem $\cS_{\lambda,\tau'}$.

\subsubsection{}
Set 
$$\cT \defeq \cO[\![X_{v,i,j}:v\in T, 1 \leq i,j \leq 2]\!].$$  
Choose a lift $r_{\cS}^{\textnormal{univ}}$ representing the universal deformation of type $\cS$, and form the tuple
$$\left(r^{\textnormal{univ}}_{\cS},~ \left\{ \begin{pmatrix}1 + X_{v,1,1} & X_{v,1,2} \\ X_{v,2,1} & 1 + X_{v,2,2}\end{pmatrix}\right\}_{v\in T}\right).$$
This gives a representative of the universal $T$-framed deformation of type $\cS$, and we obtain
$$R_{\cS}^{\square_T} \cong R_{\cS}^{\textnormal{univ}}\widehat{\otimes}_{\cO}\cT \cong R_{\cS}^{\textnormal{univ}}[\![X_{v,i,j}]\!]$$
(and similarly for $\cS_{\lambda,\tau'}$).

We set
\begin{eqnarray*} 
R^{\loc}  & \defeq & \left( \widehat{\bigotimes}_{v \in \Sigma_p^+} \widetilde{R}^{\Box}_{v} \right) \widehat{\otimes} \widetilde{R}^{\Box}_{v_1}, \\
R^{\loc}_{\lambda,\tau'} & \defeq & \left( \widehat{\bigotimes}_{v \in\Sigma_p^+} R^{\Box,\lambda_v,\tau_v'}_{v}\right) \widehat{\otimes}  \widetilde{R}^{\Box}_{v_1},
\end{eqnarray*}
where all completed tensor products are taken over $\cO$.

\begin{prop}
\label{prop:dim:BG}
{Assume that $R^{\Box,\lambda_v,\tau_v'}_{v}$ has a non-zero $\cO$-point for all $v\in\Sigma_p^+$}.  Then $R^{\textnormal{loc}}_{\lambda,\tau'}[1/p]$ is {regular.  If moreover $(\lambda_v,\tau'_v)=(0,\tau'_v)$ with $\tau'_v$ being $2$-generic and $\rbar|_{\Gamma_{F^+_v}}$ being $1$-generic and semisimple, then $R^{\textnormal{loc}}_{0,\tau'}[1/p]$ is formally smooth, and $R^{\loc}_{0,\tau'}$ is equidimensional of dimension $1 + 4|T| + [F^+:\bbQ]$.}
\end{prop}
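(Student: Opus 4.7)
The proof plan follows two steps, each reducing to an analysis at a single place of $T$.

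For the first assertion, my plan is to factor the claim through the tensor factors of $R^{\textnormal{loc}}_{\lambda,\tau'}$. For each $v\in\Sigma_p^+$, the ring $R^{\Box,\lambda_v,\tau'_v}_v$ is a framed potentially crystalline deformation ring for a $\cG_2$-valued representation with regular Hodge--Tate weights. By the theory of Kisin on the generic fibres of potentially semistable deformation rings (extended to reductive-group-valued representations in \cite{bellovin-gee}), the ring $R^{\Box,\lambda_v,\tau'_v}_v[1/p]$ is formally smooth over $E$, and in particular regular. The auxiliary factor $\widetilde{R}^{\Box}_{v_1}$ is itself formally smooth over $\cO$ of relative dimension $4$ by the choice of $v_1$ (cf.~\cite[Lem.~2.5]{CEGGPS}). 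Regularity of $R^{\textnormal{loc}}_{\lambda,\tau'}[1/p]$ then follows from the standard commutative-algebra principle that completed tensor products over $\cO$ of $\cO$-flat complete local Noetherian algebras with regular generic fibres retain this property.

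For the second assertion, I would invoke the explicit presentation of $R^{\Box,0,\tau'_v}_v$ from Section \ref{sec:Loc:Def}. Using the isomorphism $\cG_2\cong{}^C\bU_2$ of Subsection \ref{isomsect}, one identifies $R^{\Box,0,\tau'_v}_v\cong R^{\tau'_v}_{\rhobar_v}$. The hypothesis that $R^{\Box,0,\tau'_v}_v$ has a non-zero $\cO$-point, combined with Lemma \ref{lem:Kis:var:triv}, produces a unique polarised Kisin module $(\overline{\fM},\overline{\iota})\in Y^{\mu,\tau'_v}_{\pol}(\bbF)$ recovering $\BC(\rhobar_v)|_{\Gamma_{F_{v,\infty}}}$ (after identifying $F^+_v$ with the $K$ of Section \ref{sec:Loc:Def}). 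Equation \eqref{defringpres} then yields
\[
R^{\Box,0,\tau'_v}_v[\![S_1,\ldots,S_{2f_v}]\!]\;\cong\;\Big(\widehat{\bigotimes}_{i=0}^{f_v-1} R^{\textnormal{expl}}_{\tw_i}\Big)[\![T_1,\ldots,T_4]\!],
\]
where $f_v\defeq[F^+_v:\bbQ_p]$ and $\tw_i\in\{\ft,\ft',\fw\}$. Inspecting Table \ref{Table3}, each $R^{\textnormal{expl}}_{\tw_i}$ is a regular, $\cO$-flat local ring of Krull dimension $4$, whose generic fibre is formally smooth of dimension $3$ over $E$ (in the shape $\tw_i=\fw$ this amounts to noting that the hypersurface $xy+p=0$ is smooth after inverting $p$). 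Consequently, $R^{\Box,0,\tau'_v}_v$ is $\cO$-flat and equidimensional of Krull dimension $f_v+5$, with formally smooth generic fibre of dimension $f_v+4$.

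Assembling the local data, each place $v\in\Sigma_p^+$ contributes a tensor factor of Krull dimension $f_v+5$, the auxiliary place $v_1$ contributes a factor of dimension $5$, and forming the completed tensor product over $\cO$ identifies one copy of $\cO$ (of dimension $1$) per extra factor. This yields
\[
\dim R^{\textnormal{loc}}_{0,\tau'}\;=\;\sum_{v\in\Sigma_p^+}(f_v+5)+5-(|T|-1)\;=\;[F^+:\bbQ]+4|T|+1,
\]
as claimed. Equidimensionality of $R^{\textnormal{loc}}_{0,\tau'}$ and formal smoothness of its generic fibre descend from the tensor factors via standard arguments. The only technical care required is the commutative-algebra bookkeeping for completed tensor products of $\cO$-flat, generic-fibre-smooth, equidimensional complete local $\cO$-algebras; once the explicit presentation \eqref{defringpres} is in hand, the remaining steps are routine.
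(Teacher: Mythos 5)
Your argument is correct and is essentially the paper's: regularity of the generic fibre of each local factor via the $G$-valued theory of \cite{bellovin-gee}, formal smoothness of $\widetilde{R}^{\Box}_{v_1}$, and stability of these properties under completed tensor products over $\cO$ (the ``standard principle'' you invoke is precisely \cite[Cor.~A.2]{CEGGPS}, resp.\ \cite[Lem.~(3.4.12)]{kisin-annals}, and is the one genuinely non-formal commutative-algebra input); for $\lambda=0$ the paper likewise uses the explicit presentation \eqref{defringpres} and Table \ref{Table3}, though it reads the dimension off the abstract formula of \cite[Thm.~3.3.7]{bellovin-gee} together with \cite[Lem.~3.3]{BLGHT} rather than off the presentation — your count $\dim R^{\Box,0,\tau'_v}_v=f_v+5$ agrees. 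Two inessential inaccuracies: for general $\lambda$ the cited theory yields \emph{regularity} of $R^{\Box,\lambda_v,\tau'_v}_v[1/p]$, which is all you need, whereas the paper asserts formal smoothness only in the $\lambda=0$ case where it follows from the presentation; and the shape-$\fw$ ring $\cO[\![x_{1,1},y_{2,2},x^*_{1,2},x^*_{2,1}]\!]/(x_{1,1}y_{2,2}+p)$ is not regular at its closed point when $E/\bbQ_p$ is ramified (since then $p\in\fm^2$) — only its generic fibre is smooth, and that, together with $\cO$-flatness, equidimensionality and the dimension count, is again all your argument actually uses.
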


\begin{proof}
{The fact that $R^{\textnormal{loc}}_{\lambda,\tau'}[1/p]$ is regular follows from \cite[Thm. 3.3.7]{bellovin-gee}, formal smoothness of $\widetilde{R}^{\square}_{v_1}$, and \cite[Cor. A.2]{CEGGPS}.  When $\lambda = 0$, formal smoothness of $R^{\textnormal{loc}}_{\lambda,\tau'}[1/p]$ follows from  the results of Subsubsection \ref{subsubsec:integrality}, formal smoothness of $\widetilde{R}^{\square}_{v_1}$, and \cite[Lem. (3.4.12)]{kisin-annals}.  The claim about dimensions then follows from \cite[Thm. 3.3.7]{bellovin-gee}, the fact that $\widetilde{R}^{\square}_{v_1}$ is of relative dimension 4 over $\cO$, and \cite[Lem. 3.3]{BLGHT}.  }
\end{proof}

\subsubsection{}
\label{subsec:level:lowering}
We now relate the above constructions to spaces of automorphic forms.  We fix a compact open subgroup $K_m = \prod_{v}K_{m,v} \subseteq \bG(\bbA_{F^+}^\infty)$ satisfying the following properties:
\begin{itemize}
\item if $v$ is a place of $F^+$ which is inert in $F$ and $v\not\in \Sigma_p^+$, then $K_{m,v}$ {is {a hyperspecial} subgroup of $\bG(F^+_v)$};
\item if $v$ is a place of $F^+$ which is split in $F$ and $v \neq v_1$, then $K_{m,v} = \bG(\cO_{F^+_v})$;
\item if $v\in \Sigma_p^+$, then $K_{m,v} = \ker(\bG(\cO_{F^+_v}) \longtwoheadrightarrow \bG(\cO_{F^+_v}/\varpi_{v}^m))$;
\item if $v = v_1$ and $\widetilde{v}_1$ is the fixed place of $F$ above $v_1$, then $K_{m,v_1}$ is the preimage under $\iota_{\widetilde{v}_1}$ of the upper-triangular Iwahori subgroup of $\bG\bL_2(\cO_{F_{\widetilde{v}_1}})$.
\end{itemize}
These assumptions guarantee that $K_m$ is sufficiently small.  We define $K \defeq K_0$.

{Before proceeding, we will need the following level-lowering result.}

{\begin{prop}
\label{prop:level:lowering}
Suppose $\rbar$ satisfies the hypotheses from the beginning of Subsection \ref{sub:setup}, so that in particular $\rbar$ is modular, unramified outside $p$, and $\rbar|_{\Gamma_{F^+_v}}$ is tamely ramified and $4$-generic for all $v\in \Sigma_p^+$.  Then $\rbar$ is modular of level $K_0$.  
\end{prop}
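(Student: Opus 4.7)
The plan is to promote the bare modularity of $\rbar$ to modularity at the explicit level $K_0$ by passing to $\bG\bL_2(\bbA_F)$ via stable base change, applying a level-lowering result there, and then descending back to $\bG(\bbA_{F^+})$.  By hypothesis there exist a sufficiently small compact open subgroup $K' \subseteq \bG(\bbA_{F^+}^\infty)$ and a Serre weight $V'$ for $\bG$ such that $S_\bG(K', V'^\vee)_{\fm_\rbar} \neq 0$; via Lemma \ref{lem:non-zero} and the automorphic--classical comparison of Subsection \ref{subsubssec:cl:aut:fms}, this yields a cuspidal automorphic representation $\pi'$ of $\bG(\bbA_{F^+})$ whose associated Galois representation $r_\imath(\pi')$ lifts $\BC'(\rbar)$.

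First, I would transfer $\pi'$ to $\bG\bL_2(\bbA_F)$ via Jacquet--Langlands and stable base change (using \cite[Thm. 1.7.1]{KMSW} and \cite[\S 11.5]{rogawski}, as in the proof of Theorem \ref{auttogal}), producing a RACSDC automorphic representation $\Pi$ of $\bG\bL_2(\bbA_F)$ whose associated Galois representation lifts $\BC'(\rbar)$.  Next, I would use the unramifiedness of $\BC'(\rbar)$ outside $\Sigma_p$ and the $4$-generic tame description at each $v\in\Sigma_p$ to invoke known level-lowering results for RACSDC representations over CM fields, producing a new representation $\Pi'$ which is unramified outside $\tT$, has a minimally ramified principal series local component of inertial type $\tau'_v$ with $(\tau'_v)^{\varphi^{-[F^+_v:\bbQ_p]}}\cong\tau_v'^\vee$ at each $v\in\Sigma_p$ (the $4$-genericity forces $\tau'_v$ to be a principal series tame type as in Definition \ref{inertialtype}), has an appropriate unramified principal series local component at $\tv_1$, and has infinite component of weight $\imath_*\lambda$ for some $\lambda\in(\bbZ_+^2)^{\widetilde{I}_p}$.

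Then I would descend $\Pi'$ back to a cuspidal automorphic representation $\pi$ of $\bG(\bbA_{F^+})$ using the descent results in \cite[\S 11.5]{rogawski} and \cite[Thm. 1.7.1]{KMSW}.  At split places $v\neq v_1$ the component $\pi_v$ is unramified, at $v_1$ the component $\pi_{v_1}$ has a nonzero pro-$v_1$-Iwahori fixed vector, and for $v\in\Sigma_p^+$ the inertial local Langlands correspondence of Theorem \ref{ILLC} guarantees $\sigma(\tau'_v)\otimes_{E,\imath}\bbC \hookrightarrow \pi_v|_{\bG(\cO_{F^+_v})}$.  Setting $\sigma \defeq \bigotimes_{v\in\Sigma_p^+}\sigma(\tau'_v)$ with a $\bG(\cO_{F^+,p})$-stable $\cO$-lattice $\sigma^\circ$, the isomorphism \eqref{lem:decompose} then produces a non-zero class in $S_\bG(K_0, (W_\lambda\otimes_\cO \sigma^\circ)^\vee)_{\fm_\rbar}\otimes_\cO\overline{E}$; an application of Lemma \ref{lem:non-zero} to a Jordan--H\"older constituent of the mod $\varpi$ reduction finally gives a Serre weight $V$ with $S_\bG(K_0,V^\vee)_{\fm_\rbar}\neq 0$, which is exactly the desired modularity at level $K_0$.

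The main obstacle will be carrying out the level-lowering step on the $\bG\bL_2$ side compatibly with the descent: one must verify that the lowering procedure does not force the appearance of local components at $p$ which are incompatible with the tame $\bU_2(\cO_{F^+_v})$-types of Definition \ref{inertialtype}, and that the descent actually produces a cuspidal automorphic representation on $\bG$ (and not merely on some related inner form).  The $4$-genericity and tame ramification hypotheses at each $v\in\Sigma_p^+$ are crucial here: they ensure that $\BC(\rbar)|_{\Gamma_{F_v}}$ is non-exceptional so that the inertial type $\tau'_v$ of any minimally ramified lift is a principal series tame type of the form needed for Theorem \ref{ILLC}, and they rule out the degenerate cases of both the level-lowering and the descent.
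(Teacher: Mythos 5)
The overall shape of your argument (transfer to $\bG\bL_2(\bbA_F)$, lower the level there, come back) matches the paper's, but the pivotal step is a gap: you ``invoke known level-lowering results for RACSDC representations over CM fields,'' and no such off-the-shelf theorem exists in the generality you need. Ribet-style level lowering is not available for conjugate self-dual representations over CM fields; the only known route is the Khare--Wintenberger method, which requires proving an automorphy lifting theorem and exploiting finiteness of a global deformation ring. That is precisely the content of the paper's proof, which you have black-boxed. Concretely, the paper: (a) chooses a solvable CM extension $L/F$ so that every residually ramified prime-to-$p$ place acquires norm $\equiv 1 \bmod p$ and an Iwahori-fixed vector; (b) twists by a $p$-power-order nebentypus with \emph{distinct} tame characters at those places and runs a going-down argument on the Hecke algebra (the nebentypus is trivial mod $\varpi$, so the localized space with nebentypus is still nonzero) to produce a congruent form whose local components there are principal series, hence only tamely ramified on the Galois side; (c) base changes further to $M$ to kill the residual tame ramification; (d) proves an $R=\bbT$ theorem over $M^+$ with potentially Barsotti--Tate conditions, deduces that $R^{\textnormal{univ}}_{\cS_{0,\tau'}}$ is finite over $\cO$ of positive rank, and thereby manufactures an $\overline{E}$-point $r$ of the deformation ring that is unramified outside $\Sigma_p^+$ with the prescribed types at $p$; and (e) concludes $\BC'(r)$ is automorphic by solvable descent from $M$, whence $S_{\bG}(K_0,\sigma^\vee)_{\fm_{\rbar}}\neq 0$. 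Without some substitute for (a)--(e), your step producing $\Pi'$ is unsupported.

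A secondary point: the worry you single out as the ``main obstacle'' --- descending the level-lowered $\Pi'$ back to the definite inner form $\bG$ --- is actually sidestepped in the paper's argument. One never descends $\Pi'$ directly; instead one produces automorphy of the restriction to $\Gamma_M$ of a Galois deformation unramified outside $p$, and solvable base change for $\bG\bL_2$ plus the surjection $R^{\textnormal{univ}}_{\cS_{0,\tau'}} \twoheadrightarrow \bbT$ over $F^+$ does the rest. So the real difficulty is not the descent but the construction of the congruence itself, i.e.\ steps (a)--(d) above.
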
}

\begin{proof}

Suppose $\rbar$ is modular of weight $V = \bigotimes_{v\in \Sigma_p^+} V_v$.  Thus, there exists a finite set of finite places $T'$ and a sufficiently small level $K' = \prod_v K'_v \subseteq \bG(\bbA_{F^+}^\infty)$ (with $K'_v$  {hyperspecial for all inert $v$ and $K'_v = \bG(\cO_{F^+_v})$ for $v\in \Sigma_p^+$ and for split $v \not\in T'$}) satisfying
$$S_{\bG}(K',V^\vee)_{\fm_{\rbar}} \neq 0.$$
For each $v\in \Sigma_p^+$, we choose a principal series tame type $\tau_v'$ such that $(\tau'_v)^{\varphi^{-[\bbF^+_v:\bbF_p]}} \cong \tau_v'^\vee$ and such that $V \in \JH(\overline{\sigma})$, where $\sigma := \bigotimes_{v\in \Sigma_p^+} \sigma(\tau'_v)$.  By the genericity hypotheses and Theorem \ref{thm:WE}, $V_v$ is 3-deep for every $v \in \Sigma_p^+$, and consequently $\tau_v'$ is 2-generic.  Since $K'$ is sufficiently small, Lemma \ref{lem:non-zero} implies
$$S_{\bG}(K',\sigma^\vee)_{\fm_{\rbar}} \neq 0.$$
As in the proof of Theorem \ref{thm:WE}, there exists an automorphic representation $\pi$ of $\bG(\bbA_{F^+})$, and (after choosing an isomorphism $\imath: \overline{E} \stackrel{\sim}{\longrightarrow} \bbC$) an associated Galois representation 
$$r_\imath(\pi): \Gamma_F \longrightarrow \bG\bL_2(\overline{E})$$
which lifts $\BC'(\rbar) \otimes_{\bbF} \overline{\bbF}_p$.

Let $\Pi$ denote the automorphic representation of $\bG\bL_2(\bbA_F)$ obtained from $\pi$ by base change (as in the proof of Theorem \ref{auttogal}), and let $\Sigma_{\textnormal{ram}}^+$ denote the set of prime-to-$p$ places of $F^+$ at which $\pi$ is ramified (note that every place of $\Sigma_{\textnormal{ram}}^+$ is split in $F$, and if $\Pi$ is ramified at some place $w$, then $w|_{F^+} \in \Sigma_{\textnormal{ram}}^+$).  Adjusting the place $v_1$ if necessary, we may assume $v_1 \not\in \Sigma_{\textnormal{ram}}^+$.  We choose a totally real extension $L^+$ of $F^+$ such that the following conditions hold:
\begin{itemize}
\item $4$ divides $[L^+:\bbQ]$; 
\item $L^+/F^+$ is Galois and solvable;
\item $L \defeq L^+F$ is linearly disjoint from $\overline{F}^{\ker(\rbar)}(\zeta_p)$ over $F$;
\item $p$ is unramified in $L$;
\item $v_1$ splits completely in $L$;
\item if $w$ is a place of $L^+$ lying over a place in $\Sigma_{\textnormal{ram}}^+$, then $\bN(w) \equiv 1~(\textnormal{mod}~p)$;
\item if $\Pi_L$ denotes the base change of $\Pi$ to an automorphic representations of $\bG\bL_2(\bbA_L)$ and $w$ is a place of $L$ lying over a place in $\Sigma_{\textnormal{ram}}^+$, then $\Pi_{L,w}^{\textnormal{Iw}_w} \neq 0$, where $\textnormal{Iw}_w \subseteq \bG\bL_2(\cO_{L_w})$ denotes the standard upper-triangular Iwahori subgroup.
\end{itemize}
{(Note that $L/L^+$ is everywhere unramified.)}
We use the following notation in what follows: if $\widetilde{L}/\widetilde{F}$ is a finite extension of number fields and $\widetilde{T}$ is a finite set of finite places of $\widetilde{F}$, we let $\BC_{\widetilde{L}/\widetilde{F}}(\widetilde{T})$ (or $\BC(\widetilde{T})$ when the context is clear) denote the set of places of $\widetilde{L}$ lying above $\widetilde{T}$.

Let $\pi_{L^+}$ denote a descent of $\Pi_L$ to an automorphic representation of $\bG(\bbA_{L^+})$.  {We analyze the local behavior of $\pi_{L^+}$:}
\begin{enumerate}[(a)]
\item If $w$ is a place of $L^+$ which splits as $\widetilde{w}\widetilde{w}^c$ in $L$, then $\pi_{L^+,w} \cong \Pi_{L, \widetilde{w}} \circ \iota_{\widetilde{w}}$, where $\iota_{\widetilde{w}}$ is an isomorphism $\bG(L^+_w) \stackrel{\sim}{\longrightarrow} \bG\bL_2(L_{\widetilde{w}})$ which identifies groups of integral points.  In particular, if $\Pi_{L, \widetilde{w}}$ is unramified, so is $\pi_{L^+,w}$.  
\item 
\label{it:switch}
{Next, we consider the situation above $p$.  Define 
\begin{eqnarray*}
\sigma_{L^+} & \defeq & \bigotimes_{\substack{w \in \BC_{L^+/F^+}(\Sigma_p^+) \\ v = w|_{F^+}}} \sigma(\tau'_v|_{I_{L^+_w}}), \\
\sigma'_L \defeq \BC(\sigma_{L^+}) & = & \bigotimes_{\substack{w \in \BC_{L^+/F^+}(\Sigma_p^+) \\ v = w|_{F^+}}} \sigma'(\tau'_v|_{I_{L^+_w}}), \\
\end{eqnarray*}
which are representations of $\bG(\cO_{L^+,p})$ and $\bG\bL_2(\cO_{L,p})$, respectively (see Subsection \ref{ILLC-sect} and Definition \ref{inertialtype} for the definitions of $\sigma(\tau'_v|_{I_{L^+_w}})$, $\sigma'(\tau'_v|_{I_{L^+_w}})$).  We claim that we can choose $\pi_{L^+}$ so that we have a $\bG(\cO_{L^+,p})$-equivariant injection $\sigma_{L^+} \otimes_{E,\imath} \bbC \longhookrightarrow \pi_{L^+,p}$.  Indeed, note that by construction we have a $\bG\bL_2(\cO_{L,p})$-equivariant injection $\sigma_L' \otimes_{E, \imath} \bbC \longhookrightarrow \Pi_{L,p}$, which implies that $\Pi_{L,w}$ is a tamely ramified principal series representation for every place $w$ of $L$ above $p$ (this uses the genericity hypothesis).  By the explicit description of the local base change map given in \cite[\S~11.4]{rogawski} and \cite{blasco}, we see that if $w$ is a place of $L^+$ above $p$, then $\pi_{L^+,w}$ is either a tamely ramified principal series, or a supercuspidal representation contained in a local $L$-packet of size 2.  In the first case, we have a $\bG(\cO_{L^+_w})$-equivariant injection $\sigma_{L^+,w} \otimes_{E, \imath} \bbC \longhookrightarrow \pi_{L^+,w}$.  In the second case, it may happen that the supercuspidal representation $\pi_{L^+,w}$ has no invariants under the principal congruence subgroup of $\bG(\cO_{L^+,w})$, and therefore does not admit a $\bG(\cO_{L^+,w})$-equivariant injection $\sigma_{L^+,w} \otimes_{E, \imath} \bbC \longhookrightarrow \pi_{L^+,w}$; however, if we let $\pi_{L^+,w}^\ddagger$ denote the other element of the local $L$-packet containing $\pi_{L^+,w}$, then $\pi_{L^+,w}^\ddagger$ \emph{will} admit such an injection (see, e.g., the explicit description of depth 0 $L$-packets in \cite[\S~3.1]{adlerlansky}).  Let us define $\pi_{L^+}^\ddagger := \pi_{L^+,w}^\ddagger \otimes \bigotimes_{w' \neq w}' \pi_{L^+,w'}$, which lies in the same global $L$-packet as $\pi_{L^+}$.  Since the Galois representation associated to $\pi_{L^+}$ via Theorem \ref{auttogal} is irreducible, $\pi_{L^+}$ defines a stable $L$-packet, and in particular $\pi_{L^+}^\ddagger$ will be automorphic and cuspidal (this uses \cite[Prop. 11.2.1(a), Thm. 11.5.1]{rogawski}).  Therefore, by replacing $\pi_{L^+}$ by $\pi_{L^+}^\ddagger$ (for several $w\in \BC_{L^+/F^+}(\Sigma_p^+)$ if necessary), we can guarantee that we have a $\bG(\cO_{L^+,p})$-equivariant injection $\sigma_{L^+} \otimes_{E,\imath} \bbC \longhookrightarrow \pi_{L^+,p}$.}
\item Finally, suppose that $w$ is a place of $L^+$ which is inert in $L$ and such that $\Pi_{L,w}$ is unramified {(in particular, this means that $w \not\in \BC_{L^+/F^+}(\Sigma_p^+)$)}.  By the explicit description of local base change found in \cite[\S~ 11.4]{rogawski} and \cite{blasco}, we see that $\pi_{L^+,w}$ is unramified relative to a {hyperspecial subgroup} of $\bG(L^+_w)$, which is equal to $\bG(\cO_{L^+_w})$ for all but finitely many inert primes $w$.  
\end{enumerate}
Thus, we define $K_{L^+} = \prod_{w} K_{L^+,w} \subseteq \bG(\bbA_{L^+}^\infty)$ to be the compact open subgroup satisfying the following conditions:
\begin{itemize}
\item if $w \in \BC_{L^+/F^+}(\Sigma_{\textnormal{ram}}^+ \cup \{v_1\})$, then $K_{L^+,w}$ is the preimage under $\iota_{\widetilde{w}}$ of $\textnormal{Iw}_{\widetilde{w}}$, where $\widetilde{w}$ is a fixed choice of place of $L$ lying over $w$;
\item if $w$ is a place of $L^+$ which is split in $L$ and $w \not\in \BC_{L^+/F^+}(\Sigma_{\textnormal{ram}}^+ \cup \{v_1\})$, then $K_{L^+,w} = \bG(\cO_{L^+_w})$;
\item if $w\in \BC_{L^+/F^+}(\Sigma_p^+)$, then $K_{L^+,w} = \bG(\cO_{L^+_w})$;
\item  if $w$ is a place of $L^+$ which is inert in $L$ and $w\not\in \BC_{L^+/F^+}(\Sigma_p^+)$, then $K_{L^+,w}$ denotes a {hyperspecial subgroup} of $\bG(L^+_w)$ relative to which $\pi_{L^+,w}$ is unramified, chosen to be equal to $\bG(\cO_{L^+_w})$ for all but finitely many such $w$.
\end{itemize}
Note that $K_{L^+}$ is sufficiently small.  The representation $\pi_{L^+}$ then contributes to the space 
$$S_{\bG_{\cO_{L^+}}}(K_{L^+}, \sigma_{L^+}^\vee)_{\fm_{\rbar|_{\Gamma_{L^+}}}},$$ 
where $\fm_{\rbar|_{\Gamma_{L^+}}}$ is the maximal ideal of $\bbT^{\BC(T'')}$ defined as in Definition \ref{maxidealgalois}.  Here $T'' \defeq \Sigma_p^+ \cup \Sigma_{\textnormal{ram}}^+ \cup \{v_1\}$.

For every place $w \in \BC_{L^+/F^+}(\Sigma_{\textnormal{ram}}^+)$, we fix two distinct tame characters $\psi_{w,1}, \psi_{w,2}:\cO_{L^+_w}^\times \longrightarrow \cO^\times$ of $p$-power order, and define $\psi_w :K_{L^+,w} \longrightarrow \cO^\times$ by
$$\psi_w\left(\iota_{\widetilde{w}}^{-1}\begin{pmatrix}a & b \\ c & d\end{pmatrix}\right) = \psi_{w,1}(a)\psi_{w,2}(d),$$
where $\sm{a}{b}{c}{d} \in \textnormal{Iw}_{\widetilde{w}}$.  (Note that such a choice of characters is possible by the choice of $L^+$.)
We define $\psi \defeq \bigotimes_{w\in \BC(\Sigma_{\textnormal{ram}}^+)} \psi_w$, so that the reduction mod $\varpi$ of $\psi$ is the trivial character of $\prod_{w \in \BC(\Sigma_{\textnormal{ram}}^+)} K_{L^+,w}$.  Let 
$$S_{\bG_{\cO_{L^+}}}(K_{L^+}, \psi \otimes_{E} \sigma_{L^+}^\vee)$$ 
denote the space of algebraic automorphic forms with nebentypus $\psi$ above $\Sigma_{\textnormal{ram}}^+$ (defined as in Subsection \ref{sec:AAF}, except that the component $\prod_{w \in \BC(\Sigma_{\textnormal{ram}}^+)} K_{L^+,w}$ acts by $\psi$).

We claim $S_{\bG_{\cO_{L^+}}}(K_{L^+}, \psi \otimes_{E} \sigma_{L^+}^\vee)_{\fm_{\rbar|_{\Gamma_{L^+}}}} \neq 0$.  Indeed, after possibly replacing $E$ by a finite extension, the representation $\pi_{L^+}$ gives a morphism 
$$\theta:\bbT^{\BC(T'')}_{0,\BC(\tau')}(K_{L^+}) \longrightarrow \cO,$$ 
(where $\BC(\tau')$ denotes the collection $\{\tau'_v|_{I_{L_w}}\}_{w|_{F} = v \in \Sigma_p}$) and by reduction modulo $\varpi$ we obtain 
$$\theta \otimes_{\cO}\bbF: \bbT^{\BC(T'')}_{0,\BC(\tau')}(K_{L^+})\otimes_{\cO}\bbF \longrightarrow \bbF.$$  
Let $\bbT^{\BC(T'')}_{0,\BC(\tau')}(K_{L^+}, \bbF)$ denote the image of the universal Hecke algebra $\bbT^{\BC(T'')}$ in 
$$\textnormal{End}_{\cO}\left(S_{\bG_{\cO_{L^+}}}(K_{L^+}, \overline{\sigma_{L^+}^{{\vee,\circ}}})\right),$$
where $\sigma_{L^+}^{{\vee,\circ}}$ denotes a choice of $K_{L^+,p}$-stable $\cO$-lattice in {$\sigma_{L^+}^\vee$}.  Since the kernel of $\bbT^{\BC(T'')}_{0,\BC(\tau')}(K_{L^+})\otimes_{\cO}\bbF \longtwoheadrightarrow \bbT^{\BC(T'')}_{0,\BC(\tau')}(K_{L^+}, \bbF)$ is nilpotent, $\theta \otimes \bbF$ factors through a map 
$$\overline{\theta}: \bbT^{\BC(T'')}_{0,\BC(\tau')}(K_{L^+}, \bbF) \longrightarrow \bbF.$$  
Now let $\bbT^{\BC(T'')}_{0, \BC(\tau')}(K_{L^+},\psi)$ denote the image of the universal Hecke algebra $\bbT^{\BC(T'')}$ in 
$$\textnormal{End}_{\cO}\left(S_{\bG_{\cO_{L^+}}}(K_{L^+}, \psi \otimes_{\cO} \sigma_{L^+}^{{\vee,\circ}})\right),$$
and define $\bbT^{\BC(T'')}_{0,\BC(\tau')}(K_{L^+}, \psi, \bbF)$ analogously.  Since $\psi$ is of $p$-power order, we have $\bbT^{\BC(T'')}_{0,\BC(\tau')}(K_{L^+}, \psi, \bbF) = \bbT^{\BC(T'')}_{0,\BC(\tau')}(K_{L^+}, \bbF)$, and by pulling $\overline{\theta}$ back we get
$$\theta': \bbT^{\BC(T'')}_{0,\BC(\tau')}(K_{L^+}, \psi) \longtwoheadrightarrow \bbT^{\BC(T'')}_{0,\BC(\tau')}(K_{L^+}, \psi) \otimes_{\cO} \bbF \longtwoheadrightarrow \bbT^{\BC(T'')}_{0,\BC(\tau')}(K_{L^+}, \psi,\bbF) \stackrel{\overline{\theta}}{\longrightarrow} \bbF.$$
We view $\ker(\theta')$ as a prime ideal lying over the ideal $(\varpi)$ relative to the finite flat extension $\cO \longrightarrow \bbT^{\BC(T'')}_{0,\BC(\tau')}(K_{L^+}, \psi)$.  By the going-down theorem, there exists a prime $\fp \subseteq \ker(\theta')$ lying over $(0) \subseteq \cO$.

The minimal prime $\fp$ constructed above corresponds to an automorphic representation $\pi'_{L^+}$ contributing to $S_{\bG_{\cO_{L^+}}}(K_{L^+}, \psi \otimes_{E} \sigma_{L^+}^\vee)_{\fm_{\rbar|_{\Gamma_{L^+}}}}$.  Let $\Pi_L'$ denote the base change of $\pi_{L^+}'$ to $\bG\bL_2(\bbA_L)$.  Then for every place $w$ of $L^+$ which is inert in $L$ and for which $w\not\in \BC_{L^+/F^+}(\Sigma_p^+)$, the representation $\Pi'_{L,w}$ is unramified.  For every place $w$ of $L^+$ which splits as $w = \widetilde{w}\widetilde{w}^c$ in $L$ and for which $w\not\in \BC_{L^+/F^+}(\Sigma_{\textnormal{ram}}^+ \cup \{v_1\})$, the representation $\Pi'_{L,\widetilde{w}} = \pi'_{L^+,w} \circ \iota_{\widetilde{w}}^{-1}$ is unramified.  Finally, if $w$ splits in $L$ and $w \in \BC_{L^+/F^+}(\Sigma_{\textnormal{ram}}^+ )$, then $(\Pi'_{L,\widetilde{w}})^{\textnormal{Iw}_{\widetilde{w}},\psi_w \circ \iota_{\widetilde{w}}^{-1}} \neq 0$.  By choice of the characters $\psi_w$, the latter condition implies that $\Pi'_{L,\widetilde{w}}$ must be a principal series representation.

Associated to $\pi_{L^+}'$ (or $\Pi_L'$) we have a Galois representation
$$r_{\imath}(\pi_{L^+}'): \Gamma_L \longrightarrow \bG\bL_2(\overline{E})$$
lifting $\BC'(\rbar)|_{\Gamma_L} \otimes_{\bbF}\overline{\bbF}_p$.  By the discussion in the previous paragraph and local/global compatibility, the representation $r_\imath(\pi'_{L^+})$ is unramified outside $\BC_{L/F^+}(\Sigma_p^+ \cup \Sigma_{\textnormal{ram}}^+)$ (recall that all deformations at places above $v_1$ are unramified), and tamely ramified at $\BC_{L/F^+}(\Sigma_{\textnormal{ram}}^+)$.  Further, if $w\in \BC_{L/F^+}(\Sigma_p^+)$, then $r_{\imath}(\pi_{L^+}')|_{\Gamma_{L_w}}$ is potentially crystalline with (parallel) Hodge--Tate weights $\{1,0\}$ and inertial type $\tau'_v|_{I_{L_w}}$ (where $v = w|_F$).

We now choose another totally real extension $M^+/L^+$ satisfying the first five conditions imposed on $L^+$ above, along with the following {further} condition:  letting $\Pi'_M$ denote the base change of $\Pi'_L$ to $\bG\bL_2(\bbA_M)$ (where $M \defeq M^+F$), we have $(\Pi'_{M,w})^{\bG\bL_2(\cO_{M_w})} \neq 0$ for every $w \in \BC_{M/F^+}(\Sigma_{\textnormal{ram}}^+)$.  Thus, if we let 
$$r_{\imath}(\Pi'_M): \Gamma_M \longrightarrow \bG\bL_2(\overline{E})$$
denote the Galois representation associated to $\Pi'_M$, then we see that $r_{\imath}(\Pi'_M)$ is a lift of $\BC'(\rbar)|_{\Gamma_M} \otimes_{\bbF} \overline{\bbF}_p$.  Moreover, $r_{\imath}(\Pi'_M)$ is unramified outside $\BC_{M/F^+}(\Sigma_p^+)$ and if $w\in \BC_{M/F^+}(\Sigma_p^+)$, then $r_{\imath}(\Pi_M')|_{\Gamma_{M_w}}$ is potentially crystalline with (parallel) Hodge--Tate weights $\{1,0\}$ and inertial type $\tau'_v|_{I_{M_w}}$ (where $v = w|_F$).

Recall that we have defined a deformation problem $\cS_{0,\tau'}$.  We define $\cS_M$ to be the ``base changed'' deformation problem, so that
\begin{flushleft}
$\cS_M \defeq \bigg(M/M^+,~ \BC_{M^+/F^+}(\Sigma_p^+ \cup \{v_1\}),~ \BC_{M/F}(\Sigma_p \cup \{\widetilde{v_1}\}),~ \cO,~ \rbar|_{\Gamma_{M^+}},~ \varepsilon^{-1}, $
\end{flushleft}
\begin{flushright}
$  \{R^{\Box, 0, \tau'_v|_{I_{M_w}}}_w\}_{w \in \BC_{M^+/F^+}(\Sigma_p^+ )} \cup \{\widetilde{R}^{\Box}_w\}_{w\in \BC_{M^+/F^+}( \{v_1\})}\bigg).$
\end{flushright}
Thus, we see that the extension of $r_{\imath}(\Pi_M')$ to $\Gamma_{M^+}$ corresponds to an $\overline{E}$-point of $R_{\cS_M}^{\textnormal{univ}}$.  A variant of the patching construction of \cite[Thm. 3.4]{guerberoff} with potentially Barsotti--Tate deformation rings (see also the argument which follows in subsequent sections) shows that $(R_{\cS_M}^{\textnormal{univ}})^{\textnormal{red}}$ is isomorphic to an appropriate localized Hecke algebra, and consequently $R_{\cS_M}^{\textnormal{univ}}$ is finite over $\cO$.  Just as in the proof of \cite[Lem. 1.2.3(1)]{BLGGT}, we have that $R_{\cS_{0,\tau'}}^{\textnormal{univ}}$ is finite over $R_{\cS_M}^{\textnormal{univ}}$.  Combining these facts with the dimension calculation in \cite[Cor. 2.3.5]{CHT}, we see that $R_{\cS_{0,\tau'}}^{\textnormal{univ}}$ is a finite $\cO$-module of positive rank.

Now, let $r: \Gamma_{F^+} \longrightarrow \cG_2(\overline{E})$ correspond to an $\overline{E}$-point of $R_{\cS_{0,\tau'}}^{\textnormal{univ}}$, so that in particular $r$ is a lift of $\rbar \otimes_{\bbF} \overline{\bbF}_p$ which is unramified outside of $\Sigma_p^+$.  The restriction $r|_{\Gamma_{M^+}}$ corresponds to an $\overline{E}$-point of $R_{\cS_M}^{\textnormal{univ}}$, which necessarily factors through the reduced ring.  Thus, $\BC'(r)|_{\Gamma_{M}}$ is automorphic, and by {\cite[Lem. 2.2.2]{BLGGT} applied successively to $M/L$ and $L/F$, we conclude that} $\BC'(r)$ is also automorphic {(more appropriately, the pair $(\BC'(r), \varepsilon^{-1})$ is automorphic in the sense of \cite[\S~2.1]{BLGGT}).  Therefore, using \cite[Thm. 11.5.1]{rogawski} and the Jacquet--Langlands correspondence ({the latter in the  ``opposite direction'' as compared to the proof of Theorem \ref{auttogal}}), we can find an automorphic representation $\pi''$ of $\bG(\bbA_{F^+})$ which contributes to the space $S_{\bG}(K_0, \sigma^\vee)_{\fm_{\rbar}}$ (perhaps after replacing $\pi''$ by another element in its $L$-packet, as in item \ref{it:switch} above).}  This implies that $S_{\bG}(K_0, \sigma^\vee)_{\fm_{\rbar}} \neq 0$, and by Lemma \ref{lem:non-zero} we have $S_{\bG}(K_0, V'^\vee)_{\fm_{\rbar}} \neq 0$ for some $V' \in \JH(\overline{\sigma})$.
\end{proof}

{Recall that we have defined a maximal ideal $\fm \defeq \fm_{\rbar} \subseteq \bbT^{{T}}$ associated to $\rbar$ (Definition \ref{maxidealgalois}).  Proposition \ref{prop:level:lowering} shows that $S_{\bG}(K_m,\sigma^{{\vee,\circ}})_{\fm} \neq 0$ for $m \geq 1$, where $\sigma^{{\vee,\circ}}$ is a $\bG(\cO_{F^+,p})$-stable $\cO$-lattice in {the dual of} a tame type.  Since the $p$-component of $K_m$ acts trivially on $\sigma^{{\vee,\circ}}$ for $m \geq 1$, we have $S_{\bG}(K_m,\sigma^{{\vee,\circ}})_{\fm} \cong S_{\bG}(K_m, \cO)_{\fm} \otimes_{\cO} \sigma^{{\vee,\circ}}$.  Thus, the image of $\fm$ in $\bbT^{T}_{0,\mathbf{1}}(K_m)$ (which will be denoted by the same symbol $\fm$) is a maximal ideal.}  By Theorem \ref{galrepheckealg}, we have a continuous lift of $\rbar$ given by
$$r_{\fm}\otimes\cO/\varpi^r: \Gamma_{F^+} \longrightarrow \cG_2\left(\bbT^{{T}}_{0,\mathbf{1}}(K_m)_{\fm}\otimes_{\cO} \cO/\varpi^r\right)$$
which is of type $\cS$.  Therefore, we obtain a surjection
\begin{equation}\label{deftohecke}
R^{\textnormal{univ}}_{\cS} \longtwoheadrightarrow \bbT^{{T}}_{0, \mathbf{1}}(K_m)_{\fm} \otimes_{\cO} \cO/\varpi^r.
\end{equation}
In particular, $S_{\bG}(K_m,\cO/\varpi^r)_{\fm}$ is a finite $R_{\cS}^{\textnormal{univ}}$-module.

\subsection{Auxiliary primes}\label{auxprim}

\subsubsection{}

Let $q$ denote the maximum of $[F^+:\bbQ]$ and $\dim_{\bbF}H^1_{\cL^\perp,T}(\Gamma_{F^+,T}, \textnormal{ad}^0(\rbar)(1))$ (defined as in \cite[\S 2.3]{CHT}{; note that the latter cohomology group is the usual $H^1(\Gamma_{F^+,T}, \textnormal{ad}^0(\rbar)(1))$ since ``$S=T$'' in the notation of \emph{op.~cit.}}).  The proof {of \cite[Prop. 2.5.9]{CHT} (see also \cite[Prop. 4.4]{thorne})} remains valid, and thus for each $N\geq 1$ we can find a tuple $(Q_N, \widetilde{Q}_N, \{\overline{\psi}_v, \overline{\psi}_v'\}_{v\in Q_N})$ such that 
\begin{itemize} 
\item $Q_N$ is a finite set of places of $F^+$ which split in $F$;
\item $|Q_N| = q$;
\item $Q_N$ is disjoint from $T$;
\item $\widetilde{Q}_N$ consists of a single place $\widetilde{v}$ of $F$ above each place $v$ of $Q_N$;
\item if $v \in Q_N$ then $\bN(v) \equiv 1~(\textnormal{mod}~p^N)$; and
\item if $v\in Q_N$, then $\BC'(\rbar)|_{\Gamma_{F_{\widetilde{v}}}} \cong \overline{\psi}_v \oplus \overline{\psi}_v'$ where $\overline{\psi}_v$ and $\overline{\psi}_v'$ are distinct unramified characters.
\end{itemize}

For $v\in Q_N$, We let $R^{\overline{\psi}_v}_v$ denote the quotient of $\widetilde{R}^{\square}_v$ corresponding to lifts $\Gamma_{F_{\widetilde{v}}} \longrightarrow \bG\bL_2(R)$ of $\BC'(\rbar)|_{\Gamma_{F_{\widetilde{v}}}}$ which are $1_2 + \textnormal{Mat}_{2\times 2}(\fm_R)$-conjugate to a lift of the form $\psi \oplus \psi'$, where $\psi$ lifts $\overline{\psi}_{\widetilde{v}}$, $\psi'$ lifts $\overline{\psi}_{\widetilde{v}}'$, and $\psi'$ is unramified.  We then obtain a deformation problem
$$\cS_{Q_N} \defeq \left(F/F^+, T \cup Q_N, \widetilde{T}\cup \widetilde{Q}_N, \cO, \rbar, \varepsilon^{-1}, \{\widetilde{R}^{\Box}_{v}\}_{v \in \Sigma_p^+} \cup \{ \widetilde{R}^{\Box}_{v_1}\} \cup \{R^{\overline{\psi}_v}_v\}_{v\in Q_N}\right).$$
From this data we obtain the associated universal (resp.~$T$-framed) deformation ring $R_{\cS_{Q_N}}^{\textnormal{univ}}$ (resp.~$R_{\cS_{Q_N}}^{\square_T}$) of type $\cS_{Q_N}$.  By \cite[Prop. 4.4]{thorne}, the ring $R_{\cS_{Q_N}}^{\square_T}$ can be topologically generated over $R^{\textnormal{loc}}$ by $q - [F^+:\bbQ]$ elements.

\subsubsection{}\label{parahoric}
We now shrink the subgroup $K$ at places in $Q_N$.  For $w$ a place of $F$, denote by $K_0(w)$ and $K_1(w)$ the following subgroups: 
\begin{eqnarray*}
K_0(w) & \defeq & \textnormal{Iw}_{w} ~ = ~ \{g\in \bG\bL_2(\cO_{F_w}): g\equiv \sm{*}{*}{0}{*}~(\textnormal{mod}~w)\}\\
K_1(w) & \defeq & \ker\left(K_0(w) \longrightarrow \bbF_w^\times(p)\right)
\end{eqnarray*}
where $\bbF_w^\times(p)$ denotes the maximal $p$-power order quotient of $\bbF_w^\times$, and the map in question sends $\sm{a}{b}{c}{d}$ to the image of $d~(\textnormal{mod}~w)$ in $\bbF_w^\times(p)$.  For $i = 0,1$, define
$$K_i(Q_N)_m \defeq K^{Q_N}_m\cdot \prod_{v\in Q_N}\iota_{\widetilde{v}}^{-1}(K_i(\widetilde{v})).$$

\subsubsection{}\label{levelchange}

Let $\bbT^{{T}\cup Q_N} \subseteq \bbT^{{T}}$ denote the universal Hecke algebra away from ${T} \cup Q_N$, and define $\fm_{Q_N} \defeq \fm_{\rbar} \cap \bbT^{{T} \cup Q_N}$.  As in \cite[\S 2.6]{CEGGPS} (which is based on \cite[Prop. 5.9]{thorne}), we have a projection operator 
$$\textnormal{pr} \in \textnormal{End}_{\cO}\left(S_{\bG}\big(K_i(Q_N)_m, \cO/\varpi^r\big)_{\fm_{Q_N}}\right)$$
for $i = 0,1$.  This operator induces an isomorphism
$$\textnormal{pr}: S_{\bG}\big(K_m, \cO/\varpi^r\big)_{\fm} \stackrel{\sim}{\longrightarrow} \textnormal{pr}\left(S_{\bG}\big(K_0(Q_N)_m, \cO/\varpi^r\big)_{\fm_{Q_N}}\right),$$
which commutes with the action of $\bG(\cO_{F^+,p})$.

\subsubsection{}\label{coinvts}
Define 
$$\Gamma_m \defeq \prod_{v\in \Sigma_p^+} \bG(\cO_{F^+_v}/\varpi_v^m)$$
and
$$\Delta_{Q_N}  \defeq K_0(Q_N)_m/K_1(Q_N)_m \cong \prod_{v\in Q_N} K_0(\widetilde{v})/K_1(\widetilde{v}),$$
a finite $p$-group.

The space $\textnormal{pr}(S_{\bG}(K_1(Q_N)_m, \cO/\varpi^r)_{\fm_{Q_N}})$ has commuting actions of $\Gamma_m$ and $\Delta_{Q_N}$, under which it becomes a projective $(\cO/\varpi^r)[\Delta_{Q_N}][\Gamma_m]$-module (this follows from \cite[Lem. 3.3.1]{CHT}).  From this, we obtain $\Gamma_m$-equivariant isomorphisms
\begin{eqnarray*}
\textnormal{pr}\left(S_{\bG}\big(K_1(Q_N)_m, \cO/\varpi^r\big)_{\fm_{Q_N}}\right)^{\Delta_{Q_N}} & \cong & \textnormal{pr}\left(S_{\bG}\big(K_0(Q_N)_m, \cO/\varpi^r\big)_{\fm_{Q_N}}\right)\\
 & \cong &  S_{\bG}\big(K_m, \cO/\varpi^r\big)_{\fm}
 \end{eqnarray*}
where the last isomorphism follows from the previous subsection.

\subsubsection{}
Let {$\bbT^{{T}\cup Q_N}_{0,\mathbf{1}}(K_i(Q_N)_m,\cO/\varpi^r)_{\fm_{Q_N}}$} denote the image of $\bbT^{{T}\cup Q_N}$ in $\End_{\cO}(\textnormal{pr}(S_{\bG}(K_i(Q_N)_m, \cO/\varpi^r)_{\fm_{Q_N}}))$.  
{(This is the mod $\varpi^r$ reduction of the image of $\bbT^{{T}\cup Q_N}_{0,\mathbf{1}}(K_i(Q_N)_m)$ in  $\End_{\cO}(\textnormal{pr}(S_{\bG}(K_i(Q_N)_m, \cO)_{\fm_{Q_N}})).)$}
We let
$$r_{\fm_{Q_N}}^{\textnormal{pr}} :
\Gamma_{F^+} \longrightarrow \cG_2\left({\bbT^{{T}\cup Q_N}_{0,\mathbf{1}}(K_1(Q_N)_m,\cO/\varpi^r)_{\fm_{Q_N}}}\right)$$ 
denote the Galois representation {obtained by pushing forward the representation of Theorem \ref{galrepheckealg} to $\bbT^{{T}\cup Q_N}_{0,\mathbf{1}}(K_i(Q_N)_m,\cO/\varpi^r)_{\fm_{Q_N}}$}.  
Using the construction of $r_{\fm_{Q_N}}$, the local/global compatibility statements of Theorem \ref{auttogal}, and the properties of the auxiliary primes {(along with \cite[Prop. 5.12]{thorne})}, we see that $r_{\fm_{Q_N}}^{\textnormal{pr}}$
is a lift of type $\cS_{Q_N}$.  In particular, $\textnormal{pr}(S_{\bG}(K_i(Q_N)_m,\cO/\varpi^r)_{\fm_{Q_N}})$ is a finite $R_{\cS_{Q_N}}^{\textnormal{univ}}$-module.

\subsubsection{}
We identify the group $\Delta_{Q_N}$ with the image of $\prod_{v\in Q_N} I_{F_{\widetilde{v}}}$ in the maximal abelian $p$-power order quotient of $\prod_{v\in Q_N} \Gamma_{F_{\widetilde{v}}}$.  This gives rise to a homomorphism $\Delta_{Q_N} \longrightarrow R_{\cS_{Q_N}}^{\textnormal{univ},\times}$ as follows: let $r_{\cS_{Q_N}}^{\textnormal{univ}}$ denote any choice of universal deformation, and consider the map
$$\prod_{v\in Q_N}{I}_{F_{\widetilde{v}}} \stackrel{r_{\cS_{Q_N}}^{\textnormal{univ}}}{\longrightarrow} \prod_{v\in Q_N}\bG\bL_2(R_{\cS_{Q_N}}^{\textnormal{univ}}) \stackrel{\det}{\longrightarrow} R_{\cS_{Q_N}}^{\textnormal{univ},\times}.$$
Thus, we obtain morphisms $\cO[\Delta_{Q_N}] \longrightarrow R_{\cS_{Q_N}}^{\textnormal{univ}} \longrightarrow R_{\cS_{Q_N}}^{\square_{T}}$.  This gives an induced $\cO[\Delta_{Q_N}]$-module structure on $\textnormal{pr}(S_{\bG}(K_1(Q_N)_m, \cO/\varpi^r)_{\fm_{Q_N}})$, which agrees with the action of $\Delta_{Q_N}$ via diamond operators.  These morphisms also lead to natural isomorphisms 
$$R_{\cS_{Q_N}}^{\textnormal{univ}}/\fa_{Q_N} \cong R_{\cS}^{\textnormal{univ}} \qquad\textnormal{and}\qquad R_{\cS_{Q_N}}^{\square_T}/\fa_{Q_N} \cong R_{\cS}^{\square_T},$$
where $\fa_{Q_N}$ denotes the augmentation ideal of $\cO[\Delta_{Q_N}]$ 
({{cf. \cite[\S 4.3.7]{gee-kisin}}}).

\subsubsection{}
For each $N$, we choose a lift $r^{\textnormal{univ}}_{\cS_{Q_N}}$ representing the universal deformation of type $\cS_{Q_N}$, with $r^{\textnormal{univ}}_{\cS_{Q_N}}~(\textnormal{mod}~\fa_{Q_N}) = r^{\textnormal{univ}}_{\cS}$.  The choice of $r^{\textnormal{univ}}_{\cS_{Q_N}}$ gives an isomorphism $R^{\square_T}_{\cS_{Q_N}} \cong R^{\textnormal{univ}}_{\cS_{Q_N}}\widehat{\otimes}_{\cO}\cT$, which reduces modulo $\fa_{Q_N}$ to the isomorphism $R^{\square_T}_{\cS} \cong R^{\textnormal{univ}}_{\cS}\widehat{\otimes}_{\cO}\cT$.

\subsection{Patching}
\label{sub:patching}

\subsubsection{}
\label{patching-rings}
Let $q$ be as in Section \ref{auxprim}, and define
\begin{eqnarray*}
\Delta_\infty & \defeq & \bbZ_p^q\\
S_\infty & \defeq & \cT[\![\Delta_\infty]\!] \cong \cO[\![z_1, \ldots, z_{4|T|}, y_1, \ldots, y_q]\!]\\
R_\infty & \defeq & R^{\textnormal{loc}}[\![x_1,\ldots, x_{q - [F^+:\bbQ]}]\!]\\
R_{\lambda,\tau',\infty} & \defeq & R^{\textnormal{loc}}_{\lambda,\tau'}[\![x_1,\ldots, x_{q - [F^+:\bbQ]}]\!] 
\end{eqnarray*}
We let $\fa \subseteq S_\infty$ denote the augmentation ideal of $S_\infty$.  For each $N\geq 1$, fix a surjection $\Delta_\infty\longtwoheadrightarrow \Delta_{Q_N}$; passing to completed group algebras, we get a surjective map $S_\infty = \cT[\![\Delta_\infty]\!] \longtwoheadrightarrow \cT[\Delta_{Q_N}]$.  We view $R^{\square_T}_{\cS_{Q_N}}$ as an $S_\infty$-algebra via $S_\infty \longtwoheadrightarrow \cT[\Delta_{Q_N}] \longrightarrow R^{\square_T}_{\cS_{Q_N}}$, which gives $R^{\square_T}_{\cS_{Q_N}}/\fa \cong R_{\cS}^{\textnormal{univ}}$.

Recall that $R^{\square_T}_{\cS_{Q_N}}$ can be topologically generated over $R^{\textnormal{loc}}$ by $q - [F^+:\bbQ]$ variables.  Therefore, we can chooose a surjection of $R^{\textnormal{loc}}$-algebras
$$R_\infty \longtwoheadrightarrow R^{\square_T}_{\cS_{Q_N}}.$$

\subsubsection{}

We may now proceed exactly as in \cite[\S 2.8]{CEGGPS} and patch together (certain quotients of) the spaces
\begin{equation}
\label{patchfinitelevel}
\textnormal{pr}\left(S_{\bG}\big(K_i(Q_N)_{2N},\cO/\varpi^N \big)_{\fm_{Q_N}}\right)^\vee\otimes_{R_{\cS_{Q_N}}^{\textnormal{univ}}} R_{\cS_{Q_N}}^{\square_T},
\end{equation}
where $\vee$ denotes {here} the Pontryagin dual.  (In our setup, we are omitting the Hecke operators at $v_1$, and we ignore the maps $\alpha_N$ of \emph{op. cit.}.)  Thus, we obtain a profinite topological $S_\infty[\![\bG(\cO_{F^+,p})]\!]$-module $M_\infty$, with a commuting action of $R_\infty$.  Furthermore, $M_\infty$ enjoys the following properties:
\begin{itemize}
\item $S_\infty$ acts on \eqref{patchfinitelevel} via the map $S_\infty \longtwoheadrightarrow \cT[\Delta_{Q_N}] \longrightarrow R^{\square_T}_{\cS_{Q_N}}$ of Subsubsection \ref{patching-rings}, and this action factors through an $\cO$-algebra morphism $S_\infty \longrightarrow R_\infty$.  Since the image of $S_\infty$ in $\textnormal{End}_{S_\infty}(M_\infty)$ is closed, this implies we may factor the action of $S_\infty$ on $M_\infty$ through an $\cO$-algebra morphism $S_\infty \longrightarrow R_\infty$.
\item The argument at the bottom of p. 29 of \cite{CEGGPS} implies that $M_\infty$ is a finite $S_\infty[\![\bG(\cO_{F^+,p})]\!]$-module, and thus it is a finite $R_\infty[\![\bG(\cO_{F^+,p})]\!]$-module.
\item As in \cite[2.10 Prop.]{CEGGPS}, $M_\infty$ is projective over $S_\infty[\![\bG(\cO_{F^+,p})]\!]$.
\end{itemize}

\subsubsection{}

Using the patched module $M_\infty$, we define a patching functor $M_\infty(-)$ from the category of finitely generated $\cO$-modules with an action of $\bG(\cO_{F^+,p})$ to the category of $R_\infty$-modules by
$$M_\infty(W) \defeq \Hom^{\textnormal{cts}}_{\bG(\cO_{F^+,p})}\big(W, M_\infty^\vee\big)^\vee.$$
By projectivity of $M_\infty$ {(in the category of pseudocompact $\cO[\![\bG(\cO_{F^+,p})]\!]$--modules)}, the functor $M_\infty(-)$ is exact.  Moreover, if $W$ is $p$-torsion free, then we have 
$$M_\infty(W) \cong \textnormal{Hom}^{\textnormal{cts}}_{\bG(\cO_{F^+,p})}\big(W,M_\infty^{\textnormal{d}}\big)^\textnormal{d},$$
{where ``$\textnormal{d}$'' denotes the Schikhof dual (cf.~\cite[\S 1.8]{CEGGPS} for the definition)}.

\begin{prop}\hfill
\label{patchingprops}
\begin{enumerate}
\item \label{patch:item1} We have a $\bG(\cO_{F^+,p})$-equivariant isomorphism
$$M_\infty/\fa \cong \left(\varprojlim_{n}S_{\bG}(K^p,\cO/\varpi^n)_{\fm}\right)^{\textnormal{d}},$$
which is compatible with the action of $\widehat{\bigotimes}_{v\in \Sigma_p^+} \widetilde{R}^{\square}_v$ on both sides \emph{(}the action on the right-hand side is given by the maps 
$$\left. \widehat{\bigotimes}_{v\in \Sigma_p^+} \widetilde{R}^{\square}_v \longrightarrow R_{\cS}^{\textnormal{univ}} \longrightarrow \varprojlim_m \bbT{^T_{0,\mathbf{1}}}(K_m)_{\fm} \right).$$

\item \label{patch:item2} If $W$ is a free $\cO$-module of finite type \emph{(}resp.~a free $\bbF$-module of finite type\emph{)} with a continuous $\bG(\cO_{F^+,p})$-action, then $M_\infty(W)$ is a free $S_\infty$-module of finite type \emph{(}resp.~a free $S_\infty\otimes_{\cO}\bbF$-module of finite type\emph{)}.

\item\label{it:patch-type} {Let $\lambda\in (\bbZ^2_+)^{\widetilde{I}_p}$ and let $\tau' = \{\tau'_v\}_{v\in \Sigma_p^+}$ be a collection of tame inertial types satisfying $(\tau_v')^{\varphi^{-[\bbF_v^+:\bbF_p]}} \cong \tau_v'^\vee$.  Set $\sigma \defeq \bigotimes_{v\in \Sigma_p^+} \sigma(\tau'_v)$ and $W \defeq W_\lambda^{{\textnormal{d}}}\otimes_{\cO}\sigma^{\circ}$.}  Then we have an isomorphism
$$M_\infty(W)/\fa \cong S_{\bG}\big(K,W^{\textnormal{d}}\big)_{\fm}^{\textnormal{d}},$$
compatible with the surjection $R_\infty/\fa \longtwoheadrightarrow R_{\cS}^{\textnormal{univ}}$
{\emph{(}note that $R^{\textnormal{univ}}_{\cS}$ acts on the right hand side via
$R^{\textnormal{univ}}_{\cS} \longtwoheadrightarrow \bbT^{T}_{\lambda, \tau'}(K)_{\fm}$ \emph{)}.}

\item\label{patch-weight} Suppose $V$ is a Serre weight.  Then we have an isomorphism
$$M_\infty(V)/\fa \cong S_{\bG}\big(K,V^\vee\big)_{\fm}^\vee,$$
compatible with the surjection $R_\infty/\fa \longtwoheadrightarrow R_{\cS}^{\textnormal{univ}}$
{\emph{(}and the action on the right hand side is obtained as in item \ref{it:patch-type}\emph{)}.}

\item\label{it:MCM1} Let $\lambda\in (\bbZ^2_+)^{\widetilde{I}_p}$ and let $\tau' = \{\tau'_v\}_{v\in \Sigma_p^+}$ be a collection of tame inertial types satisfying $(\tau_v')^{\varphi^{-[\bbF_v^+:\bbF_p]}} \cong \tau_v'^\vee$.  Set $\sigma \defeq \bigotimes_{v\in \Sigma_p^+} \sigma(\tau'_v)$.  Then the $R_\infty$-action on $M_\infty(W_\lambda^{{\textnormal{d}}}\otimes_{\cO}\sigma^\circ)$ factors through $R_{\lambda,\tau',\infty}$.  Further, if $M_\infty(W_\lambda^{{\textnormal{d}}}\otimes_{\cO}\sigma^{\circ}) \neq 0$, then it is maximal Cohen-Macaulay over $R_{\lambda,\tau',\infty}$, and the support of $M_\infty(W_\lambda^{{\textnormal{d}}} \otimes_{\cO} \sigma^\circ)$ is a union of components of $\textnormal{Spec} R_{\lambda,\tau',\infty}$.

Let $\overline{R}_{\lambda,\tau',\infty}$ denote the quotient of $R_{\lambda,\tau',\infty}$ which acts faithfully on $M_\infty(W_\lambda^{{\textnormal{d}}} \otimes_{\cO} \sigma^\circ)$.  Then $M_\infty(W_\lambda^{{\textnormal{d}}}\otimes_{\cO}\sigma^{\circ})[1/p]$ is locally free of rank 2 over $\overline{R}_{\lambda,\tau',\infty}[1/p]$.  

\item\label{it:MCM2} Let $V$ be a Serre weight with highest weight $\lambda\in (\bbZ^2_{+,p})^{\widetilde{I}_p} \subseteq (\bbZ^2_+)^{\widetilde{I}_p}$.  Then $M_\infty(V) \neq 0$ if and only if $\rbar$ is modular of weight $V$.  In this case, the $R_\infty$-action on $M_\infty(V)$ factors through $R_{\lambda,\mathbf{1},\infty}\otimes_{\cO}\bbF$ and $M_\infty(V)$ is maximal Cohen-Macaulay over $R_{\lambda,\mathbf{1},\infty}\otimes_{\cO}\bbF$.  
\end{enumerate}
\end{prop}

\begin{proof}
\noindent (i) This follows from the patching construction (see \cite[\S 2.8]{CEGGPS}).  The argument of 2.11 Corollary of \emph{op. cit.} shows $\widehat{\bigotimes}_{v\in \Sigma_p^+} \widetilde{R}^{\square}_v$-equivariance.  

\vspace{5pt}

\noindent (ii) The module $M_\infty$ is a finite projective $S_\infty[\![\bG(\cO_{F^+,p})]\!]$-module.  If $W$ is $p$-torsion free, then the proof of \cite[4.18 Lem.]{CEGGPS} implies that $M_\infty(W)$ is a finite free $S_\infty$-module.  The same argument applies when $W$ is a free $\bbF$-module of finite type.  

\vspace{5pt}

\noindent (iii) Using part \ref{patch:item1}, we have
\begin{eqnarray*}
 \big(M_\infty(W)/\fa\big)^{\textnormal{d}} & \cong & \textnormal{Hom}^{\textnormal{cts}}_{\bG(\cO_{F^+,p})}\big(W,M_\infty^{\textnormal{d}}\big)[\fa]\\
  & \cong & \textnormal{Hom}^{\textnormal{cts}}_{\bG(\cO_{F^+,p})}\big(W,(M_\infty/\fa)^{\textnormal{d}}\big) \\
  & \cong & S_{\bG}(K,W^{\textnormal{d}})_{\fm}.
\end{eqnarray*}
{The statement about the action of the deformation ring follows in a manner analogous to the proof of \cite[Thm. 5.2.1(iii)]{HLM}.}

\vspace{5pt}

\noindent (iv) This follows by applying the previous point to a free $\cO$-module whose reduction mod $p$ is $V$, and reducing mod $p$.

\vspace{5pt}

\noindent (v) {The claims regarding the $R_\infty$-action, the support of the module $M_\infty(W_\lambda^{{\textnormal{d}}} \otimes_{\cO} \sigma^\circ)$, and its local freeness follow exactly as the proof of \cite[4.18 Lem.]{CEGGPS}, using \cite[Thm. 3.3.7]{bellovin-gee} instead of \cite[Thm. 3.3.8]{kisinPSS}.}

{In order to calculate the precise value of the rank, we proceed as follows.  We first claim that every irreducible component of $\overline{R}_{\lambda,\tau',\infty}[1/p]$ has nonempty intersection with the locus $\fa = 0$.  Indeed, since $\overline{R}_{\lambda,\tau',\infty}$ acts faithfully on $M_\infty(W_\lambda^{{\textnormal{d}}} \otimes_{\cO} \sigma^\circ)$, the localized ring $\overline{R}_{\lambda, \tau',\infty}[1/p]$ acts faithfully on $M_\infty(W_\lambda^{{\textnormal{d}}} \otimes_{\cO} \sigma^\circ)[1/p]$.  The latter module is free of finite rank over $S_\infty[1/p]$ (by item \ref{patch:item2}), so the ring $S_\infty[1/p]$ acts faithfully on it, and the ring $\overline{R}_{\lambda,\tau',\infty}[1/p]$ injects into a matrix ring over $S_\infty[1/p]$.  As the action of $S_\infty[1/p]$ factors through the action of $\overline{R}_{\lambda,\tau',\infty}[1/p]$, we conclude that we have an injection $S_\infty[1/p] \longhookrightarrow \overline{R}_{\lambda,\tau',\infty}[1/p]$, and that $\overline{R}_{\lambda,\tau',\infty}[1/p]$ is finite as an $S_\infty[1/p]$-module. }

{Now let $\fq$ denote a minimal prime of $\overline{R}_{\lambda,\tau',\infty}[1/p]$, and consider the composite map $\gamma: S_\infty[1/p] \longhookrightarrow \overline{R}_{\lambda,\tau',\infty}[1/p] \longtwoheadrightarrow \overline{R}_{\lambda,\tau',\infty}[1/p]/\fq$.  Using the fact that $\gamma$ is a finite map between integral domains of the same Krull dimension (the latter because $\overline{R}_{\lambda,\tau',\infty}[1/p]$ is equidimensional; see \cite[Thm. 3.3.7]{bellovin-gee} and note that $\textnormal{Spec} \overline{R}_{\lambda,\tau',\infty}[1/p]$ is a union of irreducible components of $\textnormal{Spec} R_{\lambda,\tau',\infty}[1/p]$), standard commutative algebra arguments imply that $\gamma$ must be injective (indeed, one sees that $\textnormal{ker}(\gamma)$ is a prime ideal of height 0).  Thus, by applying the Lying Over Theorem to the integral extension $\gamma: S_\infty[1/p] \longhookrightarrow \overline{R}_{\lambda,\tau',\infty}[1/p]/\fq$, we see that there exists a prime ideal lying over the augmentation ideal $\fa$.  This verifies the claim.  }

{Since the rank of $M_\infty(W_\lambda^{{\textnormal{d}}} \otimes_{\cO} \sigma^\circ)[1/p]$ is constant on the irreducible components of $\overline{R}_{\lambda,\tau',\infty}[1/p]$, the paragraphs above imply that it suffices to compute the rank at prime ideals $\fp$ containing $\fa$.  In particular, we may compute the rank after modding out by $\fa$.  Since $M_\infty(W_\lambda^{{\textnormal{d}}} \otimes_{\cO} \sigma^\circ)[1/p]/\fa$ is locally free of positive rank over $\overline{R}_{\lambda,\tau',\infty}[1/p]/\fa$, the localized ring $(\overline{R}_{\lambda,\tau',\infty}[1/p]/\fa)_\fp$ acts faithfully on $(M_\infty(W_\lambda^{{\textnormal{d}}} \otimes_{\cO} \sigma^\circ)[1/p]/\fa)_\fp$, and since this action factors through $(\bbT_{\lambda,\tau'}^T(K)_{\fm}[1/p])_\fp$ we obtain an isomorphism
$$\big(\overline{R}_{\lambda,\tau',\infty}[1/p]/\fa\big)_\fp \stackrel{\sim}{\longrightarrow} \big(\bbT_{\lambda,\tau'}^T(K)_{\fm}[1/p]\big)_\fp.$$
(The surjectivity of this map follows from item \ref{it:patch-type}.)  It therefore suffices to compute the rank of $M_\infty(W_\lambda^{{\textnormal{d}}} \otimes_{\cO} \sigma^\circ)[1/p]/\fa \cong S_\bG(K, W_\lambda \otimes_{\cO} \sigma^{{\vee,\circ}})_{\fm}^{\textnormal{d}}[1/p]$ as a module over $\bbT_{\lambda,\tau'}^T(K)_{\fm}[1/p]$.  Finally, since $\bbT_{\lambda,\tau'}^T(K)_{\fm}[1/p]$ is a product of fields (being a reduced Artinian $E$-algebra), this is equivalent to computing the rank of the linear dual $S_\bG(K, W_\lambda \otimes_{\cO} \sigma^{{\vee,\circ}})_{\fm}[1/p]$ over $\bbT_{\lambda,\tau'}^T(K)_{\fm}[1/p]$.  }

{Up to enlarging $E$ if necessary, we can assume that all prime ideals of $\bbT_{\lambda,\tau'}^T(K)_{\fm}[1/p]$ have residue field $E$.
Hence a prime ideal}
$\fp$ of $\bbT_{\lambda,\tau'}^T(K)_{\fm}[1/p]$ corresponds to a Hecke eigensystem $\lambda_\fp: \bbT_{\lambda,\tau'}^T(K)_{\fm}[1/p] \longrightarrow \overline{E}$, and therefore we obtain
$$\left(S_\bG(K, W_\lambda \otimes_{\cO} \sigma^{{\vee,\circ}})_{\fm}[1/p]\right)_{\fp} \otimes_{E,\imath}\bbC \cong \bigoplus_{\substack{\pi_\infty \cong  \mathsf{W}_{\imath_*\lambda}^\vee \\ \lambda_\pi = \lambda_\fp}} m(\pi) \Hom_{\bG(\cO_{F^+,p})}(\sigma^\circ\otimes_{\cO} \bbC, \pi_p ) \otimes_{\bbC} (\pi^{\infty,p})^{K^p},$$
where $\lambda_\pi: \bbT_{\lambda,\tau'}^T(K)_{\fm}[1/p] \longrightarrow \bbC \stackrel{\imath^{-1}}{\longrightarrow} \overline{E}$ denotes the Hecke eigensystem corresponding to $\pi$.  Since the base change map is injective on $L$-packets, strong multiplicity one for $\bG\bL_2$ implies that there is at most one $L$-packet contributing to the direct sum above.  Further, the base change map is determined by local base changes of local $L$-packets.  We have that the condition $\pi_v^{K_v} \neq 0$ for $v \not\in \Sigma_p^+$ inert in $F$ determines a unique member of the local $L$-packet at $v$, and the condition $\Hom_{\bG(\cO_{F^+,p})}(\sigma^\circ\otimes_{\cO} \bbC, \pi_p) \neq 0$ along with the multiplicity one property of Theorem \ref{ILLC} also determine a unique member of the local $L$-packet at $v\in \Sigma_p^+$.  Therefore, there is {exactly} one automorphic representation $\pi$ contributing to the direct sum above.  For this $\pi$, we know that $r_\imath(\pi)$ is irreducible (being a lift of $\BC'(\rbar)$), and therefore the base change of $\pi$ to $\bG\bL_2(\bbA_F)$ is cuspidal.  This implies that the $L$-packet $\JL([\pi])$ on $\bG^*(\bbA_{F^+})$ is stable, and therefore $m(\pi^*) = 1$ for any $\pi^*\in \JL([\pi])$ by \cite[Thm. 11.5.1(c)]{rogawski}.  Using an analog of the relation ``$n(\pi) = n(\tau)\prod_v c(\pi_v)$'' in \cite[p. 781]{labesselanglands}, we obtain $m(\pi) = 1$ (see also \cite[Thm. 1.7.1]{KMSW}).  To conclude, we note that $\dim_\bbC \Hom_{\bG(\cO_{F^+,p})}(\sigma^\circ\otimes_{\cO} \bbC, \pi_p) = 1$ by Theorem \ref{ILLC} and $\dim_\bbC( (\pi^{\infty,p})^{K^p}) = 2$ by \cite[Lem. 1.6(2)]{Taylor06} (since we have omitted Hecke operators at $v_1$).

\vspace{5pt}

\noindent (vi) Let $V$ be a Serre weight.  By point (iv) and Nakayama's lemma, $M_\infty(V) \neq 0$ if and only if $\rbar$ is modular of weight $V$ {and level $K$.  Therefore, in order to conclude it suffices to show that if $\rbar$ is modular of weight $V$, then $\rbar$ is modular of weight $V$ and level $K$.  This follows from Proposition \ref{prop:level:lowering} : in that proof, if we choose $\sigma$ so that $\JH(\overline{\sigma}) \cap \textnormal{W}^{?}(\rbar) = \{V\}$, then Theorem \ref{thm:WE} and exactness of the functor of algebraic automorphic forms guarantees that the $V'$ appearing at the end of the proof is equal to $V$.}

The claim about $M_\infty(V)$ being maximal Cohen--Macaulay follows exactly as in the previous point.
\end{proof}

\subsection{Weight Existence}
\label{subsec:WExt}

\begin{theo}
\label{thm:WExt}
Let $\rbar:\Gamma_{F^+}\longrightarrow \cG_2(\bbF)$ be a continuous representation such that
\begin{itemize}
	\item $\nu\circ \rbar = \overline{\varepsilon}^{-1}$;
	\item $\rbar^{-1}(\bG\bL_2(\bbF)\times\bG_m(\bbF)) = \Gamma_{F}$;
	\item $\BC'(\rbar)(\Gamma_F) \supseteq \bG\bL_2(\bbF')$ for some subfield $\bbF' \subseteq \bbF$ with $|\bbF'| > 6$;
	\item $\rbar$ is modular;
	\item $\rbar|_{\Gamma_{F^+_v}}$ is tamely ramified and $4$-generic for all $v\in \Sigma_p^+$;
	\item $\rbar$ is unramified outside $\Sigma_p^+$;
	\item $\overline{F}^{\ker(\textnormal{ad}^0(\rbar))}$ does not contain $F(\zeta_p)$.
\end{itemize}
Then
$$\textnormal{W}^?(\rbar) \subseteq \textnormal{W}_{\textnormal{mod}}(\rbar).$$
\end{theo}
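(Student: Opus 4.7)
The plan is to combine the patching functor $M_\infty$ of Proposition \ref{patchingprops} with the local integrality and Breuil--M\'ezard results (Theorem \ref{thm:dringBC} and Corollary \ref{cor:irr:cmpts}) via an inductive argument on graph distance, in the spirit of \cite{gee-kisin}. By Proposition \ref{patchingprops}\ref{it:MCM2}, a Serre weight $V$ is modular if and only if $M_\infty(V)\neq 0$, so the task is to establish non-vanishing of $M_\infty(V)$ for every $V\in \textnormal{W}^?(\rbar)$. Since $\rbar$ is modular, fix $V_0\in \textnormal{W}_{\textnormal{mod}}(\rbar)$, which lies in $\textnormal{W}^?(\rbar)$ by Theorem \ref{thm:WE}. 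By Proposition \ref{prop:weights:type}, $\textnormal{W}^?(\rhobar_v) = \JH(\overline{\beta(V_\phi(\rhobar_v))})$, so for $V = \bigotimes_v V_v\in \textnormal{W}^?(\rbar)$ I set $\dgr{V_0}{V} := \sum_{v\in \Sigma_p^+} \dgr{V_{0,v}}{V_v}$, with each local graph distance computed inside $\JH(\overline{\beta(V_\phi(\rhobar_v))})$.

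The first key step is to show that $e(M_\infty(V''))=2$ for every modular $V''\in \textnormal{W}_{\textnormal{mod}}(\rbar)$. Applying Lemma \ref{lem:comb:typ1} embedding-wise with $F = F' = V''$ produces a tame $\bG(\cO_{F^+,p})$-type $\sigma''$ isolating $V''$, namely $\JH(\overline{\sigma''}) \cap \textnormal{W}^?(\rbar) = \{V''\}$; let $\tau''$ be the associated collection of inertial types via Theorem \ref{ILLC}. By Proposition \ref{patchingprops}\ref{it:MCM1}, $M_\infty(\sigma''^{\circ})$ is maximal Cohen--Macaulay over $R_{0,\tau'',\infty}$; it is nonzero because $V''$ is modular and appears in $\overline{\sigma''^{\circ}}$, combined with the exactness of $M_\infty(-)$. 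The explicit presentations in Table \ref{Table3} show each $R_{\rhobar_v}^{\tau''_v}$ is regular, hence integral, so $R^{\textnormal{loc}}_{0,\tau''}$ and $R_{0,\tau'',\infty}$ are integral domains; full support of $M_\infty(\sigma''^{\circ})$ combined with the generic rank $2$ property yields
\begin{equation*}
e\big(M_\infty(\sigma''^{\circ})/\varpi\big) \;=\; 2\,e\big(R^{\textnormal{loc}}_{0,\tau''}/\varpi\big) \;=\; 2\,\big|\textnormal{W}^?(\rbar) \cap \JH(\overline{\sigma''})\big| \;=\; 2,
\end{equation*}
the middle equality being Corollary \ref{cor:irr:cmpts}. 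Exactness of $M_\infty(-)$ on a composition series of $\overline{\sigma''^{\circ}}$ together with Theorem \ref{thm:WE} (which forces $M_\infty(V')=0$ for $V'\notin \textnormal{W}^?(\rbar)$) then yield $e(M_\infty(\sigma''^{\circ})/\varpi) = e(M_\infty(V''))$, whence $e(M_\infty(V''))=2$.

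The induction on $d = \dgr{V_0}{V}$ now runs as follows. The base case $d = 0$ is the modularity of $V_0$. For the inductive step, fix $V$ with $d > 0$ and assume every $V'\in \textnormal{W}^?(\rbar)$ with $\dgr{V_0}{V'} < d$ is modular. By Lemma \ref{lem:comb:typ1} applied embedding-wise with $F = V_0$ and $F' = V$, there exists a tame type $\sigma$ with $V_0, V\in \JH(\overline{\sigma})$, $\big|\JH(\overline{\sigma})\cap \textnormal{W}^?(\rbar)\big| = 2^d$, and every other element of this intersection has graph distance to $V_0$ strictly less than $d$, hence is modular by the inductive hypothesis. Running the multiplicity computation of the first step with $\sigma$ in place of $\sigma''$, and using the modularity of $V_0$ to ensure $M_\infty(\sigma^{\circ})\neq 0$, gives
\begin{equation*}
2^{d+1} \;=\; e\big(M_\infty(\sigma^{\circ})/\varpi\big) \;=\!\!\!\!\! \sum_{V'\in \JH(\overline{\sigma})\cap \textnormal{W}^?(\rbar)}\!\!\!\!\! e(M_\infty(V')) \;=\; 2\cdot \#\{V'\in \JH(\overline{\sigma})\cap \textnormal{W}^?(\rbar):\, M_\infty(V')\neq 0\},
\end{equation*}
the last equality using that modular weights in the intersection contribute exactly $2$ each by the first step, and non-modular ones contribute $0$. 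Hence the number of modular weights in $\JH(\overline{\sigma})\cap \textnormal{W}^?(\rbar)$ equals its full cardinality $2^d$, so every such weight --- in particular $V$ --- is modular.

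The main obstacle is establishing the integrality of the polarized potentially crystalline deformation rings $R_{\rhobar_v}^{\tau'_v}$ together with the matching Breuil--M\'ezard multiplicities. This is carried out via the theory of Frobenius-twist self-dual Kisin modules in Section \ref{sec:Loc:Def}, culminating in Theorem \ref{thm:dringBC} and Corollary \ref{cor:irr:cmpts}; given these local inputs, the global argument is largely formal once one has the patching functor with the properties listed in Proposition \ref{patchingprops}. A subtlety to watch is ensuring that all tame types produced by Lemma \ref{lem:comb:typ1} inherit enough genericity (e.g.~$3$-generic) for Corollary \ref{cor:irr:cmpts} to apply, which follows from the $4$-genericity hypothesis on $\rbar|_{\Gamma_{F^+_v}}$ and the fact that $\beta$ preserves genericity.
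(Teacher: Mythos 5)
Your proof is correct and follows essentially the same route as the paper: induction on the graph distance to a fixed modular weight, using Lemma \ref{lem:comb:typ1} to produce the auxiliary type, Proposition \ref{patchingprops} and Corollary \ref{cor:irr:cmpts} for the multiplicity count, and Theorem \ref{thm:WE} to kill the non-predicted constituents. The only difference is organizational — you extract the statement $e(M_\infty(V''))=2$ for modular $V''$ as a separate preliminary step via an isolating type, whereas the paper folds this into the base case and inductive hypothesis of a single induction — but the content is identical.
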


\begin{proof}
Let $V \in \textnormal{W}^?(\rbar)$ and $V' \in \textnormal{W}_{\textnormal{mod}}(\rbar)$.  We will prove that $e(M_\infty(V)) = 2$ by induction on $d \defeq \dgr{V}{V'} = \sum_{v\in \Sigma_p^+}\dgr{V_v}{V'_v}$.  
{(We write $e(M_\infty(V))$ to denote $d!$ times the coefficient of degree $d$ of the Hilbert--Samuel polynomial of $M_\infty(V)$ as a module over $R_\infty/\textnormal{Ann}_{R_\infty}(M_\infty(V))$, where $d$ denotes the Krull dimension of $R_\infty/\textnormal{Ann}_{R_\infty}(M_\infty(V))$.)}

By Lemma \ref{lem:comb:typ1} there exists a tame $\bU_2(\cO_{F^+,p})$-type $\sigma = \bigotimes_{v\in \Sigma_p^+} \sigma_v$ such that:
\begin{enumerate}
	\item $V, V'\in \JH(\overline{\sigma})$;
	\item $|\JH(\overline{\sigma})\cap \textnormal{W}^?(\rbar)| = 2^d$;
	\item\label{it:str:low} for any $V''\in \JH(\overline{\sigma})\cap \textnormal{W}^?(\rbar)$ satisfying $V''\neq V$ one has $\dgr{V''}{V'} < \dgr{V}{V'}$.
\end{enumerate}
We define $\tau_v'$ to be the tame principal series type such that $\sigma_v \cong \sigma(\tau_v')$.  We note that in this case, we have isomorphisms 
$$\left(\tR^{\Box}_{v_1}\widehat{\otimes}\widehat{\bigotimes}_{v\in \Sigma_p^+}R_{\rhobar_v}^{\tau_v'}\right)[\![x_1, \ldots, x_{q - [F^+:\bbQ]}]\!] \cong R_{0,\tau',\infty} = \overline{R}_{0,\tau',\infty}.$$
(The last equality follows from Proposition \ref{patchingprops}\ref{it:MCM1} and the fact that each $R_{\rhobar_v}^{\tau_v'}$ is integral, cf. Table \ref{Table3}.

We thus have
\begin{eqnarray*}
2(2^d - 1) + e(M_\infty(V)) & = & \sum_{V''\in \JH(\overline{\sigma})\cap \textnormal{W}^?(\rbar)} e(M_\infty(V''))\\
 & = & e(M_\infty(\overline{\sigma^\circ}))\\
 & = & 2e\Bigg(\Big(\widehat{\bigotimes}_{v\in \Sigma_p^+}R_{\rhobar_v}^{\tau_v'}\Big)\otimes_{\cO}\bbF\Bigg)\\
 & = & 2\cdot 2^d.
\end{eqnarray*}
The first equality follows from the inductive hypothesis and item \ref{it:str:low}.  For the second, we note that $M_\infty(-)$ is exact, and if $V''$ is a Serre weight such that $V''\not\in \textnormal{W}^?(\rbar)$, then Theorem \ref{thm:WE} and Proposition \ref{patchingprops}\ref{it:MCM2} imply $M_\infty(V'') = 0$.  For the third we use Proposition \ref{patchingprops}\ref{it:MCM1} above, and the fourth follows by Corollary \ref{cor:irr:cmpts}.  Hence, we obtain $e(M_\infty(V)) = 2$, and in particular $M_\infty(V) \neq 0$.  Thus $V\in \textnormal{W}_{\textnormal{mod}}(\rbar)$ by Proposition \ref{patchingprops}\ref{it:MCM2}.  
\end{proof}

Combining Theorems \ref{thm:WE} and \ref{thm:WExt}, along with the isomorphism in Subsection \ref{isomsect}, we obtain the following.

\begin{corollary}
\label{cor:SWC}
Let $\rbar:\Gamma_{F^+}\longrightarrow {}^C\bU_2(\bbF)$ be a continuous $L$-parameter such that
\begin{itemize}
	\item $\widehat{\imath}\circ \rbar = \overline{\varepsilon}$;
	\item $\rbar^{-1}(\bG\bL_2(\bbF)\times\bG_m(\bbF)) = \Gamma_{F}$;
	\item $\BC(\rbar)(\Gamma_F) \supseteq \bG\bL_2(\bbF')$ for some subfield $\bbF' \subseteq \bbF$ with $|\bbF'| > 6$;
	\item $\rbar$ is modular;
	\item $\rbar|_{\Gamma_{F^+_v}}$ is tamely ramified and $4$-generic for all $v\in \Sigma_p^+$;
	\item $\rbar$ is unramified outside $\Sigma_p^+$;
	\item $\overline{F}^{\ker(\textnormal{ad}^0(\rbar))}$ does not contain $F(\zeta_p)$.
\end{itemize}
Then
$$\textnormal{W}^?(\rbar) = \textnormal{W}_{\textnormal{mod}}(\rbar).$$
\end{corollary}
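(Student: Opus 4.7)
The plan is to deduce this corollary formally by combining the two inclusions already established. The hypotheses are stated for an $L$-parameter $\rbar$ valued in ${}^C\bU_2(\bbF)$, whereas Theorems \ref{thm:WE} and \ref{thm:WExt} are stated for representations valued in $\cG_2(\bbF)$. First I would transport $\rbar$ across the isomorphism ${}^C\bU_2 \stackrel{\sim}{\longrightarrow} \cG_2$ of Subsection \ref{isomsect} (used globally via Remark \ref{cgroupglobal}) to obtain a continuous representation $\rbar':\Gamma_{F^+}\longrightarrow \cG_2(\bbF)$, and check that each hypothesis of the corollary translates into the corresponding hypothesis of the two theorems.

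For the translation of hypotheses, the relation $\nu \circ \rbar' = \overline{\varepsilon}^{-1}$ follows from $\widehat{\imath}\circ\rbar = \overline{\varepsilon}$ together with the identity $\nu = \widehat{\imath}^{-1}$ under the isomorphism in Subsection \ref{isomsect}. The condition $\rbar'^{-1}(\bG\bL_2(\bbF)\times\bG_m(\bbF)) = \Gamma_F$ is unchanged, since the isomorphism is the identity on the connected component. By \eqref{twoBC}, one has $\BC'(\rbar') \cong \BC(\rbar)\otimes \overline{\varepsilon}^{-1}$, so the image condition $\BC(\rbar)(\Gamma_F) \supseteq \bG\bL_2(\bbF')$ transfers to a condition on $\BC'(\rbar')(\Gamma_F)$ containing $\bG\bL_2(\bbF'')$ for a suitable subfield $\bbF''$ of comparable size (after possibly enlarging slightly, still with more than $6$ elements, and replacing $\bbF'$ by an appropriate twist-adjusted subfield; the adequacy/big-image consequences in \cite{BLGG13} are unchanged under character twist). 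The modularity, unramifiedness away from $p$, tameness and $4$-genericity, and the condition on $\overline{F}^{\ker(\textnormal{ad}^0(\rbar))}$ are intrinsic to the $\bG\bL_2$-factor and thus invariant under the isomorphism.

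Next I would observe that both sets $\textnormal{W}^?(-)$ and $\textnormal{W}_{\textnormal{mod}}(-)$ are defined in a way that is manifestly invariant under the isomorphism ${}^C\bU_2 \cong \cG_2$: $\textnormal{W}^?$ is built from the local data $\rbar|_{\Gamma_{F^+_v}}$ via Proposition \ref{prop:weights:type}, whose ingredients (the associated Deligne--Lusztig representation $V_\phi$ and the map $\beta$) are intrinsic to the underlying $\bU_2$; and $\textnormal{W}_{\textnormal{mod}}$ depends only on the Hecke system cut out by $\rbar$ (equivalently $\rbar'$) via the maximal ideal $\fm_{\rbar}$ of Definition \ref{maxidealgalois}. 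Hence $\textnormal{W}^?(\rbar) = \textnormal{W}^?(\rbar')$ and $\textnormal{W}_{\textnormal{mod}}(\rbar) = \textnormal{W}_{\textnormal{mod}}(\rbar')$.

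Applying Theorem \ref{thm:WE} to $\rbar'$ gives $\textnormal{W}_{\textnormal{mod}}(\rbar')\subseteq \textnormal{W}^?(\rbar')$, while Theorem \ref{thm:WExt} applied to $\rbar'$ gives the reverse inclusion $\textnormal{W}^?(\rbar')\subseteq \textnormal{W}_{\textnormal{mod}}(\rbar')$. Combining these with the identifications above yields the desired equality $\textnormal{W}^?(\rbar) = \textnormal{W}_{\textnormal{mod}}(\rbar)$. The main (minor) obstacle is just the bookkeeping in the second step: verifying that the image and cyclotomic-multiplier hypotheses on $\rbar$ translate cleanly to the corresponding hypotheses on $\rbar'$ under the twist by $\overline{\varepsilon}$ implicit in $\BC'$ vs $\BC$; everything else is essentially formal given the work done in Sections \ref{sec:global} and \ref{glob2}.
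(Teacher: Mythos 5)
Your proposal is correct and follows essentially the same route as the paper, which proves the corollary simply by combining Theorem \ref{thm:WE} and Theorem \ref{thm:WExt} via the isomorphism ${}^C\bU_2 \cong \cG_2$ of Subsection \ref{isomsect}. The extra bookkeeping you supply (translating $\widehat{\imath}\circ\rbar=\overline{\varepsilon}$ into $\nu\circ\rbar'=\overline{\varepsilon}^{-1}$, handling the twist $\BC'(\rbar')\cong\BC(\rbar)\otimes\overline{\varepsilon}^{-1}$ in the big-image hypothesis, and noting that both sets of weights are defined compatibly with the isomorphism) is exactly the content the paper leaves implicit, and it checks out.
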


\subsection{Automorphy lifting}
\label{autlift}

We now discuss our other main global application.

\begin{defn}
Let $F/F^+$ and $\bG$ be as in Subsection \ref{subsec:UGGlob}, and suppose $r': \Gamma_F \longrightarrow \bG\bL_2({E})$ is a continuous Galois representation.  We say $r'$ is \emph{automorphic} if there exists a cuspidal automorphic representation $\pi$ of $\bG(\bbA_{F^+})$ such that $r'{\otimes_E\ovl{E}} \cong r_{\imath}(\pi)$, where $r_{\imath}(\pi)$ is as in Theorem \ref{auttogal}.
\end{defn}

\begin{theo}
\label{thm:mod:lift-body}
Let $F/F^+$ and $\bG$ be as in Subsection \ref{subsec:UGGlob}.  Let $r': \Gamma_F \longrightarrow \bG\bL_2(\cO)$ be a Galois representation and let $\rbar': \Gamma_F \longrightarrow \bG\bL_2(\bbF)$ denote the associated residual representation.
Assume that
\begin{itemize}
	\item $r'$ is unramified at all but finitely many places;
	\item we have $r'^c \cong r'^\vee \otimes \varepsilon^{-1}$;
	\item for all $\kappa \in \widetilde{I}_p$, the local representation $r'|_{\Gamma_{F_{v(\kappa)}}}$ is potentially crystalline, with $\textnormal{HT}_{\kappa}(r'|_{\Gamma_{F_{v(\kappa)}}}) = \{1,~ 0\}$ and $4$-generic tame inertial type $\tau'_{v(\kappa)}$;
	\item $\rbar'$ is unramified outside $\Sigma_p$;
	\item for all $v\in \Sigma_p$, the local representation $\rbar'|_{\Gamma_{F_v}}$ is tamely ramified and $4$-generic;
	\item $\rbar' \cong \overline{r_{\imath}(\pi)}$ where $\pi$ is a cuspidal automorphic representation of $\bG(\bbA_{F^+})$ with $\pi_\infty$ trivial and such that for all $v\in \Sigma_p^+$, $\pi_v|_{\bG(\cO_{F^+_v})}$ contains the tame $\bG(\cO_{F^+_v}) \cong \bU_2(\cO_{F^+_v})$-representation associated to $\tau'_v$ by the inertial Local Langlands correspondence \emph{(}cf.~Theorems \ref{ILLC} and \ref{auttogal}\emph{)};
	\item $\overline{F}^{\ker(\textnormal{ad}(\rbar'))}$ does not contain $F(\zeta_p)$; 
	\item $\rbar'(\Gamma_F) \supseteq \bG\bL_2(\bbF')$ for some subfield $\bbF' \subseteq \bbF$ with $|\bbF'| > 6$.
\end{itemize}
Then
$r'{\otimes_{\cO}E}$ is automorphic.

\end{theo}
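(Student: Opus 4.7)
The plan is to carry out a Taylor--Wiles--Kisin patching argument via the patched module $M_\infty(-)$ of Proposition \ref{patchingprops}, using as key input the integrality of the potentially crystalline local deformation rings $R^{\tau'_v}_{\rhobar_v}$ established in Section \ref{sec:Loc:Def}.

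First I would extend $r'$ to a $\cG_2$-valued representation of $\Gamma_{F^+}$: the conjugate self-duality $r'^c \cong r'^\vee \otimes \varepsilon^{-1}$ together with the correspondence of Subsubsection \ref{poln-equiv} produces a continuous lift $r : \Gamma_{F^+} \longrightarrow \cG_2(\cO)$ with $\BC'(r) \cong r'$ and $\nu \circ r = \varepsilon^{-1}$. Reducing modulo $\varpi$ gives $\rbar : \Gamma_{F^+} \longrightarrow \cG_2(\bbF)$ extending $\rbar'$, and the listed hypotheses on $r'$ and $\rbar'$ translate via $\BC'$ into the hypotheses placed on $\rbar$ at the beginning of Subsection \ref{sub:setup}. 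Since the Hodge--Tate weights of $r'|_{\Gamma_{F_{v(\kappa)}}}$ are $\{1,0\}$ for every $\kappa \in \widetilde{I}_p$ and its inertial types at $v\in\Sigma_p$ are the prescribed $\tau'_v$, the representation $r$ is an $\cO$-valued point of the deformation problem $\cS_{0,\tau'}$, giving a homomorphism $R^{\textnormal{univ}}_{\cS_{0,\tau'}} \longrightarrow \cO$.

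Next I would set $\sigma \defeq \bigotimes_{v\in\Sigma_p^+}\sigma(\tau'_v)$, choose a $\bG(\cO_{F^+,p})$-stable $\cO$-lattice $\sigma^\circ \subseteq \sigma$, and consider $M_\infty(\sigma^\circ)$. The assumption that $\pi_v|_{\bG(\cO_{F^+_v})}$ contains $\sigma(\tau'_v)$ for every $v\in\Sigma_p^+$, combined with Lemma \ref{lem:non-zero} and Proposition \ref{prop:level:lowering}, yields $S_{\bG}(K,\sigma^{\circ,\vee})_\fm \neq 0$; by Proposition \ref{patchingprops}\ref{it:patch-type} and Nakayama's lemma, $M_\infty(\sigma^\circ)\neq 0$. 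The crucial geometric input then enters: by Theorem \ref{thm:dringBC} and the explicit presentations in Table \ref{Table3}, each local ring $R^{\tau'_v}_{\rhobar_v}$ is an integral domain; together with the smoothness of $\widetilde{R}^{\square}_{v_1}$ and the presentation \eqref{defringpres}, this implies that $R_{0,\tau',\infty}$ is itself an integral domain.

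To conclude, I invoke Proposition \ref{patchingprops}\ref{it:MCM1}: $M_\infty(\sigma^\circ)$ is maximal Cohen--Macaulay over $R_{0,\tau',\infty}$, and its support is a union of irreducible components of $\Spec R_{0,\tau',\infty}$. Integrality of $R_{0,\tau',\infty}$ and non-vanishing of $M_\infty(\sigma^\circ)$ force this support to equal all of $\Spec R_{0,\tau',\infty}$, so $\overline{R}_{0,\tau',\infty}=R_{0,\tau',\infty}$ and $M_\infty(\sigma^\circ)[1/p]$ is locally free of rank $2$ over $R_{0,\tau',\infty}[1/p]$. After a choice of framings, the $\cO$-point of $R^{\textnormal{univ}}_{\cS_{0,\tau'}}$ attached to $r$ defines a closed point $x \in \Spec R_{0,\tau',\infty}/\fa[1/p]$, at which the localization of $M_\infty(\sigma^\circ)/\fa \cong S_{\bG}(K,\sigma^{\circ,\vee})_\fm^{\textnormal{d}}$ is non-zero. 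This produces a Hecke eigensystem in $S_{\bG}(K,\sigma^{\circ,\vee})_\fm\otimes_\cO \overline{E}$ whose associated Galois representation, by Theorem \ref{auttogal} and Chebotarev density, is isomorphic to $r'\otimes_\cO \overline{E}$, giving automorphy. The main obstacle — the integrality of the potentially Barsotti--Tate deformation rings — was already surmounted in Section \ref{sec:Loc:Def}, so the rest of the argument is a standard application of the patching machinery.
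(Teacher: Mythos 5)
Your proposal follows the right overall strategy (patching plus integrality of the local rings at $p$), but there is a genuine gap at the point where you assert that the extension $r$ of $r'$ defines an $\cO$-point of $R^{\textnormal{univ}}_{\cS_{0,\tau'}}$. The hypotheses only require $r'$ to be unramified at \emph{all but finitely many} places and $\rbar'$ to be unramified outside $\Sigma_p$; thus $r'$ may be ramified at a nonempty finite set $\Sigma_{\textnormal{ram}}$ of prime-to-$p$ places where the residual representation is unramified. A deformation of type $\cS_{0,\tau'}$ is by definition unramified outside $T = \Sigma_p^+\cup\{v_1\}$, so $r$ need not factor through $R^{\textnormal{univ}}_{\cS_{0,\tau'}}$ at all, and the closed point $x$ you want to localize at does not exist on $\Spec R_{0,\tau',\infty}/\fa$. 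The paper's proof repairs this by enlarging the deformation problem to $\cS_{\Sigma_{\textnormal{ram}},\tau'}$, imposing only the fixed-multiplier framed condition $\widetilde{R}^{\Box}_v$ at each $v\in\Sigma_{\textnormal{ram}}^+$ (and first invoking Proposition \ref{prop:level:lowering} to arrange that $r_\imath(\pi)$ itself is unramified outside $\Sigma_p$).

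Once the auxiliary places are included, your final step also breaks: the completed tensor product $R^{\loc}_{0,\tau',\Sigma_{\textnormal{ram}}}$ is no longer an integral domain, because the unrestricted local rings $\widetilde{R}^{\Box}_v$ at $v\in\Sigma_{\textnormal{ram}}^+$ need not be domains, so you cannot conclude that the support of $M_\infty(\sigma^\circ)$ is all of $\Spec R_{0,\tau',\Sigma_{\textnormal{ram}},\infty}$. The correct conclusion is the component-matching argument: the points $\zeta'$ and $\zeta_\pi$ attached to $\widetilde{r}'$ and $\widetilde{r}_\imath(\pi)$ lie on a \emph{common} irreducible component of $\Spec R_{0,\tau',\Sigma_{\textnormal{ram}},\infty}$, because the factors at $p$ and at $v_1$ are domains while the factors at $\Sigma_{\textnormal{ram}}^+$ satisfy Property $(*)$ of \cite[\S 3.5]{BLGG11} (their generic fibers are connected in the relevant sense, by Lemma 3.5.1 of \emph{op.~cit.}). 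Since $\zeta_\pi$ lies in the support of $M_\infty(\sigma^\circ)$ and that support is a union of irreducible components, $\zeta'$ lies in it too, and one concludes as you do. In the special case where $r'$ happens to be unramified outside $p$, your one-point argument is essentially the paper's, but the theorem as stated requires the two-point version.
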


\begin{proof}  
We outline the proof, which is based on \cite[\S\S~ 4, 5]{Taylor08}.

We begin with several reductions.  Let $\Sigma_{\textnormal{ram}}$ denote the set of prime-to-$p$ places of $F$ at which $r'$ is ramified, and $\Sigma_{\textnormal{ram}}^+$ the set of places of $F^+$ which are the restriction to $F^+$ of places in $\Sigma_{\textnormal{ram}}$.  For every $v\in \Sigma_{\textnormal{ram}}^+$, we let $\widetilde{v}$ denote a fixed choice of place of $F$ lying above $v$.  We moreover fix a finite place $v_1$ of $F^+$ satisfying the hypotheses \ref{it:aux:1},\ref{it:aux:2} and \ref{it:aux:3} of Subsubsection \ref{subsub:setup}.  By \cite[Lem.~2.2.2]{BLGGT}, we may replace $r'$ by $r'|_{\Gamma_L}$ (for $L = L^+F$ and an appropriately chosen $L^+$ furnished by \cite[Lem. 4.1.2]{CHT}) and assume without loss of generality that the following conditions are satisfied: for every $w\in \Sigma_{\textnormal{ram}}$, we have
\begin{itemize}
\item $w$ is split over $w|_{F^+}$;
\item $\bN(w) \equiv 1~(\textnormal{mod}~p)$;
\item $\rbar'|_{\Gamma_{F_w}}$ is trivial;
\item the representation $r'|_{\Gamma_{F_{w}}}$ is, up to an unramified twist, a nonsplit extension of the trivial character by the cyclotomic character.  
\end{itemize}
By the proof of Proposition \ref{prop:level:lowering} we can assume further that $r_\imath(\pi)$ is unramified outside $\Sigma_p$ (in particular it is unramified outside $\Sigma_p \cup \{\widetilde{v}_1,  \tv_1^c\} \cup \Sigma_{\textnormal{ram}}$).

We first discuss Galois representations.  Enlarging $\cO$ if necessary, we view $r_\imath(\pi)$ as being valued in $\bG\bL_2(\cO)$.  Let $\widetilde{r}':\Gamma_{F^+} \longrightarrow \cG_2(\cO)$ and $\widetilde{r}_{\imath}(\pi): \Gamma_{F^+} \longrightarrow \cG_2(\cO)$ denote respectively the extensions of $r'$ and $r_{\imath}(\pi)$ to $\Gamma_{F^+}$ which satisfy $\nu\circ \widetilde{r}' = \varepsilon^{-1} = \nu \circ \widetilde{r}_{\imath}(\pi)$. We also let $\overline{\widetilde{r}'}$ denote the reduction mod $\varpi$ of $\widetilde{r}'$ or $\widetilde{r}_{\imath}(\pi)$.

For $v \in \Sigma_{\textnormal{ram}}^+$, we let $\chi_{v,1}, \chi_{v,2}: \Gamma_{F_{\tv}} \longrightarrow 1 + \varpi \cO$ denote two distinct continuous characters.  We denote by $R_v^{(\chi_{v,1},\chi_{v,2})}$ denote the quotient of $R^{\Box}_{v}$ parametrizing lifts $\rho$ of $\overline{\widetilde{r}'}|_{\Gamma_{F^+_v}}$ which satisfy $\nu \circ \rho = \varepsilon^{-1}$ and 
$$\textnormal{char}_{\BC'(\rho)(\gamma)}(X) = (X - \chi_{v,1}(\gamma))(X - \chi_{v,2}(\gamma))$$
for all $\gamma \in I_{F_{\tv}}$.  We define $R_v^{(1,1)}$ similarly, with the characters $\chi_{v,1}, \chi_{v,2}$ replaced by the trivial character.  (Note that these quotients exist and are non-zero, by the discussion in \cite[\S~ 3]{Taylor08}.)  Since the characters $\chi_{v,1}, \chi_{v,2}$ are trivial modulo $\varpi$, we have $R_v^{(\chi_{v,1},\chi_{v,2})}/\varpi \cong R_v^{(1,1)}/\varpi$.

We now consider two global deformation problem $\cS_{\Sigma_{\textnormal{ram}}, \tau'}$ and $\cS_{\Sigma_{\textnormal{ram}}, \tau'}'$  given by
\begin{flushleft}
$\cS_{\Sigma_{\textnormal{ram}},\tau'} \defeq \left(F/F^+,~ \Sigma_p^+ \cup \{v_1\}\cup \Sigma^+_{\textnormal{ram}},~\Sigma_p \cup \{\tld{v}_1\}\cup \tld{\Sigma}^+_{\textnormal{ram}} ,~ \cO,~ \overline{\widetilde{r}'},~ \varepsilon^{-1},\right.$ 
\end{flushleft}
\begin{flushright}
$\left.\{R^{\Box,0,\tau_v'}_{v}\}_{v \in \Sigma_p^+} \cup \{ \widetilde{R}^{\Box}_{v_1}\} \cup \{R^{(1,1)}_{v}\}_{v\in \Sigma^+_{\textnormal{ram}}}\right),$
\end{flushright}
\begin{flushleft}
$\cS_{\Sigma_{\textnormal{ram}},\tau'}' \defeq \left(F/F^+,~ \Sigma_p^+ \cup \{v_1\}\cup \Sigma^+_{\textnormal{ram}},~\Sigma_p \cup \{\tld{v}_1\}\cup \tld{\Sigma}^+_{\textnormal{ram}} ,~ \cO,~ \overline{\widetilde{r}'},~ \varepsilon^{-1},\right.$ 
\end{flushleft}
\begin{flushright}
$\left.\{R^{\Box,0,\tau_v'}_{v}\}_{v \in \Sigma_p^+} \cup \{ \widetilde{R}^{\Box}_{v_1}\} \cup \{R^{(\chi_{v,1}, \chi_{v,2})}_{v}\}_{v\in \Sigma^+_{\textnormal{ram}}}\right).$
\end{flushright}
We let $R_{\cS_{\Sigma_{\textnormal{ram}},\tau'}}^{\textnormal{univ}}$ (resp., $R_{\cS_{\Sigma_{\textnormal{ram}},\tau'}'}^{\textnormal{univ}}$) denote the complete local Noetherian $\cO$-algebra representing the functor of deformations of $\overline{\widetilde{r}'}$ of type $\cS_{\Sigma_{\textnormal{ram}},\tau'}$ (resp., of type $\cS_{\Sigma_{\textnormal{ram}},\tau'}'$).  We note that by the conditions at $\Sigma_{\textnormal{ram}}^+$, we have 
$$R_{\cS_{\Sigma_{\textnormal{ram}},\tau'}}^{\textnormal{univ}}/ \varpi \cong R_{\cS_{\Sigma_{\textnormal{ram}},\tau'}'}^{\textnormal{univ}}/\varpi.$$  
By the assumptions on $r'$ and $r_{\imath}(\pi)$, both $\widetilde{r}'$ and $\widetilde{r}_{\imath}(\pi)$ are deformations of $\overline{\widetilde{r}'}$ of type $\cS_{\Sigma_{\textnormal{ram}},\tau'}$, and therefore the $\ker(\cG_2(\cO) \longtwoheadrightarrow \cG_2(\bbF))$-conjugacy classes of $\widetilde{r}'$ and $\widetilde{r}_{\imath}(\pi)$ give rise to morphisms $\zeta': R_{\cS_{\Sigma_{\textnormal{ram}},\tau'}}^{\textnormal{univ}} \longrightarrow \cO$ and $\zeta_\pi: R_{\cS_{\Sigma_{\textnormal{ram}},\tau'}}^{\textnormal{univ}} \longrightarrow \cO$, respectively.

Next, we construct the spaces of algebraic automorphic forms that we will patch.  {Recall from Subsubsection \ref{subsec:level:lowering} the compact open $K_0\subseteq \bG(\bbA_{F^+}^\infty)$.  Let $K' = \prod_v K'_v \subseteq K_0$} denote the compact open subgroup satisfying the following conditions:
\begin{itemize}
\item if $v$ is a place of $F^+$ which is inert in $F$ and $v \not\in \Sigma_p^+$, then $K'_v$ is {a hyperspecial} subgroup of $\bG(F^+_v)$;
\item if $v$ is a place of $F^+$ which is split in $F$ and $v \not\in \{v_1\} \cup \Sigma_{\textnormal{ram}}^+$, then $K'_v = \bG(\cO_{F^+_v})$;
\item if $v \in \Sigma_p^+$, then $K'_v = \bG(\cO_{F^+_v})$;
\item if $v \in \{v_1\} \cup \Sigma_{\textnormal{ram}}^+$, then $K'_v$ is the preimage under $\iota_{\tv}$ of $\textnormal{Iw}_{\tv}$, the upper-triangular Iwahori subgroup of $\bG\bL_2(\cO_{F_{\tv}})$. 
\end{itemize}
With these choices, we have that $K'$ is sufficiently small.  Let $\sigma \defeq \bigotimes_{v\in \Sigma_p^+} \sigma(\tau'_v)$ denote the tame type associated to the collection $\tau' = \{\tau'_v\}_{v\in \Sigma_p^+}$ by Theorem \ref{ILLC}, and let $\sigma^\circ$ (resp., $\sigma^{\vee,\circ}$) denote a fixed choice $\bG(\cO_{F^+,p})$-stable $\cO$-lattice in $\sigma$ (resp., $\sigma^\vee$).  For $v \in \Sigma_{\textnormal{ram}}^+$, we also define a character $\chi_v:K'_v \longrightarrow \cO^\times$ by
$$\chi_v\left(\iota_{\tv}^{-1}\begin{pmatrix} a & b \\ c & d\end{pmatrix}\right) = (\chi_{v,1} \circ \textnormal{Art}_{F_\tv})(a)(\chi_{v,2} \circ \textnormal{Art}_{F_\tv})(d),$$
where $\sm{a}{b}{c}{d} \in \textnormal{Iw}_{\tv}$.  We then set $\chi \defeq \bigotimes_{v\in \Sigma_{\textnormal{ram}}^+}\chi_v$.  We will examine the spaces of algebraic automorphic forms
$$S_{\bG}(K', \sigma^{\vee, \circ}) \qquad \textnormal{and} \qquad S_{\bG}(K', \sigma^{\vee,\circ}\otimes_\cO \chi^{-1})$$
(the latter is defined as in Subsection \ref{sec:AAF}, except that the component $\prod_{v\in \Sigma_{\textnormal{ram}}^+}K'_v$ acts by $\chi$).

Let $\fm_{\overline{r}'} \subseteq \bbT^{\Sigma_p^+ \cup \{v_1\} \cup \Sigma_{\textnormal{ram}}^+}$ denote the maximal ideal associated to $\overline{r}'$ as in Definition \ref{maxidealgalois}.  Since the representation $\pi$ contributes to the space $S_{\bG}(K_0, \sigma^{\vee,\circ})_{\fm_{\overline{r}'}}$ {(recall that $K_0\supseteq K'$ with $K_0$ defined in Subsubsection \ref{subsec:level:lowering})}, we obtain $S_{\bG}(K', \sigma^{\vee,\circ})_{\fm_{\overline{r}'}} \neq 0$.  Using the fact that $\chi$ is congruent to the trivial character modulo $\varpi$, repeated application of Lemma \ref{lem:non-zero} gives
\begin{eqnarray*}
S_{\bG}(K', \sigma^{\vee,\circ})_{\fm_{\overline{r}'}} \neq 0 & \Longleftrightarrow & S_{\bG}(K', \sigma^{\vee,\circ} \otimes_{\cO} \bbF)_{\fm_{\overline{r}'}} \neq 0 \\
 & \Longleftrightarrow & S_{\bG}(K', (\sigma^{\vee,\circ} \otimes_\cO \chi^{-1}) \otimes_{\cO} \bbF)_{\fm_{\overline{r}'}} \neq 0 \\
 & \Longleftrightarrow & S_{\bG}(K', \sigma^{\vee,\circ} \otimes_\cO \chi^{-1} )_{\fm_{\overline{r}'}} \neq 0.
\end{eqnarray*}

We now outline the patching argument which uses the above spaces.  Let 
$$R^{\loc}_{0,\tau',\Sigma_{\textnormal{ram}}}\defeq \left( \widehat{\bigotimes}_{v \in\Sigma_p^+} R^{\Box,0,\tau_v'}_{v}\right) \widehat{\otimes}  \widetilde{R}^{\Box}_{v_1}  \widehat{\otimes}   \left( \widehat{\bigotimes}_{v \in \Sigma^+_{\textnormal{ram}}} R^{(1,1)}_{v}\right),$$
$$R^{\loc,'}_{0,\tau',\Sigma_{\textnormal{ram}}}\defeq \left( \widehat{\bigotimes}_{v \in\Sigma_p^+} R^{\Box,0,\tau_v'}_{v}\right) \widehat{\otimes}  \widetilde{R}^{\Box}_{v_1}  \widehat{\otimes}   \left( \widehat{\bigotimes}_{v \in \Sigma^+_{\textnormal{ram}}} R^{(\chi_{v,1}, \chi_{v,2})}_{v}\right).$$
A variant of the patching construction in Subsections \ref{auxprim} and \ref{sub:patching} with $\Sigma_p^+ \cup \{v_1\}$ replaced by $\Sigma_p^+ \cup \{v_1\} \cup \Sigma_{\textnormal{ram}}^+$ provides us with the following data (see \cite[\S~4]{Taylor08}):
\begin{enumerate} 
\item
\label{it:1:last}
A ring $R_{0,\tau',\Sigma_{\textnormal{ram}},\infty}$ which a formal power series ring in $q - [F^+:\bbQ]$ variables over $R^{\loc}_{0,\tau',\Sigma_{\textnormal{ram}}}$, together with a surjection $R_{0,\tau',\Sigma_{\textnormal{ram}},\infty} \longtwoheadrightarrow R_{\cS_{\Sigma_{\textnormal{ram}},\tau'}}^{\textnormal{univ}}$; 
\item 
\label{it:2:last}
an $R_{0,\tau',\Sigma_{\textnormal{ram}},\infty}$-module $M_\infty(\sigma^\circ)$ whose support is a union of irreducible components of $\textnormal{Spec} R_{0,\tau',\Sigma_{\textnormal{ram}},\infty}$; 
\item 
\label{it:3:last}
the mod $\mathfrak{a}$ reduction of $M_\infty(\sigma^\circ)$ is isomorphic to $S_{\bG}(K',(\sigma^{\circ})^{\textnormal{d}})_{\fm_{\rbar'}}^{\textnormal{d}}$, compatibly with the morphism $R_{0,\tau',\Sigma_{\textnormal{ram}},\infty}/\fa \longtwoheadrightarrow R_{\cS_{\Sigma_{\textnormal{ram}},\tau'}}^{\textnormal{univ}} \longtwoheadrightarrow \bbT_{0,\tau'}^{\Sigma_p^+ \cup \{v_1\} \cup \Sigma_{\textnormal{ram}}^+}(K')_{\fm_{\rbar'}}$; 
\item 
\label{it:4:last}
we have analogous ``primed'' versions of the above constructions corresponding to the deformation problem $\cS'_{\Sigma_{\textnormal{ram}},\tau'}$ (e.g., $R'_{0,\tau',\Sigma_{\textnormal{ram}},\infty}$, $R'_{0,\tau',\Sigma_{\textnormal{ram}},\infty} \longtwoheadrightarrow R_{\cS_{\Sigma_{\textnormal{ram}},\tau'}'}^{\textnormal{univ}}$, $M'_\infty(\sigma^\circ)/\fa \cong S_{\bG}(K',(\sigma^\circ\otimes_\cO \chi)^{\textnormal{d}})_{\fm_{\rbar'}}^{\textnormal{d}}$, etc.).  Furthermore, the primed data may be chosen so that it is compatible with the previous data modulo $\varpi$ (e.g., under the isomorphism $M_\infty(\sigma^\circ)/\varpi \cong M'_\infty(\sigma^\circ)/\varpi$, the action of $R_{0,\tau',\Sigma_{\textnormal{ram}},\infty}/\varpi$ on the left-hand side intertwines with the action of $R'_{0,\tau',\Sigma_{\textnormal{ram}},\infty}/\varpi$ on the right-hand side). 
\end{enumerate}

By the primed version of item \ref{it:2:last} and irreducibility of $\Spec R'_{0,\tau',\Sigma_{\textnormal{ram}},\infty}$, we conclude that
$$\textnormal{Supp}_{R'_{0,\tau',\Sigma_{\textnormal{ram}},\infty}}\left(M_\infty'(\sigma^\circ)\right) = \textnormal{Spec} R'_{0,\tau',\Sigma_{\textnormal{ram}},\infty}.$$  
(To see that $\Spec R'_{0,\tau',\Sigma_{\textnormal{ram}},\infty}$ is irreducible, we use the primed version of item \ref{it:1:last} and \cite[Lem. 3.3(5)]{BLGHT}, and observe that each of the local deformation rings comprising $R^{\loc,'}_{0,\tau',\Sigma_{\textnormal{ram}}}$ has an irreducible spectrum:  for $v\in \Sigma_p^+$, this follows from Table \ref{Table3} and equation \eqref{defringpres}; for $v = v_1$, this follows from \cite[Lem.~2.5]{CEGGPS}; for $v \in \Sigma_{\textnormal{ram}}^+$, this follows from \cite[Prop. 3.1(1)]{Taylor08}.)  In particular, we get 
$$\textnormal{Supp}_{R'_{0,\tau',\Sigma_{\textnormal{ram}},\infty}/\varpi}\left(M_\infty'(\sigma^\circ)/\varpi\right) = \textnormal{Spec} R'_{0,\tau',\Sigma_{\textnormal{ram}},\infty}/\varpi.$$
By item \ref{it:4:last}, we obtain the analogous statement for the deformation problem $\cS_{\Sigma_{\textnormal{ram}}, \tau'}$:
\begin{equation}
\label{full-support-mod-pi}
\textnormal{Supp}_{R_{0,\tau',\Sigma_{\textnormal{ram}},\infty}/\varpi}\left(M_\infty(\sigma^\circ)/\varpi\right) = \textnormal{Spec} R_{0,\tau',\Sigma_{\textnormal{ram}},\infty}/\varpi.
\end{equation}

Likewise, item \ref{it:2:last} implies that $\textnormal{Supp}_{R_{0,\tau',\Sigma_{\textnormal{ram}},\infty}}(M_\infty(\sigma^\circ))$ is a union of irreducible components of $\textnormal{Spec} R_{0,\tau',\Sigma_{\textnormal{ram}},\infty}$.  Since the irreducible components of $R_{0,\tau',\Sigma_{\textnormal{ram}},\infty}/\varpi$ are in bijection with the irreducible components of $R_{0,\tau',\Sigma_{\textnormal{ram}},\infty}$ by \cite[Prop. 3.1(3)]{Taylor08}, equation \eqref{full-support-mod-pi} implies
$$\textnormal{Supp}_{R_{0,\tau',\Sigma_{\textnormal{ram}},\infty}}\left(M_\infty(\sigma^\circ)\right) = \textnormal{Spec} R_{0,\tau',\Sigma_{\textnormal{ram}},\infty}.$$
Consequently, we get 
$$\textnormal{Supp}_{R_{0,\tau',\Sigma_{\textnormal{ram}},\infty}/\fa}\left(M_\infty(\sigma^\circ)/\fa\right) = \textnormal{Spec} R_{0,\tau',\Sigma_{\textnormal{ram}},\infty}/\fa,$$
which implies by item \ref{it:3:last} that
$$\textnormal{Supp}_{R_{\cS_{\Sigma_{\textnormal{ram}},\tau'}}^{\textnormal{univ}}} \left(S_{\bG}(K',(\sigma^{\circ})^{\textnormal{d}})_{\fm_{\rbar'}}^{\textnormal{d}} \right) = \textnormal{Spec} R_{\cS_{\Sigma_{\textnormal{ram}},\tau'}}^{\textnormal{univ}}.$$
Since $S_{\bG}(K',(\sigma^{\circ})^{\textnormal{d}})_{\fm_{\rbar'}}^{\textnormal{d}}$ is a faithful $\bbT_{0,\tau'}^{\Sigma_p^+ \cup \{v_1\} \cup \Sigma_{\textnormal{ram}}^+}(K')_{\fm_{\rbar'}}$-module and the latter ring is reduced, the surjection 
$$R_{\cS_{\Sigma_{\textnormal{ram}},\tau'}}^{\textnormal{univ}} \longtwoheadrightarrow \bbT_{0,\tau'}^{\Sigma_p^+ \cup \{v_1\} \cup \Sigma_{\textnormal{ram}}^+}(K')_{\fm_{\rbar'}}$$ 
induces an isomorphism
$$\Big(R_{\cS_{\Sigma_{\textnormal{ram}},\tau'}}^{\textnormal{univ}}\Big)^{\textnormal{red}} \stackrel{\sim}{\longrightarrow} \bbT_{0,\tau'}^{\Sigma_p^+ \cup \{v_1\} \cup \Sigma_{\textnormal{ram}}^+}(K')_{\fm_{\rbar'}}.$$
Thus, the homomorphism $\zeta': R_{\cS_{\Sigma_{\textnormal{ram}},\tau'}}^{\textnormal{univ}} \longrightarrow \cO$ factors through $(R_{\cS_{\Sigma_{\textnormal{ram}},\tau'}}^{\textnormal{univ}})^{\textnormal{red}} \cong \bbT_{0,\tau'}^{\Sigma_p^+ \cup \{v_1\} \cup \Sigma_{\textnormal{ram}}^+}(K')_{\fm_{\rbar'}}$, which implies that $r'$ is automorphic.  
\end{proof}

\bibliographystyle{amsalpha} 
\bibliography{Biblio}

\end{document}